\definecolor{beige}{rgb}{0.96, 0.96, 0.86}
\definecolor{airforceblue}{rgb}{0.36, 0.54, 0.66}
\definecolor{antiquefuchsia}{rgb}{0.57, 0.36, 0.51}
\definecolor{awesome}{rgb}{1.0, 0.13, 0.32}
\newtheorem{Theorem}{Theorem}[section]
\newtheorem{Definition}[Theorem]{Definition}
\newtheorem{Proposition}[Theorem]{Proposition}
\newtheorem{Lemma}[Theorem]{Lemma}
\newtheorem{Corollary}[Theorem]{Corollary}
\newtheorem{Assumption}[Theorem]{Assumption}
\newtheorem{Remark}[Theorem]{Remark}
\newtheorem{Notation}[Theorem]{Notation}
\theoremstyle{definition}
\newtheorem{Example}[Theorem]{Example}
\renewcommand{\epsilon}{\varepsilon}
\newcommand*{\medcap}{\mathbin{\scalebox{1.5}{\ensuremath{\cap}}}}
\newcommand*{\medcup}{\mathbin{\scalebox{1.5}{\ensuremath{\cup}}}}
\numberwithin{equation}{section}
\def\R{\mathbb R}
\def\N{\mathbb N}
\def\E{\mathbb E}
\def\ca{\mathbf{ca}}
\def\<{\left\langle }
\def\>{\right\rangle }
\begin{document}

\title{
$C_0$-sequentially equicontinuous semigroups:\\theory and applications
\footnote{This research has been partially supported by the INdAM-GNAMPA project ``PDE correlate a sistemi stocastici con ritardo''. The authors are sincerely grateful to Fausto Gozzi for valuable discussions.}
}

\author{Salvatore Federico $^\mathrm{(a)}$ \and Mauro Rosestolato $^\mathrm{(b)}$}

\maketitle

\begingroup
\renewcommand*{\thefootnote}{\alph{footnote}}

\footnotetext[1]{\thinspace  Universit\`a di Firenze, Dipartimento per l'Economia e l'Impresa (Italy). Email:
\texttt{salvatore.federico@unifi.it}.}

\footnotetext[2]{\thinspace LUISS University, Rome (Italy). Email:\ \texttt{mrosestolato@luiss.it}. The author acknowledges the financial support of DAAD (Deutscher Akademischer Austausch Dienst) for his visit at the University of Wuppertal.}

\endgroup
%
%
%
%

\begin{abstract}
We present and apply a theory of one parameter $C_0$-semigroups of linear operators in locally convex spaces. Replacing the notion of equicontinuity considered by the literature with the weaker notion of sequential equicontinuity, we prove the basic results of the classical theory of $C_0$-equicontinuous semigroups:\ we show that the semigroup is uniquely identified by its generator and we provide a generation theorem in the spirit of the celebrated Hille-Yosida theorem. Then, we particularize the theory in some functional spaces and identify two locally convex topologies that allow to gather under a unified framework various notions $C_0$-semigroup introduced by some authors to deal with Markov transition semigroups. Finally, we apply the results to transition semigroups associated to stochastic differential equations.

\vskip 5pt
\noindent\textbf{Keywords:}
One parameter semigroup, sequential equicontinuity, transition semigroup.

\vskip 5pt
\noindent\textbf{AMS 2010 subject classification:} 46N30, 47D06, 47D07, 60J35.
\end{abstract}

\setcounter{tocdepth}{3}


%

\section{Introduction}
\label{sec:2015-05-01:00}
The aim of this paper is to present and apply a notion of  one parameter  strongly continuous ($C_0$) semigroups of linear operators in locally convex spaces  based on the notion of  sequential equicontinuity and  following the spirit and the methods of the classical theory in Banach spaces. 

The theory of $C_0$-semigroups was first stated in Banach spaces (a widespread presentation can be found in several monographs, e.g.\ \cite{EngelNagel99, HillePhillips57, Pazy83}).
 The theory was extended to locally convex spaces by introducing the notions of $C_0$-equicontinuous semigroup (\cite[Ch.\  IX]{Yosida80}), $C_0$-quasi-equicontinuous semigroup (\cite{Choe85}), $C_0$-locally equicontinuous semigroup  (\cite{Dembart74, Komura68}), weakly integrable semigroup (\cite{Jefferies86, Jefferies87}).  A mixed approach is the one followed by \cite{Kuehnemund2001}, which  introduces the notion of bi-continuous semigroup:\ in a framework of Banach spaces, semigroups that are strongly continuous with respect  to a weaker locally convex topology are considered. 
 
In this paper we deal with semigroups of linear operators in locally convex spaces that are only \emph{sequentially} continuous. The idea is due to the following key observation:\ the theory of $C_0$-(locally) equicontinuous semigroups  can be developed, with appropriate adjustments, to semigroups of operators which are only $C_0$-(locally) \emph{sequentially} equicontinuous (in the sense specified by Definition \ref{def:2015-05-02:06}). 
On the other hand, as we will show by examples, the passage from equicontinuity to sequential equicontinuity is motivated and fruitful:\ as discussed in Remark \ref{rem..} and shown by
Example \ref{2015-05-04:07-II},  in concrete  applications, replacing equicontinuity with sequential equicontinuity
might turn out to be much more convenient.

The main motivation that led us to consider sequential continuity is that it allows a convenient treatment of Markov  transition semigroups. 
The employment of Markov transition semigroups to the study of partial differential equations through the use of stochastic representation formulas is the subject of a wide mathematical literature (here we only refer to \cite{Cerrai01book} in finite and infinite dimension and to \cite{DaPratoZabczyk14} in infinite dimension). Also, the
regularizing properties of 
such semigroups
 is the core of a regularity theory for second order PDEs (see, e.g., \cite{Lunardi95}).
Unfortunately, the framework of $C_0$-semigroup in 
Banach spaces is not always appropriate to treat such semigroups. Indeed, on Banach spaces of functions not vanishing at infinity, the $C_0$-property fails already in  basic cases, such as the
one-dimensional Ornstein-Uhlenbeck semigroup,  when considering it in the space of bounded uniformly continuous real-valued functions  $(UC_b(\mathbb{R}),|\cdot|_\infty)$ (see, e.g., \cite[Ex.\ 6.1]{Cerrai94} for a counterexample, or  \cite[Lemma\ 3.2]{DaPratoLunardi95},
 which implies this semigroup is strongly continuous in $(UC_b(\R),|\cdot|_\infty)$ if and only if the drift of the associated  stochastic differential equation vanishes). 
 On the other hand,   
 finding a locally convex topology on these spaces  to frame Markov transition semigroups within the theory of $C_0$-locally equicontinuous semigroups is not an easy task (see also the considerations of Remark \ref{rem..}). In the case of the Ornstein-Uhlenbeck semigroup, such approach is adopted by  \cite{GoldysKocan01}. 
Some authors have bypassed these difficulties by introducing some (more or less \emph{ad hoc}) notions, relying on some sequential continuity properties, to treat such semigroups  (weakly continuous semigroups  \cite{Cerrai94}, $\pi$-continuous semigroups \cite{Priola99}, bi-continuous semigroups  \cite{Kuehnemund2001}). 
The theory developed in our paper allows to gather all the aforementioned notions under a unified framework.
 
We end the introduction by describing in detail the contents of the paper.  
 Section \ref{sec:not} contains notations that will hold throughout the paper. 
 
  In Section \ref{sec:main}  we first provide and  study  the notions of sequential continuity of linear operators and sequential equicontinuity of families of linear operators on locally convex spaces. 
Then, we give  the definition of  $C_0$-sequentially (locally) equicontinuous semigroup in locally convex spaces. Next, we define the generator of the semigroup and the resolvent of the generator. In order to guarantee the existence of the resolvent,  the theory is developed under  Assumption \ref{ass:int}, requiring the existence  of the Laplace transform \eqref{eq:2015-05-01:16} as  Riemann integral (see Remark \ref{2015-10-29:13}). This assumption is immediately verified if the underlying space $X$ is sequentially complete. Otherwise, the Laplace transform always exists in the (sequential) completion of $X$ and then one should check that it  lies in $X$, as we do in Proposition \ref{2015-10-13:06}.
The properties of generator and resolvent  are  stated through a series of results: their synthesis is represented by Theorem 
  \ref{theo:2015-07-29:04}, stating that the semigroup is uniquely identified by its generator, and by Theorem  \ref{teo:2015-05-01:12}, stating that the resolvent coincides with the Laplace transform.
 Then we provide a generation theorem (Theorem \ref{theo:2015-05-02:10}), characterizing, in the same spirit  of the Hille-Yosida theorem, the linear operators generating  $C_0$-sequentially equicontinuous semigroups.
Afterwards, we show that the notion of bi-continuous semigroups can be seen as a specification of ours (Proposition \ref{prop:2015-05-02:05}).   Finally, we  provide some examples which illustrate  our notion  in relation to the others.

Section \ref{sec:SE}  implements the  theory of Section \ref{sec:main} in  spaces of bounded  Borel functions,  continuous and bounded functions, or uniformly continuous and  bounded functions defined on a metric space.  The main aim of this section is to find and study  appropriate locally convex topologies in these functional spaces allowing a comparison between our notion with the aforementioned other  ones.  We identify them in two topologies  belonging to a class  of locally convex topologies defined through the family of seminorms \eqref{maurosemi}.  We study the relation between them  and the topology induced by the uniform norm (Proposition \ref{prop:2015-04-29:01}). Then,  we  study these topological spaces through a series of results ending with Proposition \ref{prop:seqcom} and we  characterize  their topological dual  in Proposition \ref{prop:2015-04-27:00}. We end the section with the desired comparison:   in Subsections \ref{sec:wc}, \ref{2015-10-13:03}, and \ref{sub:goldys}, we show that the notions developed in  \cite{Cerrai94}, \cite{Priola99}, and \cite{GoldysKocan01} to treat Markov transition semigroups can be reintepreted in our framework.

Section \ref{sec:trans} applies the results of Section \ref{sec:SE} to transition semigroups. This is done, in Subsection \ref{sub2B}, in the space of bounded continuous functions endowed with the topology $\tau_\mathcal{K}$ defined in \eqref{tauk}. Then,   in Subsection \ref{2015-10-29:16}, we provide an extension to weighted spaces of continuous functions,  not necessarily bounded. Finally, in Subsection \ref{2015-10-29:16}, we treat the case of Markov transition semigroups associated to stochastic differential equations in Hilbert spaces. 

\vskip20pt

\section{Notation}
\label{sec:not}

\begin{enumerate}[(N1)]
\itemsep=-1mm
\item $X,Y$ denote Hausdorff topological vector spaces.
   Starting from Subsection \ref{SCO}, Assumption 
\ref{XYconv} will hold and $X,Y$ will be Hausdorff  locally convex topological vector spaces. 


\item The topological dual of a topological vector space $X$ is denoted by  $X^*$. 

\item If $X$ is a vector space and $\Gamma$ is a vector space of linear functionals on $X$ separating points in $X$, we denote by $\sigma(X,\Gamma)$ the weakest locally convex topology on $X$ making continuous the elements of $\Gamma$.

\item The weak topology on the topological vector space $X$ is denoted by $\tau_w$, that is $\tau_w\coloneqq \sigma (X,X^*)$.

\item If $X$ and $Y$ are topological vector spaces, the space of continuous operators from $X$ into $Y$ is denoted by $L(X,Y)$, and the space of sequentially continuous operators from $X$ into $Y$
 (see Definition \ref{def:2015-05-02:06})
 is denoted by $\mathcal{L}_0(X,Y)$. We also denote $L(X)\coloneqq L(X,X)$ and $\mathcal{L}_0(X)\coloneqq \mathcal{L}_0(X,X)$.
\item Given a locally convex topological vector space $X$, the symbol $\mathcal{P}_X$  denotes a family of seminorm on $X$ inducing the locally convex topology.

\item $E$ 
denotes a metric space; $\mathcal{E}\coloneqq \mathcal{B}(E)$ denotes the Borel $\sigma$-algebra of subsets of $E$ .

\item Given the metric space $E$, $\mathbf{ba}(E)$ denotes the space of finitely additive signed measures with bounded total variation on $\mathcal{E}$,
  $\mathbf{ca}(E)$ denotes the subspace of $\mathbf{ba}(E)$ of countably additive finite measure, and  $\mathbf{ca}^+(E)$ denotes the subspace of $ \mathbf{ca}(E)$ of positive countably additive finite measures. 

\item Given the metric space $E$, we denote by $B(x,r)$ the open ball centered at $x\in E$ and with radius $r$ and by $B(x,r]$ the closed ball centered at $x$ and with radius $r$.
\item The common symbol  $\mathcal{S}(E)$  denotes indifferently one of the spaces 
$B_b(E)$, $C_b(E)$, $UC_b(E)$, that is, respectively,  the space of real-valued \emph{bounded  Borel}  /  \emph{continuous and bounded} / \emph{uniformly continuous and  bounded} functions  defined on $E$.

\item  On $\mathcal{S}(E)$, we consider the sup-norm $|f|_\infty\coloneqq \sup_{x\in E} |f(x)|$, which makes it a Banach space. The topology on $\mathcal{S}(E)$ induced by such norm is denoted by $\tau_\infty$.

\item On $\mathcal{S}(E)$, the symbol  $\tau_\mathcal{C}$ denotes  the topology of the uniform convergence on compact sets.

\item By $\mathcal{S}(E)_\infty^*$ we denote the topological  dual of $(\mathcal{S}(E),|
\cdot|_\infty)$ and  by $|\cdot|_{\mathcal{S}(E)^*_\infty}$  the operator norm in $\mathcal{S}(E)^*_\infty$.
\end{enumerate}
\noindent We make use of  the conventions $\inf\emptyset=+\infty$, $\sup\emptyset=-\infty$, $1/\infty=0$. 

\section{$C_0$-sequentially equicontinuous semigroups}\label{sec:main}

 In this section, we  introduce and investigate the notion of $C_0$-sequentially equicontinuous semigroups on  locally convex topological vector spaces.

\subsection{Sequential continuity and equicontinuity}
\label{sub1}
We recall the notion of sequential continuity for functions and define the notion of sequential equicontinuity for families of functions   on topological spaces.
\begin{Definition}
\label{def:2015-05-02:06}
Let $X$, $Y$ be Hausdorff topological spaces.
\begin{enumerate}[(i)]
\item \label{2015-10-06:02}
A function  $f\colon X\rightarrow Y$ is said to be \emph{sequentially continuous} if,  for every  sequence $\{x_n\}_{n\in \mathbb{N}}$ converging to $x$ in $X$, we have 
$
f(x_n)\rightarrow f(x)$  in $Y$.

\item   \label{2015-10-06:03} If $Y$ is a vector space,
a family of functions  
$\mathcal{F}=\{f_\iota\colon X\rightarrow Y\}_{\iota\in \mathcal{I}}$  is said to be \emph{sequentially equicontinuous} if 
for every $x\in X$, for every  sequence $\{x_n\}_{n\in \mathbb{N}}$ converging to $x$ in $X$ and for every neighborhood $U$ of $0$ in $Y$, there exists $\overline n\in \N$ such that $f_\iota (x_n)\in f_\iota(x)+ U$ for every $\iota\in \mathcal{I}$ and $n\geq \overline n$.
\end{enumerate}

\end{Definition}


\begin{Remark}
  \label{rem:2015-05-01:06}
Let $E$ be a metric space. If $g:X\rightarrow Y$ is  sequentially continuous  and  $f:E\rightarrow X$ is continuous, then $g\circ f:E\rightarrow Y$ is continuous. It is sufficient to recall that  continuity for a function defined on a metric space is equivalent to sequential continuity.
\end{Remark}

  If $Y$ is a locally convex topological vector space, then Definition \ref{def:2015-05-02:06}\emph{(\ref{2015-10-06:03})} is equivalent to   
    \begin{equation}
\label{eq:2015-05-01:04}
\{x_n\}_{n\in\N}\subset X, \ x_n\rightarrow x \
\mbox{in} \ X \ \Longrightarrow \    \lim_{n\rightarrow +\infty}\sup_{\iota\in \mathcal{I}}q(f_\iota(x_n)-f_\iota(x))=0, \ \ \ \ \ \forall  q\in \mathcal{P}_Y,
  \end{equation}
  where $\mathcal{P}_Y$ is a set of seminorms inducing the topology on $Y$.
%
The characterization of sequential continuity \eqref{eq:2015-05-01:04} will be very often used throughout the paper.

\subsection{The space of sequentially continuous linear  operators}\label{SCO}
Starting from this subsection, we make the following
\begin{Assumption}\label{XYconv}
$X$ and $Y$ are Hausdorff locally convex topological vector spaces,
 and $\mathcal{P}_X$, $\mathcal{P}_Y$ denote  families of seminorms inducing the topology on $X$, $Y$, respectively. 
\end{Assumption}

\begin{Remark}\label{lemma1} 
 We recall that
a subset $B\subset X$ is bounded if and only if $\sup_{x\in B} p(x)<+\infty$ for every $p\in\mathcal{P}_X$ and that Cauchy (and, therefore, also convergent) sequences are bounded in $X$.  
\end{Remark}

We define the vector space 
$$\mathcal{L}_0(X,Y)\coloneqq \{ F\colon X\rightarrow Y\ \mbox{s.t.} \ F \ \mbox{is linear and sequentially continuous}\}.$$ 
We will use  $\mathcal{L}_0(X)$ to denote the space $\mathcal{L}_0(X,X)$. Clearly, we have the inclusion
\begin{equation}\label{sad}
 L(X,Y)\subset \mathcal{L}_0(X,Y).
 \end{equation}

We recall that a linear operator $F\colon X\rightarrow Y$ is a called  \emph{bounded} if $F(B)$ is bounded in $Y$ for each bounded subset $B\subset X$. As well known (see \cite[Th.\ 1.32, p.\ 24]{Rudin1991})
 \begin{equation}\label{ssd}
 F\in L(X,Y)\ \Longrightarrow \ F \ \mbox{is bounded}.
 \end{equation}
 On the other hand,
if  $X$ is bornological (see \cite[p.\ 95, Definition 4.1]{Osborne2014}), 
then,
by 
\cite[Ch.\ 4, Prop.\ 4.12]{Osborne2014}, also the converse holds true, that is
\begin{equation}\label{sad1}
X \ \mbox{bornological}, \ F\colon X \rightarrow Y \ \mbox{linear and  bounded} \ \Longrightarrow \ F\in L(X,Y).
\end{equation}
 
\begin{Proposition}
  \label{2015-07-30:02}
  Let $F\in \mathcal{L}_0(X,Y)$. Then 
  \begin{enumerate}[(i)]
  \item\label{2015-08-31:01} $F$ is a bounded operator;
  \item\label{2015-08-31:02}  $F$ maps Cauchy sequences into Cauchy sequences.
  \end{enumerate}
\end{Proposition}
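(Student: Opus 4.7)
Both assertions can be reduced, via linearity, to testing sequential continuity of $F$ at the point $0\in X$. I would therefore first record the trivial observation that if $\{z_n\}\subset X$ converges to $0$, then $F(z_n)\to 0$ in $Y$, i.e.\ $q(F(z_n))\to 0$ for every $q\in\mathcal{P}_Y$.

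For (\ref{2015-08-31:01}), I would argue by contradiction. Suppose $B\subset X$ is bounded but $F(B)$ is not bounded in $Y$. By Remark \ref{lemma1}, there exists $q\in\mathcal{P}_Y$ with $\sup_{x\in B} q(F(x))=+\infty$, so one can select a sequence $\{x_n\}_{n\in\N}\subset B$ satisfying $q(F(x_n))\geq n^2$ for every $n\in\N$. Set $z_n\coloneqq x_n/n$. Since $B$ is bounded, for every $p\in\mathcal{P}_X$ we have
\begin{equation*}
p(z_n)=\frac{p(x_n)}{n}\leq \frac{1}{n}\sup_{x\in B}p(x)\xrightarrow[n\to\infty]{}0,
\end{equation*}
so $z_n\to 0$ in $X$. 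By sequential continuity of $F$ at $0$ and linearity, $q(F(z_n))=q(F(x_n))/n\to 0$, but by construction $q(F(z_n))\geq n$, a contradiction. Hence $F$ is bounded.

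For (\ref{2015-08-31:02}), let $\{x_n\}_{n\in\N}$ be a Cauchy sequence in $X$ and suppose, again by contradiction, that $\{F(x_n)\}_{n\in\N}$ is not Cauchy in $Y$. Then there exist $q\in\mathcal{P}_Y$, $\varepsilon>0$, and strictly increasing sequences $\{n_k\}$, $\{m_k\}$ in $\N$ with $n_k,m_k\geq k$ and
\begin{equation*}
q\bigl(F(x_{n_k})-F(x_{m_k})\bigr)\geq \varepsilon,\qquad \forall k\in\N.
\end{equation*}
Because $\{x_n\}$ is Cauchy, for every $p\in\mathcal{P}_X$ we have $p(x_{n_k}-x_{m_k})\to 0$, i.e.\ $x_{n_k}-x_{m_k}\to 0$ in $X$. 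Sequential continuity of $F$ at $0$, combined with $F(x_{n_k})-F(x_{m_k})=F(x_{n_k}-x_{m_k})$, then gives $q(F(x_{n_k})-F(x_{m_k}))\to 0$, contradicting the lower bound above.

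The core idea is the standard scaling trick $x_n\mapsto x_n/n$ in (\ref{2015-08-31:01}); the only care needed is to replace the norm estimate with a seminorm estimate uniform in $\mathcal{P}_X$, which is available because $B$ is bounded in the locally convex sense recalled in Remark \ref{lemma1}. Part (\ref{2015-08-31:02}) is then a short linearity argument and does not present any real obstacle.
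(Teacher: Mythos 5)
Your proof is correct, but it takes a different route from the paper on both parts. For part \emph{(i)} the paper simply cites an external reference (Osborne, Prop.\ 4.12), whereas you give the standard self-contained scaling argument $x_n\mapsto x_n/n$; your use of Remark \ref{lemma1} to pass from ``$F(B)$ unbounded'' to a single offending seminorm $q$, and to get $z_n\to 0$ uniformly over all $p\in\mathcal{P}_X$, is exactly right. For part \emph{(ii)} the paper argues directly: it uses \emph{(i)} to know that $\{Fx_n\}$ is bounded, selects indices $k_n\geq n$ nearly attaining $\sup_{k\geq n}q(F(x_k-x_n))$, and then controls $q(F(x_m-x_n))$ by $2^{1-\bar n}+2q(Fz_{\bar n})$ with $z_{\bar n}=x_{k_{\bar n}}-x_{\bar n}\to 0$. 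Your contradiction argument is cleaner: negating the Cauchy property yields indices $n_k,m_k\geq k$ with $q(F(x_{n_k}-x_{m_k}))\geq\varepsilon$, the Cauchy property of $\{x_n\}$ forces $x_{n_k}-x_{m_k}\to 0$, and sequential continuity at $0$ gives the contradiction. Notably, your proof of \emph{(ii)} does not rely on \emph{(i)} at all, which the paper's does; the only cosmetic quibble is that you do not need the index sequences to be strictly increasing, only that $n_k,m_k\geq k$, which is what your argument actually uses.
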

\begin{proof}
  \emph{(\ref{2015-08-31:01})} See \cite[Ch.\ 4, Prop.\ 4.12]{Osborne2014}.

\emph{(\ref{2015-08-31:02})}
 Let  $\{x_n\}_{n\in \mathbb{N}}$ be a Cauchy sequence in $X$. In order to prove that $\{Fx_n\}_{n\in \mathbb{N}}$ is a Cauchy sequence in $Y$, we need to prove that, for every $q\in \mathcal{P}_Y$ and $\varepsilon>0$, there exists $\overline n$ such that $n,m\geq \overline n$ implies $q(F(x_m-x_n))\leq \varepsilon$. Fix $q\in \mathcal{P}_Y$ and $\epsilon>0$. As, by Remark \ref{lemma1}, $\{x_n\}_{n\in \mathbb{N}}$ is bounded in $X$, by \emph{(\ref{2015-08-31:01})} the sequence $\{Fx_n\}_{n\in \mathbb{N}}$ is bounded in $Y$. Then, for every $n\in \mathbb{N}$, we can choose $k_n\in \mathbb{N}$, with $k_n\geq n$, such that
\begin{equation}
  \label{eq:2015-07-30:03}
  q(F(x_{k_n}-x_n))+2^{-n}\geq \sup_{k\geq n}q(F(x_k-x_n)).
\end{equation}
Define $z_n\coloneqq x_{k_n}-x_n$, for $n\in \mathbb{N}$. As $\{x_n\}_{n\in \mathbb{N}}$ is a Cauchy sequence in $X$, we have $z_n\rightarrow 0$ as $n\rightarrow +\infty$. By sequential continuity of $F$, also $Fz_n\rightarrow 0$. Then \eqref{eq:2015-07-30:03} entails, for every $\overline n\in \mathbb{N}$ and every $n,m\geq \overline n$,
\begin{equation*}
    q(F(x_m-x_n))\leq
q(F(x_m-x_{\overline n}))+
q(F(x_n-x_{\overline n}))
\leq
2\sup_{k\geq \overline n}q(F(x_k-x_{\overline n}))\\
\leq
2^{1-\overline n}
+2q(Fz_{\overline n}).
\end{equation*}
Passing to the limit $\overline n\rightarrow +\infty$, we conclude that $\{Fx_n\}_{n\in \mathbb{N}}$ is a Cauchy sequence  in $Y$.
\end{proof}

\begin{Remark}
 We notice that the fact that $F$ is a bounded linear operator from $X$ into $Y$ does not guarantee, in general, that it belongs to the space $\mathcal{L}_0(X,Y)$.   Indeed, the bounded sets in the weak topology $\tau_w$ of any Banach space $X$ are exactly the originally bounded sets 
  (see 
  Lemma \ref{lem:2015-05-04:01}; actually this is true for locally convex spaces:\ see \cite[p.\ 70, Theorem 3.18]{Rudin1991}). Then, if $\tau$ denotes the norm-topology in $X$, the identity $\mathbf{id}\colon (X,\tau_w)\rightarrow (X,\tau)$ is bounded. Nevertheless, this identity is in general not sequentially continuous (any  infinite dimensional Hilbert space provides an immediate counterexample).
\end{Remark}

\begin{Corollary}\label{2015-10-04:02}
  If $X$ is  bornological, then 
  $$
\mathcal{L}_0(X,Y)=\mathcal{L}_0(X_w,Y_w)=L(X,Y)=L(X_w,Y_w),
$$ where $X_w,Y_w$ denote, respectively, the spaces $X$,$Y$ endowed with their weak topologies.
\end{Corollary}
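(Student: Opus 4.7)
The plan is to establish the chain of four equalities by proving three separate identifications: $\mathcal{L}_0(X,Y)=L(X,Y)$, $L(X,Y)=L(X_w,Y_w)$, and $\mathcal{L}_0(X_w,Y_w)=L(X_w,Y_w)$. The key ingredients are Proposition \ref{2015-07-30:02}(\emph{i}) (sequentially continuous $\Rightarrow$ bounded), the implication \eqref{sad1} (bounded $\Rightarrow$ continuous on a bornological space), and Mackey's theorem identifying weakly bounded with originally bounded sets in a locally convex space, as recalled in the remark preceding the corollary and in Lemma \ref{lem:2015-05-04:01}.

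For $\mathcal{L}_0(X,Y)=L(X,Y)$: the inclusion $\supseteq$ is \eqref{sad}; conversely, if $F\in \mathcal{L}_0(X,Y)$, then Proposition \ref{2015-07-30:02}(\emph{i}) yields that $F$ is bounded, and the bornological hypothesis together with \eqref{sad1} gives $F\in L(X,Y)$. For $L(X,Y)=L(X_w,Y_w)$: given $F\in L(X,Y)$, for each $y^{*}\in Y^{*}$ the composition $y^{*}\circ F$ lies in $X^{*}$, hence pre-images under $F$ of sub-basic weak neighborhoods of $0$ in $Y$ are weak neighborhoods of $0$ in $X$, so $F\in L(X_w,Y_w)$. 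Conversely, if $F\in L(X_w,Y_w)$, then by \eqref{ssd} applied in the weak topologies, $F$ sends weakly bounded subsets of $X$ to weakly bounded subsets of $Y$; Mackey's theorem then makes $F$ bounded as a map $X\to Y$, and \eqref{sad1} concludes. The identification $\mathcal{L}_0(X_w,Y_w)=L(X_w,Y_w)$ is analogous: one inclusion is \eqref{sad} transported to the weak topologies, while for the other, $F\in\mathcal{L}_0(X_w,Y_w)$ is bounded from $X_w$ to $Y_w$ by Proposition \ref{2015-07-30:02}(\emph{i}) (applied with $X,Y$ replaced by $X_w,Y_w$, both Hausdorff locally convex), whence Mackey's theorem makes $F$ bounded as a map $X\to Y$, and \eqref{sad1} combined with the previously established $L(X,Y)=L(X_w,Y_w)$ places $F$ in $L(X_w,Y_w)$.

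The main obstacle, and the reason this statement is not a mere tautology, is that although $X$ is bornological its weak version $X_w$ is generally not; hence \eqref{sad1} cannot be invoked directly in the weak topology to promote a weak-to-weak bounded operator into a weak-to-weak continuous one. The argument circumvents this by using Mackey's theorem to transport weak-to-weak boundedness back to the original topologies, where the bornological hypothesis of the corollary is actually available, and only then transfers the resulting continuity back to the weak setting via the already established $L(X,Y)=L(X_w,Y_w)$.
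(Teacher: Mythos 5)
Your proof is correct and follows essentially the same route as the paper: both establish $\mathcal{L}_0(X,Y)=L(X,Y)$ via Proposition \ref{2015-07-30:02}\emph{(i)} together with \eqref{sad1}, obtain $L(X_w,Y_w)=L(X,Y)$ from \eqref{ssd}, \eqref{sad1} and the coincidence of weakly bounded with originally bounded sets, and close the loop by showing $\mathcal{L}_0(X_w,Y_w)\subset L(X,Y)$ using the same boundedness transfer. Your closing remark about why the bornological hypothesis must be exploited in the original topology rather than the weak one is a fair articulation of the point the paper's proof relies on implicitly.
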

\begin{proof}
Since $X$ (\emph{resp.}\  $Y$) is locally convex, by \cite[p.\ 70, Theorem 3.18]{Rudin1991}, the weakly bounded sets of $X$ (\emph{resp.}\  $Y$) are exactly the originally bounded sets in $X$ (\emph{resp.}\  in $Y$). Hence, \eqref{ssd} and \eqref{sad1} yield $L(X_w,Y_w)\subset L(X,Y)$. On the other hand, the opposite inclusion holds true for every $X,Y$ vector topological spaces. So, we have proved that  $L(X_w,Y_w)= L(X,Y)$.
 
 Now, 
by Proposition \ref{2015-07-30:02}\emph{(\ref{2015-08-31:01})} and by \eqref{sad1}, we have $\mathcal{L}_0(X,Y)\subset L(X,Y)$. The opposite inclusion is obvious. So, $\mathcal{L}_0(X,Y)=L(X,Y)$.

Finally, considering that $\mathcal{L}_0(X_w,Y_w)\supset L(X_w,Y_w)$, in order to conclude we need to show that $\mathcal{L}_0(X_w,Y_w)\subset L(X,Y)$.
 Recalling that the 
 weakly bounded sets of $X$ (\emph{resp.}\  $Y$) are exactly the originally bounded sets in $X$ (\emph{resp.}\  in $Y$), the latter follows from \eqref{sad1} and 
 Proposition \ref{2015-07-30:02}\emph{(\ref{2015-08-31:01})}, as $X$ is bornological.  
\end{proof}

Let $\mathbf{B}$ be the set of all bounded subsets of $X$.
We introduce on $\mathcal{L}_0(X,Y)$ a locally convex topology as follows.
By Proposition \ref{2015-07-30:02}\emph{(\ref{2015-08-31:01})}
\begin{equation}\label{semi}
\rho_{q,D}(F)\coloneqq\sup_{x\in D}q\left(Fx\right)
\end{equation}
is  finite for 
all $F\in \mathcal{L}_0(X,Y)$, $D\in \mathbf{B}$, and $q\in \mathcal{P}_Y$. Given $D\in \mathbf{B}$ and $q\in \mathcal{P}_Y$, \eqref{semi} defines a seminorm in the space $\mathcal{L}_0(X,Y)$.
We denote by  $\mathcal{L}_{0,b}(X,Y)$ the
space $\mathcal{L}_0(X,Y)$ endowed with the
 locally convex vector  topology
$\tau_b$ induced by the family of seminorms $\{\rho_{q,D}\}_{q\in \mathcal{P}_Y,\  D\in \mathbf{B}}$. We notice  that $\tau_b$ does not depend on the choice of family $\mathcal{P}_Y$ inducing the topology of $Y$.
Since $\mathbf{B}$ contains all singletons $\{x\}_{x\in X}$ and $Y$ is Hausdorff, also $\mathcal{L}_{0,b}(X,Y)$ is Hausdorff.

\begin{Proposition}\label{2015-10-08:05}
  The  map 
  $$\mathcal{L}_{0,b}(X)\times \mathcal{L}_{0,b}(X)\rightarrow \mathcal{L}_{0,b}(X),\ \ \ (F,G)\mapsto FG,$$ is sequentially continuous.
\end{Proposition}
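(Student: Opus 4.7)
My plan is, given sequences $F_n \to F$ and $G_n \to G$ in $\mathcal{L}_{0,b}(X)$, to show that $\rho_{q,D}(F_n G_n - FG) \to 0$ for every $q \in \mathcal{P}_X$ and every bounded $D \subset X$. I will use the telescoping identity $F_nG_n - FG = (F_n - F)G + F_n(G_n - G)$, and then split the second summand further as $F_n(G_n - G) = F(G_n - G) + (F_n - F)(G_n - G)$, so that three separate terms have to be estimated.

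The first term is easy: by Proposition \ref{2015-07-30:02}\emph{(\ref{2015-08-31:01})}, $G$ is a bounded operator, hence $G(D)$ is bounded in $X$, and
$\rho_{q,D}((F_n - F)G) = \sup_{x\in D} q((F_n-F)Gx) = \rho_{q,G(D)}(F_n - F) \to 0$,
because $F_n \to F$ in $\tau_b$. For the cross term $(F_n - F)(G_n - G)$, set $H_n \coloneqq G_n - G$; since $H_n \to 0$ in $\tau_b$, for each $p \in \mathcal{P}_X$ the real sequence $\sup_{x\in D} p(H_n x)$ is convergent, hence bounded in $n$. This means that $B \coloneqq \bigcup_{n\in\N} H_n(D)$ is a bounded subset of $X$, so $\sup_{x\in D} q((F_n - F)H_n x) \leq \rho_{q, B}(F_n - F) \to 0$.

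The delicate step, and the one place where sequential continuity of $F$ (rather than mere boundedness) genuinely intervenes, is the remaining term $\sup_{x\in D} q(F H_n x)$. I would argue by contradiction: if convergence to $0$ fails, there exist $\varepsilon > 0$, a subsequence $(n_k)$, and points $x_k \in D$ with $q(F H_{n_k} x_k) \geq \varepsilon$. Setting $y_k \coloneqq H_{n_k} x_k$, for every $p \in \mathcal{P}_X$ one has $p(y_k) \leq \sup_{x\in D} p(H_{n_k} x) \to 0$, so $y_k \to 0$ in $X$. Sequential continuity of $F$ then forces $F y_k \to 0$ in $X$, whence $q(F y_k) \to 0$, contradicting the lower bound $\varepsilon$. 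Combining the three estimates yields $\rho_{q, D}(F_n G_n - FG) \to 0$ and hence the sequential continuity of composition. The main obstacle is precisely this last term: neither half of the factorization $F H_n$ is $\tau_b$-close to $0$ on its own (the sequence $F$ is constant, and $H_n$ only converges to $0$ pointwise-uniformly on bounded sets), so one must exploit the joint behaviour through the uniform convergence of $H_n$ on $D$ combined with sequential continuity of $F$.
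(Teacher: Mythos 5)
Your proof is correct and follows essentially the same strategy as the paper: the crucial term $F(G_n-G)$ is handled by exactly the same contradiction argument exploiting the sequential continuity of $F$, while the remaining terms are controlled by evaluating $\rho_{q,\cdot}(F_n-F)$ on a suitable bounded set. The only difference is cosmetic: you use a three-term telescoping with the bounded sets $G(D)$ and $\bigcup_{n}(G_n-G)(D)$, whereas the paper uses the two-term splitting $F(G-G_n)+(F-F_n)G_n$ together with the single bounded set $\bigcup_n G_nD$.
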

\begin{proof}
  Let $(  F,  G)\in \mathcal{L}_0(X)\times \mathcal{L}_0(X)$, and let $D\subset X$ be bounded. Let $\{(F_n ,G_n )\}_{n \in \mathbb{N}}$
be a sequence converging to $(  F,  G)$ in $\mathcal{L}_{0,b}(X)\times \mathcal{L}_{0,b}(X)$ .
Consider the set $D'\coloneqq   \medcup_{n\in \mathbb{N}} G_nD  $. We have 
$$
\sup_{n\in\N} \sup_{x\in D} q(G_nx)
\leq  \sup_{n\in\N} \sup_{x\in D} q((G_n-G)x)+ \sup_{x\in D} q(Gx)\ \ \ \ \ \forall  q\in\mathcal{P}_X.
$$
 On the other hand,  $G_n\rightarrow G$ yields
 $$
\sup_{n\in\N}\sup_{x\in D} q((G_n-G)x)=
\sup_{n\in\N}\rho_{q,D}(G_n-G)<+\infty, \ \ \ \forall  q\in\mathcal{P}_X.
 $$
Then, combining with Proposition \ref{2015-07-30:02}\emph{(\ref{2015-08-31:01})}, we conclude that $D'$ is bounded.

 Now fix $q\in \mathcal{P}_X$.
For every $n\in \mathbb{N}$, we can write 
$$
\rho_{q,D}((  F  G-F_nG_n))\leq \rho_{q,D}(  F(  G- G_n))+\rho_{q,D}((  F- F_n) G_n)\leq
\rho_{q,D}(  F(  G- G_n))+\rho_{q,D'}(F- F_n).
$$
Now $\lim_{n\rightarrow +\infty}\rho_{q,D'}(F-F_n)=0$, because $D'\in\mathbf{B}$ and $F_n\rightarrow F$ in $\mathcal{L}_{0,b}(X)$. Hence we conclude if we show  $\lim_{n\rightarrow+\infty}\rho_{q,D}(F(G-G_n))=0$. Assume, by contradiction, that there exist $\varepsilon>0$, $\{x_k\}_{k\in \mathbb{N}}\subset D$, and a subsequence $\{G_{n_k}\}_{k\in \mathbb{N}}$, such that
\begin{equation}\label{contra}
q(F(G-G_{n_k})x_k)\geq \varepsilon\qquad \forall  k\in \mathbb{N}.
\end{equation}
Since 
$$
\lim_{n\rightarrow +\infty}q'((G-G_{n_k})x_k)\leq \lim_{n\rightarrow +\infty}\rho_{q',D}(G-G_{n_k})=0\qquad \ \forall   q'\in \mathcal{P}_X,
$$
then $\{z_k\coloneqq (G-G_{n_k}x_k)\}_{k\in \mathbb{N}}$ is a sequence converging to $0$ in $X$. By sequential continuity of $F$, we have
$
\lim_{k\rightarrow+\infty}q(Fz_k)=0,
$
contradicting \eqref{contra} and concluding the proof.
\end{proof}

\begin{Proposition}
\label{prop:2015-08-04:00}
  \begin{enumerate}[(i)]
  \item\label{2015-12-04:00}  If $Y$ is complete, then $\mathcal{L}_{0,b}(X,Y)$ is complete.
  \item\label{2015-10-08:03}  If $Y$ is sequentially complete, then $\mathcal{L}_{0,b}(X,Y)$ is sequentially complete.
  \end{enumerate}
\end{Proposition}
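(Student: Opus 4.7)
The plan is to handle both items in parallel: in (i) work with nets, in (ii) with sequences, since the argument is formally identical except for the replacement of the index set. Let $\{F_\alpha\}_\alpha$ be a Cauchy net (resp.\ sequence) in $\mathcal{L}_{0,b}(X,Y)$. The first step is to construct the candidate limit pointwise. For each $x\in X$, the singleton $\{x\}$ belongs to $\mathbf{B}$, so the seminorms $\rho_{q,\{x\}}(F)=q(Fx)$ show that $\{F_\alpha x\}_\alpha$ is Cauchy in $Y$ for every $q\in\mathcal{P}_Y$; by completeness (resp.\ sequential completeness) of $Y$, the limit $Fx\coloneqq \lim_\alpha F_\alpha x$ exists in $Y$. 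Linearity of $F$ is immediate from the linearity of each $F_\alpha$ together with the joint continuity of the vector-space operations.

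Next I would upgrade pointwise convergence to convergence in $\tau_b$. Fix $D\in\mathbf{B}$, $q\in\mathcal{P}_Y$, and $\varepsilon>0$, and choose $\alpha_0$ such that $\rho_{q,D}(F_\alpha-F_\beta)\leq\varepsilon$ for all $\alpha,\beta\geq\alpha_0$. For each $x\in D$, passing to the limit $\beta\to\infty$ in $q((F_\alpha-F_\beta)x)\leq\varepsilon$ (using continuity of $q$ and the construction of $F$) yields $q((F_\alpha-F)x)\leq\varepsilon$. Taking the supremum over $x\in D$ gives $\rho_{q,D}(F_\alpha-F)\leq\varepsilon$ for every $\alpha\geq\alpha_0$, hence $F_\alpha\to F$ in $\tau_b$ \emph{provided} $F$ lies in $\mathcal{L}_0(X,Y)$.

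The main step, and the only nontrivial one, is therefore to verify that $F$ is sequentially continuous; this is what forces us to work with the space $\mathcal{L}_0(X,Y)$ (a pointwise limit of continuous linear maps need not be continuous in the topological sense). Let $x_n\to x$ in $X$ and fix $q\in\mathcal{P}_Y$ and $\varepsilon>0$. By Remark~\ref{lemma1} the set $D\coloneqq\{x_n:n\in\mathbb{N}\}\cup\{x\}$ is bounded, so by the previous step there exists $\alpha_0$ with $\rho_{q,D}(F_{\alpha_0}-F)\leq\varepsilon/3$; in particular $q((F_{\alpha_0}-F)x_n)\leq\varepsilon/3$ for every $n$ and $q((F_{\alpha_0}-F)x)\leq\varepsilon/3$. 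Since $F_{\alpha_0}\in\mathcal{L}_0(X,Y)$, there exists $\overline n$ such that $q(F_{\alpha_0}x_n-F_{\alpha_0}x)\leq\varepsilon/3$ for $n\geq\overline n$. A triangle inequality then gives $q(Fx_n-Fx)\leq\varepsilon$ for $n\geq\overline n$, proving $Fx_n\to Fx$ in $Y$ and hence $F\in\mathcal{L}_0(X,Y)$.

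The only delicate point, and the reason (i) and (ii) must be distinguished, is the first step: in (i) we need nets $\{F_\alpha x\}_\alpha$ to converge, which requires full completeness of $Y$, whereas in (ii) only sequences appear and sequential completeness suffices. Everything else — the passage-to-the-limit in the Cauchy estimate and the $\varepsilon/3$ argument for sequential continuity of $F$ — uses only sequences and the fact that convergent sequences are bounded, and therefore applies uniformly to both cases.
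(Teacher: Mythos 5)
Your proposal is correct and follows essentially the same route as the paper's proof: pointwise limits exist by (sequential) completeness of $Y$ on the singletons, the Cauchy estimate is passed to the limit over a bounded set $D$ to get $\tau_b$-convergence, and sequential continuity of $F$ is obtained by approximating $F$ with some $F_{\alpha_0}$ uniformly over the bounded set $\{x_n\}_{n\in\mathbb{N}}\cup\{x\}$. The only difference is organisational (you establish the uniform estimate $\rho_{q,D}(F_\alpha-F)\le\varepsilon$ first and then deduce sequential continuity, while the paper does these two steps in the opposite order), which does not change the substance.
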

\begin{proof} 
\emph{(\ref{2015-12-04:00})} Let $\{F_\iota\}_{\iota\in \mathcal{I}}$ be a Cauchy net in $\mathcal{L}_{0,b}(X,Y)$. 
Then , by definition of $\tau_b$,  the net $\{F_\iota(x)\}_{\iota\in \mathcal{I}}$ is  Cauchy in $Y$, for every $x\in X$.
 Since $Y$ is complete, for every $x\in X$, the limit
$
F(x)\coloneqq \lim_{\iota}F_\iota(x)
$
exists in $Y$.
Clearly, $F$ is linear. Now we show that it is sequentially continuous. Let $q\in \mathcal{P}_Y$ and denote by $D$ the bounded set  $D\coloneqq \{x_n\}_{n\in \mathbb{N}}\subset X$, where $x_n\rightarrow 0$ in $X$.
Then 
\begin{equation*}
       q\left(Fx_n\right)=\lim_{\substack{\iota\\\iota\succeq \overline \iota  }}q\left(F_\iota x_n\right)
\leq 
\lim_{
\substack{\iota\\\iota\succeq \overline \iota  }
}q\left((F_\iota-F_{\overline \iota})x_n\right)+
 q\left(F_{\overline \iota}x_n\right)\leq
 \sup_{\iota\succeq \overline \iota}\rho_{q,D}\left(F_\iota-F_{\overline \iota}\right)+
 q\left(F_{\overline \iota} x_n\right),
 \qquad\ \forall   \overline \iota\in \mathcal{I},\ \ \forall   n\in \mathbb{N}.
\end{equation*}
Taking  the $\limsup_{n\rightarrow +\infty}$ in the inequality above  and taking into account that $\{F_\iota\}_{\iota\in\mathcal{I}}$ is a Cauchy net in $\mathcal{L}_{0,b}(X,Y)$  yield the sequential continuity of $F$.

We now show that $\lim_{\iota}F_\iota=F$ in $\mathcal{L}_{0,b}(X,Y)$. Let $D\in \mathbf{B}$
and let $q\in \mathcal{P}_Y$. 
We have
\begin{equation*}
       q\left((F-F_{{\overline \iota}})x\right)=
\lim_{
\substack{\iota\\\iota\succeq \overline \iota  }
}q\left((F_{ \iota}-F_{\overline \iota})x\right)
 \leq
 \sup_{\iota\succeq \overline \iota}\rho_{q,D}\left(F_{\iota}-F_{\overline \iota}\right)
\qquad \qquad\ \forall   \overline \iota\in \mathcal{I},\ \ \forall  
 x\in D,
\end{equation*}
and the conclusion follows as $\{F_\iota\}_{\iota\in\mathcal{I}}$ is a Cauchy net in $\mathcal{L}_{0,b}(X,Y)$.

\emph{(\ref{2015-10-08:03})} 
It follows by similar arguments as those above, taking now $Y$ sequentially complete and replacing $\mathcal{I}$
 by  $\mathbb{N}$. 
\end{proof}

\subsection{Families of  sequentially  equicontinuous functions}
\begin{Proposition}
  \label{prop:2015-05-03:00}
  For $n\in \mathbb{N}$ and $i=1,\ldots,n$, let 
  $\mathcal{F}^{(i)}=\{F^{(i)}_\iota:X\rightarrow X\}_{\iota\in \mathcal{I}_i}$ be families of sequentially equicontinuous linear operators. Then the following hold.
  \begin{enumerate}[(i)]
  \item\label{2015-12-04:01}
The family 
$
\mathcal{F}=\{F^{(1)}_{\iota_1}  F^{(2)}_{\iota_2} \ldots   F^{(n)}_{\iota_n}\colon X\rightarrow X\}_{\iota_1\in \mathcal{I}_1,\ldots, \iota_n\in \mathcal{I}_n}
$
is sequentially equicontinuous.
\item \label{2015-12-04:02}
The family
$
\mathcal{F}=\{F^{(1)}_{\iota_1}+F^{(2)}_{\iota_2}+\ldots +F^{(n)}_{\iota_n}\colon X\rightarrow Y\}_{\iota_1\in \mathcal{I}_1,\ldots, \iota_n\in \mathcal{I}_n}
$
is sequentially equicontinuous.
\item \label{2015-10-06:04}
The family $\mathcal{F}$ is equibounded, that is, if $D$ is a bounded subset of $X$, then
 $
    \big\{F^{(i)}_{\iota_i}x\big\}_{\substack{\iota_i\in \mathcal{I}_i,\\i=1,\ldots,n\\x\in D}}
$
  is bounded in $X$.
\end{enumerate}
\end{Proposition}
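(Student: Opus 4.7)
I would prove the three items in the order (iii), (ii), (i), since equiboundedness for a single family is the engine behind the composition argument. Throughout I will use the characterization \eqref{eq:2015-05-01:04} of sequential equicontinuity in terms of seminorms.

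\textbf{Item (iii).} It suffices to prove that each individual family $\mathcal{F}^{(i)}$ sends bounded sets to bounded sets, since a finite union of bounded sets is bounded. Fix $i$, a bounded $D\subset X$, and suppose by contradiction that $\{F^{(i)}_{\iota}x : \iota\in\mathcal{I}_i,\ x\in D\}$ is unbounded. Then there exist $q\in\mathcal{P}_X$, sequences $\{\iota_k\}\subset\mathcal{I}_i$ and $\{x_k\}\subset D$, with $a_k\coloneqq q\left(F^{(i)}_{\iota_k}x_k\right)\to +\infty$. Set $t_k\coloneqq a_k^{-1/2}$, so $t_k\to 0$. Since $D$ is bounded and $t_k\to 0$, the sequence $t_kx_k$ converges to $0$ in $X$. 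By sequential equicontinuity of $\mathcal{F}^{(i)}$,
\[
\sup_{\iota\in\mathcal{I}_i}q\!\left(F^{(i)}_{\iota}(t_kx_k)\right)\xrightarrow[k\to\infty]{}0,
\]
but the left-hand side is at least $t_k\,a_k=a_k^{1/2}\to +\infty$, a contradiction.

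\textbf{Item (ii).} Let $\{x_m\}\subset X$ with $x_m\to x$ and fix $q\in\mathcal{P}_Y$. By the triangle inequality,
\[
\sup_{\iota_1,\ldots,\iota_n} q\!\left(\sum_{i=1}^n F^{(i)}_{\iota_i}(x_m-x)\right)\le \sum_{i=1}^n \sup_{\iota_i\in\mathcal{I}_i} q\!\left(F^{(i)}_{\iota_i}(x_m-x)\right),
\]
and each summand tends to $0$ as $m\to\infty$ by sequential equicontinuity of $\mathcal{F}^{(i)}$, giving the conclusion via \eqref{eq:2015-05-01:04}.

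\textbf{Item (i).} I would proceed by induction on $n$, the case $n=1$ being the hypothesis. Assume the claim for $n-1$ factors, so that the family $\mathcal{G}\coloneqq\{F^{(2)}_{\iota_2}\cdots F^{(n)}_{\iota_n}\}$ is sequentially equicontinuous. It then suffices to handle the composition of two sequentially equicontinuous families $\mathcal{A}=\{A_\alpha\}_{\alpha\in\mathcal{A}}$ and $\mathcal{G}=\{G_\beta\}_{\beta\in\mathcal{B}}$. Fix $x_m\to x$ in $X$ and $q\in\mathcal{P}_X$; suppose by contradiction that \eqref{eq:2015-05-01:04} fails, so that for some $\varepsilon>0$ there exist indices $\alpha_k,\beta_k$ and $m_k\uparrow\infty$ with
\[
q\!\left(A_{\alpha_k}G_{\beta_k}(x_{m_k}-x)\right)\ge \varepsilon\qquad\forall\, k\in\mathbb{N}.
\]
Set $z_k\coloneqq G_{\beta_k}(x_{m_k}-x)$. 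By sequential equicontinuity of $\mathcal{G}$ applied to $x_{m_k}-x\to 0$, for every $q'\in\mathcal{P}_X$,
\[
q'(z_k)\le \sup_{\beta\in\mathcal{B}} q'\!\left(G_\beta(x_{m_k}-x)\right)\xrightarrow[k\to\infty]{}0,
\]
so $z_k\to 0$ in $X$. Applying sequential equicontinuity of $\mathcal{A}$ to the null sequence $\{z_k\}$ yields $\sup_{\alpha}q(A_\alpha z_k)\to 0$, contradicting the displayed lower bound.

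\textbf{Main obstacle.} The delicate point is item (i): the naive attempt of pushing $x_m-x$ first through $G_{\beta}$ produces an image that depends on $\beta$, so one cannot directly feed a single sequence into $\mathcal{A}$. The contradiction/diagonal argument above circumvents this by extracting a single sequence $z_k$ from the presumed counterexample, which reduces the problem to the already-assumed sequential equicontinuity of the outer family.
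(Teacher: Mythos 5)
Your proof is correct and follows essentially the same route as the paper's: item (ii) by the triangle inequality, item (iii) by a rescaling-to-zero contradiction (the paper scales by $1/k$ where you use $a_k^{-1/2}$, but the mechanism is identical), and item (i) by reducing to two factors and extracting a single null sequence $z_k=G_{\beta_k}(x_{m_k}-x)$ from a presumed counterexample, exactly as in the paper. (Minor remark: your stated plan suggests (iii) is needed for (i), but your actual argument for (i) never uses it, matching the paper.)
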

\begin{proof}
\emph{(\ref{2015-12-04:01})}  It suffices to prove the statement for $n=2$. By contradiction, assume that  there exist a sequence $\{x_k\}
_{k\in \mathbb{N}}$ converging to $0$ in $X$, sequences $\{\iota_1^{(k)}\}_{k\in \mathbb{N}}$ in $\mathcal{I}_1$
and
$\{\iota_2^{(k)}\}_{k\in \mathbb{N}}$ in $\mathcal{I}_2$, $p\in \mathcal{P}_X$,  and $\varepsilon>0$, such that
$$
p\left(
\left(F^{(1)}_{\iota^{(k)}_1} 
F^{(2)}_{\iota^{(k)}_2}\right)
x_k
\right) \geq  \varepsilon\qquad \ \forall   k\in \mathbb{N}.
$$
Since $\mathcal{F}^{(2)}$ is sequentially equicontinuous, 
we have
$$
  \limsup_{k\rightarrow +\infty}q\left(
F^{(2)}_{\iota^{(k)}_2}
x_k
\right)
 \leq
 \lim_{k\rightarrow +\infty}\sup_{\iota_2\in \mathcal{I}_2}
q\left(
F_{\iota_2}^{(2)}
x_k
\right) = 0, \qquad\forall  q\in \mathcal{P}_X.
$$
This means that the sequence $\big\{y_k\coloneqq F^{(2)}_{\iota_2^{(k)}}x_k\big\}_{k\in\mathbb{N}}$  converges to $0$ in $X$.
Then, in the same way,  since 
 $\mathcal{F}^{(1)}$ is sequentially equicontinuous,
\begin{equation*}
\limsup_{k\rightarrow +\infty}p\left(
\left(F^{(1)}_{\iota^{(k)}_1} 
F^{(2)}_{\iota^{(k)}_2}\right)
x_k
\right) 
=
\limsup_{k\rightarrow +\infty}p\left(
F^{(1)}_{\iota^{(k)}_1} 
y_k
\right)   \leq 
\limsup_{k\rightarrow +\infty}
\sup_{\iota_1\in \mathcal{I}_1}
p\left(
F_{\iota_1}^{(1)}
y_k
\right) = 0,
\end{equation*}
and the contradiction arises.

\emph{(\ref{2015-12-04:02})}  The proof follows by the triangular inequality.

\emph{(\ref{2015-10-06:04})} 
Assume,  by contradiction, that there exist a bounded set $D$ and
 $p\in\mathcal{P}_X$ such that
$$
\sup_{\substack{\iota_i\in \mathcal{I}_i\\i=1,\ldots,n,\\x\in D}}p\left(F^{(i)}_{\iota_i}x\right)=+\infty.
$$
Then there exist $\bar \imath\in \{1,\ldots,n\}$ and 
 sequences
$\{x_k\}_{k\in \mathbb{N}}\subset D$,
$\{\iota_k\}_{k\in \mathbb{N}}\subset \mathcal{I}_{\bar \imath}$,  
 such that
 \begin{equation}
   \label{eq:2015-07-30:04}
   p\left(F^{(\bar \imath)}_{\iota_k}x_k\right)\geq k,\qquad \ \forall   k\in \mathbb{N}.
 \end{equation}
 On the other hand, since $D$ is bounded, the sequence $\left\{\frac{x_k
}{k}\right\}_{k\in \mathbb{N}\setminus \{0\}}$ converges to $0$, and then, since the family $\big\{F^{(\bar \imath)}_\iota\big\}_{\iota\in \mathcal{I}_{\bar \imath}}$ is sequentially equicontinuous, we have
$$
\lim_{k\rightarrow +\infty}p\left(F^{(\bar \imath)}_{\iota_k}\frac{x_k}{k}\right)=0,
$$
which contradicts \eqref{eq:2015-07-30:04}, concluding the proof.
\end{proof}


The following proposition clarifies when the notion of sequential equicontinuity for a family of linear operators  is equivalent to the notion of equicontinuity.

\begin{Proposition}\label{2015-10-06:00}
Let $\mathcal{F}\coloneqq \{F_\iota:X\rightarrow X\}_{\iota\in \mathcal{I}}$
be a family of linear operators.
If $\mathcal{F}\subset L(X)$ is  equicontinuous, then
$\mathcal{F}\subset \mathcal{L}_0(X)$ and $\mathcal{F}$ 
is sequentially equicontinuous.

Conversely, 
if  $X$ is metrizable
 and $\mathcal{F}\subset \mathcal{L}_0(X)$ is sequentially equicontinuous, then $\mathcal{F}\subset L(X)$ and $\mathcal{F}$ is equicontinuous.
\end{Proposition}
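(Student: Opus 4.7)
The plan is to prove the two implications separately; the forward implication is essentially a direct translation of $\epsilon$-$\delta$ style equicontinuity into sequences, while the converse relies crucially on metrizability to extract a countable sequence witnessing the failure of equicontinuity.

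\textbf{Forward direction.} Equicontinuity of $\mathcal{F}$ implies continuity of each $F_\iota$, hence sequential continuity, so $\mathcal{F}\subset \mathcal{L}_0(X)$ by \eqref{sad}. To establish sequential equicontinuity, I would fix $x\in X$, a sequence $x_n\to x$ in $X$, and a neighborhood $U$ of $0$. Equicontinuity of $\mathcal{F}$ at $0$ supplies a neighborhood $V$ of $0$ such that $F_\iota(V)\subset U$ for every $\iota\in \mathcal{I}$. Since $x_n-x\to 0$, one has $x_n-x\in V$ for all $n$ beyond some $\overline n$, and then by linearity $F_\iota x_n-F_\iota x=F_\iota(x_n-x)\in U$ uniformly in $\iota$, which is the required condition.

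\textbf{Converse.} Assume $X$ is metrizable and $\mathcal{F}\subset \mathcal{L}_0(X)$ is sequentially equicontinuous. Suppose, for contradiction, that $\mathcal{F}$ is not equicontinuous. Metrizability yields a countable decreasing base $\{V_n\}_{n\in \mathbb{N}}$ of neighborhoods of $0$ in $X$. Failure of equicontinuity at $0$ produces a neighborhood $U$ of $0$ such that, for every $n\in\mathbb{N}$, we can select $\iota_n\in \mathcal{I}$ and $x_n\in V_n$ with $F_{\iota_n}(x_n)\notin U$. Because $\{V_n\}$ is a decreasing countable base at $0$, the sequence $\{x_n\}_{n\in\mathbb{N}}$ converges to $0$ in $X$. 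Applying sequential equicontinuity at $x=0$ to the neighborhood $U$ then furnishes $\overline n\in\mathbb{N}$ such that $F_\iota(x_n)\in U$ for every $\iota\in \mathcal{I}$ and every $n\geq \overline n$, in direct contradiction with the choice $F_{\iota_n}(x_n)\notin U$. Therefore $\mathcal{F}$ is equicontinuous, and the inclusion $\mathcal{F}\subset L(X)$ follows at once since an equicontinuous family consists of continuous operators.

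\textbf{Main obstacle.} The converse is the delicate direction, and the role of the metrizability hypothesis is precisely to enable the diagonal construction of a single countable sequence $x_n\to 0$ together with indices $\iota_n$ violating the ``uniform in $\iota$'' bound. Without metrizability, the failure of equicontinuity only produces a net rather than a sequence, and the sequential equicontinuity property is too weak to yield a contradiction; this is what prevents a general converse.
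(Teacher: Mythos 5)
Your proof is correct and follows essentially the same route as the paper: the forward direction is the standard translation of equicontinuity at the origin into the sequential statement (the paper dismisses it as obvious), and your converse is the same contradiction argument, with your countable decreasing neighborhood base $\{V_n\}_{n\in\mathbb{N}}$ playing exactly the role of the paper's countable family of seminorms $\{p_n\}_{n\in\mathbb{N}}$. No gaps.
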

\begin{proof}
  The first statement being obvious,  we will only show the second one.

 Assume that $\mathcal{F}$ is sequentially equicontinuous and that $X$ is metrizable.
Since $X$ is metrizable, we have $\mathcal{L}_0(X)=L(X)$.
Assume, by contradiction, that  $\mathcal{F}$ is not equicontinuous.
Since  the topology of $X$   is induced by a countable family of seminorms $\{p_n\}_{n\in \mathbb{N}}$
(see \cite[Th.\ 3.35, p.\ 77]{Osborne2014}), it then follows that
 there exist  a continuous seminorm $q$ on $X$ and sequences $\{x_n\}_{n\in \mathbb{N}}\subset X$, $\{\iota_n\}_{n\in \mathbb{N}}\subset \mathcal{I}$ such that
$$
\sup_{k=1,\ldots,n}p_k(x_n)<\frac{1}{n},\ \quad q(F_{\iota_n} x_n)>1, \ \ \ \ \ \ \ \ \forall   n\in \mathbb{N}.
$$
But then
$$
\lim_{n\rightarrow +\infty}x_n=0\qquad
\mathrm{and}
\qquad
\liminf_{n\rightarrow +\infty}\left(\sup_{\iota\in \mathcal{I}}q(F_{\iota} x_n)\right)\geq 
\liminf_{n\rightarrow +\infty}q(F_{\iota_n} x_n)\geq 1,
$$
which implies that $\mathcal{F}$ is not sequentially equicontinuous, getting a contradiction and concluding the proof.
\end{proof}

\subsection{$C_0$-sequentially  equicontinuous semigroups}\label{sub3}

We  now introduce the notion of  $C_0$-sequentially (locally) equicontinuous semigroups.

\begin{Definition}[$C_0$-sequentially (locally) equicontinuous semigroup]
  \label{def:2015-05-01:01}
A family  of linear operators (not necessarily continuous) $$T\coloneqq\{T_t\colon X\rightarrow X\}_{t\in \mathbb{R}^+}$$  is called a $C_0$-\emph{sequentially equicontinuous semigroup} on $X$  if the following properties hold.
  \begin{enumerate}[(i)]
  \item\label{2015-10-06:01} (Semigroup property) $T_0=I$ and $T_{t+s}=T_t  T_s$ for all $t,s\geq 0$.
  \item\label{2015-10-13:11}  ($C_0$- or strong continuity property)  $\lim_{t\rightarrow 0^+}T_tx=x$, \ for every $x\in X$.
  \item\label{2015-08-31:00} (Sequential equicontinuity) $T$ is a sequentially equicontinuous family.
  \end{enumerate}
 The family $T$ is said to be a  $C_0$-sequentially locally  equicontinuous semigroup if (\ref{2015-08-31:00})
 is replaced by
\begin{enumerate}[(i)$'$]
\setcounter{enumi}{2}
\item\label{2015-12-05:24}  (Sequential local equicontinuity) $\{T_t\}_{t\in [0,R]}$ is sequentially locally equicontinuous for every $R>0$.
\end{enumerate}
\end{Definition}


\begin{Remark}\label{rem..}
The
 notion of $C_0$-sequentially (locally)  equicontinuous semigroup that we introduced is clearly a generalization of the notion of $C_0$-(locally) equicontinuous semigroup considered, e.g.,  in  \cite[Ch.\ IX]{Yosida80}, \cite{Komura68}. By Proposition \ref{2015-10-06:00} the two notions coincide if  $X$ is metrizable.
 In order to motivate the introduction of
$C_0$-sequentially equicontinuous semigroups,
 we stress two facts.

\begin{enumerate}[(1)]
\item Even if a semigroup on a sequentially complete space is $C_0$-(locally) equicontinuous, proving this property might  be  harder than proving that it is only $C_0$-sequentially equicontinuous. For instance, in locally convex functional spaces with topologies defined by seminorms involving integrals, one can use integral convergence theorems for sequence of functions which do not hold for nets of functions.
 \item\label{2015-12-05:25}
The notion of $C_0$-sequentially equicontinuous semigroup is a genuine generalization of the notion of $C_0$-equicontinuous semigroup of \cite{Yosida80}, as shown by  Example \ref{ex:bb}. 
\end{enumerate}
\end{Remark}

As for $C_0$-semigroups in Banach spaces, given a $C_0$-sequentially locally equicontinuous semigroup $T$,  
we define
  $$
  \mathcal{D}(A)\coloneqq \left\{x\in X\colon\exists\ \lim_{h\rightarrow
      0^+}\frac{T_hx-x}{h}\in X\right\}.
  $$
  Clearly, $\mathcal{D}(A)$ is a linear subspace of $X$. Then, we define the linear operator $A\colon \mathcal{D}(A)\rightarrow X$
as
  $$
   Ax\coloneqq \lim_{h\rightarrow    0^+}\frac{ T_hx-x}{h},\qquad  x\in \mathcal{D}( A),
  $$
and call it the \emph{infinitesimal generator} of  $T$.



\begin{Proposition}
  \label{prop:2015-05-01:02a}
 Let  $T\coloneqq \{T_t\colon X\rightarrow X\}_{t\in \mathbb{R}^+}$ be a  $C_0$-{sequentially locally equicontinuous semigroup}. 
\begin{enumerate}[(i)]
\item\label{2015-10-06:05} For every $x\in X$, the function $Tx:\mathbb{R}^+\rightarrow X,\ t\mapsto T_tx$, is continuous. 
\item\label{2015-10-06:06} If $T$ is sequentially equicontinuous, then, 
for every $x\in X$, the function $T x:\mathbb{R}^+\rightarrow X,\ t\mapsto T_tx$, is bounded.
\end{enumerate} 
  \end{Proposition}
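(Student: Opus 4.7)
My plan is to handle the two items separately, with (ii) being an essentially immediate consequence of the equiboundedness result Proposition \ref{prop:2015-05-03:00}\emph{(\ref{2015-10-06:04})} already established, and (i) being a standard semigroup-style orbit continuity argument that in this setting exploits sequential local equicontinuity at the key moment.

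For \emph{(\ref{2015-10-06:05})} I would split continuity of $t\mapsto T_tx$ at a point $t_0\in \mathbb{R}^+$ into right and left continuity. The right continuity at $t_0$ reduces via the semigroup property to
\[
T_{t_0+h}x-T_{t_0}x = T_{t_0}(T_hx-x),
\]
and since $T_hx\to x$ as $h\to 0^+$ by \emph{(\ref{2015-10-13:11})} and $T_{t_0}\in \mathcal{L}_0(X)$ (because it belongs to the sequentially equicontinuous family $\{T_s\}_{s\in[0,t_0+1]}$, in particular is sequentially continuous), the right-hand side tends to $0$ in $X$. For left continuity at $t_0>0$, take a sequence $h_n\to 0^+$ with $0<h_n<t_0$ and write
\[
T_{t_0}x-T_{t_0-h_n}x = T_{t_0-h_n}(T_{h_n}x-x).
\]
Set $z_n\coloneqq T_{h_n}x-x$, so that $z_n\to 0$ in $X$. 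This is the step where the mere sequential continuity of each individual $T_s$ is not enough, because the index $s=t_0-h_n$ moves with $n$; instead I invoke sequential local equicontinuity of $\{T_s\}_{s\in[0,t_0]}$, which via the characterization \eqref{eq:2015-05-01:04} gives
\[
\lim_{n\to+\infty}\sup_{s\in[0,t_0]} q\bigl(T_s z_n\bigr)=0 \qquad \forall q\in \mathcal{P}_X,
\]
and in particular $q(T_{t_0-h_n}z_n)\to 0$ for every $q\in \mathcal{P}_X$, yielding $T_{t_0-h_n}x\to T_{t_0}x$. Since the sequence $h_n$ was arbitrary, left continuity at $t_0$ follows.

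For \emph{(\ref{2015-10-06:06})}, assuming now that the family $T=\{T_t\}_{t\in\mathbb{R}^+}$ is (globally) sequentially equicontinuous, I apply Proposition \ref{prop:2015-05-03:00}\emph{(\ref{2015-10-06:04})} with $n=1$, $\mathcal{F}^{(1)}=T$, and the singleton $D=\{x\}$, which is trivially bounded by Remark \ref{lemma1}. The equiboundedness conclusion gives that $\{T_tx\}_{t\in \mathbb{R}^+}$ is a bounded subset of $X$, which is exactly the assertion that the orbit map $t\mapsto T_tx$ is bounded.

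The only non-routine aspect is recognizing in part (i) that the left-continuity argument really needs the \emph{family} $\{T_s\}_{s\in[0,t_0]}$ to act uniformly on a sequence converging to $0$, rather than a single operator acting on a convergent sequence; this is precisely what sequential local equicontinuity is designed for and is the reason why local equicontinuity (and not only strong continuity plus sequential continuity of each $T_s$) must be assumed in Definition \ref{def:2015-05-01:01}. Part (ii) is a direct citation of the equiboundedness lemma.
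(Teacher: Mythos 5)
Your proof is correct and follows essentially the same route as the paper: the same right/left decomposition via the semigroup identities $T_{t_0+h}x-T_{t_0}x=T_{t_0}(T_hx-x)$ and $T_{t_0}x-T_{t_0-h}x=T_{t_0-h}(T_hx-x)$, concluded by sequential local equicontinuity through \eqref{eq:2015-05-01:04}, and part \emph{(ii)} cited directly from Proposition \ref{prop:2015-05-03:00}\emph{(\ref{2015-10-06:04})}. Your explicit remark that the left-continuity step is where the moving index forces the use of equicontinuity of the whole family (rather than sequential continuity of a single operator) is exactly the point the paper's terser proof leaves implicit.
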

 \begin{proof}
 \emph{(\ref{2015-10-06:05})}
Let  $\{t_n\}_{n\in\N}\subset\R^+$ be a sequence converging from the 	right (resp., from the left) to $t\in \R$.
 By Definition \ref{def:2015-05-01:01}\emph{(\ref{2015-10-06:01})}, we have, for every $p\in\mathcal{P}_X$ and $x\in X$,
$$
p(T_{t_n}x-T_{t}x)\  =  \
p(T_t(T_{t_n-t}x-x))   \ \ \ \ (\mbox{resp.,} \ 
  p(T_{t_n}x-T_{t}x)\  = 
p(T_{t_n}(T_{t-t_n}x-x))). 
$$
 By Definition \ref{def:2015-05-01:01}\emph{(\ref{2015-10-13:11})}, $\{T_{t_n-t}x-x\}_{n\in \mathbb{N}}$ (\emph{resp.}\  $\{T_{t-t_n}x-x\}_{n\in \mathbb{N}}$) converges to $0$.
Now conclude by using local sequential equicontinuity and \eqref{eq:2015-05-01:04}.

\emph{(\ref{2015-10-06:06})} This is provided by Proposition \ref{prop:2015-05-03:00}\emph{(\ref{2015-10-06:04})}.
\end{proof}

As well known, unlike the Banach space case, in locally convex spaces  the passage from $C_0$-locally \emph{equicontinuous} semigroups  
to  $C_0$-\emph{equicontinuous} semigroups through a renormalization with an exponential function is not obtainable in general (see Examples \ref{2015-10-13:05} and \ref{example2} in Subsection \ref{sec:examples}). Nevertheless, we have the following  partial result.

\begin{Proposition}
\label{prop:2015-08-01:00}
Let $\tau$ denote the locally convex topology on $X$ and let $|\cdot|_X$ be a norm on $X$. 
Assume 
that a set is $\tau$-bounded 
if and only if it is
$|\cdot|_X$-bounded.
  Let $T$ be a $C_0$-sequentially locally  equicontinuous semigroup on $(X,\tau)$.
%
%
\begin{enumerate}[(i)]
\item\label{2015-10-13:14}  If there exist $\alpha\in \R$ and $M\geq 1$ such that
  \begin{equation}
    \label{eq:2015-08-06:00}
    |T_t|_{L((X,|\cdot|_X))}\leq Me^{\alpha t},\ \ \  \ \forall    t\in \R^+,
  \end{equation}
then, for every $\lambda>\alpha$, the family $\{e^{-\lambda t}T_t:(X,\tau)\rightarrow (X,\tau)\}_{t\in \mathbb{R}^+}$ is a $C_0$-sequentially equicontinuous semigroup.
\item\label{2015-10-13:00}
If $(X,|\cdot|_X)$ is Banach, then  there exist $\alpha\in \R$ and $M\geq 1$ such that 
\eqref{eq:2015-08-06:00} holds.
\end{enumerate}
\end{Proposition}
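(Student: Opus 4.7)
The plan for \emph{(i)} hinges on a single structural observation: the coincidence of $\tau$-bounded sets with $|\cdot|_X$-bounded sets forces every $q\in\mathcal{P}_X$ to satisfy $q(\cdot)\le C_q|\cdot|_X$ on $X$. Indeed, the $|\cdot|_X$-unit ball is $|\cdot|_X$-bounded, hence $\tau$-bounded, hence $q$-bounded, so $C_q\coloneqq\sup_{|x|_X\le 1}q(x)<+\infty$. With this in hand, the semigroup and strong continuity properties for $S_t\coloneqq e^{-\lambda t}T_t$ are immediate from the corresponding properties of $T$, so only the sequential equicontinuity of $\{S_t\}_{t\ge 0}$ on $\mathbb{R}^+$ requires work.

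To show $x_n\to 0$ in $(X,\tau)$ implies $\sup_{t\ge 0}q(S_t x_n)\to 0$, I would split the parameter into $[0,R]$ and $[R,+\infty)$. Convergent sequences are $\tau$-bounded (Remark \ref{lemma1}), hence $|\cdot|_X$-bounded, say $|x_n|_X\le K$. On the tail, the estimate $q(S_t x_n)\le C_q e^{-\lambda t}|T_t x_n|_X\le C_q M K e^{(\alpha-\lambda)t}\le C_q M K e^{(\alpha-\lambda)R}$ holds for $t\ge R$ because $\lambda>\alpha$, and choosing $R$ large makes this arbitrarily small uniformly in $n$. On $[0,R]$, sequential local equicontinuity of $T$ gives $\sup_{t\in[0,R]}q(T_t x_n)\to 0$, hence $\sup_{t\in[0,R]}q(S_t x_n)\le e^{|\lambda|R}\sup_{t\in[0,R]}q(T_t x_n)\to 0$. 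Combining the two estimates completes the proof of \emph{(i)}.

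For \emph{(ii)}, I would invoke the Banach-Steinhaus theorem on $(X,|\cdot|_X)$. First I would verify that each $T_t$ actually lies in $L((X,|\cdot|_X))$: since $T_t\in\mathcal{L}_0(X)$, Proposition \ref{2015-07-30:02}\emph{(i)} gives that $T_t$ sends $\tau$-bounded sets into $\tau$-bounded sets; the bounded-set coincidence transfers this to $|\cdot|_X$-bounded sets; and for a Banach space, bounded linear operators are continuous. For each $x\in X$, Proposition \ref{prop:2015-05-01:02a}\emph{(i)} shows that $\{T_t x:t\in[0,1]\}$ is the $\tau$-continuous image of a compact set, hence $\tau$-compact, $\tau$-bounded, and $|\cdot|_X$-bounded. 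Banach-Steinhaus then yields $M\coloneqq\sup_{t\in[0,1]}|T_t|_{L((X,|\cdot|_X))}<+\infty$, and $|T_0|=1$ forces $M\ge 1$.

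Extending to all $t\ge 0$ is routine: writing $t=n+s$ with $n\in\mathbb{N}$ and $s\in[0,1)$, the semigroup property gives $|T_t|_{L((X,|\cdot|_X))}\le |T_1|^n\, |T_s|_{L((X,|\cdot|_X))}\le M^{n+1}\le M\cdot M^t=M e^{\alpha t}$ with $\alpha\coloneqq\log M\ge 0$, which is exactly \eqref{eq:2015-08-06:00}. The main obstacle I foresee is in \emph{(i)}: recognizing that the hypothesis on bounded sets is precisely what supplies the bridge between the locally convex topology and the norm-exponential estimate. Without the pointwise domination $q\le C_q|\cdot|_X$, no truncation argument in $t$ could reconcile the $\tau$-behavior near $t=0$ with the $|\cdot|_X$-decay at infinity.
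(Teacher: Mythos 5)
Your proposal is correct and follows essentially the same route as the paper: for \emph{(i)} the pointwise domination $q\le C_q|\cdot|_X$ (the paper's constant $L_p$) combined with the split of $\mathbb{R}^+$ into $[0,R]$ and $[R,+\infty)$, and for \emph{(ii)} the inclusion $\mathcal{L}_0((X,\tau))\subset L((X,|\cdot|_X))$, compactness of the orbit on $[0,1]$ via Proposition \ref{prop:2015-05-01:02a}, Banach--Steinhaus, and the standard semigroup extension. No gaps.
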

\begin{proof}
\emph{(\ref{2015-10-13:14})} 
  Let $\lambda>\alpha$ and let $\{x_n\}_{n\in \mathbb{N}}$ be a sequence converging to $0$ in $(X,\tau)$. Then $\{x_n\}_{n\in\mathbb{N}}$
is bounded in $(X,\tau)$,  thus, by assumption, also in $(X,|\cdot|_X)$.
Set  $N\coloneqq \sup_{n\in \mathbb{N}}| x_n|_X $ and let $p\in \mathcal{P}_{(X,\tau)}$. 
  Then 
  \begin{equation*}
    \begin{split}
\sup_{t\in\mathbb{R}^+}p(e^{-\lambda t}T_tx_n)&\leq
\sup_{0\leq t\leq s}p(e^{-\lambda t}T_tx_n)+
\sup_{t>s}p(e^{-\lambda t}T_tx_n)\\
&\leq
\sup_{0\leq t\leq s}p(e^{-\lambda t}T_tx_n)+
L_pe^{(\alpha-\lambda)s}MN,
\end{split}
\end{equation*}
where $L_p\coloneqq \sup_{x\in X\setminus \{0\}}p(x)/|x|_X$ is finite, because $|\cdot|_X$-bounded sets are $\tau$-bounded.
Now we can conclude by applying to the right hand side of the inequality above first the $\limsup_{n\rightarrow +\infty}$ and considering that $T$ is a $C_0$-sequentially locally  equicontinuous semigroup on $(X,\tau)$, then  the $\lim_{s\rightarrow +\infty}$ and taking into account that $\lambda>\alpha$.

\emph{(\ref{2015-10-13:00})} By assumption, the bounded sets of $(X,|\cdot|_X)$ coincide with the bounded sets of $(X,\tau)$. By Proposition \ref{2015-07-30:02}\emph{(\ref{2015-08-31:01})}, we then have $\mathcal{L}_0((X,\tau))\subset L((X,|\cdot|_X))$. In particular $T_t\in L((X,|\cdot|_X))$, for all $t\in \mathbb{R}^+$.
Now,  by Proposition 
\ref{prop:2015-05-01:02a}\emph{(\ref{2015-10-06:05})}, the set $\{T_tx\}_{t\in [0,t_0]}$ is compact in $(X,\tau)$ for every $x\in X$ and $t_0>0$, hence bounded. We can then apply the Banach-Steinhaus Theorem in $(X,|\cdot|_X)$ and conclude that there exists $M\geq 0$ such that $|T_t|_{L((X,|\cdot|_X))}\leq M$ for all $t\in[0,t_0]$.
The conclusion now follows in a standard way from the semigroup property. 
\end{proof}

From here on in this subsection and in Subsections \ref{gen}-\ref{sub4}, unless differently specified, we will deal  with $C_0$-sequentially equicontinuous semigroups and, to simplify the exposition, we will adopt a standing notation for them and their generator, that is 
\begin{itemize}
\item
$T=\{T_t\}_{t\in {\R^+}}$ denotes a $C_0$-sequentially equicontinuous semigroup;
\item  $A$ denotes the infinitesimal generator of $T$.
\end{itemize}

Also, unless differently specified, from here on in this subsection and in Subsections \ref{gen}-\ref{sub4}, we will assume the following
\begin{Assumption}\label{ass:int}
For every $x\in X$
and $\lambda>0$, there exists the generalized Riemann integral in $X$\ (\footnote{That is, for every $a\geq 0$, the Riemann integral
$
\int_0^ae^{-\lambda t}T_txdt
$
exists in $X$, and the limit
$
\int_0^{+\infty}e^{-\lambda t}T_txdt\coloneqq \lim_{a\rightarrow+\infty}
\int_0^ae^{-\lambda t}T_txdt
$
exists in $X$.})
   \begin{equation}
       \label{eq:2015-05-01:16}
 R(\lambda)x\coloneqq \int_0^{+\infty} e^{-\lambda t}T_tx dt. 
  \end{equation}
\end{Assumption}

\begin{Remark}\label{2015-10-29:13}
By Proposition \ref{prop:2015-05-01:02a}, the generalized Riemann integral \eqref{eq:2015-05-01:16} always exists in the sequential completion of $X$. In particular, Assumption \ref{ass:int} is satisfied  if $X$ is sequentially complete.
\end{Remark}

For every $p\in \mathcal{P}_X$, and every $\lambda,\hat\lambda\in (0,+\infty)$,  we have the following inequalities, whose proof is straightforward, by triangular inequality and  definition of Riemann integral, and by recalling 
Proposition \ref{prop:2015-05-01:02a}:
\begin{align}
  p(R(\lambda)x-y) &\leq \int_0^{+\infty} e^{-\lambda t}p(T_tx-\lambda y )dt, \qquad \ \forall  x,y\in X\label{2015-10-07:02}\\
  p(R(\lambda)x-R(\hat \lambda)x) &\leq \int_0^{+\infty} |e^{-\lambda
    t}-e^{-\hat \lambda t}| p(T_tx)dt,\qquad \ \forall  x\in X.\label{2015-10-07:01}
\end{align}


\begin{Proposition}
  \label{prop:2015-05-01:07}
 If $L\in \mathcal{L}_0(X,Y)$, 
    then $\mathbb{R}^+\rightarrow Y, x \mapsto  LT_tx$ is continuous and  bounded. Moreover, for every
    $x\in X$, every $a\geq 0$, and every $\lambda>0$,
    \begin{equation}
      \label{eq:2015-05-01:08}
      L\int_0^a e^{-\lambda t}T_tx  d t =  \int_0^a e^{-\lambda t}LT_tx d t\qquad \text{and}\qquad
      L \int_0^{+\infty} e^{-\lambda t}T_tx d t = \int_0^{+\infty} e^{-\lambda t}LT_t d t,
    \end{equation}
where the Riemann integrals on the right-hand side of the equalities exist in $Y$.
\end{Proposition}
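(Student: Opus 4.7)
The plan is to handle the three assertions in sequence, leveraging the sequential continuity of $L$ and reducing every net-based statement about Riemann integrals to statements about sequences of Riemann sums or of endpoints.

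First I would dispatch the continuity and boundedness of the map $t\mapsto LT_tx$. By Proposition \ref{prop:2015-05-01:02a}\emph{(\ref{2015-10-06:05})}, the map $\mathbb{R}^+\to X$, $t\mapsto T_tx$, is continuous, and since $\mathbb{R}^+$ is metric, Remark \ref{rem:2015-05-01:06} guarantees that composing with the sequentially continuous $L$ yields a continuous map $\mathbb{R}^+\to Y$. For boundedness, since $T$ is sequentially equicontinuous, Proposition \ref{prop:2015-05-01:02a}\emph{(\ref{2015-10-06:06})} shows that $\{T_tx\}_{t\in\mathbb{R}^+}$ is bounded in $X$, and Proposition \ref{2015-07-30:02}\emph{(\ref{2015-08-31:01})} tells us $L$ is a bounded operator, hence $\{LT_tx\}_{t\in \mathbb{R}^+}$ is bounded in $Y$.

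Next I would prove the first identity in \eqref{eq:2015-05-01:08}. For any tagged partition $(P,\xi)$ of $[0,a]$, linearity of $L$ gives $L\,S(e^{-\lambda\cdot}T_\cdot x,P,\xi)=S(e^{-\lambda\cdot}LT_\cdot x,P,\xi)$. Call $I_a\coloneqq \int_0^ae^{-\lambda t}T_txdt\in X$, which exists by Assumption \ref{ass:int}. Take any sequence $(P_n,\xi_n)$ of tagged partitions of $[0,a]$ with mesh $|P_n|\to 0$: then $S(e^{-\lambda\cdot}T_\cdot x,P_n,\xi_n)\to I_a$ in $X$ by definition of the Riemann integral, so by sequential continuity of $L$,
\[
S(e^{-\lambda\cdot}LT_\cdot x,P_n,\xi_n) = L\,S(e^{-\lambda\cdot}T_\cdot x,P_n,\xi_n) \longrightarrow LI_a \quad\text{in } Y.
\]
I would then observe that if the Riemann integral $\int_0^ae^{-\lambda t}LT_txdt$ did \emph{not} exist in $Y$ equal to $LI_a$, one could extract (for some $q\in\mathcal{P}_Y$ and $\epsilon>0$) a sequence of tagged partitions $(P_n,\xi_n)$ with $|P_n|\to 0$ along which $q(S(e^{-\lambda\cdot}LT_\cdot x,P_n,\xi_n)-LI_a)\geq \epsilon$, contradicting the display. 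This sequential characterization of the Riemann integral is the key technical step: the Riemann integral is really a limit over a net of partitions, but since the existence condition is stated via a $\delta$-mesh criterion, its failure can always be witnessed by a sequence, which is exactly what sequential continuity can exploit.

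Finally, the improper integral identity follows by the same sequential reduction: the existence of $R(\lambda)x=\int_0^{+\infty}e^{-\lambda t}T_txdt$ in $X$ means $I_{a}\to R(\lambda)x$ as $a\to+\infty$, which is equivalent to $I_{a_n}\to R(\lambda)x$ for every sequence $a_n\to+\infty$. Sequential continuity of $L$ gives $LI_{a_n}\to LR(\lambda)x$ in $Y$, and by the first identity $LI_{a_n}=\int_0^{a_n}e^{-\lambda t}LT_txdt$. Arguing as before, the failure of convergence $\int_0^a e^{-\lambda t}LT_txdt\to LR(\lambda)x$ as $a\to+\infty$ could be witnessed by a sequence $a_n\to+\infty$, contradicting what we just established. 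The main obstacle throughout is precisely this conversion of net-based convergence to sequential convergence; once it is carried out cleanly in each of the two integrals, linearity of $L$ on Riemann sums closes the argument.
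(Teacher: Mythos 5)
Your proposal is correct and follows essentially the same route as the paper: establish continuity and boundedness of $t\mapsto LT_tx$ exactly as you do, then push $L$ through a sequence of Riemann sums with mesh tending to zero and let $a\to+\infty$ for the improper integral. The only (harmless) variation is in how you justify that the full net of Riemann sums of $e^{-\lambda t}LT_tx$ converges: you extract a witnessing sequence of partitions from the failure of the $\delta$-mesh criterion, whereas the paper invokes the continuity of the $Y$-valued integrand; both close the same gap.
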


\begin{proof}  
Continuity of the map $\mathbb{R}^+\rightarrow X,\ t \mapsto LT_tx$, follows from Remark \ref{rem:2015-05-01:06}, from sequential continuity of $L$ and from 
Proposition \ref{prop:2015-05-01:02a}\emph{(\ref{2015-10-06:05})}.
By Proposition \ref{prop:2015-05-01:02a}\emph{(\ref{2015-10-06:06})},
we have that $\{T_tx\}_{t\in \mathbb{R}^+}$ is bounded, for all $x\in X$.
From Proposition \ref{2015-07-30:02}\emph{(\ref{2015-08-31:01})}, it then follows that 
$\{LT_tx\}_{t\in \mathbb{R}^+}$ is bounded.

Let $\{\pi^k\}_{k\in \mathbb{N}}$ be a sequence of partitions of   $[0,a]\subset \mathbb{R}^+$ of the form $\pi^k\coloneqq \{0=t^k_0<t^k_1<\ldots<t^k_{n_k}=a\}$, with $|\pi^k|\rightarrow 0$ as $k\rightarrow +\infty$, where $|\pi^k|\coloneqq \sup\{|t_{i+1}-t_i|\colon i=0,\ldots,n_{k}-1\}$.
Then, by recalling Assumption \ref{ass:int} and by continuity of $\mathbb{R}^+\rightarrow X,\ t \mapsto  T_tx$, we have in $Y$
$$
\int_0^a e^{-\lambda t}T_tx  d t =  \lim_{k\rightarrow +\infty} \sum_{i=0}^{n_k-1} e^{-\lambda t_i^k}T_{t_i^k} x (t_{i+1}^k-t_i^k). 
$$
By sequential continuity of $L$ we then have
\begin{equation}
  \label{eq:2015-10-06:09}
  L\int_0^a e^{-\lambda t}T_tx  d t =  \lim_{k\rightarrow +\infty} \sum_{i=0}^{n_k-1} e^{-\lambda t_i^k} L T_{t_i^k} x (t_{i+1}^k-t_i^k).
\end{equation}
Since
$\mathbb{R}^+\rightarrow X,\ t \mapsto  LT_tx$
is continuous, equality
 \eqref{eq:2015-10-06:09}
entails that $\mathbb{R}^+\rightarrow X,\ t \mapsto  e^{-\lambda t}LT_tx$ is Riemann integrable and that the first equality of \eqref{eq:2015-05-01:08} holds true.

The second equality of \eqref{eq:2015-05-01:08} follows from the  first one and from sequential continuity of $L$, by letting $a\rightarrow +\infty$ .
\end{proof}

\begin{Proposition}
  \label{prop:2015-05-01:02}
  \begin{enumerate}[(i)]
\item\label{2015-10-08:00} For every $\lambda>0$, the operator $R(\lambda):X\rightarrow X$ is linear and sequentially continuous.
\item\label{2015-10-13:02}  For every $x\in X$, the function $(0,+\infty)\rightarrow X,\ \lambda\mapsto R(\lambda)x$, is continuous.
\end{enumerate}
\end{Proposition}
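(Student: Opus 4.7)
The plan is to exploit the two integral estimates \eqref{2015-10-07:02} and \eqref{2015-10-07:01} listed just above the statement, since they already encode the triangular inequality for the generalized Riemann integral defining $R(\lambda)$ and reduce the matter to sup-type estimates on $\{T_tz\}_{t\in \R^+}$.

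For \emph{(\ref{2015-10-08:00})}, linearity of $R(\lambda)$ is immediate from linearity of each $T_t$ and of the generalized Riemann integral, so the only task is sequential continuity. Given $x_n\to x$ in $X$ and $p\in\mathcal{P}_X$, I would apply \eqref{2015-10-07:02} with $x$ replaced by $x_n-x$ and $y=0$, obtaining
\[
p(R(\lambda)x_n - R(\lambda)x) \;=\; p(R(\lambda)(x_n-x)) \;\leq\; \int_0^{+\infty} e^{-\lambda t}\, p(T_t(x_n-x))\,dt \;\leq\; \frac{1}{\lambda}\sup_{t\geq 0} p(T_t(x_n-x)).
\]
The sequential equicontinuity of $T$ (Definition \ref{def:2015-05-01:01}\emph{(\ref{2015-08-31:00})}), rewritten through the seminorm characterization \eqref{eq:2015-05-01:04}, says exactly that the rightmost quantity tends to $0$ as $n\to+\infty$, concluding the argument.

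For \emph{(\ref{2015-10-13:02})}, fix $x\in X$ and $\lambda>0$. Applying \eqref{2015-10-07:01} yields
\[
p(R(\lambda)x - R(\hat\lambda)x) \;\leq\; \int_0^{+\infty} |e^{-\lambda t} - e^{-\hat\lambda t}|\, p(T_tx)\,dt \;\leq\; M_p\int_0^{+\infty} |e^{-\lambda t} - e^{-\hat\lambda t}|\,dt,
\]
where $M_p\coloneqq \sup_{t\geq 0} p(T_tx)$ is finite by Proposition \ref{prop:2015-05-01:02a}\emph{(\ref{2015-10-06:06})}. The residual scalar integral equals $|1/\lambda - 1/\hat\lambda|$, which vanishes as $\hat\lambda\to\lambda$.

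The main obstacle, such as it is, is purely conceptual: it is essential that one works under full sequential \emph{equi}continuity of $T$ (not merely the local version), because the estimate in \emph{(\ref{2015-10-08:00})} requires control of $\sup_{t\geq 0} p(T_t(x_n-x))$ over all of $\R^+$ and \emph{(\ref{2015-10-13:02})} needs $M_p<+\infty$; both are exactly what Definition \ref{def:2015-05-01:01}\emph{(\ref{2015-08-31:00})} and Proposition \ref{prop:2015-05-01:02a}\emph{(\ref{2015-10-06:06})} deliver. The routine verifications — that the improper Riemann integrals on the right-hand sides are well-defined, via continuity of $t\mapsto p(T_tz)$ (from Proposition \ref{prop:2015-05-01:02a}\emph{(\ref{2015-10-06:05})} together with Remark \ref{rem:2015-05-01:06}) combined with the exponential decay in $t$ — pose no difficulty.
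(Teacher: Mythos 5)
Your proof is correct and follows essentially the same route as the paper's: part (i) reduces to the bound $p(R(\lambda)(x_n-x))\leq \lambda^{-1}\sup_{t\in\R^+}p(T_t(x_n-x))$ and invokes sequential equicontinuity via \eqref{eq:2015-05-01:04}, and part (ii) uses \eqref{2015-10-07:01} together with the boundedness of orbits from Proposition \ref{prop:2015-05-01:02a}\emph{(\ref{2015-10-06:06})}. The only (harmless) cosmetic difference is that you compute the residual scalar integral explicitly as $|1/\lambda-1/\hat\lambda|$ where the paper merely observes it vanishes.
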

\begin{proof} 
\emph{(\ref{2015-10-08:00})} 
 The linearity of $R(\lambda)$ is clear.
It remains to show its sequential continuity. Let $\{x_n\}_{n\in \mathbb{N}}\subset X$ be a sequence convergent to $0$. 
Then, for all $p\in \mathcal{P}_X$,
$$
\lim_{n \rightarrow +\infty}p(R(\lambda)x_n) \leq  \lim_{n\rightarrow +\infty}\int_0^{+\infty} e^{-\lambda t}p(T_tx_n) dt = \lambda^{-1}\lim_{n\rightarrow +\infty}\sup_{t\in\mathbb{R}^+}p(T_t x_n) = 0
$$
where the last limit is obtained by sequential equicontinuity and by recalling \eqref{eq:2015-05-01:04}.

\emph{(\ref{2015-10-13:02})}
 For $p\in \mathcal{P}_X$, $x\in X$, $\lambda,
\hat \lambda\in (0,+\infty)$,
 by
\eqref{2015-10-07:01},
$$
p\left(R(\lambda)x-R(\hat\lambda)x\right) \leq  \int_0^{+\infty} |e^{-\lambda t}-e^{-\hat\lambda t}|p(T_tx)dt \leq 
\sup_{r\in\mathbb{R}^+}p(T_rx)
\int_0^ {+\infty} |e^{-\lambda t}-e^{-\hat\lambda t}|dt.
$$
The last integral converges to $0$ as $\lambda\rightarrow \hat \lambda$, and we conclude as $\sup_{r\in\R^+} p(T_rx)<+\infty$ by Proposition   \ref{prop:2015-05-01:02a}\emph{(\ref{2015-10-06:06})}.
\end{proof}

The following proposition 
will be used 
in Subsection
\ref{sec:wc}
 to fit the theory of weakly continuous semigroups of \cite{Cerrai94, Cerrai01book}. 
\begin{Proposition}
  \label{prop:2015-07-30:00}
Let $C\subset X$ be sequentially closed, convex, and containing the origin, let $\hat t>0$, and  let $x\in X$. If $T_tx\in C$ for all $t\in[0,\hat t]$, then
\begin{equation}
  \int_0^{\hat t}e^{-\lambda  t}T_tx dt \in \frac{1}{\lambda}C,\ \ \  \forall\lambda>0\label{2015-10-13:18}.
\end{equation}  
 If $T_tx\in C$ for all $t\in\R^+$ then, 
 \begin{equation}
R(\lambda)x\in \frac{1}{\lambda}C,\ \ \ 
\ \forall  \lambda>0.  \label{eq:2015-10-13:17}
\end{equation}
\end{Proposition}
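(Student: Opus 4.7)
I would prove the first assertion by approximating the Riemann integral with right-endpoint Riemann sums, recognising each such sum as a convex combination of elements of $C$, and then passing to the limit using the sequential closedness of $C$. The second assertion then follows by another sequential-closedness argument applied to the improper integral.

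In detail, fix $\lambda>0$ and a sequence $\{\pi^k\}_{k\in\mathbb{N}}$ of partitions $\pi^k=\{0=t_0^k<t_1^k<\ldots<t_{n_k}^k=\hat t\}$ of $[0,\hat t]$ with mesh $|\pi^k|\to 0$. Form the right-endpoint Riemann sums
\[
S_k\coloneqq \lambda\sum_{i=0}^{n_k-1}e^{-\lambda t_{i+1}^k}\,T_{t_{i+1}^k}x\,(t_{i+1}^k-t_i^k).
\]
The weights $w_i^k\coloneqq \lambda e^{-\lambda t_{i+1}^k}(t_{i+1}^k-t_i^k)$ are nonnegative, and since $t\mapsto e^{-\lambda t}$ is decreasing,
\[
\sum_{i=0}^{n_k-1}w_i^k\leq\int_0^{\hat t}\lambda e^{-\lambda t}dt=1-e^{-\lambda\hat t}\leq 1.
\]
Setting $s_k\coloneqq \sum_i w_i^k$, the element $S_k=\sum_i w_i^k T_{t_{i+1}^k}x+(1-s_k)\cdot 0$ is a genuine convex combination of the vectors $T_{t_{i+1}^k}x\in C$ and $0\in C$; hence $S_k\in C$ for every $k$.

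By continuity of $t\mapsto T_tx$ (Proposition \ref{prop:2015-05-01:02a}\emph{(\ref{2015-10-06:05})}) the map $t\mapsto e^{-\lambda t}T_tx$ is continuous on $[0,\hat t]$, so its right-endpoint Riemann sums converge to the Riemann integral in $X$; that is,
\[
S_k\ \longrightarrow\ \lambda\int_0^{\hat t}e^{-\lambda t}T_tx\,dt\qquad\text{in }X.
\]
Since $C$ is sequentially closed, the limit lies in $C$, which proves \eqref{2015-10-13:18}. For \eqref{eq:2015-10-13:17}, under the hypothesis $T_tx\in C$ for all $t\in\mathbb{R}^+$, formula \eqref{2015-10-13:18} gives $\lambda\int_0^{a_n}e^{-\lambda t}T_tx\,dt\in C$ for any sequence $a_n\to+\infty$; by Assumption \ref{ass:int} these integrals converge in $X$ to $\lambda R(\lambda)x$, and another appeal to sequential closedness of $C$ finishes the proof.

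The main (mild) obstacle is choosing the right kind of Riemann sum: with left-endpoints the weights $\lambda e^{-\lambda t_i^k}(t_{i+1}^k-t_i^k)$ can exceed $1$ in total, so the convex combination argument breaks; the monotonicity of $e^{-\lambda t}$ forces the use of right endpoints (or any sample point no smaller than the left endpoint), after which the estimate $s_k\leq 1-e^{-\lambda\hat t}$ makes the convexity-plus-origin trick go through cleanly.
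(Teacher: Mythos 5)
Your proof is correct and follows essentially the same route as the paper: right-endpoint Riemann sums, the monotonicity of $e^{-\lambda t}$ to bound the total weight by $\lambda^{-1}$ (in your rescaled version, by $1$), the convexity-plus-origin trick to place each sum in $\frac{1}{\lambda}C$, and sequential closedness to pass to the limit, first on $[0,\hat t]$ and then as $a\to+\infty$ for the improper integral. The only cosmetic difference is that the paper normalizes each sum by its total weight $\alpha_k$ and uses $\alpha_k C\subset\frac{1}{\lambda}C$ rather than padding the convex combination with the origin explicitly.
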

\begin{proof}
We prove the first claim, as the second one is a straightforward consequence of it because of the sequential completeness of $C$.
Let $\hat t>0$.
The Riemann integral in \eqref{2015-10-13:18} is the limit of a sequence of Riemann sums $\{\sigma(\pi^k)\}_{k\in \N}$ of the form
$$
\sigma(\pi^k) =  \sum_{i=1}^{m_k}e^{-\lambda t_i^k}(t^k_{i}-t^k_{i-1}) T_{t_i^k}x,
$$
with $\pi^k\coloneqq\{0=t_0^k<t_1^k<\ldots <t^k_{m_k}=\hat t\}$
 and  $|\pi^k|\rightarrow 0$ as $k\rightarrow +\infty$,
where $|\pi^k|\coloneqq \sup\{|t_{i}-t_{i-1}|\colon i=1,\ldots,m_k\}$.
Then, by sequential closedness of $C$, we are reduced to show that $\sigma(\pi^k)\in \frac{1}{\lambda}C$ for every $k\in\N$. Denote
$$
\alpha_k \coloneqq \sum_{i=1}^{m_k}e^{-\lambda t_i^{k}}(t^k_{i}-t_{i-1}^k), \ \ \ \ \ \forall  k\in \N.
$$
Then 
$$
0<\alpha_k <\int_0^{+\infty}e^{-\lambda t}dt=\lambda^{-1}, \ \ \ \ \forall  k\in\N.
$$
As $\sigma(\pi^k)/\alpha_k $ is a convex combination of the elements $\{T_{t_i^k}x\}_{i=1,\ldots,m_k}$, which  belong to $C$ by assumption, recalling that $C$ is convex and contains the origin, we conclude $\sigma(\pi^k)\in \alpha_k C\subset \frac{1}{\lambda}C$, for every $k\in\N$, and the proof is complete.
\end{proof}

\subsection{Generators of $C_0$-sequentially  equicontinuous semigroups}\label{gen}

%
%
%


In this subsection we study the generator $A$ of the $C_0$-sequentially equicontinuous semigroup $T$.

Recall that a subset $U$ of a topological space $Z$ is said to be sequentially dense in $Z$ if, for every $z\in Z$, there exists a sequence $\{u_n\}_{n\in \mathbb{N}}\subset U$ converging to $z$ in $Z$.
In such a case, it is clear that $U$ is also  dense in $Z$.

\begin{Proposition}
  \label{prop:2015-05-01:10}
      $\mathcal{D}(A)$ is sequentially dense in $X$.
\end{Proposition}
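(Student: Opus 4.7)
The plan is to establish sequential density by showing that $R(\lambda)x \in \mathcal{D}(A)$ for every $\lambda>0$ and $x \in X$, and that $\lambda R(\lambda)x \to x$ as $\lambda \to +\infty$. Then the sequence $\{nR(n)x\}_{n \in \mathbb{N}} \subset \mathcal{D}(A)$ converges to $x$, giving the result.

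For the first step, I would fix $\lambda > 0$ and $x \in X$, and compute the difference quotient $\frac{T_h R(\lambda)x - R(\lambda)x}{h}$ for $h > 0$. Since each $T_h$ is sequentially continuous (being a member of a sequentially equicontinuous family, hence $T_h \in \mathcal{L}_0(X)$), Proposition \ref{prop:2015-05-01:07} applies with $L = T_h$, yielding
\begin{equation*}
T_h R(\lambda)x = \int_0^{+\infty} e^{-\lambda t} T_{t+h}x\, dt = e^{\lambda h}\int_h^{+\infty} e^{-\lambda s} T_s x\, ds.
\end{equation*}
A direct manipulation then gives
\begin{equation*}
\frac{T_h R(\lambda)x - R(\lambda)x}{h} = \frac{e^{\lambda h}-1}{h}\int_h^{+\infty} e^{-\lambda s} T_s x\, ds - \frac{1}{h}\int_0^h e^{-\lambda s} T_s x\, ds.
\end{equation*}
Sending $h \to 0^+$, the first term converges to $\lambda R(\lambda)x$ (using continuity of $\lambda \mapsto R(\lambda)x$ from Proposition \ref{prop:2015-05-01:02} together with the continuity of $s\mapsto T_s x$ on $[0,h]$), while the second term converges to $x$ by the fundamental theorem of calculus applied to the Riemann integral of the continuous function $s \mapsto e^{-\lambda s}T_s x$ at $s=0$. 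This shows $R(\lambda)x \in \mathcal{D}(A)$ with $A R(\lambda) x = \lambda R(\lambda)x - x$.

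For the second step, perform the change of variable $u = \lambda t$ to write $\lambda R(\lambda)x - x = \int_0^{+\infty} e^{-u}(T_{u/\lambda}x - x)\, du$, and estimate, for any $p \in \mathcal{P}_X$,
\begin{equation*}
p(\lambda R(\lambda) x - x) \leq \int_0^{+\infty} e^{-u} p(T_{u/\lambda}x - x)\, du.
\end{equation*}
By strong continuity (Definition \ref{def:2015-05-01:01}(\ref{2015-10-13:11})), for any $\varepsilon > 0$ there is $\delta > 0$ with $p(T_t x - x) < \varepsilon$ for $t \in [0,\delta]$; by Proposition \ref{prop:2015-05-01:02a}(\ref{2015-10-06:06}), $p(T_t x - x)$ is bounded by some $M$ uniformly in $t \in \mathbb{R}^+$. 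Splitting the integral at $u = \lambda \delta$ yields $p(\lambda R(\lambda)x - x) \leq \varepsilon + M e^{-\lambda \delta}$, and taking $\lambda \to +\infty$ followed by $\varepsilon \to 0$ concludes this step.

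The main subtlety I anticipate is handling the first step without access to Lebesgue-style dominated convergence: the arguments must be phrased using only generalized Riemann integrals and the basic seminorm estimates \eqref{2015-10-07:02}-\eqref{2015-10-07:01}, plus Proposition \ref{prop:2015-05-01:07} to commute $T_h$ with the integral. Once the identity for the difference quotient is set up, the limits are standard; sequential completeness of $X$ is not needed because both terms are shown to converge directly in $X$ to elements already identified ($\lambda R(\lambda)x$ and $x$).
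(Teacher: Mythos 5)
Your proposal is correct and follows essentially the same route as the paper: establish $\operatorname{Im}(R(\lambda))\subset\mathcal{D}(A)$ via the same difference-quotient identity for $T_hR(\lambda)x$ (obtained from Proposition \ref{prop:2015-05-01:07}), then show $\lambda R(\lambda)x\to x$ as $\lambda\to+\infty$ using the estimate \eqref{2015-10-07:02}. The only cosmetic difference is that where the paper invokes dominated convergence for the limit $\int_0^{+\infty}\lambda e^{-\lambda t}p(T_tx-x)\,dt\to 0$, you split the integral at $u=\lambda\delta$ and use strong continuity plus boundedness of the orbit directly; both are valid and rely on the same ingredients.
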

\begin{proof}
Let $\lambda>0$ and set $\psi_\lambda \coloneqq \lambda R(\lambda)\in X$.
By  \eqref{eq:2015-05-01:08},
$$
T_hR(\lambda)x = \int_0^{+\infty} e^{-\lambda t}T_{h+t}x dt\ \in \ X,\qquad \ \forall   x\in X.
$$
Then, following the proof of \cite[p.\ 237, Theorem 1]{Yosida80}(\footnote{In the cited result, $X$ is assumed sequentially complete. However, the completeness of $X$   is
used in the proof only
to define the integrals. In our case, existence for the integrals involved in the proof holds by assumption.
}), we have
$$
\frac{T_h \psi_\lambda x-\psi_\lambda x}{h} = 
\frac{e^{\lambda h}-1}{h}\left(
\psi_\lambda x-\lambda \int_0^h e^{-\lambda t} T_tx dt
\right)
-\frac{\lambda}{h}\int_0^h e^{-\lambda t}T_t xdt \ \in \ X,
\qquad \ \forall   x\in X.
$$
Passing to the limit for $h\rightarrow 0^+$, we obtain
$$
\lim_{h\rightarrow 0^+}
\frac{T_h \psi_\lambda x-\psi_\lambda x}{h}
=
\lambda (\psi_\lambda x-x) \ \in \ X,
\qquad \ \forall  x\in X.
$$
Then $\psi_\lambda x\in \mathcal{D}(A)$ and 
\begin{equation}
  \label{eq:2015-05-01:18}
  A\psi_\lambda x =  \lambda (\psi_\lambda -I)x \ \in \ X,
\qquad \ \forall   x\in X.
\end{equation}
For future reference, we notice that this shows, in particular, that
\begin{equation}
  \label{eq:2015-07-26:00}
  \operatorname{Im}(R(\lambda)) \subset  \mathcal{D}(A).
\end{equation}
Now we prove that 
\begin{equation}\label{eq:2015-07-28:05}
\lim_{\lambda\rightarrow +\infty}  \psi_\lambda x
=x\qquad \ \forall  x\in X, 
\end{equation}
which concludes the proof.
By \eqref{2015-10-07:02}, we have
$$
p(\psi_\lambda x-x)=
\lambda p(R(\lambda) x-\lambda^{-1}x)\leq
\int_0^{+\infty}\lambda
e^{-\lambda t}p(T_tx-x)dt
\quad \ \forall   x\in X, \ \ \forall  p\in \mathcal{P}_X.
$$
By Proposition \ref{prop:2015-05-01:02a}\emph{(\ref{2015-10-06:06})}, we can apply 
 the dominated convergence theorem  to the last integral above when
$\lambda\rightarrow+\infty$. Then 
 we have
 $$
p(\psi_\lambda x-x) \rightarrow 0, \ \ \ \  \ \forall   x\in X, \ \ \forall  p\in \mathcal{P}_X,
$$
 and we obtain \eqref{eq:2015-07-28:05} by arbitrariness of $p\in \mathcal{P}_X$.
\end{proof}

\begin{Remark}\label{2015-12-07:03}
  We notice that, if $X$ is sequentially complete, then Proposition \ref{prop:2015-05-01:10} can be refined. Indeed, as for $C_0$-semigroups in Banach spaces, we can define $D_\infty\coloneqq \medcap_{n=1}^{+\infty}D(A^n)$. If $X$ is sequentially complete, then,
for every $\varphi\in C^{\infty}_c((0,+\infty))$ and every $x\in X$,
 we can define the integral
$$
\varphi_Tx\coloneqq \int_0^{+\infty}\varphi(t)T_txdt.
$$
Then one can show that $\varphi_Tx\in D_\infty$, $A^n\varphi_Tx=(-1)^n(\varphi^{(n)})_Tx$, for all $n\geq 1$, and the set $\{\varphi_Tx\colon \varphi\in C^\infty_c((0,+\infty)),\ x\in X\}$  is sequentially dense in $X$.
\end{Remark}

\begin{Proposition}
  \label{prop:2015-05-01:11}
  Let $x\in \mathcal{D}(A)$. Then 
  \begin{enumerate}[(i)]
  \item\label{2015-12-05:00}
  $T_tx\in \mathcal{D}(A)$ for all $t\in \R^+$;
  \item the map $T x\colon \mathbb{R}^+\rightarrow X,\  t\mapsto T_tx$ is differentiable;
  \item the following identity holds
  \begin{equation}
    \label{eq:2015-05-01:20}    
\frac{d}{dt}T_tx
= AT_tx =  T_tAx,\qquad \ \forall   t\in \R^+.
\end{equation}
\end{enumerate}
\end{Proposition}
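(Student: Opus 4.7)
My plan is to handle the three assertions together by first computing the one-sided derivatives of $t\mapsto T_tx$ separately, deducing from the right-derivative computation that $T_tx\in \mathcal{D}(A)$, and then matching both sides to obtain differentiability and formula \eqref{eq:2015-05-01:20}.

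\textbf{Right derivative and items (i), (iii) in one shot.} Fix $x\in \mathcal{D}(A)$ and $t\in \mathbb{R}^+$. For $h>0$, the semigroup property gives
\[
\frac{T_{t+h}x-T_tx}{h}=T_t\!\left(\frac{T_hx-x}{h}\right)=\frac{T_h(T_tx)-T_tx}{h}.
\]
Since $\frac{T_hx-x}{h}\to Ax$ in $X$ as $h\to 0^+$ by definition of $\mathcal{D}(A)$, and $T_t\in \mathcal{L}_0(X)$ (so $T_t$ is sequentially continuous), the left-hand expression converges to $T_tAx$ along any sequence $h_n\to 0^+$. This simultaneously proves that the limit
\[
\lim_{h\to 0^+}\frac{T_h(T_tx)-T_tx}{h}=T_tAx
\]
exists in $X$, i.e.\ $T_tx\in \mathcal{D}(A)$ with $AT_tx=T_tAx$, and that $t\mapsto T_tx$ admits right derivative equal to $T_tAx$. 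This settles \emph{(i)} and the right-derivative half of \emph{(ii)}--\emph{(iii)}, and incidentally disposes of the boundary case $t=0$, where only the right derivative is considered and $T_0Ax=Ax$.

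\textbf{Left derivative for $t>0$.} For $h\in (0,t)$, again by the semigroup property,
\[
\frac{T_tx-T_{t-h}x}{h}=T_{t-h}\!\left(\frac{T_hx-x}{h}\right).
\]
Writing $u_h\coloneqq \frac{T_hx-x}{h}$, I split
\[
T_{t-h}u_h-T_tAx \;=\; T_{t-h}(u_h-Ax)\;+\;(T_{t-h}Ax-T_tAx).
\]
The second summand tends to $0$ as $h\to 0^+$ by continuity of the orbit $s\mapsto T_s(Ax)$ from Proposition \ref{prop:2015-05-01:02a}\emph{(\ref{2015-10-06:05})}. For the first summand, take an arbitrary sequence $h_n\downarrow 0$; then $u_{h_n}-Ax\to 0$ in $X$, so by sequential equicontinuity of the family $\{T_s\}_{s\in \mathbb{R}^+}$ (which, in the characterization \eqref{eq:2015-05-01:04}, reads $\sup_{s\in \mathbb{R}^+}q(T_s(u_{h_n}-Ax))\to 0$ for every $q\in \mathcal{P}_X$), we obtain in particular $q(T_{t-h_n}(u_{h_n}-Ax))\to 0$ for every $q\in \mathcal{P}_X$. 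Arbitrariness of the sequence yields $T_{t-h}(u_h-Ax)\to 0$ as $h\to 0^+$, and hence the left derivative of $t\mapsto T_tx$ at $t$ equals $T_tAx$.

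\textbf{Conclusion.} Combining the two halves, $t\mapsto T_tx$ is differentiable on $\mathbb{R}^+$ with derivative $T_tAx$, and together with $AT_tx=T_tAx$ obtained above this gives \eqref{eq:2015-05-01:20}. The anticipated main obstacle is the left-derivative step, because the factor $T_{t-h}$ moves with $h$ while being applied to an argument $u_h-Ax$ that also depends on $h$; this is precisely where pointwise sequential continuity of a single operator is not enough, and the \emph{uniformity in $s$} given by sequential equicontinuity of $\{T_s\}_{s\in \mathbb{R}^+}$ is indispensable.
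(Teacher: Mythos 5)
Your proof is correct and follows essentially the same route as the paper: the right-derivative computation (pulling $T_t$ through the limit of $\Delta(h)=\frac{T_hx-x}{h}$ by sequential continuity, which yields \emph{(i)} and $AT_tx=T_tAx$) is exactly the paper's argument, and your left-derivative step is precisely the classical decomposition the paper delegates to Yosida, with equicontinuity replaced by sequential equicontinuity as needed. No gaps.
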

\begin{proof}
Let $x\in \mathcal{D}(A)$. Consider the function 
  $\Delta:\R^+\rightarrow X$
defined by 
$$\Delta(h) \coloneqq \  \begin{dcases} 
\frac{T_h-I}{h}x, &\mbox{if} \ h\neq 0\\
\Delta(0)=Ax.&
\end{dcases}$$ 
This function is continuous by definition of $A$.
Then,  by Remark \ref{rem:2015-05-01:06}, 
 $$T_t Ax =  T_t\lim_{h\rightarrow 0^+} \Delta(h)  =   \lim_{h\rightarrow 0^+} T_t \Delta(h) =  \lim_{h\rightarrow 0^+}  \frac{T_hT_tx-T_tx}{h}, \qquad \forall   t\in \R^+,$$
 which shows that \emph{(\ref{2015-12-05:00})} holds and  that
$$T_t Ax = AT_t x, \qquad \forall   t\in \R^+.$$
  The rest of the proof follows exactly as in  \cite[p.\ 239, Theorem 2]{Yosida80}.
\end{proof}

We are going to show that the infinitesimal generator identifies uniquely the semigroup $T$. For that, we need the following lemma, which will be also used afterwards.
\begin{Lemma}
  \label{prop:2015-05-03:12}
  Let $0\leq a<b$, $f,g\colon(a,b)\rightarrow \mathcal{L}_0(X)$, $t_0\in (a,b)$, and $x\in X$. 
   Assume that
   \begin{enumerate}[(i)]
   \item\label{2015-12-05:01} the family $\{f(t)\}_{t\in[a',b']}$ is sequentially equicontinuous, for every $a<a'<b'<b$;
   \item $g(\cdot)x\colon (a,b)\rightarrow X$ is differentiable at $t_0$; 
   \item $f(\cdot)g(t_0)x\colon(a,b)\rightarrow X$ is differentiable at $t_0$. 
   \end{enumerate}
   Then there exists the derivative of $f(\cdot)g(\cdot)x\colon(a,b)\rightarrow X$ at $t=t_0$ and 
$$
 \frac{d}{dt}\left[f(t)g(t)x)\right]|_{t=t_0} = \frac{d}{dt}\left[f(t)g(t_0)x\right]|_{t=t_0}+f(t_0)\frac{d}{dt}[g(t)x]|_{t=t_0}.
$$

\end{Lemma}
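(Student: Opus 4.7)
The plan is to establish this Leibniz formula by the standard ``add and subtract'' decomposition, letting each hypothesis handle one piece. Fix a sequence $h_n \to 0$ with $t_0 + h_n \in (a,b)$ and choose $a', b'$ with $a < a' < t_0 < b' < b$; by (i), the family $\mathcal{F} := \{f(t)\}_{t \in [a', b']}$ is sequentially equicontinuous, and $t_0 + h_n \in [a', b']$ eventually.

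First, I would write
\begin{equation*}
\frac{f(t_0+h_n) g(t_0+h_n) x - f(t_0) g(t_0) x}{h_n} = \frac{[f(t_0+h_n) - f(t_0)] g(t_0) x}{h_n} + f(t_0+h_n) \cdot \frac{g(t_0+h_n) x - g(t_0) x}{h_n}.
\end{equation*}
The first summand converges to $L := \frac{d}{dt}[f(t) g(t_0) x]|_{t = t_0}$ directly by (iii). For the second summand, set $w_n := \frac{g(t_0+h_n)x - g(t_0)x}{h_n}$; by (ii), $w_n \to w_0 := g'(t_0)x$, and it suffices to show $f(t_0 + h_n) w_n \to f(t_0) w_0$.

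To this end, I would split
\begin{equation*}
f(t_0 + h_n) w_n - f(t_0) w_0 = f(t_0 + h_n)(w_n - w_0) + [f(t_0 + h_n) - f(t_0)] w_0.
\end{equation*}
The first piece tends to $0$ via sequential equicontinuity of $\mathcal{F}$ applied to the null sequence $w_n - w_0$: the characterization \eqref{eq:2015-05-01:04} gives $\sup_{t \in [a', b']} q(f(t)(w_n - w_0)) \to 0$ for every seminorm $q \in \mathcal{P}_X$, and in particular at $t = t_0 + h_n$.

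The main obstacle is the second piece, $[f(t_0+h_n) - f(t_0)] w_0 \to 0$, which amounts to continuity at $t_0$ of $t \mapsto f(t) g'(t_0)x$ and is not immediately visible from the hypotheses. My plan is to exploit the representation $g'(t_0)x = \lim_k k[g(t_0+1/k)x - g(t_0)x]$, push the (sequentially continuous) operator $f(t_0+h_n) - f(t_0)$ inside this limit, so that
\begin{equation*}
[f(t_0+h_n) - f(t_0)]\,g'(t_0)x = \lim_{k \to \infty} k\bigl\{[f(t_0+h_n) - f(t_0)]\,g(t_0 + 1/k)x - [f(t_0+h_n) - f(t_0)]\,g(t_0)x\bigr\},
\end{equation*}
and then to swap the limits in $n$ and $k$ using the uniformness in $t$ afforded by sequential equicontinuity, together with the continuity at $t_0$ of $t \mapsto f(t) g(t_0)x$ delivered by (iii). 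This interchange of limits is the delicate step; the remainder of the argument is triangle-inequality bookkeeping. Adding the two summands produces the stated identity.
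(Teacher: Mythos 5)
Your decomposition is the same as the paper's: the paper writes the increment as $I_1(h)+I_2(h)+I_3(h)$ with $I_1(h)=f(t_0+h)\big(g(t_0+h)x-g(t_0)x-h\,g'(t_0)x\big)$, $I_2(h)=h\,f(t_0+h)g'(t_0)x$, $I_3(h)=\big(f(t_0+h)-f(t_0)\big)g(t_0)x$, which is exactly your two-term split with the second term further separated. Your handling of the two unproblematic pieces --- hypothesis \emph{(iii)} for the difference quotient of $f(\cdot)g(t_0)x$, and sequential equicontinuity via \eqref{eq:2015-05-01:04} applied to the null sequence $w_n-w_0$ --- also matches the paper. You are moreover right that $[f(t_0+h_n)-f(t_0)]\,g'(t_0)x$ is the crux: the paper simply asserts, without comment, that $f(t_0+h)g'(t_0)x\to f(t_0)g'(t_0)x$.

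Your proposed repair of that step does not close, however. Even granting the interchange of limits in $n$ and $k$ (the uniformity in $n$ coming from sequential equicontinuity is fine), you are left needing $\lim_{n\to\infty}[f(t_0+h_n)-f(t_0)]\,g(t_0+1/k)x=0$ for each fixed $k$, i.e.\ continuity at $t_0$ of $t\mapsto f(t)g(s)x$ for $s=t_0+1/k\neq t_0$; hypothesis \emph{(iii)} delivers this only for $s=t_0$. The gap is not a bookkeeping issue: the statement fails under hypotheses \emph{(i)}--\emph{(iii)} alone. Take $X=\R^2$, $a=0$, $b=2$, $t_0=1$, $x=e_1$, $g(t)(y_1,y_2)=(y_1,ty_1)$, and $f(t)(y_1,y_2)=(\psi(t)(y_1-y_2),0)$ with $\psi$ bounded and without limit at $1$ (e.g.\ $\psi(t)=\sin(1/(t-1))$ for $t\neq 1$, $\psi(1)=0$). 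Then \emph{(i)} holds since $|\psi|\leq 1$, \emph{(ii)} holds with $g'(1)x=e_2$, and \emph{(iii)} holds because $f(t)g(1)x\equiv 0$; yet $f(t)g(t)x=(\psi(t)(1-t),0)$ has difference quotient $(-\psi(1+h),0)$ at $t_0=1$, which does not converge. So the lemma needs the extra hypothesis that $t\mapsto f(t)y$ is continuous at $t_0$ at least for $y=g'(t_0)x$ --- precisely what is available in the paper's application in Theorem \ref{theo:2015-07-29:04}, where $f(s)=T_{t-s}$ and continuity of the orbit maps is given by Proposition \ref{prop:2015-05-01:02a}. With that hypothesis added, the term you flagged converges trivially and the rest of your argument is complete.
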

\begin{proof}
  For $h\in \mathbb{R}\setminus \{0\}$ such that $[t_0-|h|,t_0+|h|]\subset (a,b)$, write
  \begin{equation*}
    \begin{split}
      f(t_0+h)g(t_0+h)x-f(t_0)g(t_0)x=&
f(t_0+h)\left(g(t_0+h)-g(t_0)-h\frac{d}{dt}[g(t)x]|_{t=t_0}\right)\\
&+
hf(t_0+h)\frac{d}{dt}[g(t)x]|_{t=t_0}
+
\left(f(t_0+h)-f(t_0)\right)g(t_0)x\\
&\eqqcolon I_1(h)+I_2(h)+I_3(h).
\end{split}
\end{equation*}
Letting $h\rightarrow 0$, we have  $h^{-1}I_2(h)\rightarrow f(t_0)\frac{d}{dt}[g(t)x]|_{t=t_0}x$ and $h^{-1}I_3(h)\rightarrow \frac{d}{dt}[f(t)g(t_0)x]|_{t=t_0}$. Moreover, 
$$
p\left(
h^{-1}I_1(h)
\right)\leq
\sup_{s\in [t_0-|h|,t_0+|h|]}p\left(
f(s)\left(
{
g(t_0+h)-g(t_0)
\over
h
}-\frac{d}{dt}[g(t)x]|_{t=t_0}\right)x
\right),\quad \ \forall   p\in \mathcal{P}_X,
$$
and the member at the right-hand side of the inequality above tends to $0$ as $h\rightarrow 0$, because of   sequential local  equicontinuity of the family $\{f(s)\}_{s\in (a,b)}$ (part \emph{(\ref{2015-12-05:01})} of the assumptions) and because of differentiability of $g(\cdot)x$ in $t_0$.
\end{proof}

\begin{Theorem}
  \label{theo:2015-07-29:04}
  Let $S$ be a $C_0$-sequentially equicontinuous semigroup on $X$ with infinitesimal generator $A_S=A$. Then $S=T$.
\end{Theorem}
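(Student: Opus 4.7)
The plan is to mimic the classical Banach-space uniqueness argument: for $x\in\mathcal{D}(A)=\mathcal{D}(A_S)$ and $t>0$, consider the auxiliary map $\varphi\colon[0,t]\to X$, $\varphi(s)\coloneqq T_{t-s}S_s x$, show it has zero derivative on $(0,t)$, and conclude $T_t x=\varphi(0)=\varphi(t)=S_t x$. Then extend to all of $X$ by sequential density.

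To carry out the differentiation I would apply Lemma \ref{prop:2015-05-03:12} with $f(s)\coloneqq T_{t-s}$ and $g(s)\coloneqq S_s$, and check the three hypotheses in turn. First, sequential local equicontinuity of $\{T_{t-s}\}_{s\in[a',b']}$ for any $0<a'<b'<t$ follows from the sequential equicontinuity of $T$ itself (Definition \ref{def:2015-05-01:01}\emph{(\ref{2015-08-31:00})}). Second, differentiability of $s\mapsto g(s)x=S_s x$ at every $s_0\in(0,t)$ follows from Proposition \ref{prop:2015-05-01:11} applied to $S$, since $x\in\mathcal{D}(A_S)$, with derivative $S_{s_0}A_S x=S_{s_0}Ax$. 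Third, to differentiate $s\mapsto f(s)g(s_0)x=T_{t-s}S_{s_0}x$ at $s_0$, I need $S_{s_0}x\in\mathcal{D}(A)$; this is where the identification $A_S=A$ enters, because Proposition \ref{prop:2015-05-01:11}\emph{(\ref{2015-12-05:00})} applied to $S$ gives $S_{s_0}x\in\mathcal{D}(A_S)=\mathcal{D}(A)$, and then Proposition \ref{prop:2015-05-01:11} applied to $T$ yields $\frac{d}{ds}T_{t-s}S_{s_0}x\big|_{s=s_0}=-AT_{t-s_0}S_{s_0}x$.

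Putting these into Lemma \ref{prop:2015-05-03:12} gives, for each $s_0\in(0,t)$,
\[
\varphi'(s_0)=-AT_{t-s_0}S_{s_0}x+T_{t-s_0}AS_{s_0}x,
\]
and since $S_{s_0}x\in\mathcal{D}(A)$, Proposition \ref{prop:2015-05-01:11} gives $AT_{t-s_0}S_{s_0}x=T_{t-s_0}AS_{s_0}x$, so $\varphi'\equiv 0$ on $(0,t)$. Continuity of $\varphi$ on $[0,t]$ (from Proposition \ref{prop:2015-05-01:02a}\emph{(\ref{2015-10-06:05})} combined with continuity of $s\mapsto S_s x$) together with a Hahn–Banach/scalarization argument — apply any $\ell\in X^*$, so $\ell\circ\varphi$ is a real-valued continuous function with zero derivative on $(0,t)$, hence constant, and use that $X$ is Hausdorff locally convex so $X^*$ separates points — yields $\varphi(0)=\varphi(t)$, i.e.\ $T_t x=S_t x$ for every $x\in\mathcal{D}(A)$.

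Finally, to pass from $\mathcal{D}(A)$ to all of $X$, I would use sequential density of $\mathcal{D}(A)$ (Proposition \ref{prop:2015-05-01:10}): for arbitrary $x\in X$, pick $\{x_n\}\subset\mathcal{D}(A)$ with $x_n\to x$; then $T_t x_n=S_t x_n$ for every $n$, and sequential continuity of both $T_t$ and $S_t$ (from Definition \ref{def:2015-05-01:01}\emph{(\ref{2015-08-31:00})} applied to each semigroup) gives $T_t x=S_t x$. The arbitrariness of $t\geq 0$ then concludes the proof ($t=0$ is trivial from the semigroup axiom). The main delicate point is the verification of the third hypothesis of Lemma \ref{prop:2015-05-03:12}, which is precisely where the equality $A_S=A$ (and not merely coincidence on a common subdomain) is needed, together with the invariance of $\mathcal{D}(A)$ under $S_{s_0}$ coming from Proposition \ref{prop:2015-05-01:11}\emph{(\ref{2015-12-05:00})} for $S$.
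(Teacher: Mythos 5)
Your proof is correct and follows essentially the same route as the paper: the paper likewise considers $s\mapsto T_{t-s}S_sx$ on $[0,t]$ for $x\in\mathcal{D}(A)$, invokes Proposition \ref{prop:2015-05-01:11} and Lemma \ref{prop:2015-05-03:12} to get a vanishing derivative, and extends to all of $X$ by sequential density of $\mathcal{D}(A)$ and sequential continuity of $T_t$ and $S_t$. Your explicit verification of the three hypotheses of the lemma (in particular that $S_{s_0}x\in\mathcal{D}(A_S)=\mathcal{D}(A)$) and the scalarization step to pass from $\varphi'\equiv 0$ to constancy are details the paper leaves implicit, but they match its argument.
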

\begin{proof}
 For $t>0$ and $x\in \mathcal{D}(A)$, consider the function  $f\colon [0,t]\rightarrow X,\ s\mapsto T_{t-s}S_sx$. By Proposition \ref{prop:2015-05-01:11} and Lemma \ref{prop:2015-05-03:12}, $f'(s)=0$ for all $s\in [0,t]$, and then $T_tx=f(0)=f(t)=S_tx$. Since $ \mathcal{D}(A)$ is sequentially dense in $X$ and the operators $T_t$, $S_t$ are sequentially continuous, we have $T_tx=S_tx$ for all $x\in X$, and we conclude by arbitrariness of $t>0$.~
\end{proof}

\begin{Definition}
  \label{def:2015-05-01:19}
Let $\mathcal{D}(C)\subset X$ be a linear subspace. For a  linear operator $C\colon \mathcal{D}(C)\rightarrow X$, we define the \emph{spectrum}  $\sigma_0(C)$ as the set of  $\lambda\in \R$ such that one of the following holds:
\begin{enumerate}[(i)]
\item
 $\lambda-C$ is not one-to-one;
 \item
 $\operatorname{Im} (\lambda - C)\neq X$;
 \item
there exists $(\lambda -C)^{-1}$, but it is not sequentially continuous. 
\end{enumerate}
We denote $\rho_0(C)\coloneqq  \mathbb{R}\setminus \sigma_0(C)$, and call it \emph{resolvent set}  of $C$. If $\lambda\in \rho_0(C)$, we denote by $R(\lambda,C)$ the sequentially continuous inverse $(\lambda-C)^{-1}$ of $\lambda-C$.
\end{Definition}

\begin{Theorem}
  \label{teo:2015-05-01:12}
If $\lambda>0$, then
$\lambda\in \rho_0(A)$ and
 $R(\lambda,A) =R(\lambda)$. 
\end{Theorem}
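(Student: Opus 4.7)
The plan is to verify the three conditions of Definition \ref{def:2015-05-01:19} for $\lambda>0$ and, along the way, identify the inverse with $R(\lambda)$.

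First, I would harvest the surjectivity identity directly from the proof of Proposition \ref{prop:2015-05-01:10}: formula \eqref{eq:2015-05-01:18}, combined with the inclusion \eqref{eq:2015-07-26:00}, already yields
$$
R(\lambda)x\in \mathcal{D}(A),\qquad AR(\lambda)x=\lambda R(\lambda)x-x, \qquad \forall\, x\in X,
$$
that is $(\lambda-A)R(\lambda)x=x$ on all of $X$. In particular $\operatorname{Im}(\lambda-A)=X$ and $R(\lambda)$ is a right inverse of $\lambda-A$.

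Next, for injectivity I would use an eigenvalue/ODE argument. Suppose $x\in \mathcal{D}(A)$ satisfies $Ax=\lambda x$. By Proposition \ref{prop:2015-05-01:11} the map $t\mapsto T_tx$ is differentiable with $\tfrac{d}{dt}T_tx=T_tAx=\lambda T_tx$, so $h(t)\coloneqq e^{-\lambda t}T_tx$ has vanishing derivative on $\mathbb{R}^+$. Applying the fundamental theorem of calculus seminorm-by-seminorm (which is legitimate since $h'$ is continuous by Proposition \ref{prop:2015-05-01:02a}\emph{(\ref{2015-10-06:05})}) shows $h$ is constant, hence $T_tx=e^{\lambda t}x$ for all $t\in \mathbb{R}^+$. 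But Proposition \ref{prop:2015-05-01:02a}\emph{(\ref{2015-10-06:06})} guarantees that $\{T_tx\}_{t\in \mathbb{R}^+}$ is bounded, so $\{e^{\lambda t}x\}_{t\in \mathbb{R}^+}$ is bounded in $X$; since $\lambda>0$ and $X$ is Hausdorff, this forces $p(x)=0$ for every continuous seminorm $p$, hence $x=0$. Thus $\lambda-A$ is one-to-one.

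Combining the two steps, $\lambda-A\colon \mathcal{D}(A)\to X$ is a bijection with inverse $R(\lambda)$. Since Proposition \ref{prop:2015-05-01:02}\emph{(\ref{2015-10-08:00})} states that $R(\lambda)\in \mathcal{L}_0(X)$, the inverse is sequentially continuous, which completes the verification that $\lambda\in \rho_0(A)$ and gives $R(\lambda,A)=R(\lambda)$. The only mildly delicate point is the vector-valued FTC used to pass from $h'\equiv 0$ to $h\equiv x$, but this reduces to the scalar statement once one tests against each seminorm; everything else is an immediate consequence of the results already established in this subsection, so I do not expect any real obstacle.
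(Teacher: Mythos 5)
Your proposal is correct and follows essentially the same route as the paper: surjectivity from \eqref{eq:2015-07-26:00} and \eqref{eq:2015-05-01:18}, injectivity from the ODE $\tfrac{d}{dt}T_tx=\lambda T_tx$ together with boundedness of orbits (Proposition \ref{prop:2015-05-01:02a}\emph{(\ref{2015-10-06:06})}), and sequential continuity of the inverse from Proposition \ref{prop:2015-05-01:02}\emph{(\ref{2015-10-08:00})}. The only cosmetic difference is in the injectivity step: the paper scalarizes immediately, showing $F(t)=f(e^{-\lambda t}T_tx)$ is constant and tends to $0$ for each $f\in X^*$, whereas you conclude $T_tx=e^{\lambda t}x$ in $X$ and then use unboundedness of $e^{\lambda t}$ --- note that your ``zero derivative implies constant'' lemma is itself most cleanly justified by testing against elements of $X^*$ (via Hahn--Banach, rather than literally ``seminorm-by-seminorm'', since seminorms are not linear), which collapses your argument back into the paper's.
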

\begin{proof}
\underline{\emph{Step 1.}} Here we show that $\lambda-A$ is one-to-one for every $\lambda>0$.
 Let  $x\in \mathcal{D}(A)$.
By Proposition \ref{prop:2015-05-01:11}, for any ${f}\in X^*$, the function $F\colon \mathbb{R}^+\rightarrow \mathbb{R},\ t\mapsto {f}(e^{-\lambda t} T_tx)$ is differentiable, and $F'(t)=f(e^{-\lambda t}T_t(A-\lambda)x)$. If $(A-\lambda )x=0$, then $F$ is constant.
By Proposition \ref{prop:2015-05-01:02a}\emph{(\ref{2015-10-06:06})},
 $F(t)\to 0$ as $t\to +\infty$, hence it must be $F\equiv 0$.
 Then $f(x)=F(0)=0$.
As $f$ is arbitrary, we conclude that $x=0$ and, therefore, that $\lambda-A$ is one-to-one.

\underline{\emph{Step 2.}} Here we show that $\lambda-A$ is invertible and $R(\lambda,A)=R(\lambda)$, for every $\lambda>0$.
By \eqref{eq:2015-07-26:00} 
and
\eqref{eq:2015-05-01:18},
\begin{equation}
  \label{eq:2015-05-01:21}
  (\lambda-A)R(\lambda)=I
\end{equation}
which shows that $\lambda-A$ is onto, and then invertible (by recalling also Step 1), and that
$(\lambda-A)^{-1}=R(\lambda)$.

\underline{\emph{Step 3.}} The fact $(\lambda-A)^{-1}\in \mathcal{L}_0(X)$ 
follows  
from Step 2 and 
 Proposition \ref{prop:2015-05-01:02}\emph{(\ref{2015-10-08:00})}.
\end{proof}

\begin{Corollary}
  \label{cor:2015-05-01:13}
The operator $A$ is  sequentially closed, that is,
its graph $\operatorname{Gr}(A)$ is sequentially closed in $X\times X$.
\end{Corollary}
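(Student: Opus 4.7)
The plan is to deduce sequential closedness of $A$ directly from the fact, proved in Theorem \ref{teo:2015-05-01:12}, that every $\lambda>0$ lies in the resolvent set $\rho_0(A)$ and that $R(\lambda,A)=R(\lambda)\in\mathcal{L}_0(X)$. The idea is simply that the graph of $A$ is the image of the graph of $R(\lambda,A)$ under a linear bijection that is sequentially bicontinuous on $X\times X$.

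Concretely, I would fix any $\lambda>0$ (say $\lambda=1$) and take a sequence $\{(x_n,Ax_n)\}_{n\in\mathbb{N}}\subset\operatorname{Gr}(A)$ converging to some $(x,y)\in X\times X$. By sequential continuity of vector operations in the locally convex space $X$, the combination $\lambda x_n-Ax_n$ converges to $\lambda x-y$ in $X$. Applying the identity $R(\lambda,A)(\lambda x_n-Ax_n)=x_n$ (valid since $x_n\in \mathcal{D}(A)$ by assumption and $R(\lambda,A)=(\lambda-A)^{-1}$ on $\operatorname{Im}(\lambda-A)=X$) and using the sequential continuity of $R(\lambda,A)$ yields
\[
x=\lim_{n\to+\infty}x_n=\lim_{n\to+\infty}R(\lambda,A)(\lambda x_n-Ax_n)=R(\lambda,A)(\lambda x-y).
\]
This shows that $x\in\operatorname{Im}(R(\lambda,A))\subset \mathcal{D}(A)$ (cf.\ \eqref{eq:2015-07-26:00}) and that $(\lambda-A)x=\lambda x-y$, whence $Ax=y$. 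Therefore $(x,y)\in\operatorname{Gr}(A)$, which is the desired sequential closedness.

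There is no real obstacle: the result reduces to algebraic manipulation together with the sequential continuity of $R(\lambda,A)$, which is already in hand from Theorem \ref{teo:2015-05-01:12} (via Proposition \ref{prop:2015-05-01:02}\emph{(\ref{2015-10-08:00})}). The only point worth keeping in mind is that $X$ is Hausdorff, so limits of sequences are unique, and that addition and scalar multiplication on $X$ are (jointly) sequentially continuous, which is automatic in a topological vector space.
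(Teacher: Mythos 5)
Your proof is correct and follows essentially the same route as the paper: both arguments reduce the sequential closedness of $\operatorname{Gr}(A)$ to the sequential continuity of the resolvent $R(\lambda,A)$ established in Theorem \ref{teo:2015-05-01:12}, via the linear bijection relating $\operatorname{Gr}(A)$ to $\operatorname{Gr}(R(\lambda,A))$. The paper states this in one line with $\lambda=1$ (observing $(x,y)\in\operatorname{Gr}(A)$ iff $(x-y,x)\in\operatorname{Gr}(R(1,A))$), while you unpack the same idea at the level of sequences; the content is identical.
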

\begin{proof}
 Observe that $(x,y)\in \operatorname{Gr}(A)$ if and only if $(x, x-y)\in \operatorname{Gr}(I-A)$, and hence if and only if $( x-y,x)\in \operatorname{Gr}(R(1,A))$. As $R(1,A)\in \mathcal{L}_0(X)$, then its graph is sequentially closed in $X\times X$, and we conclude.
\end{proof}

\begin{Corollary}\label{cor:2015-07-28:06}
We have the following.
\begin{enumerate}[(i)]
\item\label{2015-12-05:02}  $AR(\lambda,A)x=\lambda R(\lambda,A)x-x$, for all $\lambda >0$ and $x\in X$.
\item\label{2015-12-05:03}  $R(\lambda,A)Ax=AR(\lambda,A)x$, for all $\lambda>0$ and $x\in \mathcal{D}(A)$.
\item\label{2015-12-05:04}  (Resolvent equation) 
For every  $\lambda>0$ and $\mu>0$,
  \begin{equation}
    \label{eq:2015-05-03:10}
    R(\lambda,A)-R(\mu,A)=(\mu-\lambda)R(\lambda,A)R(\mu,A).
  \end{equation}
\item\label{2015-12-05:05}  
For every $x\in X$, $\lim_{\lambda\rightarrow \infty}\lambda R(\lambda,A)x=x$.
  \end{enumerate}
  \end{Corollary}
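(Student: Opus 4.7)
The plan is that all four items are essentially bookkeeping consequences of Theorem~\ref{teo:2015-05-01:12}, which identifies $R(\lambda,A)$ with the Laplace transform $R(\lambda)$, together with two facts established already in the proof of Proposition~\ref{prop:2015-05-01:10}: the inclusion $\operatorname{Im}(R(\lambda))\subset\mathcal{D}(A)$ (see \eqref{eq:2015-07-26:00}) and the identity $(\lambda-A)R(\lambda)=I$ (see \eqref{eq:2015-05-01:21}). No further analytic work is needed; the only things to watch are that all compositions land in the correct domains.

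For \emph{(\ref{2015-12-05:02})}, I would fix $\lambda>0$ and $x\in X$, note that $R(\lambda,A)x\in\mathcal{D}(A)$ by \eqref{eq:2015-07-26:00}, and apply $\lambda-A$ using \eqref{eq:2015-05-01:21}, obtaining $\lambda R(\lambda,A)x-AR(\lambda,A)x=x$, which is the desired identity. For \emph{(\ref{2015-12-05:03})}, take $x\in\mathcal{D}(A)$; by Theorem~\ref{teo:2015-05-01:12} we also have $R(\lambda,A)(\lambda-A)=I$ on $\mathcal{D}(A)$, hence $\lambda R(\lambda,A)x-R(\lambda,A)Ax=x$, and comparing with \emph{(\ref{2015-12-05:02})} yields $R(\lambda,A)Ax=AR(\lambda,A)x$.

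For \emph{(\ref{2015-12-05:04})}, I would perform the standard resolvent algebra: since $R(\mu,A)x\in\mathcal{D}(A)$ for every $x\in X$, one can write
\begin{equation*}
R(\lambda,A)=R(\lambda,A)(\mu-A)R(\mu,A),\qquad R(\mu,A)=R(\lambda,A)(\lambda-A)R(\mu,A),
\end{equation*}
and subtract to obtain
\begin{equation*}
R(\lambda,A)-R(\mu,A)=R(\lambda,A)\bigl[(\mu-A)-(\lambda-A)\bigr]R(\mu,A)=(\mu-\lambda)R(\lambda,A)R(\mu,A).
\end{equation*}
Finally, for \emph{(\ref{2015-12-05:05})}, the required limit $\lim_{\lambda\to+\infty}\lambda R(\lambda,A)x=x$ is precisely \eqref{eq:2015-07-28:05}, which was proved in Proposition~\ref{prop:2015-05-01:10} via dominated convergence applied to the bound \eqref{2015-10-07:02} on $p(\psi_\lambda x-x)$, with $\psi_\lambda=\lambda R(\lambda)=\lambda R(\lambda,A)$ thanks to Theorem~\ref{teo:2015-05-01:12}.

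There is no real obstacle: the substantive analytic work — existence of the Laplace transform, the inclusion of its range in $\mathcal{D}(A)$, the identity $(\lambda-A)R(\lambda)=I$, and the strong convergence $\lambda R(\lambda)x\to x$ — has already been carried out in Propositions~\ref{prop:2015-05-01:02} and~\ref{prop:2015-05-01:10} and in Theorem~\ref{teo:2015-05-01:12}. The only care required is to invoke $\operatorname{Im}(R(\lambda,A))\subset\mathcal{D}(A)$ before acting with $A$, and to use $R(\lambda,A)(\lambda-A)=I$ only on $\mathcal{D}(A)$, which is respected in each of the four computations above.
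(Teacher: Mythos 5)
Your proof is correct and follows essentially the same route as the paper: item (i) from $(\lambda-A)R(\lambda)=I$ in \eqref{eq:2015-05-01:21}, item (ii) by rewriting $x=R(\lambda,A)(\lambda-A)x$ for $x\in\mathcal{D}(A)$, item (iii) by the standard resolvent algebra the paper leaves implicit, and item (iv) from \eqref{eq:2015-07-28:05} together with Theorem~\ref{teo:2015-05-01:12}. Your explicit attention to domains (invoking \eqref{eq:2015-07-26:00} before applying $A$) just spells out what the paper's terser proof takes for granted.
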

  \begin{proof}
\emph{(\ref{2015-12-05:02})}  It follows from  \eqref{eq:2015-05-01:21}. 

 \emph{(\ref{2015-12-05:03})} By \emph{(\ref{2015-12-05:02})} 
and considering that $x\in \mathcal{D}(A)$, we can write
$$AR(\lambda,A)x=\lambda R(\lambda,A)x-x=\lambda R(\lambda,A)x-R(\lambda,A)(\lambda-A)x=R(\lambda,A)Ax.$$

 \emph{(\ref{2015-12-05:04})} It follows from \emph{(\ref{2015-12-05:02})} by standard algebraic computations.

\emph{(\ref{2015-12-05:05})} This follows from \eqref{eq:2015-07-28:05}  and from Theorem 
\ref{teo:2015-05-01:12}.
  \end{proof}

  \begin{Remark}
    \label{rem:2015-08-04:02}
  The computations  involved in the proof of Corollary \ref{cor:2015-07-28:06}{(\ref{2015-12-05:04})} require only that $A\colon  \mathcal{D}(A)\subset X  \rightarrow X$ is a linear operator and $\lambda,\mu\in \rho_0(A)$.
  \end{Remark}

\begin{Proposition}
  \label{prop:2015-05-01:14}
  The family of operators
$
\left\{\lambda^n R(\lambda,A)^n\colon X\rightarrow X\right\}_{\lambda>0,n\in \mathbb{N}}
$
is sequentially equicontinuous.
\end{Proposition}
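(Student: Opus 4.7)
The plan is to establish the classical integral representation
\[
R(\lambda,A)^n x \;=\; \frac{1}{(n-1)!}\int_0^{+\infty} t^{n-1} e^{-\lambda t} T_t x\, dt, \qquad n\geq 1,\ \lambda>0,\ x\in X,
\]
and then exploit the fact that $\lambda^n t^{n-1} e^{-\lambda t}/(n-1)!$ is the density of a probability measure on $[0,+\infty)$ (a $\mathrm{Gamma}(n,\lambda)$ density). Granting this formula, for any $p\in\mathcal{P}_X$ and any $x\in X$, the triangle inequality for the Riemann integral gives
\[
p\bigl(\lambda^n R(\lambda,A)^n x\bigr) \;\leq\; \frac{\lambda^n}{(n-1)!}\int_0^{+\infty} t^{n-1} e^{-\lambda t} p(T_t x)\,dt \;\leq\; \sup_{s\geq 0} p(T_s x),
\]
uniformly in $\lambda>0$ and $n\geq 1$; the case $n=0$ is trivial since $\lambda^0 R(\lambda,A)^0 = I$. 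By sequential equicontinuity of $T$, read through the characterization \eqref{eq:2015-05-01:04} and Proposition \ref{prop:2015-05-01:02a}\emph{(\ref{2015-10-06:06})}, for every sequence $x_k\to 0$ in $X$ we have $\sup_{s\geq 0} p(T_s x_k)\to 0$; hence $\sup_{\lambda>0,\,n\in\N} p\bigl(\lambda^n R(\lambda,A)^n x_k\bigr)\to 0$, which is exactly the claimed sequential equicontinuity.

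To establish the integral formula I would proceed by induction on $n$. The base case $n=1$ is the definition of $R(\lambda)$ together with Theorem \ref{teo:2015-05-01:12}. For the inductive step, I would write $R(\lambda)^{n+1}x = R(\lambda)\bigl(R(\lambda)^n x\bigr)$, insert the inductive hypothesis, and pull $R(\lambda)$ inside the weighted integral using the sequential continuity of $R(\lambda)$ from Proposition \ref{prop:2015-05-01:02}\emph{(\ref{2015-10-08:00})}. The commutation $R(\lambda)T_t x = T_t R(\lambda) x$, a direct consequence of the definition of $R(\lambda)$ combined with Proposition \ref{prop:2015-05-01:07} applied with $L=T_t$, together with the substitution $u=s+t$, yields
\[
T_t R(\lambda) x \;=\; e^{\lambda t}\int_t^{+\infty} e^{-\lambda u} T_u x\, du.
\]
Substituting this into the integrand and interchanging the order of integration collapses the weight $t^{n-1}$ against $\int_0^u t^{n-1}\,dt = u^n/n$, delivering the formula at step $n+1$.

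The main obstacle is the rigorous justification of the two exchanges involved in the inductive step: the commutation of $R(\lambda)$ with a weighted Riemann integral, and the Fubini-type interchange of two Riemann integrals in the locally convex space $X$. My plan is to handle both by approximating the improper Riemann integrals with Riemann sums on compact intervals and passing to the limit using sequential continuity, precisely as in the proof of Proposition \ref{prop:2015-05-01:07}. An alternative that bypasses the Fubini step is to iterate the resolvent equation of Corollary \ref{cor:2015-07-28:06}\emph{(\ref{2015-12-05:04})} to prove that $R(\cdot)x\colon(0,+\infty)\to X$ is infinitely differentiable with $R^{(n-1)}(\lambda)x = (-1)^{n-1}(n-1)!\,R(\lambda)^n x$, and to compare with the $\lambda$-derivatives of the Laplace transform \eqref{eq:2015-05-01:16}; those derivatives are accessible through a dominated convergence argument leveraging the boundedness of $\{T_t x\}_{t\in\R^+}$ granted by Proposition \ref{prop:2015-05-01:02a}\emph{(\ref{2015-10-06:06})} together with the absolute integrability of $t^{n-1}e^{-\lambda t}$. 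Either route reduces the crux to the probabilistic identity $\int_0^{+\infty}\lambda^n t^{n-1} e^{-\lambda t}/(n-1)!\,dt = 1$, which delivers the uniform bound in both $\lambda$ and $n$ that is the heart of the argument.
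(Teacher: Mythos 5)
Your proposal is correct and follows essentially the same route as the paper: the paper's proof simply invokes the argument of Yosida (p.~241, Theorem 2) to obtain the key inequality $\sup_{n,\lambda}p\bigl(\lambda^{n+1}R(\lambda,A)^{n+1}x\bigr)\leq\sup_{t\in\mathbb{R}^+}p(T_tx)$, which is exactly the bound you derive from the Gamma-density representation of $R(\lambda,A)^n$, and then concludes by the sequential equicontinuity of $T$. Your second route (differentiating the resolvent in $\lambda$ and matching with the derivatives of the Laplace transform) is in fact the argument Yosida uses, so you have supplied the details the paper leaves to the citation.
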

\begin{proof}
Arguing as in the proof of \cite[p.\ 241, Theorem 2]{Yosida80} (\footnote{Also here, we remark that the sequential completeness of the space is not necessary, once that Assumption \ref{ass:int} is standing.}),
we obtain
the inequality 
$$
\sup_{\substack{n\in \mathbb{N}\\\lambda>0}}p\left(
\lambda^{n+1} R(\lambda,A)^{n+1}x\right) \leq 
\sup_{t\in\mathbb{R}^+}p(T_tx), \qquad\ \forall  p\in \mathcal{P}_X,
$$
which provides the  sequential equicontinuity
due to sequential equicontinuity of $T$.
\end{proof}

\begin{Proposition}
  \label{lem:2015-05-02:07} 
  Let $\lambda_1,\ldots,\lambda_j$ be strictly positive real numbers. Then  
$$
p\left(\left(\prod_{i=1}^j \lambda_i R(\lambda_i,A)\right)x\right) \leq  \sup_{t\in\mathbb{R}^+}p\left(T_tx\right),\qquad \ \forall   p\in \mathcal{P}_X,\ \ \forall  x \in X.
$$
\end{Proposition}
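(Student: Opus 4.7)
The plan is to proceed by induction on $j$, using at each step the integral representation $R(\lambda_i,A)x = R(\lambda_i)x = \int_0^{+\infty} e^{-\lambda_i t}T_tx\,dt$ provided by Theorem \ref{teo:2015-05-01:12}, together with Proposition \ref{prop:2015-05-01:07} to commute the sequentially continuous operators $T_t$ with the Riemann integrals.

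For the base case $j=1$, inequality \eqref{2015-10-07:02} applied with $y=0$ gives
\[
p(\lambda_1 R(\lambda_1)x) \leq \int_0^{+\infty} \lambda_1 e^{-\lambda_1 t} p(T_tx)\,dt \leq \sup_{t\in\R^+} p(T_tx),
\]
since $\int_0^{+\infty}\lambda_1 e^{-\lambda_1 t}\,dt = 1$. Note that the supremum is finite by Proposition \ref{prop:2015-05-01:02a}\emph{(\ref{2015-10-06:06})}.

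For the inductive step, write $y \coloneqq \bigl(\prod_{i=2}^{j}\lambda_i R(\lambda_i,A)\bigr)x$ and expand
\[
\left(\prod_{i=1}^j \lambda_i R(\lambda_i,A)\right)x = \lambda_1 R(\lambda_1) y = \int_0^{+\infty} \lambda_1 e^{-\lambda_1 t} T_t y\,dt.
\]
Since $T_t\in\mathcal{L}_0(X)$, Proposition \ref{prop:2015-05-01:07} lets me push $T_t$ inside $R(\lambda_2)$ and the semigroup property $T_tT_s=T_{t+s}$ reabsorbs the factor. Iterating this maneuver $j-1$ times yields the representation
\[
\left(\prod_{i=1}^j \lambda_i R(\lambda_i,A)\right)x = \int_0^{+\infty}\!\!\cdots\int_0^{+\infty} \left(\prod_{i=1}^j \lambda_i e^{-\lambda_i t_i}\right) T_{t_1+\cdots+t_j}x\,dt_1\cdots dt_j.
\]
Applying $p$, using the triangular property of the seminorm (formalized via a standard Riemann-sum argument, exactly as in the derivation of \eqref{2015-10-07:02}), and bounding $p(T_{t_1+\cdots+t_j}x)$ by $\sup_{t\in\R^+}p(T_tx)$, one gets the desired inequality since the product of the densities $\lambda_i e^{-\lambda_i t_i}$ integrates to $1$ over $(\R^+)^j$.

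The main obstacle is bookkeeping the commutation of $T_t$ with $R(\lambda_{i+1})$ at each stage: this is not purely formal because we are in a locally convex setting without a priori Fubini, but Proposition \ref{prop:2015-05-01:07} gives precisely the needed interchange between a sequentially continuous operator and a Riemann integral of the form $\int_0^{+\infty} e^{-\lambda t}T_t\cdot\,dt$. Alternatively, one can bypass the multi-integral bookkeeping by carrying the induction directly: assuming the estimate at level $j-1$ applied to $T_{t_1}x$ in place of $x$, one obtains
\[
p\left(\left(\prod_{i=2}^j \lambda_i R(\lambda_i,A)\right)T_{t_1}x\right) \leq \sup_{s\in\R^+} p(T_s T_{t_1}x)\leq \sup_{s\in\R^+} p(T_s x),
\]
so that
\[
p\left(\left(\prod_{i=1}^j \lambda_i R(\lambda_i,A)\right)x\right) \leq \int_0^{+\infty}\lambda_1 e^{-\lambda_1 t_1} p\left(\left(\prod_{i=2}^j \lambda_i R(\lambda_i,A)\right)T_{t_1}x\right)dt_1 \leq \sup_{s\in\R^+}p(T_sx),
\]
which closes the induction. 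This second form avoids invoking a multivariate Fubini-type statement and seems cleaner in a proof write-up.
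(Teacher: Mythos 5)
Your proposal is correct and follows essentially the same route as the paper: the paper likewise uses Theorem \ref{teo:2015-05-01:12} and Proposition \ref{prop:2015-05-01:07} to write $\prod_{i=1}^j R(\lambda_i,A)x$ as the $j$-fold iterated integral of $e^{-\sum\lambda_i t_i}T_{\sum t_i}x$ and then bounds the seminorm by $\bigl(\prod_i\lambda_i^{-1}\bigr)\sup_{t}p(T_tx)$. Your alternative one-integral-at-a-time induction is just a repackaging of the same computation (and avoids writing the multivariate integral explicitly), so both versions are fine.
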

\begin{proof}
We can assume $j=1$, since the general case follows immediately by recursion
and by Proposition~\ref{prop:2015-05-01:07}.
For $\lambda>0$ and $x\in X$, by Theorem \ref{teo:2015-05-01:12}
we have
\begin{equation*}
    p\left(
 R(\lambda,A)x
\right)
\leq
\left(\int_0^{+\infty}
 e^{-\lambda t}
\,dt\right)
\sup_{t\in\mathbb{R}^+}
p\left(T_t x\right)
=
\lambda^{-1}
\sup_{t\in\mathbb{R}^+}p\left(T_t x\right).
\end{equation*}
\end{proof}

\subsection{Generation of $C_0$-sequentially equicontinuous semigroups}\label{sub4}

The aim of this subsection is to state a generation theorem for $C_0$-sequentially equicontinuous semigroups in the spirit of the Hille-Yosida theorem stated for $C_0$-semigroups in Banach spaces.
In order to implement the classical arguments (with slight variations due to our ``sequential continuity'' setting), and, more precisely, in order to define the Yosida approximation, we need the sequential completeness of the space $X$. 

\begin{Proposition}
  \label{prop:2015-05-01:15}
Let $X$ be sequentially complete and  let $B\in \mathcal{L}_0(X)$. Assume that  the family
 $
 \left\{ B^n\colon X\rightarrow X\right\}_{n\in \mathbb{N}}
 $
is sequentially equicontinuous.
Let $f\colon\R\rightarrow \mathbb{R}$ be an analytic function 
of the form
$
f(t)=\sum_{n=0}^{+\infty} a_n t^n$, with $t\in \R.$
Then the following hold.
\begin{enumerate}[(i)]
\item\label{2015-12-05:06}
The series
\begin{equation}
  \label{eq:2015-05-03:02}
  f_B(t) \coloneqq  \sum_{n=0}^{+\infty} a_n t^nB^n
\end{equation}
converges in $\mathcal{L}_{0,b}(X)$ uniformly for $t$ on compact sets of $\R$. 
 \item\label{2015-12-05:07}  The function $f_B\colon \mathbb{R}\rightarrow \mathcal{L}_{0,b}(X), \ t\mapsto f_B(t)$ is continuous.
  \item\label{2015-12-05:08}
The family  $\{f_B(t)\}_{t\in [-r,r]}$ is sequentially equicontinuous for every $r>0$.
 \end{enumerate}
\end{Proposition}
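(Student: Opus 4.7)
The plan is to exploit the fact that, by Proposition \ref{prop:2015-05-03:00}\emph{(\ref{2015-10-06:04})}, sequential equicontinuity of $\{B^n\}_{n\in\mathbb{N}}$ automatically yields equiboundedness: for every bounded $D\subset X$ and every $q\in\mathcal{P}_X$, the quantity
$$C_{q,D}\coloneqq \sup_{n\in\mathbb{N},\,x\in D} q(B^n x)$$
is finite. Combined with the analyticity of $f$ on the whole of $\mathbb{R}$, which forces $\sum_n |a_n|\,|t|^n$ to converge uniformly for $t$ in compact subsets of $\mathbb{R}$, this is essentially the only analytic input needed.

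\textbf{Part (\ref{2015-12-05:06}).} Write $S_N(t)\coloneqq \sum_{n=0}^N a_n t^n B^n$. For any $x\in X$, $t\in\mathbb{R}$, and $q\in\mathcal{P}_X$, the estimate
$$q(S_N(t)x-S_M(t)x)\leq C_{q,\{x\}}\sum_{n=M+1}^{N}|a_n|\,|t|^n$$
shows that $\{S_N(t)x\}_{N}$ is Cauchy in $X$; by sequential completeness of $X$, define $f_B(t)x\coloneqq \lim_N S_N(t)x$. Linearity is immediate. For sequential continuity of $f_B(t)$, given $x_m\to 0$ I use sequential equicontinuity of $\{B^n\}$ to get $\sup_n q(B^n x_m)\to 0$, and then the above tail estimate (applied with $D=\{x_m\}_{m}\cup\{0\}$, which is bounded by Remark \ref{lemma1}) gives $q(f_B(t)x_m)\to 0$, so $f_B(t)\in \mathcal{L}_0(X)$. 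Passing $q$ through the Cauchy limit by continuity, one gets, for every bounded $D$,
$$\rho_{q,D}(f_B(t)-S_N(t))\leq C_{q,D}\sum_{n=N+1}^{+\infty}|a_n|\,|t|^n,$$
whose right-hand side tends to $0$ uniformly for $t$ in any compact subset of $\mathbb{R}$, yielding convergence in $\mathcal{L}_{0,b}(X)$ with the claimed uniformity.

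\textbf{Part (\ref{2015-12-05:07}).} Each $S_N$ is a polynomial in $t$ with coefficients in $\mathcal{L}_0(X)$, hence continuous as a map $\mathbb{R}\to \mathcal{L}_{0,b}(X)$ (the seminorm $\rho_{q,D}(S_N(t)-S_N(s))$ is bounded by $C_{q,D}\sum_{n=1}^N |a_n|\,|t^n-s^n|$). Since by (i) $f_B$ is the uniform-on-compacts limit of the $S_N$, continuity of $f_B$ follows from the standard argument for uniform limits of continuous functions into a (sequentially) Hausdorff locally convex space.

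\textbf{Part (\ref{2015-12-05:08}).} Fix $r>0$, $q\in\mathcal{P}_X$, and a sequence $x_m\to 0$ in $X$. Continuity of $q$ allows one to push it under the series, yielding, for every $t\in[-r,r]$,
$$q(f_B(t)x_m)\leq \sum_{n=0}^{+\infty}|a_n|\,|t|^n q(B^n x_m)\leq \Bigl(\sup_{n\in\mathbb{N}} q(B^n x_m)\Bigr)\sum_{n=0}^{+\infty}|a_n|\,r^n.$$
The first factor tends to $0$ by sequential equicontinuity of $\{B^n\}$ and the second is finite since $f$ is entire, so $\sup_{t\in[-r,r]} q(f_B(t)x_m)\to 0$, which is precisely the criterion \eqref{eq:2015-05-01:04} for sequential equicontinuity of $\{f_B(t)\}_{t\in[-r,r]}$.

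There is no real obstacle: the only mildly delicate step is exchanging $q$ with the infinite sum defining $f_B(t)x$, which is handled by continuity of $q$ applied to the Cauchy partial sums. Note how the argument uses sequential equicontinuity in its sharpest form — on a single convergent sequence $x_m\to 0$ — rather than equicontinuity, exactly matching the framework of Definition \ref{def:2015-05-02:06}.
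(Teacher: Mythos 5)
Your proof is correct and follows essentially the same route as the paper: the key estimate in all three parts is the tail bound $\sum_{k\geq n}|a_k|r^k\cdot\sup_{i}q(B^ix)$, with finiteness of the supremum coming from equiboundedness (Proposition \ref{prop:2015-05-03:00}\emph{(\ref{2015-10-06:04})}). The only cosmetic difference is that you build $f_B(t)x$ pointwise and check sequential continuity of the limit by hand, whereas the paper gets this for free by invoking the sequential completeness of $\mathcal{L}_{0,b}(X)$ (Proposition \ref{prop:2015-08-04:00}\emph{(\ref{2015-10-08:03})}); both are valid.
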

\begin{proof}
\emph{(\ref{2015-12-05:06})} For $0\leq n\leq m$, $p\in \mathcal{P}_X$, $D\subset X$ bounded, $r>0$,
$x\in D$, $t\in[-r,r]$,
we write
\begin{equation}
  \label{eq:2015-07-27:00}
p\left(\sum_{k=n}^m a_k t^kB^kx\right)\leq
\sum_{k=n}^m |a_k||t|^kp\left(B^kx\right)\leq
\left(
\sum_{k=n}^{+\infty} |a_k|r^k
\right)\sup_{i\in \mathbb{N}}p\left(B^ix\right)
\leq
\left(
\sum_{k=n}^{+\infty} |a_k|r^k
\right)
\sup_{
y\in\bigcup_{i\in \mathbb{N}}B^iD}
p(y).
\end{equation}
Observe that, by Proposition \ref{prop:2015-05-03:00}\emph{(\ref{2015-10-06:04})}, the supremum appearing in last term of \eqref{eq:2015-07-27:00}  is finite. Then
\begin{equation}
\label{eq:2015-08-04:01}
\sup_{t\in [-r,r]}\rho_{p,D}\left(\sum_{k=n}^m a_k t^kB^k\right)\leq
\left(
\sum_{k=n}^{+\infty} |a_k|r^k
\right)
\sup_{
y\in\bigcup_{i\geq 0}B^iD}
p(y)\qquad  \forall  n\in \mathbb{N}
\end{equation}
shows that the sequence of the partials sums of \eqref{eq:2015-05-03:02} is Cauchy in $\mathcal{L}_{0,b}(X)$, uniformly for $t\in[-r,r]$, and then, by
Proposition \ref{prop:2015-08-04:00}\emph{(\ref{2015-10-08:03})}, the sum is convergent, uniformly for $t\in[-r,r]$.

\emph{(\ref{2015-12-05:07})} This follows from  convergence of the partial sums in the space $C([-r,r],\mathcal{L}_{0,b}(X))$ endowed with the compact-open topology, as shown above.

\emph{(\ref{2015-12-05:08})}
By continuity of $p$, estimate
\eqref{eq:2015-07-27:00}
 shows that
$$
\sup_{t\in [-r,r]}p\left(f_B(t)x\right)=
\sup_{t\in [-r,r]}\lim_{n\rightarrow +\infty}p\left(\sum_{k=0}^n a_k t^k B^kx\right)
\leq\left(\sum_{k=0}^{+\infty}  |a_k| r^k \right)
\sup_{i\in \mathbb{N}}p\left(B^ix\right)\ \ \ \ \ \ \forall  x\in X,
$$
which provides the sequential equicontinuity of $\left\{f_B(t)\right\}_{t\in [-r,r]}$.
\end{proof}


\begin{Lemma} Let $X$ be sequentially complete.
  \label{lem:2015-07-27:02}
 Let $B,C\in \mathcal{L}_0(X)$  be  such that $\{B^n\}_{n\in \mathbb{N}}$ and $\{C^n\}_{n\in \mathbb{N}}$ are sequentially equicontinous.
Let $f(t)=\sum_{n=0}^{+\infty} a_n t^n$, $g(t)=\sum_{n=0}^{+\infty} b_n t^n$ be analytic functions defined on $\R$.
Then
\begin{equation}\label{eq:2015-07-27:02}
  p\left(
f_B(t)g_C(s)x
\right)
\leq
\left(
\sum_{n=0}^{+\infty} |a_n||t|^n
\right)
\left(
\sum_{n=0}^{+\infty} |b_n||s|^n
\right)
\sup_{i,j\in \mathbb{N}}p\left(
B^i C^jx
\right),\qquad \forall   p\in \mathcal{P}_X,\ \forall  x\in X,\ \forall   t,s\in \R,
\end{equation}
and
the family $\{f_B(t)g_C(s)\}_{t,s\in [-r,r]}$ is sequentially equicontinuous for every $r>0$.
\end{Lemma}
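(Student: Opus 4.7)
The plan is to obtain the estimate \eqref{eq:2015-07-27:02} by exploiting sequential continuity of $f_B(t)$ (Proposition \ref{prop:2015-05-01:15}\emph{(\ref{2015-12-05:06})}) to move it inside the limit defining $g_C(s)x$, and then applying the partial-sum estimate already used in the proof of Proposition \ref{prop:2015-05-01:15}. Fix $p\in\mathcal{P}_X$, $x\in X$, and $t,s\in\R$. Since $f_B(t)\in\mathcal{L}_0(X)$, I first write
\begin{equation*}
f_B(t)g_C(s)x=f_B(t)\lim_{N\to+\infty}\sum_{j=0}^{N}b_j s^j C^j x=\lim_{N\to+\infty}\sum_{j=0}^{N}b_j s^j f_B(t)C^j x,
\end{equation*}
and for each $j$ the vector $f_B(t)C^j x$ is in turn the limit in $X$ of $\sum_{i=0}^{M}a_i t^i B^i C^j x$ as $M\to+\infty$, by definition of $f_B(t)$.

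The next step is the seminorm bound. By continuity of $p$ and the triangle inequality,
\begin{equation*}
p(f_B(t)C^j x)\le \sum_{i=0}^{+\infty}|a_i||t|^i\,p(B^i C^j x)\le \Big(\sum_{i=0}^{+\infty}|a_i||t|^i\Big)\sup_{i,j\in\N}p(B^i C^j x),
\end{equation*}
and then, summing over $j$ and again using continuity of $p$,
\begin{equation*}
p(f_B(t)g_C(s)x)\le \sum_{j=0}^{+\infty}|b_j||s|^j\,p(f_B(t)C^j x)\le \Big(\sum_{i=0}^{+\infty}|a_i||t|^i\Big)\Big(\sum_{j=0}^{+\infty}|b_j||s|^j\Big)\sup_{i,j\in\N}p(B^i C^j x).
\end{equation*}
To know that this bound is finite (and meaningful), I observe that by Proposition \ref{prop:2015-05-03:00}\emph{(\ref{2015-12-04:01})} applied to $\mathcal{F}^{(1)}=\{B^i\}_{i\in\N}$ and $\mathcal{F}^{(2)}=\{C^j\}_{j\in\N}$, the family $\{B^i C^j\}_{i,j\in\N}$ is sequentially equicontinuous, and hence by Proposition \ref{prop:2015-05-03:00}\emph{(\ref{2015-10-06:04})} it is equibounded, so $\sup_{i,j}p(B^iC^j x)<+\infty$. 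This proves \eqref{eq:2015-07-27:02}.

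For the sequential equicontinuity of $\{f_B(t)g_C(s)\}_{t,s\in[-r,r]}$, I apply the estimate just obtained to $|t|,|s|\le r$: for every sequence $\{x_n\}_{n\in\N}$ converging to $0$ in $X$ and every $p\in\mathcal{P}_X$,
\begin{equation*}
\sup_{t,s\in[-r,r]}p(f_B(t)g_C(s)x_n)\le \Big(\sum_{i=0}^{+\infty}|a_i|r^i\Big)\Big(\sum_{j=0}^{+\infty}|b_j|r^j\Big)\sup_{i,j\in\N}p(B^iC^j x_n).
\end{equation*}
Since, as noted above, $\{B^iC^j\}_{i,j\in\N}$ is sequentially equicontinuous, the characterization \eqref{eq:2015-05-01:04} gives $\sup_{i,j}p(B^iC^j x_n)\to 0$, and the desired sequential equicontinuity follows from \eqref{eq:2015-05-01:04} again.

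The only genuine difficulty is the bookkeeping for the interchange of limits: since we are not in a Banach space and Fubini for double series is not directly available in a topological sense, one has to first pull $f_B(t)$ through the limit defining $g_C(s)x$ using sequential continuity, and only then expand $f_B(t)C^j x$ as a power series in $B$; the key fact that makes everything work uniformly in $j$ (and eventually in $t,s,n$) is the joint equiboundedness / sequential equicontinuity of $\{B^iC^j\}_{i,j\in\N}$ provided by Proposition \ref{prop:2015-05-03:00}\emph{(\ref{2015-12-04:01})}--\emph{(\ref{2015-10-06:04})}.
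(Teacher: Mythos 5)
Your proof is correct and follows essentially the same route as the paper: both reduce the claim to the seminorm estimate on finite partial sums, justify the interchange of the two limits via sequential continuity (the paper pulls the partial sums of $f_B$ through the limit in $m$, you pull the full operator $f_B(t)$ through and then expand in $i$, which amounts to the same thing), and then deduce sequential equicontinuity from \eqref{eq:2015-07-27:02} together with Proposition \ref{prop:2015-05-03:00}. No gaps.
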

\begin{proof}
  By Proposition \ref{prop:2015-05-01:15} and by recalling that every partial sum
$\sum_{i=0}^n a_i t^i B^i$ 
 is sequentially continuous, we can write
$$
p\left(
f_B(t)g_C(s)x
\right)
=\lim_{n\rightarrow +\infty}\lim_{m\rightarrow +\infty}p\left(
\left(\sum_{i=0}^n a_i t^i B^i\right)
\left(\sum_{j=0}^m b_j s^j C^j\right)x
\right)
\qquad \forall   p\in \mathcal{P}_X,\ \forall  x\in X,\ \forall  t,s\in \R.
$$
Then, we obtain 
\eqref{eq:2015-07-27:02}
 by the properties of the seminorms.
The sequential equicontinuity of the family $\{f_B(t)g_C(s)\}_{t,s\in[-r,r]}$ comes from \eqref{eq:2015-07-27:02} and Proposition \ref{prop:2015-05-03:00}\emph{(\ref{2015-12-04:01})}.
\end{proof}

\begin{Proposition}
  \label{prop:2015-05-02:09} Let $X$ be sequentially complete.
 Let $B$, $C$, $f$, $g$,
as in Lemma \ref{lem:2015-07-27:02}.
We have  the following:
\begin{enumerate}[(i)]
\item 
$(f+g)_B=f_B+g_B$ and $(fg)_B=f_B g_B$;
\item \label{2015-12-05:09}
if   $BC=CB$,
 then $f_B(t)g_C(s)=g_C(s)f_B(t)$, for every $t,s\in \R$,
and $\{f_B(t)g_C(s)\}_{t,s\in [-r,r]}$ is sequentially equicontinuous for every $r>0$.
\end{enumerate}

\end{Proposition}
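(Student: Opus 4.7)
The plan is to handle the two parts separately, leaning on the absolute-convergence estimates already secured in Proposition~\ref{prop:2015-05-01:15} and Lemma~\ref{lem:2015-07-27:02}, together with Proposition~\ref{2015-10-08:05} to justify limit interchanges in $\mathcal{L}_{0,b}(X)$.

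For part~(i), the identity $(f+g)_B = f_B + g_B$ is immediate from the definition \eqref{eq:2015-05-03:02}, since the $n$-th coefficient of $f+g$ is $a_n+b_n$ and partial sums are finite linear combinations. For the product identity $(fg)_B = f_B g_B$, I would first fix $t\in \mathbb{R}$ and, for each $n\in\mathbb{N}$, write the Cauchy product partial sum
\[
S_n(t)\coloneqq \sum_{k=0}^n \Bigl(\sum_{j=0}^k a_j b_{k-j}\Bigr) t^k B^k,
\]
which, using $B^j B^{k-j}=B^k$, rearranges algebraically (as a finite sum) to a difference of $\bigl(\sum_{j=0}^n a_j t^j B^j\bigr)\bigl(\sum_{j=0}^n b_j t^j B^j\bigr)$ and a tail term. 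The key step is then to show that this tail tends to $0$ in $\mathcal{L}_{0,b}(X)$: for any $p\in \mathcal{P}_X$ and any bounded $D\subset X$, one estimates the tail seminorm $\rho_{p,D}(\cdot)$ by the product of the absolute-value series $\sum |a_j||t|^j\,\sum |b_j||t|^j$ (truncated appropriately), times $\sup_{i\in\mathbb{N}}\sup_{y\in D}p(B^iy)$, which is finite by Proposition~\ref{prop:2015-05-03:00}\emph{(\ref{2015-10-06:04})}. Thus $S_n(t)\to (fg)_B(t)$ and simultaneously, by Proposition~\ref{2015-10-08:05} together with Proposition~\ref{prop:2015-05-01:15}\emph{(\ref{2015-12-05:06})}, the partial product tends to $f_B(t)g_B(t)$ in $\mathcal{L}_{0,b}(X)$; uniqueness of limits yields the claim.

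For part~(ii), the commutativity $BC=CB$ implies by induction that $B^iC^j = C^j B^i$ for all $i,j\in\mathbb{N}$. Fix $t,s\in\mathbb{R}$ and consider the finite partial sums
\[
P_{n,m}\coloneqq \Bigl(\sum_{i=0}^n a_i t^i B^i\Bigr)\Bigl(\sum_{j=0}^m b_j s^j C^j\Bigr),\qquad Q_{n,m}\coloneqq \Bigl(\sum_{j=0}^m b_j s^j C^j\Bigr)\Bigl(\sum_{i=0}^n a_i t^i B^i\Bigr),
\]
which coincide term by term because of $B^iC^j=C^jB^i$. By Proposition~\ref{prop:2015-05-01:15}\emph{(\ref{2015-12-05:06})} and Proposition~\ref{2015-10-08:05}, we have $P_{n,m}\to f_B(t)g_C(s)$ and $Q_{n,m}\to g_C(s)f_B(t)$ in $\mathcal{L}_{0,b}(X)$ as $n,m\to +\infty$; hence $f_B(t)g_C(s)=g_C(s)f_B(t)$. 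The sequential equicontinuity of $\{f_B(t)g_C(s)\}_{t,s\in[-r,r]}$ on each compact square is nothing but the second assertion of Lemma~\ref{lem:2015-07-27:02}, so it requires no further work.

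The only mildly delicate point is the justification of the iterated/joint limit in the Cauchy product step of part~(i): one must ensure that the truncation error can be made small simultaneously in both factors, uniformly on a bounded set $D$. This reduces to the two-factor estimate \eqref{eq:2015-07-27:02} (specialized to $C=B$ and $s=t$) together with the equiboundedness of $\{B^i D\}_{i\in\mathbb{N}}$ granted by Proposition~\ref{prop:2015-05-03:00}\emph{(\ref{2015-10-06:04})}; no additional sequential-completeness hypothesis beyond what is already standing is needed.
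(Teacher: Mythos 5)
Your proof is correct and follows exactly the route the paper intends: the paper's own proof is just the one-line remark that the result ``follows by algebraic computations on the partial sums and then passing to the limit,'' and your argument is a faithful, detailed expansion of that sketch (the Cauchy-product tail estimate mirrors the identity used later in the proof of Proposition~\ref{prop:2015-05-03:06}, and the limit interchanges are correctly justified via Proposition~\ref{2015-10-08:05} and Proposition~\ref{prop:2015-05-01:15}). No gaps.
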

\begin{proof}
The proof follows by algebraic computations on the partial sums and then passing to the limit.
%
\end{proof}

\begin{Notation}
We denote 
 $e^{tB}\coloneqq f_B(t)$ when
$f(t)=e^t$.
\end{Notation}

\begin{Proposition}
  \label{prop:2015-05-03:06}
  Let $X$ be sequentially complete.
  \begin{enumerate}[(i)]
\item\label{2015-10-08:10}
Let $B,C\in \mathcal{L}_0(X)$ be 
such that
 $BC=CB$, and assume that the families $\{B^n\}_{n\in \mathbb{N}}$ and $\{C^n\}_{n\in \mathbb{N}}$ are sequentially equicontinous.
Then, for every $t,s\in \mathbb{R}$,
\begin{enumerate}[(a)]
\item\label{2015-12-05:10}  the sum
$
e^{tB+sC}\coloneqq \sum_{n=0}^{+\infty}\frac{(tB+sC)^n}{n!}
$
converges in $\mathcal{L}_{0,b}(X)$;
\item\label{2015-12-05:11}
$e^{tB+sC}=e^{tB}e^{sC}=e^{sC}e^{tB}$;
\item\label{2015-12-05:12}
 the family $\left\{e^{tB+sC}\right\}_{t,s\in [-r,r]}$ is sequentially equicontinuous for every $r>0$.
 \end{enumerate}
  \item\label{2015-10-08:11}
 Let $B\in \mathcal{L}_0(X)$ be such that
 the family $\{B^n\}_{n\in \mathbb{N}}$ is sequentially equicontinous.
Then $\{e^{tB}\}_{t\in \mathbb{R}^+}$ is a $C_0$-sequentially locally  equicontinuous semigroup on $X$ with infinitesimal generator  $B$.
 \end{enumerate}
\end{Proposition}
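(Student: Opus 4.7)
The plan for part \emph{(\ref{2015-10-08:10})} is to extract from the commutation $BC=CB$ a single equiboundedness statement and then reduce all three claims to binomial estimates; part \emph{(\ref{2015-10-08:11})} will then follow by applying part \emph{(\ref{2015-10-08:10})} with $C:=B$.

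First, I would apply Proposition \ref{prop:2015-05-03:00}\emph{(\ref{2015-12-04:01})} to the families $\{B^i\}_{i\in\mathbb{N}}$ and $\{C^j\}_{j\in\mathbb{N}}$ to conclude that $\{B^iC^j\}_{i,j\in\mathbb{N}}$ is sequentially equicontinuous, and then Proposition \ref{prop:2015-05-03:00}\emph{(\ref{2015-10-06:04})} to obtain that $M_{p,D}\coloneqq \sup_{i,j\in \mathbb{N},\,y\in D}p(B^iC^jy)<+\infty$ for every $p\in\mathcal{P}_X$ and every bounded $D\subset X$. Using $BC=CB$ and the binomial formula, this yields the master estimate
\begin{equation*}
\sup_{y\in D}p\bigl((tB+sC)^n y\bigr)\le (|t|+|s|)^n\, M_{p,D},\qquad \forall n\in\mathbb{N},\ \forall t,s\in\mathbb{R}.
\end{equation*}
Dividing by $n!$ and summing shows that the partial sums of $\sum_n (tB+sC)^n/n!$ form a Cauchy sequence in $\mathcal{L}_{0,b}(X)$, uniformly for $(t,s)\in[-r,r]^2$, so Proposition \ref{prop:2015-08-04:00}\emph{(\ref{2015-10-08:03})} delivers \emph{(\ref{2015-12-05:10})}; passing the same estimate to the limit gives the uniform bound needed for \emph{(\ref{2015-12-05:12})}.

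For the identity \emph{(\ref{2015-12-05:11})} I would compare the truncations
\begin{equation*}
S_N\coloneqq\sum_{n=0}^N\frac{(tB+sC)^n}{n!},\qquad T_N\coloneqq\Bigl(\sum_{k=0}^N\frac{t^k B^k}{k!}\Bigr)\Bigl(\sum_{l=0}^N\frac{s^l C^l}{l!}\Bigr).
\end{equation*}
Expanding $S_N$ via the binomial theorem (legitimate thanks to $BC=CB$), one recognizes $S_N=\sum_{k+l\le N}(t^ks^l/(k!\,l!))B^kC^l$ and $T_N=\sum_{k,l\le N}(t^ks^l/(k!\,l!))B^kC^l$, so the difference is indexed by $\{(k,l):k,l\le N,\ k+l>N\}$ and is controlled, via the master estimate, by the tail $M_{p,D}\sum_{k+l>N,\,k,l\le N}|t|^k|s|^l/(k!\,l!)$, which vanishes as $N\to+\infty$ uniformly for $(t,s)\in[-r,r]^2$; meanwhile Proposition \ref{2015-10-08:05} gives $T_N\to e^{tB}e^{sC}$ in $\mathcal{L}_{0,b}(X)$. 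Combining, $e^{tB+sC}=e^{tB}e^{sC}$, and the equality with $e^{sC}e^{tB}$ follows by swapping the roles of $B$ and $C$.

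Part \emph{(\ref{2015-10-08:11})} then reduces to part \emph{(\ref{2015-10-08:10})} with $C:=B$: the semigroup law $e^{(t+s)B}=e^{tB}e^{sB}$ is exactly \emph{(\ref{2015-12-05:11})}, sequential local equicontinuity on $[0,r]$ comes from \emph{(\ref{2015-12-05:12})}, strong continuity at $0$ is immediate from the tail bound $p(e^{tB}x-x)\le (e^{|t|}-1)\sup_n p(B^nx)$, and the analogous tail estimate for $(e^{hB}x-x)/h-Bx$ identifies $B$ as the infinitesimal generator. The hardest step will be the comparison $S_N-T_N\to 0$ in \emph{(\ref{2015-12-05:11})}: this is precisely where the commutativity hypothesis, the master estimate, and the sequential completeness of $X$ combine to justify the rearrangement of a doubly indexed exponential series in the locally convex setting.
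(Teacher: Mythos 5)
Your proof is correct and follows essentially the same strategy as the paper's: both reduce everything to the sequential equicontinuity and equiboundedness of $\{B^iC^j\}_{i,j\in\mathbb{N}}$ (via Proposition \ref{prop:2015-05-03:00}) and control the discrepancy between the partial sums of $e^{tB+sC}$ and the product of the partial sums of $e^{tB}$ and $e^{sC}$ by a vanishing scalar tail, with Proposition \ref{2015-10-08:05} handling the convergence of the product. The only real difference is organizational: you expand $(tB+sC)^n$ binomially and treat general $(t,s)$ at once (also getting convergence of the series directly from a Cauchy estimate and Proposition \ref{prop:2015-08-04:00}), whereas the paper first proves the case $s=t$ and then rescales $C$ to $\frac{s}{t}C$, which forces a case distinction on $|s|\leq|t|$ that your route avoids.
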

\begin{proof}
\emph{(\ref{2015-10-08:10})}
Let $r>0$ and $t\in [-r,r]$. By standard computations, we have 
\begin{equation}
  \label{eq:2015-10-08:04}
  \sum_{i=0}^{n}\frac{(B+C)^i}{i!}t^i=
   \left( \sum_{i=0}^{n}\frac{B^i}{i!}t^i \right) 
   \left( \sum_{i=0}^{n}\frac{C^i}{i!}t^i \right) 
   -  \sum_{i=0}^{n}\frac{B^i}{i!}t^i \left( \sum_{k=n-i+1}^{n}\frac{C^k}{k!}t^k \right).
\end{equation}
Let $D\subset X$  be a bounded set. For $x\in D$ and $p\in \mathcal{P}_X$, we have
\begin{equation*}
  \begin{split}
      p \left( 
\sum_{i=0}^{n}\frac{B^i}{i!}t^i \left( \sum_{k=n-i+1}^{n}\frac{C^kx}{k!}t^k \right)
 \right)&\leq
\sum_{i=0}^{n}\sum_{k=n-i+1}^{n}\frac{1}{i!k!}r^{i+k} p\left( B^iC^kx \right)\\
&\leq
 \left( \sum_{i=0}^{n}\sum_{k=n-i+1}^{n}\frac{1}{i!k!}r^{i+k} \right) 
\sup_{i,k\in \mathbb{N}}\rho_{p,D}
\left( B^iC^k \right).
\end{split}
\end{equation*}
By Proposition \ref{prop:2015-05-03:00}\emph{(\ref{2015-12-04:01})}, the family $\{B^iC^k\}_{i,k\in \mathbb{N}}$ is sequentially equicontinuous. Hence,  by Proposition \ref{prop:2015-05-03:00}\emph{(\ref{2015-10-06:04})}, we have
$
\sup_{i,k\in \mathbb{N}}\rho_{p,D}
\left( B^iC^k \right)<+\infty
$.
Moreover,  Lebesgue's dominated convergence theorem applied in discrete spaces yields
$$
\lim_{n\rightarrow +\infty}  \sum_{i=0}^{n}\sum_{k=n-i+1}^{n}\frac{1}{i!k!}r^{i+k}  =0.
$$
So, we conclude
\begin{equation}
  \label{eq:2015-10-08:07}
  \lim_{n\rightarrow +\infty}\rho_{p,D}
 \left( 
\sum_{i=0}^{n}\frac{B^i}{i!}t^i \left( \sum_{k=n-i+1}^{n}\frac{C^k}{k!}t^k \right)
 \right) =0.
\end{equation}
On the other hand, by Proposition \ref{2015-10-08:05},
\begin{equation}
  \label{2015-10-08:06}
\lim_{n\rightarrow +\infty}     \left( \sum_{i=0}^{n}\frac{B^i}{i!}t^i \right) 
   \left( \sum_{i=0}^{n}\frac{C^i}{i!}t^i \right) 
=
\lim_{n\rightarrow +\infty}    \left( \sum_{i=0}^{n}\frac{B^i}{i!}t^i \right) 
\lim_{n\rightarrow +\infty}     \left( \sum_{i=0}^{n}\frac{C^i}{i!}t^i \right) =e^{tB}e^{tC},
\end{equation}
where the limits are taken in the space  $\mathcal{L}_{0,b}(X)$.
By \eqref{eq:2015-10-08:04}, \eqref{eq:2015-10-08:07} and 
\eqref{2015-10-08:06}, we obtain
\begin{equation}
  \label{eq:2015-10-08:08}
\lim_{n\rightarrow +\infty}  \sum_{i=0}^{n}\frac{(B+C)^i}{i!}t^i=e^{tB}e^{tC},
\end{equation}
with the limit taken in $\mathcal{L}_{0,b}(X)$.

Now, let
 $t\neq 0$ and $|s|\leq|t|$(\footnote{If $|t|< |s|$, we can exchange the role of $B$ and $C$, by simmetry of the sums appearing in \eqref{2015-10-08:09}.}). Then $\left\{ \left( \frac{s}{t}C \right) ^n\right\}_{n\in \mathbb{N}}$ is sequentially equicontinuous. By replacing $C$ by $\frac{s}{t}C$ in 
\eqref{eq:2015-10-08:08}, we have 
\begin{equation}
\label{2015-10-08:09}
\lim_{n\rightarrow +\infty}  \sum_{i=0}^{n}\frac{( tB+sC)^i}{i!}=
%
e^{tB}e^{ \left( \frac{s}{t}C \right) t}=e^{tB}e^{sC},
\end{equation}
where the limits are in $\mathcal{L}_{0,b}(X)$. So we have proved \emph{(a)}.
Properties \emph{(b)} and 
 \emph{(c)} now follow from
\eqref{2015-10-08:09} and from  Proposition \ref{prop:2015-05-02:09}\emph{(\ref{2015-12-05:09})}. 

\emph{(\ref{2015-10-08:11})}
  First we notice that $e^{0B}=I$ by definition. The semigroup property for $\{e^{tB}\}_{t\in\R^+}$ is given by \emph{(\ref{2015-10-08:10})}, which also provides 
 the sequential local equicontinuity.
Proposition \ref{prop:2015-05-01:15} provides the continuity of
the map $\mathbb{R}^+\rightarrow X,\ t\mapsto e^{tB}x$, for every $x\in X$.
Hence, we have proved that $\{e^{tB}\}_{t\in \R^+}$ is a  $C_0$-sequentially locally equicontinuous semigroup.
It remains to show that the infinitesimal generator is $B$.
For $h>0$,
define $f(t;h)\coloneqq e^{h t}-1-ht$.
By applying
\eqref{eq:2015-07-27:02}
to the map $\mathbb{R}\rightarrow \mathbb{R},\  t\mapsto f(t;h)$, with $B$ in place of $B$, and with  $C=I$ and $g\equiv 1$, we obtain
\begin{equation*}
    p\left(\frac{e^{hB}-I}{h}x-Bx\right)=h^{-1}p\left(f_{B}(1;h)\right)\leq
h^{-1}f(1;h)\sup_{n\in \mathbb{N}}p\left(B^nx\right)
\end{equation*}
and the last term converges to $0$ as $h\rightarrow 0^+$, because of sequential equicontinuity of $\{B^n\}_{n\in\N}$. This shows that the domain of the generator is the whole space $X$ and that the generator is $B$.
\end{proof}

We can now state the equivalent of the Hille-Yosida generation theorem in our framework of $C_0$-sequentially equicontinuous semigroups.

\begin{Theorem}
  \label{theo:2015-05-02:10}
  Let $\hat{A}\colon  \mathcal{D}(\hat{A})\subset X \rightarrow X$ be a linear operator. Consider the following two statements.
\begin{enumerate}[(i)]
\item \label{2015-12-05:13}
 $\hat A$ is the infinitesimal generator of a $C_0$-sequentially equicontinuous  semigroup $\hat T$ on $X$.
\item\label{2015-12-05:14}
 $\hat A$ is a sequentially closed  linear operator, $\mathcal{D}(\hat A)$ is sequentially dense in $X$, and there exists a sequence  $\{\lambda_n\}_{n\in \mathbb{N}}\subset \rho_0(\hat A)$, with $\lambda_n\rightarrow +\infty$, such that the family  $\left\{\left(\lambda_n R(\lambda_n,\hat A)\right)^m\right\}_{n,m\in \mathbb{N}}$ is sequentially equicontinuous.
\end{enumerate}
Then {(\ref{2015-12-05:13})}$\Rightarrow${(\ref{2015-12-05:14})}. If $X$ is sequentially complete, then  {(\ref{2015-12-05:14})}$\Rightarrow${(\ref{2015-12-05:13})}.
\end{Theorem}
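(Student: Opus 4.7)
The implication (i) $\Rightarrow$ (ii) is immediate from the theory of Subsection \ref{gen}: sequential closedness of $\hat A$ is Corollary \ref{cor:2015-05-01:13}, sequential density of $\mathcal{D}(\hat A)$ is Proposition \ref{prop:2015-05-01:10}, the inclusion $(0,+\infty)\subset \rho_0(\hat A)$ with $R(\lambda,\hat A)=R(\lambda)$ is Theorem \ref{teo:2015-05-01:12}, and the sequential equicontinuity of the family $\{(\lambda R(\lambda,\hat A))^n\}_{\lambda>0,\,n\in\N}$ is Proposition \ref{prop:2015-05-01:14}. Any divergent sequence $\lambda_n\in(0,+\infty)$ closes this direction.

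For (ii) $\Rightarrow$ (i), assuming $X$ sequentially complete, the plan is to reproduce Yosida's construction, adapted to the sequential setting. Introduce the Yosida approximations
\begin{equation*}
\hat A_n\coloneqq \lambda_n^2 R(\lambda_n,\hat A)-\lambda_n I=\lambda_n \hat A\, R(\lambda_n,\hat A)\in \mathcal{L}_0(X).
\end{equation*}
Setting $C_n\coloneqq \lambda_n R(\lambda_n,\hat A)$, the hypothesis ensures sequential equicontinuity of $\{C_n^k\}_{k\in\N}$ for each $n$, so Proposition \ref{prop:2015-05-03:06}\emph{(\ref{2015-10-08:11})} produces a $C_0$-sequentially locally equicontinuous semigroup $\{e^{s C_n}\}_{s\in\R^+}$ with generator $C_n$; a time rescaling together with multiplication by the scalar semigroup $e^{-\lambda_n t}$ yields
\begin{equation*}
T^{(n)}_t\coloneqq e^{-\lambda_n t}e^{t\lambda_n C_n},
\end{equation*}
a $C_0$-sequentially locally equicontinuous semigroup on $X$ with generator $\hat A_n$. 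The series expansion of $e^{t\lambda_n C_n}$ gives the crucial bound
\begin{equation*}
p(T^{(n)}_t x)\leq e^{-\lambda_n t}\sum_{m=0}^{+\infty}\frac{(\lambda_n t)^m}{m!}p(C_n^m x)\leq \sup_{k,j\in\N}p(C_k^j x), \qquad \forall p\in\mathcal{P}_X,
\end{equation*}
which shows that the family $\{T^{(n)}_t\}_{n\in\N,\,t\in\R^+}$ is sequentially equicontinuous jointly in $n$ and $t$.

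The heart of the argument is the convergence of $T^{(n)}_t x$. First I would establish that $C_n x\to x$ for every $x\in X$: on $\mathcal{D}(\hat A)$ this follows from $C_n x-x=\lambda_n^{-1}C_n\hat A x$ combined with the equiboundedness of $\{C_n\hat A x\}_n$ given by Proposition \ref{prop:2015-05-03:00}\emph{(\ref{2015-10-06:04})} applied to the singleton $\{\hat A x\}$; the extension to arbitrary $x\in X$ is a $3\varepsilon$ argument through sequential density, with the error term uniformly in $n$ controlled by sequential equicontinuity of $\{C_n\}_n$ applied to the null sequence $x-x_j\to 0$. Consequently $\hat A_n x=C_n\hat A x\to \hat A x$ for every $x\in\mathcal{D}(\hat A)$. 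Since all the operators $\hat A_n,T^{(n)}_s,T^{(m)}_{t-s}$ are series in the mutually commuting resolvents (commutativity via the resolvent equation, Corollary \ref{cor:2015-07-28:06}\emph{(\ref{2015-12-05:04})}), Lemma \ref{prop:2015-05-03:12} gives
\begin{equation*}
T^{(n)}_t x-T^{(m)}_t x=\int_0^t T^{(n)}_s T^{(m)}_{t-s}(\hat A_n-\hat A_m)x\,ds,\qquad x\in\mathcal{D}(\hat A).
\end{equation*}
The family $\{T^{(n)}_s T^{(m)}_{t-s}\}_{n,m\in\N,\,0\leq s\leq t}$ is sequentially equicontinuous by Proposition \ref{prop:2015-05-03:00}\emph{(\ref{2015-12-04:01})} combined with the bound above, so the right-hand side converges to $0$ locally uniformly in $t$ as $n,m\to+\infty$. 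Sequential completeness of $X$ lets me define $\hat T_t x\coloneqq \lim_n T^{(n)}_t x$ for $x\in\mathcal{D}(\hat A)$, and the uniform sequential equicontinuity together with sequential density extend $\hat T_t$ to all $x\in X$ by another $3\varepsilon$ argument.

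Finally, $\hat T$ is a $C_0$-sequentially equicontinuous semigroup: one passes to the limit in the semigroup property, in the strong continuity at $0$, and in the uniform bound above. Denoting by $B$ the generator of $\hat T$, passing to the limit in $T^{(n)}_t x-x=\int_0^t T^{(n)}_s \hat A_n x\,ds$ for $x\in\mathcal{D}(\hat A)$ yields $\hat T_t x-x=\int_0^t \hat T_s \hat A x\,ds$, so $B\supset \hat A$. Equality $\mathcal{D}(B)=\mathcal{D}(\hat A)$ follows upon fixing any $\lambda_n\in\rho_0(\hat A)\cap \rho_0(B)$ (the latter being guaranteed by Theorem \ref{teo:2015-05-01:12} applied to $\hat T$) and noting that both $\lambda_n-\hat A$ and $\lambda_n-B$ are bijections onto $X$: given $y\in \mathcal{D}(B)$, write $(\lambda_n-B)y=(\lambda_n-\hat A)z$ for some $z\in\mathcal{D}(\hat A)$, use $B\supset \hat A$ to get $(\lambda_n-B)y=(\lambda_n-B)z$, and invoke injectivity of $\lambda_n-B$. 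I expect the main obstacle to be the double extension -- of $C_n x\to x$ and of $T^{(n)}_t x\to \hat T_t x$ -- from $\mathcal{D}(\hat A)$ to all of $X$, since the familiar net/equicontinuity arguments of the classical Hille-Yosida proof must be carefully replaced by sequential ones that exploit only the uniform sequential equicontinuity available from the hypothesis.
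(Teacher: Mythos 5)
Your proposal is correct and follows essentially the same route as the paper: the same Yosida approximants $T^{(n)}_t=e^{-\lambda_n t}e^{t\lambda_n\,\lambda_nR(\lambda_n,\hat A)}$, the same joint sequential equicontinuity bound, the same integral representation of $T^{(n)}_tx-T^{(m)}_tx$ to get the Cauchy property on $\mathcal{D}(\hat A)$, the same density-plus-equicontinuity extension to $X$, and the same bijectivity argument to identify the generator. The only point you gloss over that the paper makes explicit is the justification for exchanging the limit with the integral in $\lim_n\int_0^t T^{(n)}_s\hat A_nx\,ds$, which rests on the locally uniform convergence $T^{(n)}_sJ_{\lambda_n}\hat Ax\to\hat T_s\hat Ax$ already available from your construction.
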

\begin{proof}
\emph{{(\ref{2015-12-05:13})}$\Rightarrow${(\ref{2015-12-05:14})}}.
The fact that $\hat A$ is a sequentially closed linear operator was proved in   
Corollary \ref{cor:2015-05-01:13}. The fact that $\mathcal{D}(\hat A)$ is sequentially dense in $X$ was proved in  Proposition \ref{prop:2015-05-01:10}. The remaining facts follow by Proposition \ref{lem:2015-05-02:07} and 
Theorem \ref{teo:2015-05-01:12}.

\emph{{(\ref{2015-12-05:14})}$\Rightarrow${(\ref{2015-12-05:13})}}. We split this part of the proof in several steps.

\underline{\emph{Step 1}.}
Let $\{\lambda_n\}_{n\in \mathbb{N}}\subset \rho_0(\hat A)$ be a sequence as in \emph{(\ref{2015-12-05:14})}. 
For $n\in \mathbb{N}$, define $J_{\lambda_n}\coloneqq \lambda_n R(\lambda_n,\hat A)$. 
Observe that, for all $x\in \mathcal{D}(\hat A)$, it is $(J_{\lambda_n}-I)x=R(\lambda_n,\hat A)\hat Ax$. By
assumption, the family $\{J_{\lambda_n}\}_{n\in \mathbb{N}}$ is sequentially equicontinuous, and then, for every
$x\in \mathcal{D}(\hat A)$ and $p\in \mathcal{P}_X$,
\begin{equation}
  \label{eq:2015-12-05:15}
  \lim_{n\rightarrow +\infty }p\left(
J_{\lambda_n} x-x
\right)=
\lim_{n\rightarrow +\infty}
p\left(
 R(\lambda_n,\hat A)\hat Ax
 \right)\leq \lim_{n\rightarrow +\infty}\lambda_{n}^{-1}\left(\sup_{k\in \mathbb{N}}p
 \left(
 J_k\hat Ax
 \right)\right)=0.
\end{equation}
Now let $x\in X$. By assumption, there exists a sequence $\{x_k\}_{k\in \mathbb{N}}$ in $\mathcal{D}(\hat A)$ converging to $x$ in $X$. 
We have
\begin{equation*}
  \begin{split}
p\left(
J_{\lambda_n} x-x
\right)
&\leq
p\left(
x-x_k
\right)
+p\left(
J_{\lambda_n}x_k-x_k
\right)+
p\left(
J_{\lambda_n}(x-x_k)
\right),
\qquad\forall  k\in \mathbb{N},\ \forall  n\in \mathbb{N},\ \forall  p\in \mathcal{P}_X.
\end{split}
\end{equation*}
By taking first the $\limsup$ in $n$ and then the limit as $k\rightarrow+\infty$ in the inequality above, 
and recalling  \eqref{eq:2015-12-05:15} and the sequential equicontinuity of $\{J_{\lambda_n}\}_{n\in \mathbb{N}}$, we conclude
  \begin{equation}
    \label{eq:2015-07-28:07}
      \lim_{n\rightarrow +\infty}J_{\lambda_n} x=x,\qquad \ \forall   x\in X.
    \end{equation}

\underline{\emph{Step 2}.}
Here we show that, for $t \in\mathbb{R}^+$ and  $n\in \mathbb{N}$,
 $T^{(n)}_t\coloneqq e^{t\hat AJ_{\lambda_n}}$ is well-defined as a convergent series in $\mathcal{L}_{0,b}(X)$, and that $\big\{T^{(n)}_t\big\}_{t\in \mathbb{R}^+,n\in \mathbb{N}}$ is sequentially equicontinuous. 
Taking into account that  $\hat AJ_{\lambda_n}=\lambda_n(J_{\lambda_n}-I)$,
we have (as formal sums)
$T^{(n)}_t= e^{t\hat AJ_{\lambda_n}}=e^{t\lambda_n(J_{\lambda_n}-I)}$.
Since
 $\{J^k_{\lambda_n}\}_{k\in \mathbb{N}}$ 
is assumed to be
  sequentially equicontinuous, 
by  Proposition \ref{prop:2015-05-03:06}\emph{(\ref{2015-10-08:10})}, $T^{(n)}_t$ is well-defined 
as a convergent series in $\mathcal{L}_{0,b}(X)$,
and
\begin{equation}\label{xc}
T^{(n)}_t= 
e^{-t\lambda_n I}e^{t\lambda_n J_{\lambda_n}}.
\end{equation}
  Hence, using   Proposition \ref{prop:2015-05-03:06}\emph{(\ref{2015-10-08:11})}, 
the family
$\{T^{(n)}_t \}_{t\in\R^+}$ is a $C_0$-sequentially locally  equicontinuous semigroup for each fixed $n\in\N$. 
 On the other hand, 
 by \eqref{xc} and by Lemma \ref{lem:2015-07-27:02}, we have
$$
\sup_{n\in\N}p\left(T^{(n)}_tx\right)=\sup_{n\in \N} \left(e^{-t\lambda_n}p\left(e^{t\lambda_n J_{\lambda_n}}x\right)\right)\leq \sup_{n,k\in \N}p(J_{\lambda_{n}}^kx),\qquad  \forall  p\in \mathcal{P}_X,\ \forall  x\in X.
$$
As, by assumption,
 $\{J^k_{\lambda_n}\}_{n,k\in \mathbb{N}}$ 
is
  sequentially equicontinuous, this shows that
 $\big\{T^{(n)}_t\big\}_{t\in \mathbb{R}^+,n\in \mathbb{N}}$ is sequentially equicontinuous.

\underline{\emph{Step 3}.} Here we show that   the sequence $\big\{T_t^{(n)}x\big\}_{n\in\N}$ is Cauchy for every  $t\in \mathbb{R}^+$ and $x\in \mathcal{D}(\hat A)$.
First note that, since the family $\{R(\lambda_n,\hat A)\}_{n\in \mathbb{N}}$ is a commutative set (see \eqref{eq:2015-05-03:10} and Remark \ref{rem:2015-08-04:02}), also the family $\{J_{\lambda_n}\}_{n\in \mathbb{N}}$ is a commutative set. Then
  $\lambda_m(J_{\lambda_m}-I)$ commutes with every $J_{\lambda_n}$. Since the sum defining $T^{(m)}_t$ is convergent in $\mathcal{L}_{0,b}(X)$,
 we have $T^{(m)}_tJ_{\lambda_n}=J_{\lambda_n}T_t^{(m)}$ and $T^{(m)}_tT^{(n)}_s=T^{(n)}_sT^{(m)}_t$ for every $m,n\in \mathbb{N}$, $t,s\in \mathbb{R}^+$.
By Lemma \ref{prop:2015-05-03:12} and by the commutativity just noticed,
if $x\in X$ and $t\in\R^+$, the map
 $F\colon [0,t]\rightarrow X,\ s\mapsto T^{(n)}_{t-s}T^{(m)}_sx$, is differentiable and
$$
T^{(m)}_tx-T^{(n)}_tx
=
\int_0^t F'(s)ds
=\int_0^t T^{(n)}_{t-s}T^{(m)}_s\hat A\left(J_{\lambda_m}-J_{\lambda_n}\right)x ds,
$$
where the integral is well-defined by sequential completeness of $X$.
We notice that $J_{\lambda_n}\hat A=\hat AJ_{\lambda_n}$ on $\mathcal{D}(\hat A)$.
Then, from the equality above we deduce
\begin{equation*}
p\left(T^{(m)}_tx-T^{(n)}_tx\right)\leq
\int_0^t p\left(T^{(n)}_{t-s}T^{(m)}_s\left(J_{\lambda_m}-J_{\lambda_n}\right)\hat Ax\right) ds,\qquad  \forall   x\in \mathcal{D}(\hat A),\ \forall  p\in \mathcal{P}_X,
\end{equation*}
and then
\begin{equation}\label{eq:2015-07-29:00}
  \sup_{t\in[0,\hat t]}p\left(T^{(m)}_tx-T^{(n)}_tx\right)\leq
\hat t\sup_{t,s\in[0,\hat t]}p\left(
T^{(n)}_{t}T^{(m)}_s
\left(J_{\lambda_m}-J_{\lambda_n}\right)\hat Ax
\right),\qquad  \forall   \hat t>0,\   \forall  x\in \mathcal{D}(\hat A),\  \forall  p\in \mathcal{P}_X.
\end{equation}
Now observe that,
by Proposition \ref{prop:2015-05-03:00}\emph{(\ref{2015-12-04:01})} and Step 2, the family
$
\big\{T^{(m)}_tT^{(n)}_s\big\}_{
\substack{
 t,s\in \mathbb{R}^+ \\m,n\in \mathbb{N}}}
$
is sequentially equicontinuous, and then the term on the right-hand side of  \eqref{eq:2015-07-29:00} goes to $0$ as $n,m\rightarrow +\infty$, because of \eqref{eq:2015-07-28:07}.
Hence,  the sequence $\{T_t^{(n)}x\}_{n\in\N}$ is Cauchy for every $t\in \mathbb{R}$ and $x\in\mathcal{D}(\hat A)$.

\underline{\emph{Step 4}.}
By Step 3 and by sequential completeness of $X$, we conclude that there exists  in $X$
\begin{equation}\label{limlim}
\hat T_tx\coloneqq \lim_{n\rightarrow+\infty}T^{(n)}_tx, \qquad\forall  t\in\R^+, \ \forall  x\in \mathcal{D}(\hat A).
\end{equation} 
 Moreover, by \eqref{eq:2015-07-29:00}, the limit \eqref{limlim}  is uniform in $t\in [0,\hat t]$, for every $\hat t>0$.

\underline{\emph{Step 5}.}
We  extend the result of Step 4, stated for $x\in\mathcal{D}(\hat A)$, to all $x\in X$.
Let $\hat t>0$ and let $\{x_k\}_{k\in \mathbb{N}}\subset D(\hat A)$ be a sequence converging to $x$ in $X$. 
We can write
\begin{equation*}
    T^{(m)}_tx-T^{(n)}_tx=
     \left( T^{(m)}_t-T^{(n)}_t \right) (x-x_k)+
     \left( T^{(m)}_t-T^{(n)}_t \right) x_k,\qquad\forall  t\in[0,\hat t],\ \forall  m,n,k\in \mathbb{N}.
\end{equation*}
Then, using Step 4, we have, uniformly for $t\in[0,\hat t]$,
\begin{equation*}
  \begin{split}
    \limsup_{n,m\rightarrow +\infty} \sup_{t\in[0,\hat t]}   p \left( T^{(m)}_tx-T^{(n)}_tx \right) 
    & \leq
    \limsup_{n,m\rightarrow +\infty}\sup_{t\in[0,\hat t]}p    \left(   \left( T^{(m)}_t-T^{(n)}_t \right) (x-x_k) \right)\\
    & \leq
    \sup_{\substack{n,m\in \mathbb{N}\\t\in[0,\hat t]}}p    \left(   \left( T^{(m)}_t-T^{(n)}_t \right) (x-x_k) \right),
    \qquad \forall  k\in \mathbb{N}, \ \forall  p\in \mathcal{P}_X.
  \end{split}
\end{equation*}
The last term goes to $0$ as $k\rightarrow +\infty$, because of sequential equicontinuity of the family $\big\{T^{(n)}_t\big\}_{n\in \mathbb{N},t\in\mathbb{R}^+}$ (Step 2).

Hence, recalling that  $\mathcal{D}(\hat A)$ is sequentially dense in $X$,  we have proved that that there exists in $X$, uniformly for $t\in [0,\hat t]$,
\begin{equation}\label{limlim2}
\hat T_tx\coloneqq \lim_{n\rightarrow+\infty}T^{(n)}_tx, \ \ \forall  x\in X.
\end{equation}

 \underline{\emph{Step 6}.}
 We  show that the family $\hat T=\{\hat T_t\}_{t\in \R^+}$ is a $C_0$-sequentially equicontinuous semigroup on $X$.  First we notice that, as by Step 5 the  limit in \eqref{limlim2} defining $\hat T_tx$ is  uniform for $t\in[0,\hat t]$, for every $\hat t>0$,
 then the function $\mathbb{R}^+\rightarrow X,\ t\mapsto \hat T_tx$, is continuous. In particular, $\hat T_tx \rightarrow \hat T_0x$ as $t\rightarrow 0^+$ for every $x\in X$. Moreover, $\hat T_0 =I$ as $T^{(n)}_0=I$ for each $n\in \N$. The linearity of $\hat T_t$ and the semigroup property come from the same properties holding for every $T^{(n)}_t$. It remains to show that the family $\hat T$ is sequentially equicontinuous. This comes from
sequential equicontinuity of the family $\big\{T^{(n)}_t\big\}_{n\in \mathbb{N},t\in\mathbb{R}^+}$ (Step 2), and from
 the estimate 
$$
p\left(\hat T_tx\right)\leq 
p\left(\hat T_tx-T_t^{(n)}x\right)+
p\left(T^{(n)}_tx\right)
\leq
p\left(\hat T_tx-T^{(n)}_tx\right)+
\sup_{\substack{t\in \mathbb{R}^+\\ n\in \mathbb{N}}}p\left(T^{(n)}_tx\right)\qquad \forall  t\in \mathbb{R}^+,\ \forall  n\in \mathbb{N},
$$
 by taking first the limit as $n\rightarrow +\infty$ and then the supremum over $t$.
 
 \underline{\emph{Step 7}.}
To conclude the proof, we only need to show that the infinitesimal generator of $\hat T$ is $\hat A$. 
Let $p\in \mathcal{P}_X$ and $x \in \mathcal{D}(\hat A)$.
By applying 
Proposition \ref{prop:2015-05-01:11}
to $T^{(n)}$, we can write
$$ \hat T_tx-x =\lim_{n\rightarrow +\infty} (T_t^{(n)}x-x) =\lim_{n\rightarrow +\infty}\int_0^t T_s^{(n)}\hat AJ_{\lambda_n}xds,
$$
where the integral on the right-hand side exists because of sequential completeness of $X$ and of continuity of the integrand function, and where the latter equality is obtained, as usual, by pairing the two members of the equality with funtionals $\Lambda\in X^*$ and by using
\eqref{eq:2015-05-01:08}.

Now we wish to exchange the limit with the integral. This is possible, as, by Step 2, Step 5, and 
 and \eqref{eq:2015-07-28:07}, we have 
 $$\lim_{n\rightarrow+\infty}T^{(n)}_tJ_{\lambda_n}\hat Ax=\hat T_t\hat Ax
\ \ \ \mbox{uniformly for $t$ over compact sets}.$$
Then 
$$ \hat T_tx-x  =\int_0^t \lim_{n\rightarrow +\infty}  T_s^{(n)}\hat AJ_{\lambda_n}xds = \int_0^t \hat T_s\hat Ax.
$$
Dividing by $t$ and letting $t\rightarrow 0^+$, we conclude that $x\in \mathcal{D}(\tilde{A})$, where $\tilde{A}$ is the infinitesimal generator of $\hat T$, and that $\tilde{A}=\hat A$ on $\mathcal{D}(\hat A)$.
 But, by assumption, for some $\lambda_n>0$, the operator   $\lambda_n-\hat A$ is one-to-one and full-range. By  Theorem \ref{teo:2015-05-01:12}, the same thing holds true for $\lambda_n-\tilde A $. Then we conclude $\mathcal{D}(\tilde A)=\mathcal{D}(\hat {A})$ and $\tilde A=\hat A$.
\end{proof}
%

  \begin{Remark}\label{2015-12-05:23}
  Let $X$ be a Banach space with  norm $|\cdot|_X$ and let $\tau$ be a sequentially complete locally convex topology on $X$
such that the $\tau$-bounded sets are exactly the $|\cdot|_X$-bounded sets.
Then, by Proposition \ref{2015-07-30:02}(\ref{2015-08-31:01}), we have $\mathcal{L}_0((X,\tau))\subset L((X,|\cdot|_X))$. Let $\hat T$ be a $C_0$-sequentially equicontinuous semigroup on $(X,\tau)$ with infinitesimal generator $\hat A$. By
    referring to the notation of the proof of 
Theorem \ref{theo:2015-05-02:10}, we make the following observations.
\begin{enumerate}[(1)]
\item 
Since
$R(\lambda_n,\hat A)\in \mathcal{L}_0((X,\tau))\subset L((X,|\cdot|_X))$, 
 then the Yosida approximations
 $\{T^{(n)}\}_{n\in \mathbb{N}}$, approximating
$\hat T$ according to
 \eqref{limlim2},
 are uniformly continuous semigroups on the Banach space $(X,|\cdot|_X)$.
\item The fact that $\big\{(\lambda_n R(\lambda_n,\hat A))^m\big\}_{n,m\in \mathbb{N}}$ is sequentially equicontinuous implies that such a family is uniformly bounded in $L((X,|\cdot|_X))$. 
Indeed, as the unit ball $B$ in $(X,|\cdot|_X)$ is bounded in $(X,\tau)$,  by Proposition \ref{prop:2015-05-03:00}{(\ref{2015-10-06:04})} the set
$\big\{(\lambda_n R(\lambda_n,\hat A))^mx\big\}_{n,m\in \mathbb{N},\,x\in B}$ is bounded in $(X,\tau)$. Hence, it is also bounded  in $(X,|\cdot|_X)$, as we are assuming that the bounded sets are the same in both the topologies.
As a consequence, by recalling the Hille-Yosida theorem for $C_0$-semigroups in Banach spaces, we have that $\hat T$ is also a $C_0$-semigroup in the Banach space $(X,|\cdot|_X)$ if and only if $D(\hat A)$ is norm dense in $X$.
\end{enumerate}
\end{Remark}

\subsection{Relationship with   \emph{bi-continuous} semigroups}\label{sub:K}
In this subsection we establish a comparison of our notion of $C_0$-sequentially equicontinuous semigroup with the notion of \emph{bi-continuous} semigroup developed in \cite{ Kuehnemund2001, Kuhmenund01}.  The latter requires to deal with Banach spaces as underlying spaces.

\begin{Definition}
Let $(X,|\cdot|_X)$ be a Banach space and  let $X^*$ be its  topological dual. A linear subspace $\Gamma\subset X^*$ is called  \emph{norming} for $(X,|\cdot|_X)$ if
 $
|x|_X=\sup_{\gamma\in \Gamma,\   |\gamma|_{X^*}\leq 1}|\gamma(x)|
$, for every $x\in X$.
\end{Definition}

\begin{Lemma}
\label{lem:2015-05-04:01}
 Let $(X,|\cdot|_X)$ be a Banach space and let  $\Gamma\subset X^*$ be  norming for $(X,|\cdot|_X)$ and closed with respect to the operator norm $|\cdot|_{X^*}$.
 Then $B\subset X$ is $\sigma(X,\Gamma)$-bounded if and only if it is  $|\cdot|_X$-bounded.
\end{Lemma}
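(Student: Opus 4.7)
The plan is to prove both directions, with one being immediate and the other relying on a Banach–Steinhaus argument in a cleverly chosen Banach space.

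For the easy direction ($|\cdot|_X$-bounded $\Rightarrow$ $\sigma(X,\Gamma)$-bounded), I would simply observe that if $\sup_{x\in B}|x|_X=M<+\infty$, then for each $\gamma\in\Gamma$ one has $\sup_{x\in B}|\gamma(x)|\leq M|\gamma|_{X^*}<+\infty$, so every $\sigma(X,\Gamma)$-continuous seminorm $x\mapsto|\gamma(x)|$ is bounded on $B$.

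For the nontrivial direction, the key idea is to exploit the norming condition to embed $X$ isometrically into the dual of $\Gamma$ and then invoke the uniform boundedness principle on the Banach space $\Gamma$. Concretely, since $\Gamma$ is closed in $(X^*,|\cdot|_{X^*})$, it is itself a Banach space. For each $x\in X$, define the evaluation functional $\iota(x)\colon\Gamma\to\mathbb{R}$ by $\iota(x)(\gamma)\coloneqq\gamma(x)$, which is linear and bounded on $\Gamma$. The norming hypothesis gives exactly
\[
|\iota(x)|_{\Gamma^*}=\sup_{\gamma\in\Gamma,\,|\gamma|_{X^*}\leq 1}|\gamma(x)|=|x|_X,
\]
so $\iota\colon X\to\Gamma^*$ is an isometric embedding.

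Now suppose $B\subset X$ is $\sigma(X,\Gamma)$-bounded. By definition, for every $\gamma\in\Gamma$ we have $\sup_{x\in B}|\iota(x)(\gamma)|=\sup_{x\in B}|\gamma(x)|<+\infty$, so the family $\{\iota(x)\}_{x\in B}\subset\Gamma^*$ is pointwise bounded on the Banach space $\Gamma$. The Banach–Steinhaus theorem then yields $\sup_{x\in B}|\iota(x)|_{\Gamma^*}<+\infty$, and using the isometry this gives $\sup_{x\in B}|x|_X<+\infty$, proving $B$ is $|\cdot|_X$-bounded.

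The only subtle step is verifying that $\Gamma$ is genuinely complete (needed for Banach–Steinhaus), which is immediate from $\Gamma$ being a norm-closed subspace of the Banach space $X^*$; no further obstacle is expected, as the norming hypothesis delivers the isometric identification for free.
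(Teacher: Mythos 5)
Your proof is correct and follows essentially the same route as the paper: both directions are handled identically, with the nontrivial implication obtained by viewing the elements of $B$ as evaluation functionals on the Banach space $(\Gamma,|\cdot|_{X^*})$, applying Banach--Steinhaus, and using the norming hypothesis to convert the uniform bound in $\Gamma^*$ back into a norm bound in $X$. No issues.
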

\begin{proof}
As  $\sigma(X,\Gamma)$ is weaker than the $|\cdot|_X$-topology,  clearly $|\cdot|_X$-bounded sets are also  $\sigma(X,\Gamma)$-bounded.
Conversely, let $B\subset X$ be $\sigma(X,\Gamma)$-bounded and   consider the family of continuous functionals 
$$
\{\Lambda_b\colon \Gamma\rightarrow \mathbb{R}, \ \gamma\mapsto \gamma(b)\}_{b\in B}, 
$$
 By assumption, $\sup_{b\in B} |\gamma (b)|<+\infty$ for every $\gamma\in \Gamma$.
The Banach-Steinhaus theorem applied in the Banach space $({\Gamma},|\cdot|_{X^*})$ yields
$$
M\coloneqq \sup_{b\in B}\sup_{\gamma\in  \Gamma,\ | \gamma |_{X^*}\leq 1}|\gamma(b)|  <+\infty.
$$
Then, since $\Gamma$ is norming  for $(X,|\cdot|_X)$, we have
$$
| b|_X=\sup_{\gamma\in \Gamma,\ | \gamma |_{X^*}\leq 1}|\gamma(b)| \leq M <+\infty\qquad \forall   b\in B,
$$
and then $B$ is $|\cdot|_{X}$-bounded.
\end{proof}


We recall the definition of bi-continuous semigroup as given in
\cite[Def.\ 3]{Kuhmenund01} and \cite[Def.\ 1.3]{Kuehnemund2001}.

\begin{Definition}\label{def:bi}
 Let $(X,|\cdot|_X)$ be a Banach space with topological dual $X^*$. Let $\tau$ be a Hausdorff locally convex topology on $X$ with the following properties.
\newcounter{saveenum}
 \begin{enumerate}[(i)]
 \item\label{2015-12-05:19}
The space $(X,\tau)$ is sequentially complete on $|\cdot|_X$-bounded sets.
 \item
 $\tau$ is weaker than the topology induced by the norm $|\cdot|_X$.
 \item
 The topological dual of $(X,\tau)$ is norming for $(X,|\cdot|_X)$.
\setcounter{saveenum}{\value{enumi}}
 \end{enumerate}
 A family of linear operators $T=\{T_t\colon X\rightarrow X\}_{t\in \R^+}\subset L((X,|\cdot|_X))$ is called a  \emph{bi-continuous semigroup with respect to $\tau$ and of type $\alpha \in \mathbb{R}$}   if the following conditions hold:
 \begin{enumerate}[(i)]
\setcounter{enumi}{\value{saveenum}}
 \item
 $T_0=I$
  and  $T_tT_s=T_{t+s} $ for every $t,s\in \R^+$;
 \item\label{2015-12-05:21}
for some $M\geq 0$, $|T_t|_{L((X,|\cdot|))}
 \leq Me ^{\alpha  t}$,  for every $t\in \R^+$;
 \item\label{2015-12-05:22}
  $T$ is strongly $\tau$-continuous, that is the map 
  $\R^+\rightarrow (X,\tau), 
  \ t\mapsto T_tx
  $
 is continuous for every $x\in X$;
 \item\label{2015-10-09:00}
 $T$ is locally bi-continuous, that is, for every
 $|\cdot|_X$-bounded 
 sequence $\{x_n\}_{n\in \N}\subset X$  $\tau$-convergent to $x\in X$ and every $\hat t>0$, we have
 $$
\lim_{n\rightarrow+\infty} T_t x_n =T_tx \qquad \mbox{in  $(X,\tau)$,
 uniformly in $t\in[0,\hat t]$}.
 $$
 \end{enumerate}
\end{Definition}

The following proposition 
shows that the notion of bi-continuous semigroup  is a specification of our notion of $C_0$-sequentially locally equicontinuous semigroup in sequentially complete spaces. Indeed, given a bi-continuous semigroup on a Banach space  $(X,|\cdot|_X)$ with respect to a topology $\tau$,  one can define a locally convex sequentially complete topology $\tau'\supset \tau$ and  see the bi-continuous semigroup as a $C_0$-sequentially locally equicontinuous semigroup on $(X,\tau')$.

\begin{Proposition}
  \label{prop:2015-05-02:05}
  Let $\{T_t\}_{t\in \mathbb{R}^+}$ be a bi-continuous semigroup on $X$ with respect to $\tau$ and of type $\alpha$. 
  Then there exists a 
locally convex topology $\tau'$ with the following properties:
  \begin{enumerate}[(i)]
  \item \label{2015-12-05:16}
$\tau\subset \tau'$ and $\tau'$ is weaker than the $|\cdot|_X$-topology;
\item \label{2015-12-05:17}
a sequence converges in $\tau'$ if and only if it is $|\cdot|_X$-bounded and convergent in $\tau$;
\item \label{2015-12-05:18}
$(X,\tau')$ is sequentially complete;
\item \label{2015-12-05:20}
$T$  is a $C_0$-sequentially locally equicontinuous semigroup in $(X,\tau')$; moreover, for every $\lambda>\alpha$, $\{e^{-\lambda t}T_t\}_{t\in \mathbb{R}^+}$ is a 
$C_0$-sequentially  equicontinuous semigroup on $ (X,\tau')$ satisfying 
Assumption~\ref{ass:int}.
\end{enumerate}
\end{Proposition}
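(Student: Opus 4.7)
The strategy is to endow $X$ with the \emph{mixed topology} $\tau'$ associated with the norm topology and $\tau$: the finest locally convex topology on $X$ that coincides with $\tau$ on each closed norm-ball $B(0,r]$, $r>0$. Concretely, I will take $\tau'$ to be generated by the family $\mathcal{P}_{\tau'}$ of seminorms $p\colon X\to\R$ that are bounded on every $|\cdot|_X$-ball and whose restriction to each such ball is $\tau$-continuous. With this choice, (\ref{2015-12-05:16}) is almost immediate: every $\tau$-continuous seminorm belongs to $\mathcal{P}_{\tau'}$ (giving $\tau\subset \tau'$), and every $p\in\mathcal{P}_{\tau'}$ is norm-continuous at $0$ because it is $\tau$-continuous at $0$ on $B(0,1]$ and $\tau$ is weaker than the norm, so $\tau'$ is weaker than the norm topology.

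The heart of the argument is (\ref{2015-12-05:17}). First I would show that $\tau'$-bounded sets coincide with $|\cdot|_X$-bounded sets: the inclusion ``norm-bounded $\Rightarrow$ $\tau'$-bounded'' is built into the definition of $\mathcal{P}_{\tau'}$, while the converse uses the standard scaling trick (if $x_n\in B$ with $|x_n|_X\geq n^2$, then $x_n/n\to 0$ in $\tau'$, so it is norm-bounded -- a contradiction that can be reached directly from the mixed-topology construction without circularity, by exhibiting a $\tau'$-neighborhood $U$ of $0$ with $x_n/n\notin U$ for all $n$). Given this, the forward direction of (\ref{2015-12-05:17}) is easy: a $\tau'$-convergent sequence is $\tau'$-bounded (hence norm-bounded) and $\tau$-convergent since $\tau\subset \tau'$. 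For the reverse direction, if $\{x_n\}\subset B(0,r]$ and $x_n\to x$ in $\tau$, then $x$ belongs to the $\tau$-closure of $B(0,r]$, which coincides with $B(0,r]$ (norm-closed convex sets are $\tau$-closed because the $\tau$-dual is norming), and the $\tau$-continuity of each $p\in\mathcal{P}_{\tau'}$ on $B(0,2r]$ gives $p(x_n-x)\to 0$. Property (\ref{2015-12-05:18}) then follows quickly: a $\tau'$-Cauchy sequence is $\tau'$-bounded, hence norm-bounded and $\tau$-Cauchy, so Definition~\ref{def:bi}(\ref{2015-12-05:19}) yields a $\tau$-limit, which by (\ref{2015-12-05:17}) is a $\tau'$-limit.

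For (\ref{2015-12-05:20}), the $C_0$ property of $T$ on $(X,\tau')$ follows from the characterization (\ref{2015-12-05:17}): for each $x$, the orbit $\{T_tx\}_{t\in [0,1]}$ is norm-bounded by Definition~\ref{def:bi}(\ref{2015-12-05:21}) and $\tau$-continuous by Definition~\ref{def:bi}(\ref{2015-12-05:22}). Sequential local equicontinuity on $[0,R]$ is the main computation: given $x_n\to 0$ in $\tau'$, the set $\{T_t x_n : t\in[0,R],\ n\in\N\}$ is norm-bounded by Definition~\ref{def:bi}(\ref{2015-12-05:21}), and Definition~\ref{def:bi}(\ref{2015-10-09:00}) gives $T_t x_n\to 0$ in $\tau$ uniformly for $t\in[0,R]$; since all these elements lie in a common norm-ball on which each $p\in\mathcal{P}_{\tau'}$ is $\tau$-continuous at $0$, a straightforward $\epsilon$-$U$ argument upgrades this to $\sup_{t\in[0,R]}p(T_tx_n)\to 0$. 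Once the coincidence of bounded sets is established, Proposition~\ref{prop:2015-08-01:00}(\ref{2015-10-13:14}) applies directly to yield that $\{e^{-\lambda t}T_t\}$ is $C_0$-sequentially equicontinuous on $(X,\tau')$ for every $\lambda>\alpha$. Assumption~\ref{ass:int} is verified by noting that the integral $\int_0^{+\infty}e^{-\mu t}e^{-\lambda t}T_tx\,dt$ converges absolutely in norm (by Definition~\ref{def:bi}(\ref{2015-12-05:21}) and $\lambda>\alpha$), so the Riemann sums converge in norm -- hence in $\tau$, and, being uniformly norm-bounded, in $\tau'$ by (\ref{2015-12-05:17}).

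The main obstacle is proving that $\tau'$-bounded sets coincide with norm-bounded sets without falling into circular reasoning with (\ref{2015-12-05:17}): this requires a careful analysis of the mixed-topology neighborhoods of $0$ or an appeal to the Wiweger/Cooper construction of $\gamma(|\cdot|_X,\tau)$. Once this fact is secured, every remaining verification is a faithful transcription of the bi-continuity axioms through the characterization (\ref{2015-12-05:17}) of $\tau'$-convergence.
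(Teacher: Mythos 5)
Your proof is essentially correct, but it takes a genuinely different route from the paper. You build $\tau'$ as the mixed topology $\gamma(|\cdot|_X,\tau)$ -- the finest locally convex topology agreeing with $\tau$ on norm-balls -- and then read off (\ref{2015-12-05:16})--(\ref{2015-12-05:20}) from the Wiweger/Cooper description of its convergent sequences and bounded sets. The paper instead augments $\tau$ minimally: it adds to the seminorms of $\tau$ the functionals $|\gamma(\cdot)|$ for $\gamma$ in the $|\cdot|_{X^*}$-closure $\overline\Gamma$ of the $\tau$-dual. The one fact you correctly identify as the crux -- that $\tau'$-bounded sets are exactly the norm-bounded sets -- is then obtained in the paper by a single Banach--Steinhaus application in the Banach space $(\overline\Gamma,|\cdot|_{X^*})$ using the norming hypothesis (Lemma \ref{lem:2015-05-04:01}), which is considerably lighter than the mixed-topology machinery you would need to import or reprove; in exchange, your $\tau'$ is the canonical (finest) topology with property (\ref{2015-12-05:17}), while the paper's is an ad hoc, generally coarser one. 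Since the proposition only asserts existence of some $\tau'$, both constructions succeed, and all your verifications of (\ref{2015-12-05:16})--(\ref{2015-12-05:20}) from the bi-continuity axioms are sound once the bounded-set identification is granted.

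Two small repairs. First, your justification of Assumption \ref{ass:int} is flawed: absolute convergence of $\int_0^{+\infty}e^{-\lambda t}|T_tx|_X\,dt$ does not make the Riemann sums converge in norm, because $t\mapsto T_tx$ need not be norm-continuous (the Ornstein--Uhlenbeck semigroup on $C_b$ is a counterexample). The correct and shorter argument, which the paper uses, is that (\ref{2015-12-05:18}) gives sequential completeness of $(X,\tau')$, so Assumption \ref{ass:int} holds automatically by Remark \ref{2015-10-29:13}. Second, in the reverse direction of (\ref{2015-12-05:17}) you assert that norm-closed convex sets are $\tau$-closed; that is false in general, but you only need it for closed balls centered at the origin, where it does follow from the norming property (the norm is a supremum of $|\gamma(\cdot)|$ over the unit ball of $\Gamma$, hence $\sigma(X,\Gamma)$-lower semicontinuous). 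Neither issue affects the overall validity.
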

\begin{proof}   Denote by $X^*$ 
the topological dual of $(X,|\cdot|_X)$, and let $\mathcal{P}_X$ be a set of seminorms on $X$ inducing $\tau$. Denote by $\Gamma$ the dual of $(X,\tau)$.
 On $X$, define the seminorms
  $$
  q_{p,\gamma}(x)=p(x)+|\gamma(x)|,\qquad   p\in \mathcal{P}_X,\ \gamma\in \overline \Gamma,
  $$
  where $\overline \Gamma$ is the closure of $\Gamma$ with respect to the  norm $|\cdot|_{X^*}$.
  Let $\tau'$ be the locally convex topology induced by the family of seminorms
  $\{q_{p,\gamma}\}_{p\in \mathcal{P}_X,\gamma\in \overline \Gamma}$.

\emph{(\ref{2015-12-05:16})}
Clearly $\tau\subset \tau'$ and 
$\tau'$ is weaker than the $|\cdot|_X$-topology.

\emph{(\ref{2015-12-05:17})}
As $\tau\subset \tau'$, the
$\tau'$-convergent sequences are $\tau$-convergent. 
Moreover, as $\Gamma$ is norming, $\overline \Gamma$ is norming too.
Then, by Lemma \ref{lem:2015-05-04:01}, every $\sigma(X,\overline \Gamma)$-bounded set is $|\cdot|_X$-bounded. In particular, every convergent sequence in $\tau'$ is $|\cdot|_X$-bounded.

Conversely, consider a  sequence $\{x_n\}_{n\in \N}\subset X$ which is $\tau$-convergent to $0$ in $X$ and $|\cdot|_X$-bounded by a constant $M>0$. 
To show that $x_n\stackrel{\tau'}{\rightarrow} 0$, we only  need to show that $\gamma(x_n)\rightarrow 0$ for every $\gamma\in \overline{\Gamma}$. For that, notice first that the convergence to $0$ with respect to $\tau$ implies the convergence $\gamma(x_n)\rightarrow 0$ for every $\gamma\in \Gamma$. Take now $\gamma\in \overline{\Gamma}$ and a sequence $\{\gamma_k\}_{k\in\N}\subset \Gamma$ converging to $\gamma$ with respect to $|\cdot|_{X^*}$.
Then the estimate
$$
|\gamma(x_n-x)|
\leq  M
| \gamma-\gamma_k|_{X^*}+|\gamma_k(x_n)| \qquad \forall  n,k\in \N,
$$
yields
$$
\limsup_{n\rightarrow+\infty}
|\gamma(x_n)|
\leq
M |\gamma-\gamma_k|_{X^*}\qquad  \forall  k\in \mathbb{N}.
$$
Since $\gamma_k\rightarrow \gamma$ with respect to $|\cdot|_{X^*}$ when $k\rightarrow +\infty$, we now conclude that sequence $\{x_n\}_{n\in \N}$ converges to $0$ also with respect to $\tau'$.

%

\emph{(\ref{2015-12-05:18})}
A Cauchy sequence $\{x_n\}_{n\in \mathbb{N}}$ in $(X,\tau')$ is $\tau'$-bounded. By Lemma \ref{lem:2015-05-04:01}, it is $|\cdot|_X$-bounded. Clearly, $\{x_n\}_{n\in \mathbb{N}}$ is also $\tau$-Cauchy. Then, 
by Definition \ref{def:bi}\emph{(\ref{2015-12-05:19})}, $\{x_n\}_{n\in \mathbb{N}}$ converges to some $x$ in $(X,\tau)$.
Since the sequence is $|\cdot|_X$-bounded, by \emph{(\ref{2015-12-05:17})} the convergence takes place also in $\tau'$. This proves that $(X,\tau')$ is sequentially complete.

\emph{(\ref{2015-12-05:20})}
We start by proving that $\{T_t\}_{t\in \mathbb{R}^+}$  is a sequentially locally equicontinuous family of operators  in the space  $(X,\tau')$.
Let $\{x_n\}_{n\in\N}$ be a sequence $\tau'$-convergent to $0$.
By \emph{(\ref{2015-12-05:17})},
 $\{x_n\}_{n \in\mathbb{N}}$ is    $|\cdot|_X$-bounded and $\tau$-convergent to $0$.
 By Definition
\ref{def:bi}\emph{(\ref{2015-10-09:00})}
\begin{equation}\label{eqq}
\lim_{n\rightarrow +\infty}\sup_{t\in [0,\hat t]}p(T_tx_n)=0,
\qquad
\forall  p\in \mathcal{P}_X, \ \forall  \hat t>0.
\end{equation}
 Assume now, by contradiction, that there exist $R>0$, $p\in \mathcal{P}_X$, $\gamma\in \overline{\Gamma}$, and $\varepsilon>0$, such that
$$
\limsup_{n\rightarrow +\infty}\sup_{t\in [0,R]}q_{p,\gamma}(T_tx_n)\geq \varepsilon.
$$
Then, due to \eqref{eqq}, there exist a sequence $\{t_n\}
_{n\in\N}\subset [0,R]$ convergent to some $t\in [0,R]$ and a subsequence of $\{x_n\}_{n\in\N}$, still denoted  by $\{x_n\}_{n\in\N}$, such that
\begin{equation}
  \label{eq:2015-10-09:01}
  |\gamma(T_{t_n}x_n)|\geq \varepsilon\qquad  \forall  n\in \mathbb{N}.
\end{equation}
By Definition \ref{def:bi}\emph{(\ref{2015-12-05:21})}, the family $\{T_t\}_{t\in[0,R]}$ is uniformly bounded in the operator norm. Then, by recalling that
$\{x_n\}_{n\in \mathbb{N}}$ is $|\cdot|_X$-bounded, we have
$$
\hat M\coloneqq\sup_{n\in \mathbb{N}}|T_{t_n}x_n|_X<+\infty.
$$
Let $\hat\gamma\in \Gamma$ be such that $|\hat \gamma-\gamma|_{X^*}\leq\epsilon/(2\hat M)$. Then
\begin{equation}
  \label{eq:2015-10-09:02}
  \limsup_{n\rightarrow +\infty}|\gamma(T_{t_n}x_n)|\leq \frac{\epsilon}{2}+\limsup_{n\rightarrow +\infty}|\hat \gamma(T_{t_n}x_n)|=\frac{\epsilon}{2},
\end{equation}
where the last equality is due  to
\eqref{eqq}
and to the fact that $\hat \gamma\in \Gamma=(X,\tau)^*$.
But \eqref{eq:2015-10-09:02} contradicts \eqref{eq:2015-10-09:01}.
The fact that $T$ is strongly continuous with respect to $\tau'$ follows from 
\emph{(\ref{2015-12-05:17})} and from
Definition \ref{def:bi}\emph{(\ref{2015-12-05:21})}-\emph{(\ref{2015-12-05:22})}.

Finally, by Definition \ref{def:bi}\emph{(\ref{2015-12-05:21})} we can apply Proposition \ref{prop:2015-08-01:00}\emph{(\ref{2015-10-13:14})} and conclude that $\{e^{-\lambda t}T_t\}_{t\in \mathbb{R}^+}$ is a 
$C_0$-sequentially  equicontinuous semigroup on $ (X,\tau')$ for every $\lambda>\alpha$. 
Due to part \emph{(\ref{2015-12-05:18})}, such a semigroup
 satisfies Assumption \ref{ass:int} (recall Remark \ref{2015-10-29:13}).
\end{proof}

\subsection{A note on a weaker definition}\label{2015-10-30:07}

In this subsection we point out how, under weaker
requirements in Definition \ref{def:2015-05-01:01},
 some of the results appearing in the previous sections still hold.
The definition that we are going to introduce below will not be used in the sequel, 
except in Subsection \ref{2015-10-13:03}, where we briefly clarify the relationship between the notion of $\pi$-semigroup, introduced in \cite{Priola99}, and our notion of $C_0$-sequentially locally equicontinuos semigroup.
\begin{Definition}\label{2015-10-13:01}
Let $X$ be a Hausdorff locally convex space. Let $T\coloneqq \{T_t\}_{t\in \mathbb{R}^+}\subset \mathcal{L}_0(X)$ be a family of sequentially continuous linear operators. We say that $T$ is a \emph{bounded $C_0$-sequentially continuous semigroup} if
  \begin{enumerate}[(i)]
  \item  $T_0=I$ and $T_{t+s}=T_tT_s$ for all $t,s\in \mathbb{R}^+$;
  \item\label{2015-10-13:04} for each $x\in X$, the map $\mathbb{R}^+\rightarrow X,\ t \mapsto T_tx$, is continuous and bounded.
  \end{enumerate}
\end{Definition}

By recalling 
Proposition \ref{prop:2015-05-01:02a},
we see that 
Definition \ref{def:2015-05-01:01} is stronger than
Definition \ref{2015-10-13:01}.

Let  $T$ be a bounded $C_0$-sequentially continuous semigroup on $X$ and let us assume that, for every $x\in X$, the Riemann integral
\begin{equation}\label{Lap}
R(\lambda)x\coloneqq \int_0^{+\infty}e^{-\lambda t}T_txdt,
\end{equation}
(which exists in the completion of $X$, by Definition \ref{2015-10-13:01}\emph{(\ref{2015-10-13:04})}) belongs to $X$ (this happens, for example, if $X$ is sequentially complete).

Then, a straightforward inspection of the proofs shows that the following results still hold:
 Proposition \ref{prop:2015-08-01:00}\emph{(\ref{2015-10-13:00})};
 Proposition \ref{prop:2015-05-01:07};
 Proposition \ref{prop:2015-05-01:02}\emph{(\ref{2015-10-13:02})};
 Proposition \ref{prop:2015-05-01:10};
 Proposition \ref{prop:2015-05-01:11};
 Theorem \ref{teo:2015-05-01:12}, except for the conclusion $(\lambda-A)^{-1}\in \mathcal{L}_0(X)$;
 Corollary \ref{cor:2015-07-28:06}.

To summarize, if the Laplace transform \eqref{Lap} of a bounded $C_0$-sequentially continuous semigroup is well-defined, then the domain $\mathcal{D}(A)$ of the generator $A$ is sequentially dense in $X$ and $\lambda-A$ is one-one and onto for every $\lambda>0$.

We outline that, without the sequential local equicontinuity of $T$, the proof of
Lemma \ref{prop:2015-05-03:12} does not work, and consequently the proof of Theorem \ref{theo:2015-07-29:04} does not work.  

\subsection{Examples and counterexamples}\label{sec:examples}
In this subsection we provide some examples to clarify some features of the notion of $C_0$-sequentially (locally) equicontinuous semigroup.

First, with respect to the case of $C_0$-semigroups on Banach spaces, we notice two relevant basic implications that we loose when dealing with strong continuity and (sequential) local equicontinuity in locally convex spaces.
The first one is related to the growth rate of the orbits of the semigroup, and consequently to the possibility to define the Laplace transform.
The fact that  $T$ is a $C_0$-locally  (sequentially) equicontinuous semigroup does not imply, in general,  the existence 
  of $\alpha>0$ such that $\{e^{-\alpha t}T_t\}_{t\in \R^+}$ is a $C_0$-(sequentially) locally  equicontinuous semigroup.
We give two examples.

\begin{Example}\label{2015-10-13:05}
Consider the vector space $X\coloneqq C(\mathbb{R})$, endowed with the topology of the uniform convergence on compact sets, which makes $X$ a Fr\'echet space.
 Define   $T_t\colon X\rightarrow X$ by
$$
T_t\varphi(s)\coloneqq e^{st}\varphi(s)\qquad \forall  s\in \mathbb{R},\  \forall  t\in \mathbb{R}^+,\ \forall\varphi\in X.
$$
One verifies that  $T=\{T_t\}_{t\in\R^+}$ is a $C_0$-sequentially locally  equicontinuous semigroup on $X$ (actually, locally equicontinuous,  by Proposition \ref{2015-10-06:00}). 
On the other hand, for whatever $\alpha>0$, the family $\{e^{-\alpha t}T_t\}_{t\in\R^+}$ is not  sequentially equicontinuous. 
Indeed,  one has that  $\{e^{-\alpha t}T_tf\}_{t\in \mathbb{R}^+}$ is unbounded in $X$ for every $f$ not identically zero on $(\alpha,+\infty)$.
\end{Example}

\begin{Example}\label{example2}
  Another classical example is given in  \cite{Komura68}.
Let $X$ be as in Example \ref{2015-10-13:05}, with the same topology.
For $t\in \mathbb{R}^+$, we define $T\coloneqq \{T_t\}_{t\in \mathbb{R}^+}$ by
$$
T_t\colon X\rightarrow X,\ \varphi \mapsto \varphi(t+\cdot).
$$
Then
$T$ is a $C_0$-sequentially locally equicontinuous semigroup on $X$ (equivalently, $T$ is a $C_0$-locally equicontinuous semigroup, by Proposition \ref{2015-10-06:00}),
but there does not exist any $\alpha>0$ such that $\{e^{-\alpha t}T_t\}_{t\in \mathbb{R}^+}$ is equicontinuous. 
\end{Example}

The second relevant difference with respect to $C_0$-semigroups in Banach spaces is that the strong continuity does not imply, in general, the sequential local equicontinuity. %
The following example 
shows that 
Definition \ref{def:2015-05-01:01}\emph{(\ref{2015-12-05:24}$'$)} in general cannot be derived by 
Definition \ref{def:2015-05-01:01}\emph{(\ref{2015-10-06:01})}-\emph{(\ref{2015-10-13:11})}, even if 
Definition \ref{def:2015-05-01:01}\emph{(\ref{2015-10-13:11})} is strengthened by requiring the continuity of $\mathbb{R}^+\rightarrow X,\ t \mapsto T_tx$, $x\in X$.

\begin{Example}\label{exex}
  Let $X\coloneqq C(\mathbb{R})$ be endowed with the topology of the pointwise convergence. Define the semigroup
 $T\coloneqq \{T_t\}_{t\in \mathbb{R}^+}$ by
$$
T_t\colon X\rightarrow X,\ \varphi \mapsto \varphi(t+\cdot).
$$
Then $T_t\in \mathcal{L}_0(X)$ for all $t\in \mathbb{R}^+$.
It is clear that, for every $\varphi\in C(\mathbb{R})$, the map $\mathbb{R}^+\rightarrow X,\ t \mapsto T_t\varphi$, is continuous.
Nevertheless, for each $\hat t>0$ we can find a sequence $\{\varphi_n\}_{n\in \mathbb{N}}\subset C(\mathbb{R})$ of functions converging pointwise to $0$ and such that 
$$
\liminf_{n\rightarrow +\infty}\sup_{t\in[0,\hat t]} |(T_t\varphi_n)(0)|=
\liminf_{n\rightarrow +\infty}\sup_{t\in[0,\hat t]} |\varphi_n(t)|> 0.
$$
Hence,  $T$ is not a $C_0$-sequentially locally equicontinuous semigroup.
We observe that the same conclusion holds true if we restrict the action of $T$ to the space $C_b(\R)$.
\end{Example}

Referring to Remark \ref{rem..}\emph{(\ref{2015-12-05:25})}, we provide the following example\ (\footnote{Example \ref{ex:bb} could seem a bit artificious and \emph{ad hoc}.  
In the next section
we will provide 
 another more meaningful example  by a very simple  Markov transition semigroup (Example \ref{2015-05-04:07-II}).}).
\begin{Example}\label{ex:bb}
 Consider the Banach space $\ell^1$, with its usual  norm $|\mathbf{x}|_1=\sum_{k=0}^{+\infty} |x_k|$, where $\mathbf{x}\coloneqq \{x_k\}_{k\in\N}\in \ell^1$, and denote by
$\tau_1$ and
 $\tau_{w}$  the $|\cdot|_1$-topology and
 the weak topology  respectively.
Define $Z\coloneqq \ell^1\times \ell^1$ and endow it with the product topology $\tau_w \otimes \tau_1$.
Let
$$
B\colon Z\rightarrow Z, \ (x_1,x_2)\mapsto (x_1,x_1).
$$
We recall that  $\ell^1$ enjoys Schur's property (weak convergent sequences are strong convergent; see \cite[p.\ 85]{Diestel1984}). As a consequence, we have that $Z$ is sequentially complete and   $B\in \mathcal{L}_0(Z)$. On the other hand, as $\tau_w$ is strictly weaker than $\tau_1$, 
we have $B\notin L(Z)$.
By induction, we see
that $(I-B)^n= (I-B)$ for each $n\geq 1$, and then $\{(I-B)^{n}\}_{n\in \N}$ is a family of sequentially equicontinuous operators. 
By Proposition \ref{prop:2015-05-03:06},
if we define $T_t\coloneqq e^{t(B-I)}$ for $t\in \R^+$,
then
  $T\coloneqq \{T_t\}_{t\in \R^+}$ is a $C_0$-sequentially 
  equicontinuous semigroup on $Z$.
Actually, we have $e^{t(B-I)} = e^{-t}(I-B)+B$.
However, if $t>0$, the operators $e^{t(B-I)}=e^{-t}I+(1-e^{-t})B$ are not continuous on $Z$.
\end{Example}

\section{Developments in functional spaces}\label{sec:SE}

The aim of this section is to develop the theory of the previous section in some specific functional spaces. 
Throughout the rest of the paper, $E$ will denote a metric space, $\mathcal{E}$ will denote the associated Borel $\sigma$-algebra, and 
 $\mathcal{S}(E)$
will denote one of the spaces
 $UC_b(E)$, $C_b(E)$, $B_b(E)$.
We recall that $(\mathcal{S}(E),|\cdot|_\infty)$, where $|\cdot|_{\infty}$ is the usual sup-norm,  is a Banach space. For simplicity of notation,  we denote by $\mathcal{S}(E)_\infty^*$  the dual of $(\mathcal{S}(E),|
\cdot|_\infty)$ and  by $|\cdot|_{\mathcal{S}(E)^*_\infty}$  the operator norm in $\mathcal{S}(E)^*_\infty$.

 We are going to define on $\mathcal{S}(E)$ two particular locally convex topologies. The motivation for introducing such topologies is  that they allow to frame under a general unified viewpoint some of the approaches used in the literature of   Markov transition semigroups.
In particular, we are able to cover 
 the following types of  semigroups.
\begin{enumerate}
\item 
   Weakly continuous semigroups,  introduced in \cite{Cerrai94} for the space $UC_b(E)$ with $E$ separable Hilbert space (an overview can also be found in \cite[Appendix B]{Cerrai01book}, with $E$ separable Banach space).
\item  $\pi$-semigroups, introduced  in \cite{Priola99} for the space $UC_b(E)$, with $E$ separable metric space.
\item $C_0$-locally equicontinuous semigroups with respect to the so called mixed topology in the space $C_b(E)$, considered by \cite{GoldysKocan01}, with $E$ separable Hilbert space.
\end{enumerate}
\subsection{A family of locally convex topologies on $\mathcal{S}(E)$}\label{sub:tauk}

In the following, by $\mathbf{ba}(E)$ we denote the space of finitely additive signed measures on $(E,\mathcal{E})$ with bounded total variation. The space $\mathbf{ba}(E)$ is Banach  when endowed with the norm $| \cdot |_1$ given by the total variation and  is canonically identified with $(B_b(E)_\infty^*,|\cdot|_{B_b(E)^*_\infty})$ (see \cite[Theorem 14.4]{Aliprantis2006}) through the isometry
\begin{equation}\label{eqq1}
  \Phi\colon (\mathbf{ba}(E),|\cdot|_1)\rightarrow (B_b(E)_\infty^*,|\cdot|_{B_b(E)^*_\infty}),\ \mu\mapsto \Phi_\mu,
\end{equation}
where
\begin{equation}\label{eqq2}
\Phi_\mu(f)\coloneqq \int_Efd\mu\qquad\ \forall  f\in B_b(E),
\end{equation}
with $\int_E\# d\mu$  interpreted in the Darboux sense (see \cite[Sec. 11.2]{Aliprantis2006}).

We denote by $\mathbf{ca}(E)$  the space of elements of $\mathbf{ba}(E)$ that are countably additive.
 The space 
$(\mathbf{ca}(E),|\cdot|_1)$ is  Banach as well.
If $\mu\in \mathbf{ca}(E)$, then the Darboux integral in \eqref{eqq2} coincides with the Lebesgue integral.

For future reference, we recall the following result (see \cite[Th.\ 5.9, p.\ 39]{Parthasarathy67}).
\begin{Lemma}
  \label{rem:2015-04-27:02}
Let $\nu\in\mathbf{ca}(E)$
be such that  $\int_E {f} d\nu=0$ for all ${f}\in UC_b(E)$. Then  $\nu=0$. 
\end{Lemma}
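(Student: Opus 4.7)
The plan is to reduce the identity of charges to the identity of positive measures, approximate indicators of closed sets by Lipschitz functions, and then invoke a monotone-class / $\pi$-$\lambda$ argument to cover all Borel sets.

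First I would use the Jordan decomposition to write $\nu = \nu^+ - \nu^-$, where $\nu^+,\nu^-\in\mathbf{ca}^+(E)$ are mutually singular finite positive Borel measures. The hypothesis $\int_E f\, d\nu=0$ for every $f\in UC_b(E)$ then reads
\[
\int_E f\, d\nu^+ = \int_E f\, d\nu^- \qquad \forall f\in UC_b(E),
\]
so it suffices to show that two finite positive Borel measures on the metric space $E$ that integrate each element of $UC_b(E)$ to the same value must coincide.

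The key step is then to verify that $\nu^+(F)=\nu^-(F)$ for every closed set $F\subset E$. For this I would exploit that the distance function $x\mapsto d(x,F)$ is $1$-Lipschitz and define, for $n\in\mathbb{N}$,
\[
f_n(x)\coloneqq \max\{0,\,1-n\,d(x,F)\}.
\]
Each $f_n$ is Lipschitz (hence uniformly continuous) and bounded by $1$, so $f_n\in UC_b(E)$; moreover $f_n\downarrow \mathbf{1}_F$ pointwise as $n\to\infty$. By the dominated convergence theorem applied with the dominating constant $1$ (integrable since $\nu^\pm$ are finite),
\[
\nu^\pm(F)=\lim_{n\to\infty}\int_E f_n\, d\nu^\pm,
\]
and the two right-hand sides coincide by hypothesis. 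Hence $\nu^+$ and $\nu^-$ agree on the $\pi$-system of closed subsets of $E$.

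Finally, since the class of closed sets is a $\pi$-system generating the Borel $\sigma$-algebra $\mathcal{E}$, and both $\nu^+$ and $\nu^-$ are finite (in particular they agree on $E$ itself, taking $F=E$), Dynkin's $\pi$-$\lambda$ theorem yields $\nu^+=\nu^-$ on all of $\mathcal{E}$, whence $\nu=0$. The only subtle point is the approximation step: one must choose $f_n$ uniformly continuous (not merely continuous) so as to stay within the assumed class, and this is precisely why the $1$-Lipschitz character of the distance function is decisive. Everything else is a routine application of dominated convergence and the uniqueness of measure extensions.
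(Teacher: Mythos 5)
Your proof is correct. Note that the paper does not prove this lemma at all: it simply cites \cite[Th.\ 5.9, p.\ 39]{Parthasarathy67}, so your argument is a self-contained replacement rather than an alternative to an in-paper proof. What you give is in fact the standard proof of the cited result: Jordan decomposition, approximation of $\mathbf{1}_F$ for closed $F$ from above by the bounded Lipschitz functions $f_n(x)=\max\{0,1-n\,d(x,F)\}$ (which indeed lie in $UC_b(E)$ and decrease pointwise to $\mathbf{1}_F$ because $F$ is closed), dominated convergence for the finite measures $\nu^{\pm}$, and Dynkin's $\pi$--$\lambda$ theorem applied to the $\pi$-system of closed sets, which generates $\mathcal{E}$ and contains $E$ itself so that the total masses agree. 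All steps are valid and no regularity of the measures beyond finiteness and countable additivity is needed.
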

\begin{Proposition}\label{2015-08-07:01}
 The space $(\mathbf{ca}(E),|\cdot|_1)$ is isometrically
embedded into $(\mathcal{S}(E)_\infty^*, |\cdot|_{\mathcal{S}(E)^*_\infty})$ by
\begin{equation}
  \label{eq:2015-08-07:00}
  \Phi\colon \mathbf{ca}(E)\rightarrow\mathcal{S}(E)_\infty^*,\ \mu\mapsto \Phi_\mu,
\end{equation}
where
\begin{equation}\label{phimu}
  \Phi_\mu(f)\coloneqq \int_Efd\mu, \qquad \forall  f\in \mathcal{S}(E).
\end{equation}
\end{Proposition}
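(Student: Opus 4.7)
The plan is to verify in turn the three cases $\mathcal{S}(E) \in \{B_b(E), C_b(E), UC_b(E)\}$. First, I would note that linearity of $\Phi$ and well-definedness (for any $\mu\in\mathbf{ca}(E)$, $\Phi_\mu$ is a bounded linear functional with $|\Phi_\mu|_{\mathcal{S}(E)^*_\infty}\leq |\mu|_1$) are immediate from standard properties of the Lebesgue integral. The content of the statement is thus the reverse inequality $|\Phi_\mu|_{\mathcal{S}(E)^*_\infty}\geq |\mu|_1$.

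For $\mathcal{S}(E)=B_b(E)$, the claim follows directly from the canonical isometric isomorphism $\Phi\colon (\mathbf{ba}(E),|\cdot|_1)\to (B_b(E)^*_\infty,|\cdot|_{B_b(E)^*_\infty})$ recalled in \eqref{eqq1}-\eqref{eqq2}: simply restrict this isometry to the closed subspace $\mathbf{ca}(E)\subset \mathbf{ba}(E)$.

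The main task is the case $\mathcal{S}(E)\in\{C_b(E),UC_b(E)\}$. Fix $\mu\in\mathbf{ca}(E)$ and $\varepsilon>0$. I would apply the Hahn-Jordan decomposition to write $\mu=\mu^+-\mu^-$ with $\mu^+,\mu^-\in\mathbf{ca}^+(E)$ mutually singular, supported on disjoint Borel sets $E^+,E^-$ with $|\mu|_1=\mu^+(E^+)+\mu^-(E^-)$. Since every finite Borel measure on a metric space is regular (inner regular by closed sets), I can find disjoint closed sets $F^+\subset E^+$ and $F^-\subset E^-$ satisfying $\mu^+(E^+\setminus F^+)<\varepsilon/4$ and $\mu^-(E^-\setminus F^-)<\varepsilon/4$.

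The key step is then to build $f\in UC_b(E)$ with $|f|_\infty\leq 1$, $f\equiv 1$ on $F^+$, $f\equiv -1$ on $F^-$. This is done explicitly by the Urysohn-type formula
\begin{equation*}
f(x)\coloneqq \frac{d(x,F^-)-d(x,F^+)}{d(x,F^-)+d(x,F^+)},
\end{equation*}
which is $1$-Lipschitz in each coordinate of the denominator-normalized distance (one verifies uniform continuity directly), with the degenerate cases $F^+=\emptyset$ or $F^-=\emptyset$ handled by taking $f\equiv -1$ or $f\equiv 1$ respectively. A direct computation then yields
\begin{equation*}
\int_E f\,d\mu \;=\; \int_E f\,d\mu^+ - \int_E f\,d\mu^-\;\geq\; \mu^+(F^+)-\mu^+(E^+\setminus F^+)+\mu^-(F^-)-\mu^-(E^-\setminus F^-)\;\geq\; |\mu|_1-\varepsilon.
\end{equation*}
Since $f\in UC_b(E)\subset C_b(E)$ and $|f|_\infty\leq 1$, this gives $|\Phi_\mu|_{\mathcal{S}(E)^*_\infty}\geq |\mu|_1-\varepsilon$, and letting $\varepsilon\to 0^+$ completes the isometry.

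The isometry property immediately implies injectivity of $\Phi$, so $\Phi$ is an isometric embedding in each of the three cases. The only nonroutine point is the construction of the separating uniformly continuous function; this is the standard metric-space substitute for Urysohn's lemma and exploits crucially that $E$ is metric (note that this is also the point that makes Lemma \ref{rem:2015-04-27:02} applicable for uniqueness purposes).
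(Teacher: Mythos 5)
Your overall strategy (Hahn--Jordan decomposition plus a Urysohn-type separating function, with the reverse inequality $|\Phi_\mu|_{\mathcal{S}(E)^*_\infty}\geq|\mu|_1$ correctly identified as the only real content) is essentially the paper's, and your dispatch of the $B_b(E)$ case by restricting the canonical isometry $\mathbf{ba}(E)\cong B_b(E)^*_\infty$ is a clean shortcut. The final estimate $\int_E f\,d\mu\geq|\mu|_1-\varepsilon$ is also correct as written, \emph{given} the function $f$ you claim to have.

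The gap is the claim that $f(x)=\frac{d(x,F^-)-d(x,F^+)}{d(x,F^-)+d(x,F^+)}$ is uniformly continuous. Disjointness of the closed sets $F^+,F^-$ only guarantees that the denominator never vanishes, hence that $f\in C_b(E)$ with $|f|_\infty\le 1$; it does \emph{not} give uniform continuity unless $d(F^+,F^-)>0$. Concretely, in $E=\mathbb{R}^2$ take $F^+=\{(n,0)\colon n\geq1\}$ and $F^-=\{(n,1/n)\colon n\geq1\}$: these are disjoint closed sets, yet $f\equiv1$ on $F^+$ and $f\equiv-1$ on $F^-$ while $d((n,0),(n,1/n))\to0$, so $f\notin UC_b(E)$. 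Such a configuration can genuinely arise from your construction (take $\mu^+,\mu^-$ purely atomic and carried by these two sets). So your argument as written proves the proposition for $\mathcal{S}(E)=B_b(E)$ and $C_b(E)$, but not for $UC_b(E)$. The repair is small: since $F^-$ is closed and disjoint from $F^+$, every $x\in F^+$ has $d(x,F^-)>0$, so the closed sets $F^+_\delta\coloneqq\{x\in F^+\colon d(x,F^-)\geq\delta\}$ increase to $F^+$ as $\delta\downarrow0$, and by continuity from below you may replace $F^+$ by some $F^+_\delta$ with $\mu^+(F^+\setminus F^+_\delta)<\varepsilon/4$; then $d(x,F^-)+d(x,F^+_\delta)\geq d(F^-,F^+_\delta)\geq\delta$ for every $x$, and since the numerator is dominated by the denominator one checks $f$ is $(4/\delta)$-Lipschitz, hence in $UC_b(E)$. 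This is exactly the point the paper's proof is careful about: it insists on a closed set at \emph{positive distance} from the other carrier before writing down the quotient.
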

\begin{proof}
It is clear that $\Phi$ is linear.

Let $\mu\in \mathbf{ca}(E)$.
As $|\Phi_\mu(f)|\leq |f|_\infty|\mu|_1$ 
for every $f\in \mathcal{S}(E)$, then $\Phi_\mu\in\mathcal{S}(E)^*$ and $|\Phi_\mu|_{\mathcal{S}(E)^*}\leq |\mu|_1$.
To  show  that  $\Phi$ is an isometry it remains to show that  $|\Phi_\mu|_{\mathcal{S}(E)^*}\geq |\mu|_1$.  
 Let $\mu=\mu^+-\mu^-$ be the Jordan decomposition of $\mu$, and let $C^+\coloneqq \mbox{supp}(\mu^+), \ C^-\coloneqq \mbox{supp}(\mu^-)$.
 Let $\varepsilon>0$.
  Then we can find a closed set $C^+_\varepsilon\subset C^+$  such that
 $\mu^+
 (C^+\setminus C^+_\varepsilon)<\varepsilon$,
 and $d(C^+_\varepsilon,C^-)>0$. 
 Let $f$ be defined by
 $$
 f(x)\coloneqq \frac{d(x,C^-)-d(x,C_\varepsilon^+)}{d(x,C^-)+d(x,C_\varepsilon^+)}\qquad \forall  x\in E.
 $$
 Then $f\in UC_b(E)$,  $f\equiv 1$ on $C^+_\varepsilon$,  $f\equiv -1$ on $C^-$, and $|f|_\infty=1$. Therefore,
 $$
 \int_E fd \mu= \int_{C_\varepsilon^+} fd \mu^+ +\int_{C^+\setminus C^+_\varepsilon} f d\mu^+
 -\int_{C^-} fd\mu^-  \geq \mu^+(C^+_\varepsilon)-\varepsilon +\mu^-(C^-)\geq |\mu|_1-2\varepsilon.
 $$
 Then $|\Phi_\mu|_{\mathcal{S}(E)^*_\infty}\geq |\mu|_1-2\epsilon$.
 We conclude by arbitrariness of $\varepsilon$.
\end{proof}

Let $\mathbf{P}$ be a set of non-empty parts of $E$ such that $E=\bigcup_{P\in \mathbf{P}}P$.
For every $P\in \mathbf{P}$ and every $\mu\in \mathbf{ca}(E)$, let us introduce the seminorm
\begin{equation}\label{maurosemi}
p_{P,\mu}({f})\coloneqq [ f]_P+\left|\int_E  {f} d\mu\right|, \quad \forall  f\in \mathcal{S}(E),
\end{equation}
where 
$$[ f]_P\coloneqq \sup_{x\in P}|{f}(x)|.$$
Denote by $\tau_\mathbf{P}$
the locally convex topology on $\mathcal{S}(E)$ induced by the family of seminorms 
$$\{ p_{P,\mu} \colon P\in \mathbf{P},
 \ \mu\in \mathbf{ca}(E) \}.$$
 Since $E=\bigcup_{P\in \mathbf{P}}P$, $\tau_\mathbf{P}$ is Hausdorff.

Let us denote by $\tau_\infty$ the topology induced by the norm $| \cdot|_\infty$ on $\mathcal{S}(E)$. 
Since the functional $\Phi_\mu$ defined in \eqref{phimu} is $\tau_\mathbf{P}$-continuous for every $\mu\in \mathbf{ca}(E)$, and since
  $p_{P,\mu}$ is $\tau_\infty$-continuous for every  $P\in \mathbf{P}$ and every $\mu\in \mathbf{ca}(E)$, we have the  inclusions
 \begin{equation}\label{eqrr}
 \sigma(\mathcal{S}(E),\mathbf{ca}(E))\subset \tau_\mathbf{P}\subset \tau_\infty.
 \end{equation}
Observe that, when $\mathbf{P}$ contains only finite parts of $E$, then $\tau_\mathbf{P}=\sigma(\mathcal{S}(E),\mathbf{ca}(E))$, because $\mathbf{ca}(E)$ contains all Dirac measures. The opposite case is when $E\in \mathbf{P}$, and then
 $\tau_\mathbf{P}=\tau_\infty$.
 

\begin{Proposition}
  \label{prop:2015-04-29:00}
Let $B\subset \mathcal{S}(E)$.
The following are equivalent.
\begin{enumerate}[(i)]
\item\label{2015-12-05:26}
$B$   is  $\sigma(\mathcal{S}(E),\mathbf{ca}(E))$-bounded.
\item\label{2015-12-05:27}
 $B$   is  $\tau_\mathbf{P}$-bounded.
\item\label{2015-12-05:28}
$B$   is  $\tau_\infty$-bounded.
\end{enumerate}
\end{Proposition}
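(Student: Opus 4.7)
The plan is to observe that two of the implications are immediate from the topology inclusions already recorded in \eqref{eqrr}, and to reduce the remaining (essential) implication to the Banach--Steinhaus theorem applied on the Banach space $(\mathbf{ca}(E),|\cdot|_1)$.

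First, note that \eqref{eqrr} gives $\sigma(\mathcal{S}(E),\mathbf{ca}(E))\subset \tau_{\mathbf{P}}\subset\tau_{\infty}$, so a $\tau_{\infty}$-bounded set is $\tau_{\mathbf{P}}$-bounded and a $\tau_{\mathbf{P}}$-bounded set is $\sigma(\mathcal{S}(E),\mathbf{ca}(E))$-bounded. This settles \emph{(\ref{2015-12-05:28})} $\Rightarrow$ \emph{(\ref{2015-12-05:27})} $\Rightarrow$ \emph{(\ref{2015-12-05:26})}, since boundedness is preserved when passing to a coarser locally convex topology.

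The substantive implication is \emph{(\ref{2015-12-05:26})} $\Rightarrow$ \emph{(\ref{2015-12-05:28})}. To attack it, I would view each $f\in \mathcal{S}(E)$ as a bounded linear functional
$$\Psi_f\colon (\mathbf{ca}(E),|\cdot|_1)\rightarrow \mathbb{R},\quad \mu\mapsto \int_E f\,d\mu.$$
Proposition \ref{2015-08-07:01} (or, equivalently, testing against Dirac measures $\delta_x\in\mathbf{ca}(E)$, which gives $|f(x)|=|\Psi_f(\delta_x)|\le |\Psi_f|_{\mathbf{ca}(E)^*}$ combined with the trivial reverse bound) yields the identity
$$|\Psi_f|_{\mathbf{ca}(E)^*}=|f|_{\infty},\qquad \forall f\in\mathcal{S}(E).$$
Now the assumption that $B$ is $\sigma(\mathcal{S}(E),\mathbf{ca}(E))$-bounded amounts exactly to the pointwise bound $\sup_{f\in B}|\Psi_f(\mu)|<+\infty$ for every $\mu\in\mathbf{ca}(E)$. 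Applying the Banach--Steinhaus theorem to the family $\{\Psi_f\}_{f\in B}\subset (\mathbf{ca}(E),|\cdot|_1)^*$ yields
$$\sup_{f\in B}|f|_{\infty}=\sup_{f\in B}|\Psi_f|_{\mathbf{ca}(E)^*}<+\infty,$$
which is the desired $\tau_{\infty}$-boundedness.

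There is essentially no obstacle beyond the identification $|\Psi_f|_{\mathbf{ca}(E)^*}=|f|_{\infty}$ (already contained in Proposition \ref{2015-08-07:01}) and the completeness of $(\mathbf{ca}(E),|\cdot|_1)$, which is needed to invoke the uniform boundedness principle. No separability or regularity hypothesis on $E$ intervenes, and the argument is uniform in the choice of $\mathcal{S}(E)\in\{B_b(E),C_b(E),UC_b(E)\}$.
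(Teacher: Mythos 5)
Your proof is correct and is essentially the same as the paper's: the paper routes \emph{(i)}$\Rightarrow$\emph{(iii)} through Lemma \ref{lem:2015-05-04:01}, which is precisely your uniform-boundedness argument (Banach--Steinhaus on the Banach space $\mathbf{ca}(E)$, identified via Proposition \ref{2015-08-07:01} with a norm-closed norming subspace of $\mathcal{S}(E)^*_\infty$, together with the identity $|f|_\infty=\sup_{|\mu|_1\le 1}\left|\int_E f\,d\mu\right|$ obtained from Dirac measures). The only nit is that Proposition \ref{2015-08-07:01} literally gives $|\Phi_\mu|_{\mathcal{S}(E)^*_\infty}=|\mu|_1$ rather than $|\Psi_f|_{\mathbf{ca}(E)^*}=|f|_\infty$, but your parenthetical Dirac-measure derivation of the latter identity is correct and self-contained, so nothing is missing.
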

\begin{proof}
By \eqref{eqrr}, it is sufficient to prove that \emph{(\ref{2015-12-05:26})}$\Rightarrow$\emph{(\ref{2015-12-05:28})}.
Let $B$ be $\sigma(\mathcal{S}(E),\mathbf{ca}(E))$-bounded.
By Proposition \ref{2015-08-07:01}, $\mathbf{ca}(E)$ is closed in $\mathcal{S}(E)^*_\infty$. Moreover, since $\mathbf{ca}(E)$ contains the Dirac measures, it is norming.
Then we conclude  by applying
Lemma \ref{lem:2015-05-04:01}.
\end{proof}

\begin{Corollary}
  $\mathcal{L}_0((\mathcal{S}(E),\tau_\mathbf{P}))\subset L((\mathcal{S}(E),|\cdot|_\infty))$.
\end{Corollary}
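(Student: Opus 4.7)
The plan is to combine the preceding proposition on bounded sets with the general fact, recalled in the excerpt, that sequentially continuous linear operators between locally convex spaces are bounded. Concretely, fix $F\in\mathcal{L}_0((\mathcal{S}(E),\tau_\mathbf{P}))$ and take any $|\cdot|_\infty$-bounded set $B\subset\mathcal{S}(E)$. By Proposition \ref{prop:2015-04-29:00} (equivalence of (\ref{2015-12-05:27}) and (\ref{2015-12-05:28})), $B$ is $\tau_\mathbf{P}$-bounded. By Proposition \ref{2015-07-30:02}(\ref{2015-08-31:01}), $F(B)$ is then $\tau_\mathbf{P}$-bounded, and applying Proposition \ref{prop:2015-04-29:00} once more (this time (\ref{2015-12-05:27})$\Rightarrow$(\ref{2015-12-05:28})), $F(B)$ is $|\cdot|_\infty$-bounded. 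Hence $F$ is a bounded linear operator from the Banach space $(\mathcal{S}(E),|\cdot|_\infty)$ into itself.

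To conclude $F\in L((\mathcal{S}(E),|\cdot|_\infty))$, I would invoke \eqref{sad1}: since every normed space (in particular the Banach space $(\mathcal{S}(E),|\cdot|_\infty)$) is bornological, a bounded linear operator on it is continuous. Equivalently, one could argue directly: on a normed space, a linear map sending the unit ball to a bounded set is norm continuous.

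There is essentially no obstacle here; the proof is a one-line bookkeeping argument once Proposition \ref{prop:2015-04-29:00} and Proposition \ref{2015-07-30:02}(\ref{2015-08-31:01}) are in hand. The only mild subtlety worth noting explicitly is that we are not claiming $\mathcal{L}_0((\mathcal{S}(E),\tau_\mathbf{P}))=L((\mathcal{S}(E),|\cdot|_\infty))$ (which would need the reverse inclusion, typically false because $|\cdot|_\infty$-continuity does not imply $\tau_\mathbf{P}$-sequential continuity in general); we only pass from sequential continuity in the weaker topology to norm continuity through the common notion of boundedness. Thus the statement can be recorded as a short corollary of Proposition \ref{prop:2015-04-29:00} together with \eqref{ssd}--\eqref{sad1} and Proposition \ref{2015-07-30:02}(\ref{2015-08-31:01}).
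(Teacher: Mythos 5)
Your argument is correct and is essentially the paper's own proof: the paper likewise notes via Proposition \ref{prop:2015-04-29:00} that the $\tau_\mathbf{P}$-bounded and $|\cdot|_\infty$-bounded sets coincide and then applies Proposition \ref{2015-07-30:02}\emph{(\ref{2015-08-31:01})}, with the final passage from boundedness to norm continuity (your appeal to \eqref{sad1}, i.e.\ normed spaces being bornological) left implicit. You have simply spelled out that last standard step more explicitly.
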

\begin{proof}
By Proposition \ref{prop:2015-04-29:00}, the   bounded sets of 
$\tau_\mathbf{P}$ are
 exactly the bounded sets of $\tau_\infty$.
Then, we conclude by applying  Proposition \ref{2015-07-30:02}\emph{(\ref{2015-08-31:01})}.
\end{proof}

\begin{Corollary}
Let $T$ be a $C_0$-sequentially locally  equicontinuous semigroup on $(\mathcal{S}(E),\tau_\mathbf{P})$.
Then there exists $M\geq 1$ and $\alpha >0$ such that $|T_t|_{L((\mathcal{S}(E),|\cdot|_\infty))}\leq Me^{\alpha t}$
for all $t\in \R^+$.
\end{Corollary}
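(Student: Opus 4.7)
The plan is to recognize that this corollary is essentially an immediate application of Proposition \ref{prop:2015-08-01:00}\emph{(\ref{2015-10-13:00})} to the concrete setting at hand, once the boundedness hypothesis in that proposition has been verified. Specifically, I would take $X=\mathcal{S}(E)$, $\tau=\tau_{\mathbf{P}}$, and $|\cdot|_X=|\cdot|_\infty$, and check that the standing assumption of Proposition \ref{prop:2015-08-01:00}---that a subset is $\tau$-bounded if and only if it is $|\cdot|_X$-bounded---holds in this instance.

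First I would recall that $(\mathcal{S}(E),|\cdot|_\infty)$ is a Banach space, as noted at the beginning of this section. Next, the crucial ingredient is Proposition \ref{prop:2015-04-29:00}, which states exactly that a subset $B\subset\mathcal{S}(E)$ is $\tau_{\mathbf{P}}$-bounded if and only if it is $\tau_\infty$-bounded, i.e.\ $|\cdot|_\infty$-bounded. This verifies the coincidence of bounded sets required by the hypothesis of Proposition \ref{prop:2015-08-01:00}.

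With these two observations in place, I would invoke Proposition \ref{prop:2015-08-01:00}\emph{(\ref{2015-10-13:00})}: since $T$ is a $C_0$-sequentially locally equicontinuous semigroup on $(\mathcal{S}(E),\tau_{\mathbf{P}})$, the cited proposition directly yields the existence of $\alpha\in\mathbb{R}$ and $M\geq 1$ such that
\[
|T_t|_{L((\mathcal{S}(E),|\cdot|_\infty))}\leq Me^{\alpha t},\qquad \forall t\in\mathbb{R}^+.
\]
If one wants $\alpha>0$ (as in the statement), it suffices to replace $\alpha$ by $\max(\alpha,1)$ at the end. There is essentially no obstacle here---the work has all been done in Propositions \ref{prop:2015-08-01:00} and \ref{prop:2015-04-29:00}; the only thing to notice is that the hypotheses of the former are satisfied thanks to the latter, which in turn rests on Lemma \ref{lem:2015-05-04:01} applied to $\mathbf{ca}(E)\subset \mathcal{S}(E)^*_\infty$ (closed and norming, by Proposition \ref{2015-08-07:01} and the fact that $\mathbf{ca}(E)$ contains Dirac measures).
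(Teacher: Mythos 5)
Your proof is correct and follows exactly the paper's own argument: verify via Proposition \ref{prop:2015-04-29:00} that the $\tau_{\mathbf{P}}$-bounded sets coincide with the $|\cdot|_\infty$-bounded sets, then apply Proposition \ref{prop:2015-08-01:00}\emph{(\ref{2015-10-13:00})} to the Banach space $(\mathcal{S}(E),|\cdot|_\infty)$. The remark about enlarging $\alpha$ to make it positive is a sensible (and harmless) addition.
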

\begin{proof}
Due to Proposition \ref{prop:2015-04-29:00}, 
we can conclude by applying Proposition \ref{prop:2015-08-01:00}\emph{(\ref{2015-10-13:00})}.
\end{proof}

\noindent We now  focus on the following two cases:
\begin{enumerate}[(a)]
\item 
 $\mathbf{P}$ is the set of all finite subsets of $E$, and then $\tau_\mathbf{P}=\sigma(\mathcal{S}(E),\mathbf{ca}(E))$;
 \item
 $\mathbf{P}$ is the set of all non-empty compact subsets of $E$; in  this case, we denote $\tau_\mathbf{P}$ by $\tau_\mathcal{K}$, that is
 \begin{equation}\label{tauk}
   \tau_\mathcal{K}\coloneqq \mbox{l.c.\ topology on } \mathcal{S}(E) \ \mbox{generated by} \  \{ p_{K,\mu} \colon K\subset E \ \mbox{compact}, 
   \ \mu\in \mathbf{ca}(E) \}.
 \end{equation}
\end{enumerate}

\begin{Proposition}
  \label{prop:2015-04-29:01} We have the following characterizations.
\begin{enumerate}[(i)]
\item\label{2015-12-05:29} $\tau_{\mathcal{K}}=\tau_\infty$ if and only if $E$ is compact.
\item\label{2015-12-05:30}
$\sigma(\mathcal{S}(E),\mathbf{ca}(E))=\tau_\infty$ if and only if $E$ is finite.
\end{enumerate}
\end{Proposition}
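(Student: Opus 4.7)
The plan is to handle the two equivalences separately. In each case one direction is immediate from the definitions, while the reverse implication is proved by extracting a basic neighborhood of $0$ contained in the $|\cdot|_\infty$-unit ball and then, under the assumption that $E$ fails the required finiteness/compactness property, constructing a function that violates this containment.

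\emph{Part (i).} If $E$ is compact, then $E$ itself qualifies as a compact in \eqref{tauk}, so $p_{E,0} = |\cdot|_\infty$ is one of the generating seminorms of $\tau_\mathcal{K}$; this gives $\tau_\infty \subset \tau_\mathcal{K}$, which combined with $\tau_\mathcal{K} \subset \tau_\infty$ from \eqref{eqrr} yields equality. Conversely, assume $\tau_\mathcal{K} = \tau_\infty$. The $\tau_\infty$-unit ball is then a $\tau_\mathcal{K}$-neighborhood of $0$, so there exist compact sets $K_1, \ldots, K_n \subset E$, measures $\mu_1, \ldots, \mu_m \in \mathbf{ca}(E)$, and $\varepsilon > 0$ such that $[f]_{K_i} < \varepsilon$ for all $i$ and $|\Phi_{\mu_j}(f)| < \varepsilon$ for all $j$ together imply $|f|_\infty < 1$. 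Setting $K := \bigcup_{i=1}^n K_i$ (still compact) and using homogeneity, it suffices to exhibit a nonzero $f \in \mathcal{S}(E)$ with $f|_K = 0$ and $\Phi_{\mu_j}(f) = 0$ for all $j$: rescaling such an $f$ to $|f|_\infty > 1$ contradicts the implication above. If $E$ is not compact, then $E \setminus K$ is infinite (otherwise $E = K \cup (E \setminus K)$ would be a finite union of compacts, hence compact). Pick distinct points $x_1, \ldots, x_{m+1} \in E \setminus K$ and set
$$
\rho := \min\!\left( \min_{1 \leq i \leq m+1} d(x_i, K),\ \min_{1 \leq i < j \leq m+1} \tfrac{1}{2} d(x_i, x_j) \right),
$$
which is strictly positive as the minimum of finitely many strictly positive quantities. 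The Lipschitz bumps $f_i(y) := \max(0, \rho/2 - d(y, x_i))$ lie in $UC_b(E) \subset \mathcal{S}(E)$, vanish on $K$, and have pairwise disjoint supports inside $E \setminus K$; hence they are linearly independent. By rank-nullity, the linear map $\operatorname{span}\{f_1, \ldots, f_{m+1}\} \to \mathbb{R}^m$, $f \mapsto (\Phi_{\mu_j}(f))_{j=1}^m$, has nontrivial kernel, producing the desired $f$.

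\emph{Part (ii).} If $E$ is finite, then $\mathcal{S}(E)$ is finite-dimensional. Since the Diracs $\delta_x \in \mathbf{ca}(E)$ separate points of $\mathcal{S}(E)$, the topology $\sigma(\mathcal{S}(E), \mathbf{ca}(E))$ is Hausdorff, and any two Hausdorff linear topologies on a finite-dimensional space coincide. Conversely, if $\sigma(\mathcal{S}(E), \mathbf{ca}(E)) = \tau_\infty$, the same extraction yields $\mu_1, \ldots, \mu_m \in \mathbf{ca}(E)$ and $\varepsilon > 0$ such that $|\Phi_{\mu_j}(f)| < \varepsilon$ for all $j$ implies $|f|_\infty < 1$; by homogeneity, $\bigcap_{j=1}^m \ker \Phi_{\mu_j} = \{0\}$, so $f \mapsto (\Phi_{\mu_j}(f))_j$ injects $\mathcal{S}(E)$ into $\mathbb{R}^m$, giving $\dim \mathcal{S}(E) \leq m < \infty$. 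If $E$ were infinite, I would produce infinitely many linearly independent elements of $UC_b(E) \subset \mathcal{S}(E)$, contradicting finite-dimensionality: either $E$ has an accumulation point $x_\ast$, around which one places shrinking pairwise disjoint Lipschitz bumps analogous to the $f_i$ above, or $E$ is discrete and the indicators of its (infinitely many) isolated points already lie in $UC_b(E)$.

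The main obstacle is the hard direction of (i): guaranteeing enough geometric room in $E \setminus K$ to accommodate $m+1$ pairwise disjoint uniformly continuous bumps. This reduces cleanly to the two observations that $E \setminus K$ is infinite (so $m+1$ distinct points exist) and that $K$ is closed with each $x_i \notin K$ (so each $d(x_i, K) > 0$), which together make $\rho$ the minimum of finitely many strictly positive numbers.
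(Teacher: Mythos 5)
Your proof is correct, but the hard directions are argued by a genuinely different route than the paper's. For the converse in (i), the paper works in the \emph{dual}: it applies the seminorm domination to the cut-offs $f r_\varepsilon$ (the uniformly continuous approximations of $f\mathbf{1}_{K^c}$, needed because $f\mathbf{1}_{K^c}$ itself leaves $C_b(E)$ and $UC_b(E)$), passes to the limit, and invokes the standard lemma on linear functionals (\cite[Lemma 3.9]{Rudin1991}) to conclude that every $\nu\in\mathbf{ca}(E)$ with $|\nu|(K)=0$ lies in $\operatorname{Span}\{\mu_i\lfloor K^c\}$; testing against Dirac masses then forces $E\setminus K$ to be finite. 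You instead work in the \emph{primal} space: observing that $E\setminus K$ is infinite when $E$ is not compact, you build $m+1$ pairwise disjoint Lipschitz bumps vanishing on $K$, and rank--nullity hands you a nonzero $f$ killed by all the generating seminorms, contradicting the containment of a basic $\tau_\mathcal{K}$-neighborhood in the unit ball; the converse of (ii) is handled by the same kernel-intersection count. Your version is more elementary (no cut-off limit, no duality lemma) and is in fact the same device the paper deploys later, in Proposition \ref{prop:seqcom}(v), to show that every $\tau_\mathcal{K}$-neighborhood of $0$ contains an infinite-dimensional subspace; the paper's version buys the stronger structural byproduct that the measures supported off $K$ form a finite-dimensional space spanned by the $\mu_i\lfloor K^c$. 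Two trivial remarks: the dichotomy (accumulation point vs.\ discrete) closing your part (ii) is unnecessary, since any $m+1$ distinct points of an infinite metric space already carry pairwise disjoint Lipschitz bumps exactly as in your part (i); and your verification that $d(x_i,K)>0$ and that the bumps vanish on $K$ is exactly the point that makes the construction land in $UC_b(E)$, which is needed since $\mathcal{S}(E)$ may be as small as $UC_b(E)$.
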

\begin{proof}
First, note that the inclusions $\sigma(\mathcal{S}(E),\mathbf{ca}(E))\subset \tau_\infty$ and $\tau_{\mathcal{K}}\subset\tau_\infty$ have been already observed in \eqref{eqrr}.

\emph{(\ref{2015-12-05:29})}
If $E$ is compact,
 we have $| \cdot|_\infty=p_{E,0}$, hence $\tau_\mathcal{K}=\tau_\infty$.
Conversely, assume that $\tau_\mathcal{K}=\tau_\infty$ on $\mathcal{S}(E)$. 
Then there exist a non-empty compact set  $K\subset E$, measures $\mu_1,\ldots,\mu_n\in \mathbf{ca}(E)$, and $L >0$, such that
  \begin{equation}
    \label{eq:2015-04-29:02}
| {f}|_\infty\leq L\left([f]_K+\sum_{i=1}^n\left|\int_E {f} d\mu_i\right|\right),\qquad  \forall   {f}\in \mathcal{S}(E).
\end{equation}
For $\varepsilon>0$, 
define $A_\varepsilon\coloneqq \{x\in E\colon B(x,\varepsilon)\subset K^c\}$, and define, with the convention $d(
\cdot,\emptyset) =+\infty$, the function $r_\varepsilon(x)\coloneqq \frac{d(x,K)}{d(x,A_\varepsilon)+d(x,K)}$. Then $0\leq r_\varepsilon\leq 1$, $r_\varepsilon=0$ on $K$, $r_\varepsilon=1$ on $A_\varepsilon$, $r_\varepsilon\uparrow \mathbf{1}_{K^c}$ pointwise as $\varepsilon\downarrow 0$, and $r_\varepsilon$ is uniformly continuous (the latter is due
to
the fact that $d(A_\varepsilon,K)\geq \varepsilon$).
Hence, for every $f\in \mathcal{S}(E)$, the function ${f} r_\varepsilon$ belongs to $\mathcal{S}(E)$ and $|{f} r_\varepsilon|\uparrow |{f}\mathbf{1}_{K^c}|$ pointwise as $\varepsilon\downarrow 0$, which entails  $|f r_\varepsilon|_\infty\uparrow | {f} \mathbf{1}_{K^c}|_\infty$ as $\varepsilon\downarrow 0$.
We can then apply \eqref{eq:2015-04-29:02}
to every $fr_\varepsilon$ and pass to the limit for $\varepsilon\downarrow 0$
 to obtain
$$
| {f} \mathbf{1}_{K^c}|_\infty\leq L
\sum_{i=1}^n\left|\int_E {f} d(\mu_i\lfloor K^c)\right|,\qquad \forall  {f}\in \mathcal{S}(E),
$$
where
$\mu_i\lfloor K^c$ denotes the restriction of $\mu_i$ to $K^c$.
Let $\nu\in \mathbf{ca}(E)$ be such that $|\nu|(K)=0$. Then
%
$$
\left|\int_E{f} d\nu\right| = 
\left|\int_E{f}\mathbf{1}_{K^c} d\nu\right|
 \leq \ |\nu|_1
| {f}\mathbf{1}_{K^c}|_\infty
\leq |\nu|_1
L
\sum_{i=1}^n\left|\int_E {f} d(\mu_i\lfloor K^c)\right|, \qquad  \forall  {f}\in \mathcal{S}(E).
$$
Then, by  \cite[Lemma\ 3.9, p.\ 63]{Rudin1991} and by Proposition \ref{2015-08-07:01}, there exist $\alpha_1,\ldots,\alpha_n\in \mathbb{R}$ such that
$ \nu=\sum_{i=1}^n\alpha_i(\mu_i\lfloor K^c)$. By arbitrariness of $\nu$ this implies that $E\setminus K$ is finite, and then $E$ is compact.

\emph{(\ref{2015-12-05:30})}
 If $E$ is finite, clearly  $\sigma(\mathcal{S}(E),\mathbf{ca}(E))= \tau_\infty$.
 Conversely, assume that  $\sigma(\mathcal{S}(E),\mathbf{ca}(E))= \tau_\infty$.
 Then there exist $K\subset E$ compact, $\mu_1,\ldots,\mu_n\in \mathbf{ca}(E)$, and $L>0$ such that
\begin{equation}\label{2015-10-27:00}
|f|_\infty\leq L
\sum_{i=1}^n\left|\int_E {f} d\mu_i\right|,\qquad  \forall   {f}\in \mathcal{S}(E).
\end{equation}
By arguing as for concluding the proof of \emph{(\ref{2015-12-05:29})},
we obtain
$$
\mathbf{ca}(E)=\operatorname{Span} \left\{ \mu_1,\ldots,\mu_n \right\},
$$
and then $E$ must be finite.
\end{proof}
%

We recall the following definition.

\begin{Definition}
A locally convex topological vector space is said to be \emph{infrabarreled} if every closed, convex, balanced set, absorbing every bounded set, is a neighborhood of $0$.
\end{Definition}

\begin{Corollary}
  \label{cor:2015-04-29:05} We have the following characterizations.
  \begin{enumerate}[(i)]
  \item\label{2015-12-05:31}
 $(\mathcal{S}(E),\sigma(\mathcal{S}(E),\mathbf{ca}(E)))$ is  infrabarrelled if and only if $E$ is finite.
\item\label{2015-12-05:32} $(\mathcal{S}(E),\tau_\mathcal{K})$ is  infrabarrelled if and only if $E$ is compact.
\end{enumerate}
\end{Corollary}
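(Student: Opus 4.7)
The plan is to treat both statements uniformly by exploiting a single observation: for any family $\mathbf{P}$ of parts as in Subsection~\ref{sub:tauk}, the closed unit ball $B_\infty \coloneqq \{f \in \mathcal{S}(E) : |f|_\infty \le 1\}$ is always a bornivorous barrel in $(\mathcal{S}(E), \tau_\mathbf{P})$. Infrabarrelledness then forces the $|\cdot|_\infty$-unit ball to be a neighborhood of $0$, which collapses the topology down to $\tau_\infty$; at that point Proposition~\ref{prop:2015-04-29:01} finishes the job. I recall that a seminorm $q$ on a locally convex space $(X,\tau)$ is $\tau$-continuous iff $\{q\le 1\}$ is a $\tau$-neighborhood of $0$, so ``$B_\infty$ is a neighborhood of $0$'' is equivalent to ``$|\cdot|_\infty$ is continuous''.

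For the forward direction in both \emph{(\ref{2015-12-05:31})} and \emph{(\ref{2015-12-05:32})}, I would verify the three defining properties of a bornivorous barrel. Convexity and balancedness of $B_\infty$ are immediate. Closedness follows from the identity
\[
B_\infty \;=\; \bigcap_{x\in E}\bigl\{f \in \mathcal{S}(E) : |\Phi_{\delta_x}(f)| \le 1\bigr\},
\]
where $\delta_x \in \mathbf{ca}(E)$ is the Dirac mass at $x$; each set on the right is closed in $\sigma(\mathcal{S}(E),\mathbf{ca}(E))$, hence $B_\infty$ is closed in $\sigma(\mathcal{S}(E),\mathbf{ca}(E))$, and by \eqref{eqrr} also in $\tau_\mathcal{K}$. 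Finally, $B_\infty$ absorbs every bounded set of the given topology by Proposition~\ref{prop:2015-04-29:00}, since such bounded sets are $|\cdot|_\infty$-bounded. If we now assume $(\mathcal{S}(E),\sigma(\mathcal{S}(E),\mathbf{ca}(E)))$ (resp.\ $(\mathcal{S}(E),\tau_\mathcal{K})$) is infrabarrelled, then $B_\infty$ is a neighborhood of $0$, which gives $\tau_\infty \subset \sigma(\mathcal{S}(E),\mathbf{ca}(E))$ (resp.\ $\tau_\infty \subset \tau_\mathcal{K}$). Combined with the reverse inclusions in \eqref{eqrr}, the two topologies coincide, and Proposition~\ref{prop:2015-04-29:01}\emph{(\ref{2015-12-05:30})} (resp.\ \emph{(\ref{2015-12-05:29})}) delivers that $E$ is finite (resp.\ compact).

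For the converse, if $E$ is finite then by Proposition~\ref{prop:2015-04-29:01}\emph{(\ref{2015-12-05:30})} one has $\sigma(\mathcal{S}(E),\mathbf{ca}(E)) = \tau_\infty$, and $(\mathcal{S}(E),\tau_\infty)$ is a Banach space; similarly, if $E$ is compact then $\tau_\mathcal{K} = \tau_\infty$ by Proposition~\ref{prop:2015-04-29:01}\emph{(\ref{2015-12-05:29})}. In both cases the topology under consideration is the norm topology of a Banach (hence Baire) space, so it is barrelled by the classical Banach--Steinhaus argument, and every barrelled space is infrabarrelled.

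I do not foresee any real obstacle; the one subtle point worth double-checking is that the closedness of $B_\infty$ in the weaker topologies rests entirely on $\{\delta_x\}_{x \in E} \subset \mathbf{ca}(E)$, which uses only countable additivity and would fail, for example, if we replaced $\mathbf{ca}(E)$ by a strict subspace not separating points of $E$.
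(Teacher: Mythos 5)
Your proof is correct and follows essentially the same route as the paper: both establish that the closed unit ball $B_\infty$ is convex, balanced, closed (via the intersection over Dirac functionals $\Phi_{\delta_x}$ with $\delta_x\in\mathbf{ca}(E)$) and absorbs all bounded sets by Proposition \ref{prop:2015-04-29:00}, so that infrabarrelledness would force $\tau_\infty$ to coincide with the weaker topology, which Proposition \ref{prop:2015-04-29:01} rules out unless $E$ is finite (resp.\ compact); the converse in both cases is the Banach-space observation. The only cosmetic difference is that you argue the forward implication directly while the paper phrases it as a contradiction starting from $E$ not finite (resp.\ not compact).
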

\begin{proof}
If $E$ is finite (\emph{resp.}\ $E$ is compact), then,  by Proposition 
\ref{prop:2015-04-29:01},
$\sigma(\mathcal{S}(E),\mathbf{ca}(E))$ (\emph{resp.}\ $\tau_\mathcal{K}$)
coincides with the topology $\tau_\infty$ of the Banach space $(\mathcal{S}(E),|\cdot|_\infty)$, and then
it is infrabarreled, because every Banach space is so (see \cite[Theorem 4.5, p.\ 97]{Osborne2014}).

Conversely, let 
 $E$ be not 
finite (\emph{resp.}\  not compact) and consider the $|\cdot|_\infty$-closed ball 
 $$B_\infty(0,1] \coloneqq  \{f\in \mathcal{S}(E)\colon \ |f|_\infty\leq 1\}.
$$
The set $B_\infty(0,1]$
 is convex, balanced, absorbent. 
Moreover,
$$
B_\infty(0,1]=\bigcap_{x\in E}\left\{f \in \mathcal{S}(E) \colon \left|\int_E {f}d\delta_x\right|\leq 1\right\},
$$
where $\delta_x\in \mathbf{ca}(E)$ is the Dirac measure centered in $x$. Hence $B_\infty(0,1]$ is
$\sigma(\mathcal{S}(E),\mathbf{ca}(E))$-closed (and then
 $\tau_\mathcal{K}$-closed). So  $B_\infty(0,1]$ is a barrel for the topology $\sigma(\mathcal{S}(E),\mathbf{ca}(E))$  (\emph{resp.}\  
 $\tau_\mathcal{K}$). Moreover, by Proposition \ref{prop:2015-04-29:00}, it absorbs every
$\sigma(\mathcal{S}(E),\mathbf{ca}(E))$- (\emph{resp.}\ 
 $\tau_\mathcal{K}$-) bounded set. 
Assuming now, by contradiction, that  $(\mathcal{S}(E),\sigma(\mathcal{S}(E),\mathbf{ca}(E)))$ (\emph{resp.}\   $(\mathcal{S}(E),\tau_{\mathcal{K}})$)  is infrabarreled, we would have that  
$B_\infty(0,1]$ 
 is a
$\sigma(\mathcal{S}(E),\mathbf{ca}(E))$-neighborhood
(\emph{resp.}\  $\tau_\mathcal{K}$-neighborhood) of the origin. This would contradict  Proposition \ref{prop:2015-04-29:01}.
\end{proof}

\begin{Remark}
Corollary \ref{cor:2015-04-29:05} has an important consequence.
If $E$ is not finite (\emph{resp.}\  not compact), then
$\sigma(\mathcal{S}(E),\mathbf{ca}(E))$
(\emph{resp.}\  $(\mathcal{S}(E),\tau_\mathcal{K})$) is not infrabarreled, so the Banach-Steinhaus theorem  cannot be invoked to deduce that strongly continuous semigroups in
$(\mathcal{S}(E),\sigma(\mathcal{S}(E),\mathbf{ca}(E)))$
(\emph{resp.}\ 
 $(\mathcal{S}(E),\tau_\mathcal{K})$) are necessarily locally equicontinuous --- as it is usually done for $C_0$-semigroups in Banach spaces (cf. also Example \ref{exex}). 
\end{Remark}

We now investigate the relationship between $\tau_\mathcal{K}$ and $\tau_\mathcal{C}$,
where $\tau_\mathcal{C}$ denotes
 the topology on $\mathcal{S}(E)$ defined by the uniform convergence on compact sets of $E$, induced by the family of seminorms 
$$
\{ p_K = [\cdot]_K \colon \ K \ \mbox{non-empty compact subset of} \ E \}.
$$
Clearly 
$\tau_\mathcal{C}\subset \tau_\mathcal{K}$. 
In order to understand when  the equality  $\tau_\mathcal{C} = \tau_\mathcal{K}$  is possible,
we proceed with two preparatory lemmas.

\begin{Lemma}\label{lem:0000-00-00:00}
  \label{2015-07-29:04}
  $UC_b(E)\neq C_b(E)$ if and only if there exists a sequence $\{(x_n,y_n)\}_{n\in \mathbb{N}}\subset E\times E$  having the following properties.
  \begin{enumerate}[(i)]
  \item \label{2015-12-05:33}
 $\{d(x_n,y_n)\}_{n\in \mathbb{N}}$ is  a strictly positive sequence, converging to $0$;
  \item \label{2015-12-05:34} the sequence $\{d_n\}_{n\in \mathbb{N}}$ defined by $d_n\coloneqq d\left(\{x_n,y_n\},\bigcup_{k>n}\{x_k,y_k\}\right)$, for $n\in \mathbb{N}$, 
is strictly positive;
  \item \label{2015-12-05:35}  the sequence $\{x_n\}_{n\in \mathbb{N}}$ does not have any convergent subsequence.
  \end{enumerate}
\end{Lemma}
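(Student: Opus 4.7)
I would prove the equivalence in both directions.

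\textbf{Forward direction.} Take $f\in C_b(E)\setminus UC_b(E)$. Negating uniform continuity gives $\varepsilon>0$ and sequences $\{a_n\},\{b_n\}\subset E$ with $d(a_n,b_n)\to 0$ and $|f(a_n)-f(b_n)|\geq \varepsilon$; in particular $a_n\neq b_n$. If $a_{n_k}\to a$ along some subsequence, then $b_{n_k}\to a$ by the distance condition, and continuity of $f$ forces $|f(a_{n_k})-f(b_{n_k})|\to 0$, contradicting the bound $\varepsilon$; hence $\{a_n\}$ has no convergent subsequence, and the same reasoning applies to $\{b_n\}$. Consequently, for every finite $F\subset E$, $\liminf_m d(a_m,F)>0$ (else a subsequence of $\{a_m\}$ converges to some element of $F$), analogously for $b_m$, so $\liminf_m d(\{a_m,b_m\},F)>0$. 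I extract inductively: set $n_1:=1$; given $n_1<\cdots<n_k$, let $\beta_j:=\liminf_m d(\{a_m,b_m\},\{a_{n_j},b_{n_j}\})>0$ for $j=1,\ldots,k$, and pick $n_{k+1}>n_k$ large enough that $d(\{a_{n_{k+1}},b_{n_{k+1}}\},\{a_{n_j},b_{n_j}\})\geq \beta_j/2$ for every $j\leq k$. The pairs $(x_k,y_k):=(a_{n_k},b_{n_k})$ satisfy (i) and (iii), and also (ii): for $j>k$ our construction at step $j$ imposed $d(\{x_j,y_j\},\{x_k,y_k\})\geq \beta_k/2$, so $d_k\geq \beta_k/2>0$.

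\textbf{Backward direction.} Assume (i)-(iii). By the same reasoning as above (i.e., using (i) together with (iii)), $\{y_n\}$ also has no convergent subsequence. Set
\[
r_n:=\tfrac{1}{3}\min\bigl(d(x_n,y_n),d_1,\ldots,d_n\bigr)>0,\qquad f_n(x):=\max\bigl(0,\,1-d(x,y_n)/r_n\bigr),\qquad f:=\sum_{n\in\mathbb{N}} f_n.
\]
Each $f_n$ is continuous, bounded by $1$, supported in $B(y_n,r_n]$, with $f_n(y_n)=1$ and $f_n(x_n)=0$ (since $r_n<d(x_n,y_n)$). For $m\neq n$, put $\ell:=\min(m,n)$: then $r_m+r_n\leq \tfrac{2}{3}d_\ell$, while by (ii), $d(y_n,\{x_m,y_m\})\geq d_\ell$. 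Thus the closed balls $B(y_n,r_n]$ are pairwise disjoint and no $x_m$ lies in $B(y_n,r_n]$, so $f$ is pointwise well defined with $|f|_\infty\leq 1$, $f(y_n)=1$, $f(x_n)=0$. Since $r_n\leq d(x_n,y_n)/3\to 0$ and $\{y_n\}$ has no convergent subsequence, every $x\in E$ admits a neighborhood meeting only finitely many balls $B(y_n,r_n]$ (otherwise, picking $z_k\in B(x,1/k)\cap B(y_{n_k},r_{n_k}]$ with $n_k\to\infty$, one would get $y_{n_k}\to x$, a contradiction). Hence $f$ is locally a finite sum of continuous functions, so continuous; thus $f\in C_b(E)$. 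Finally $|f(x_n)-f(y_n)|=1$ with $d(x_n,y_n)\to 0$ gives $f\notin UC_b(E)$.

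\textbf{Main obstacle.} The delicate step is the inductive extraction in the forward direction: raw pairs from an arbitrary $f\in C_b\setminus UC_b$ may violate (ii) outright (consider, say, $a_n=b_m$ for some $n<m$), and the extraction must simultaneously preserve (i) and (iii) while installing (ii). The liminf estimates above, which convert ``no convergent subsequence'' into quantitative separation from finite sets, drive the argument. The backward direction is otherwise a routine bump-function construction; its only real subtlety is that $r_n$ must be controlled by \emph{every} earlier $d_\ell$, $\ell\leq n$, not merely $d_n$, to prevent the $n$-th bump from overlapping a bump of lower index whose separation is measured only from indices strictly greater than itself.
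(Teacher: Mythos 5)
Your proof is correct and follows essentially the same route as the paper: negate uniform continuity to produce the pairs, and for the converse build a locally finite sum of disjointly supported bump functions whose values separate $x_n$ from $y_n$ while $d(x_n,y_n)\to 0$. The one genuine (and welcome) difference is in the forward direction: you extract a subsequence, using the quantitative separation $\liminf_m d(\{a_m,b_m\},F)>0$ for finite $F$, to enforce property \emph{(ii)}, whereas the paper asserts that the raw sequence already satisfies \emph{(ii)} via a contradiction argument that tacitly treats any point at distance zero from $\bigcup_{k>\hat n}\{x_k,y_k\}$ as an accumulation point --- your extraction correctly handles the exact-coincidence case (e.g.\ $x_{\hat n}=y_m$ for a single $m>\hat n$) that the paper's argument overlooks.
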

\begin{proof}
We first prove that, if $UC_b(E)\neq C_b(E)$, then there exists a sequence satisfying \emph{(\ref{2015-12-05:33})},\emph{(\ref{2015-12-05:34})},\emph{(\ref{2015-12-05:35})}.
  Let $f\in C_b(E)\setminus UC_b(E)$.  Then  there exist $\varepsilon>0$ and a sequence $\{(x_n,y_n)\}_{n\in \mathbb{N}}\subset E\times E$ such that $\lim_{n\rightarrow +\infty}d(x_n,y_n)=0$ and $\inf_{n\in \mathbb{N}}|f(x_n)-f(y_n)|\geq \varepsilon$. 
Then \emph{(\ref{2015-12-05:33})} is  satisfied by $\{(x_n,y_n)\}_{n\in \mathbb{N}}$.
Now we show that \emph{(\ref{2015-12-05:34})} holds. Assume, by contradiction, that  $d_{\hat n}=0$ for some $\hat n\in \mathbb{N}$. Then $d\left(z,\bigcup_{k>\hat n}\{x_k,y_k\}\right)=0$ for $z=x_{\hat n}$ or $z=y_{\hat n}$. Therefore $z$ is an accumulation point for $\bigcup_{k>\hat n}\{x_k,y_k\}$. Hence, as $d(x_n,y_n)\rightarrow 0$, there exists a subsequence $\{(x_{n_k}, y_{n_k})\}_{k\in \N}$ such that $x_{n_k}\rightarrow z$ and $y_{n_k}\rightarrow z$ as $k\rightarrow +\infty$.  Now, as $f$ is continuous, we have the contradiction  $f(z)-f(z)=\lim_{k\rightarrow +\infty} |f(x_{n_k})-f(y_{n_k})|\geq \varepsilon$.
Finally, property \emph{(\ref{2015-12-05:35})}
can be proved
 by using the same argument as for proving \emph{(\ref{2015-12-05:34})}.

Conversely, take 
a sequence $\{(x_n,y_n)\}_{n\in \mathbb{N}}\subset E\times E$  satisfying \emph{(\ref{2015-12-05:33})},\emph{(\ref{2015-12-05:34})},\emph{(\ref{2015-12-05:35})}.
Consider the balls 
\begin{equation}\label{balls}
B_n\coloneqq\left\{x\colon d(x_n,x)< \varepsilon_n \right\}, \quad n\in \mathbb{N},
\end{equation}
where $\{\varepsilon_n\}_{n\in\N}$ is recursively defined by 
$$
\begin{dcases}
  \varepsilon_0 \coloneqq \frac{d_0\wedge d(x_0,y_0)}{2}\\
 \varepsilon_n\coloneqq \frac{d_n\wedge d (x_n,y_n)\wedge \varepsilon_{n-1} }{2} & n\geq 1.
\end{dcases}
$$
By the properties \emph{(\ref{2015-12-05:33})},\emph{(\ref{2015-12-05:34})}, 
the balls $\{B_n\}_{n\in \mathbb{N}}$
are pairwise disjoint and $\lim_{n\rightarrow +\infty}\epsilon_n=0$. It is also clear that $y_n\notin B_n$, for  $n\in\N$.
For every $n\in\N$,  we can construct a uniformly continuous function $\rho_n$ such that $0\leq \rho_n\leq 1$, $\rho_n(x_n)=1$, and $\rho_n=0$ on $B_n^c$. 
For $n\in \mathbb{N}$, the function $f_n\coloneqq \sum_{i=0}^n\rho_i$ is uniformly continuous.
Let 
$f\coloneqq \sum_{i=0}^{+\infty} \rho_i$. 
By
\emph{(\ref{2015-12-05:35})} and since $\varepsilon_n\rightarrow 0$, 
one can show that every converging sequence in $E$ can intersect only a finite number of the pairwise disjoint balls
 $\{B_n\}_{n\in \mathbb{N}}$.
Hence, any compact set $K\subset E$ intersects only a finite number of balls $\{B_n\}_{n\in\N}$.
Then $f$ restricted to any compact set $K\subset E$ is actually a finite sum of the form $\sum_{i=1}^{n_K}\rho_i$, 
that is, it coincides with $f_{n_K}$,
for some $n_K\in \mathbb{N}$ depending on $K$. In particular, $f\in C_b(E)$.
On the other hand,
$f(x_n)-f(y_n)=1$ and $d(x_n,y_n)\rightarrow 0$ as $n\rightarrow+\infty$, so
 $f\not \in UC_b(E)$.
\end{proof}


\begin{Lemma}
  \label{2015-08-08:00}
  If $E$ is not complete, then $UC_b(E)\neq C_b(E)$.
\end{Lemma}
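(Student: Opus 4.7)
The plan is to invoke Lemma \ref{2015-07-29:04} by constructing, out of the lack of completeness of $E$, a sequence $\{(x_n,y_n)\}_{n\in\mathbb{N}}\subset E\times E$ satisfying its three conditions.

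First I would fix a Cauchy sequence $\{z_n\}_{n\in\mathbb{N}}\subset E$ that does not converge in $E$ and consider its limit $z$ in the completion of $E$, noting that $z$ does not belong to $E$ (so that $d(w,z)>0$ for every $w\in E$). The key construction step is the extraction of a subsequence $\{w_n\}_{n\in\mathbb{N}}$ of $\{z_n\}_{n\in\mathbb{N}}$ such that the sequence $\beta_n\coloneqq d(w_n,z)$ is strictly decreasing to $0$. This can be achieved inductively: once $w_n$ has been chosen, since $d(z_m,z)\to 0$ and $\beta_n>0$, one can find $w_{n+1}$ further along the sequence with $d(w_{n+1},z)<\beta_n$. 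The strict monotonicity of $\{\beta_n\}_{n\in\mathbb{N}}$ is what will give room to separate the pairs from the tail.

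Next I would set $x_n\coloneqq w_{2n}$ and $y_n\coloneqq w_{2n+1}$ and verify the three properties of Lemma \ref{2015-07-29:04}. Property \emph{(i)} follows from $d(x_n,y_n)\leq \beta_{2n}+\beta_{2n+1}\to 0$, with strict positivity granted by $w_{2n}\neq w_{2n+1}$, which in turn is a consequence of $\beta_{2n}\neq\beta_{2n+1}$. Property \emph{(iii)} holds because any convergent subsequence of $\{w_{2n}\}_{n\in\mathbb{N}}$ in $E$ would force the Cauchy sequence $\{z_n\}_{n\in\mathbb{N}}$ to converge in $E$, against its choice. The main (though short) technical step is \emph{(ii)}: using the reverse triangle inequality in the completion of $E$, namely $d(w_i,w_j)\geq |\beta_i-\beta_j|$, for any $i\in\{2n,2n+1\}$ and any $j\geq 2n+2$ one obtains $d(w_i,w_j)\geq \beta_i-\beta_j\geq \beta_{2n+1}-\beta_{2n+2}>0$, which yields $d_n\geq \beta_{2n+1}-\beta_{2n+2}>0$ as required.

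Once the three conditions are established, Lemma \ref{2015-07-29:04} delivers an element of $C_b(E)\setminus UC_b(E)$ and the proof is complete. The only potential obstacle is engineering the strict decrease of $\{\beta_n\}_{n\in\mathbb{N}}$ — crucial both for $d(x_n,y_n)>0$ and for the separation estimate in \emph{(ii)} — and this is handled by the inductive extraction described above.
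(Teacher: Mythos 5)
Your proof is correct and follows essentially the same route as the paper: both extract pairs from a non-convergent Cauchy sequence and feed them into Lemma \ref{2015-07-29:04}. The only real difference is in verifying condition \emph{(ii)}: the paper argues by a soft contradiction (a point of $\{x_n,y_n\}$ would be an accumulation point of a non-convergent Cauchy sequence), whereas you pass to the completion and get the explicit lower bound $d_n\geq\beta_{2n+1}-\beta_{2n+2}>0$; both verifications are sound.
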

\begin{proof}
Let $\{x_n\}_{n\in \mathbb{N}}$ be a non-convergent Cauchy sequence in $E$ and define $y_n\coloneqq x_{2n}$, for $n\in \mathbb{N}$.  
We now show that, up to extract a subsequence, the sequence  $\{(x_n,y_n)\}_{n\in \mathbb{N}}$ 
satisfies 
\emph{(\ref{2015-12-05:33})},\emph{(\ref{2015-12-05:34})},\emph{(\ref{2015-12-05:35})}
of Lemma \ref{lem:0000-00-00:00}.

We prove  property \emph{(\ref{2015-12-05:33})}. As $\{x_n\}_{n\in \mathbb{N}}$ is Cauchy and non-convergent, up to extract a subsequence, we can assume that $x_n\neq x_k$, if $n\neq k$,  hence $d(x_n,y_{n})>0$. On the other hand, since $\{x_n\}_{n\in \mathbb{N}}$ is Cauchy, we have $\lim_{n\rightarrow +\infty}d(x_n,y_n)=0$. 

We prove property \emph{(\ref{2015-12-05:34})}.
Let $\{d_n\}_{n\in \mathbb{N}}$ be defined as in 
Lemma \ref{lem:0000-00-00:00}\emph{(\ref{2015-12-05:34})}.
Assume, by contradiction,  that $d_{\overline n}=0$ for some $\overline{n}\in\N$. Then $z=x_{\overline n}$ or $z=y_{\overline n}$
 should be an accumulation point for the sequence $\{x_n\}_{n\in \mathbb{N}}$ or for the sequence $\{y_n=x_{2n}\}_{n\in \mathbb{N}}$, which cannot be true by assumption on $\{x_n\}_{n\in \mathbb{N}}$. 
 
Finally, property \emph{(\ref{2015-12-05:35})} is clear from the fact that $\{x_n\}_{n\in \mathbb{N}}$ is Cauchy and non-convergent.
\end{proof}

\begin{Proposition}
\label{prop:2015-05-04:07}
$\tau_\mathcal{K}=\tau_\mathcal{C}$ on $\mathcal{S}(E)$  if and only if $E$ is  compact.
\end{Proposition}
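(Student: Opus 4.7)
\noindent\textbf{Proof plan for Proposition \ref{prop:2015-05-04:07}.}

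The easy direction is the implication $E$ compact $\Rightarrow\tau_{\mathcal{K}}=\tau_{\mathcal{C}}$. If $E$ is compact, then $E$ itself is an admissible seminorm-parameter for $\tau_{\mathcal{C}}$, and the seminorm $p_E=[\cdot]_E$ coincides with $|\cdot|_\infty$; hence $\tau_{\mathcal{C}}=\tau_\infty$. On the other hand, Proposition \ref{prop:2015-04-29:01}\emph{(\ref{2015-12-05:29})} gives $\tau_{\mathcal{K}}=\tau_\infty$, and the two topologies coincide.

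For the nontrivial implication, I assume $\tau_{\mathcal{K}}=\tau_{\mathcal{C}}$ and aim to deduce that $E$ is compact, arguing by contradiction. If $E$ is not compact, then (since $E$ is a metric space) some sequence $\{z_n\}\subset E$ has no subsequence convergent in $E$. I want to cook up a single measure $\mu\in \mathbf{ca}(E)$ and a sequence of functions $\{f_N\}\subset UC_b(E)\subset\mathcal{S}(E)$ such that $\Phi_\mu(f_N)$ stays bounded away from $0$ while $[f_N]_K\to 0$ for every compact $K$, which will contradict the $\tau_{\mathcal{C}}$-continuity of $\Phi_\mu$ (a continuity that is forced by the assumption, since $\Phi_\mu$ is trivially $\tau_{\mathcal{K}}$-continuous via the defining seminorm $p_{K,\mu}$).

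The concrete construction goes as follows. Since $\{z_n\}$ has no convergent subsequence in $E$, I can show that for each $n$ the quantity $d_n\coloneqq\inf_{m\neq n}d(z_n,z_m)$ is strictly positive: otherwise a subsequence $z_{m_k}\to z_n$ would exist. Setting $r_n\coloneqq \min(d_n/2,1/n)>0$, I define the Lipschitz bumps
\[
\phi_n(x)\coloneqq\max\Bigl\{0,\,1-d(x,z_n)/r_n\Bigr\}\in UC_b(E),
\]
which satisfy $\phi_n(z_n)=1$ and $\phi_n(z_m)=0$ for $m\neq n$ because $d(z_n,z_m)\geq 2r_n$. I then put $\mu\coloneqq\sum_{n\geq 1}2^{-n}\delta_{z_n}\in\mathbf{ca}^+(E)$ and $f_N\coloneqq 2^N\phi_N$. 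By disjointness of the supports on the atoms of $\mu$, $\Phi_\mu(f_N)=2^N\cdot 2^{-N}\phi_N(z_N)=1$ for every $N$.

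The main thing to verify is that $[f_N]_K\to 0$ for every compact $K\subset E$, which will give the required contradiction. The key lemma here is: for every compact $K\subset E$, $\liminf_{n\to\infty}d(z_n,K)>0$. Indeed, if not, one extracts $z_{n_k}$ with $d(z_{n_k},K)\to 0$ and picks nearly-nearest points $w_k\in K$; compactness of $K$ then yields $w_{k_j}\to w\in K\subset E$ and in turn $z_{n_{k_j}}\to w$, contradicting the choice of $\{z_n\}$. Combining this with $r_n\to 0$, one gets $B(z_N,r_N)\cap K=\emptyset$ for all large $N$, hence $[f_N]_K=0$ eventually. The hypothesis $\tau_{\mathcal{K}}=\tau_{\mathcal{C}}$ would force $|\Phi_\mu(f_N)|\leq C[f_N]_K$ for some compact $K$ and constant $C>0$, but this reads $1\leq 0$ for large $N$, a contradiction. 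The main subtlety lies precisely in choosing the radii $r_n$ small enough (the $\min(d_n/2,1/n)$ truncation) so that the bumps do not spill into an arbitrary compact $K$, and in proving the liminf lemma that makes this truncation effective.
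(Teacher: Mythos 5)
Your proof is correct and reaches the conclusion by a genuinely different route from the paper. The paper splits the non-compact case into the dichotomy ``$E$ not complete or $E$ not totally bounded'' and in each case recycles the bump functions $\rho_n$ constructed in the proof of Lemma \ref{lem:0000-00-00:00} to produce a sequence $\varphi_n=2^{2n}\rho_n$ that is $\tau_\mathcal{C}$-convergent to $0$ yet $\tau_\mathcal{K}$-unbounded against the measure $\mu=\sum_n 2^{-n}\delta_{x_n}$. You instead start directly from a sequence $\{z_n\}$ with no convergent subsequence (available because a non-compact metric space is not sequentially compact), prove the auxiliary fact that $\liminf_{n\to\infty} d(z_n,K)>0$ for every compact $K$, and exhibit a single $\tau_\mathcal{K}$-continuous functional $\Phi_\mu$ that fails to be $\tau_\mathcal{C}$-continuous on the normalized bumps $f_N$. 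This gives a unified, self-contained argument with no case distinction; the paper's detour through Lemmas \ref{lem:0000-00-00:00} and \ref{2015-08-08:00} is largely motivated by the fact that those lemmas are reused elsewhere (e.g.\ in the proof of Proposition \ref{prop:seqcom}). One small point to patch: your claim that $d_n=\inf_{m\neq n}d(z_n,z_m)>0$ can fail if the sequence has repeated values (e.g.\ $z_1=z_2$ makes $d_1=0$ without producing a convergent subsequence, and then $r_1=0$ breaks the bump construction). Since no value can repeat infinitely often (a constant subsequence would converge), you should first pass to a subsequence of pairwise distinct points; after that reduction your justification of $d_n>0$, and the rest of the argument, goes through verbatim.
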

\begin{proof}
If $E$ is compact, it is clear that $\tau_\mathcal{K}=\tau_\mathcal{C}$.
Suppose now that $E$ is not compact.  
We recall that $E$ is not compact if and only if $E$ is not complete or $E$ is not totally bounded. In both cases,
we will show that there exists a sequence $\{\varphi_n\}_{n\in \mathbb{N}}\subset UC_b(E)$ convergent to $0$ in $\tau_\mathcal{C}$, but  unbounded in $\tau_\mathcal{K}$. 

\emph{\underline{Case $E$ non-complete.}}
By Lemma \ref{2015-08-08:00}, there exists a sequence $\{(x_n,y_n)\}_{n\in \mathbb{N}}\subset E\times E$ satisfying
\emph{(\ref{2015-12-05:33})},\emph{(\ref{2015-12-05:34})},\emph{(\ref{2015-12-05:35})}
of Lemma \ref{lem:0000-00-00:00}.
 Let  $\{B_n\}_{n\in \mathbb{N}}$ and  $\{\rho_n\}_{n\in \mathbb{N}}$
be as in the second part of the proof of Lemma \ref{lem:0000-00-00:00}.
Define $\varphi_n\coloneqq 2^{2n}\rho_n$ for every $n\in \mathbb{N}$.
As proved in that lemma, any compact set $K\subset E$ intersects only a finite numbers of balls $\{B_n\}_{n\in\N}$, therefore $\lim_{n\rightarrow +\infty}\varphi_n=0$
in $(UC_b(E),\tau_\mathcal{C})$.

Now,  let $\mu\in\mathbf{ca}(E)$ be defined by $\mu\coloneqq \sum_{n\in \mathbb{N}}2^{-n}\delta_{x_n}$. We have
$$
\sup_{n\in \mathbb{N}}\left|\int_E \varphi_nd\mu\right|=\sup_{n\in \mathbb{N}}2^{-n}\varphi_n(x_n)=\sup_{n\in \mathbb{N}}2^n=+\infty,
$$
which shows that $\{\varphi_n\}_{n\in \mathbb{N}}$ is $\tau_\mathcal{K}$-unbounded. 

\emph{\underline{Case $E$ not totally bounded.}}
Let $\varepsilon>0$ be such that $E$ cannot be covered by a finite number of balls of radius $\varepsilon$. 
By induction, we can construct a sequence $\{x_n\}_{n\in \mathbb{N}}\subset E$ such that, for every $n\in \mathbb{N}$, $x_{n+1}\not \in\bigcup_{j=0}^n {B(x_j,\varepsilon)}$. For every $n\in \mathbb{N}$, let $\varphi_n\in UC_b(E)$ be such that $\varphi_n(x_n)=2^{2n}$, $\varphi_n(x)=0$ if $d(x,x_n)\geq \varepsilon/2$, $|\varphi_n|_\infty=2^{2n}$ 
 (\footnote{For instance,  
 $\varphi_n(x)\coloneqq 2^{2n}\frac{d(x,B(x_n,\varepsilon/2)^c)}{d(x,x_n)+d(x,{B(x_n,\varepsilon/2)}^c)}$.}). 
Then we conclude as in the previous case.
\end{proof}


 Propositions \ref{prop:2015-04-29:01} and \ref{prop:2015-05-04:07} yield the following inclusions of topologies in the space $\mathcal{S}(E)$ 
$$
\tau_\mathcal{C}\subset \tau_\mathcal{K}\ \subset \ \tau_\infty
$$
and state that such inclusions are equalities 
 if and only if $E$ is compact.
The following proposition makes clearer the connection between  $\tau_\mathcal{K}$ and  $\tau_\mathcal{C}$ when $E$ is not compact.

\begin{Proposition}
  \label{prop:2015-04-24:01}
The following statements hold.
  \begin{enumerate}[(i)]
\item\label{2015-10-13:10} If a  net $\{{f}_\iota\}_{\iota\in \mathcal{I}}$ is
bounded
and
convergent to ${f}$ in $(\mathcal{S}(E),\tau_\mathcal{K})$, then
$$
\sup_{\iota\in \mathcal{I}}| {f}_\iota|_\infty<+\infty\ \ \  \mbox{and}
\ \ \ \lim_\iota f_\iota=f\ \mbox{in }(\mathcal{S}(E),\tau_\mathcal{C}).
$$
If either $\mathcal{I}=\mathbb{N}$ or $E$ is
homeomorphic to a Borel subset of a Polish space,
 then also the converse holds true. 
  \item\label{2015-12-06:00}
If a  net $\{{f}_\iota\}_{\iota\in \mathcal{I}}$ is
bounded
and
Cauchy  in $(\mathcal{S}(E),\tau_\mathcal{K})$,
 then
$$
\sup_{\iota\in \mathcal{I}}| {f}_\iota|_\infty<+\infty\ \ \ \mbox{and}
\ \ \ \{{f}_\iota\}_\iota\mbox{ is Cauchy in }(\mathcal{S}(E),\tau_\mathcal{C}).
$$
If either $\mathcal{I}=\mathbb{N}$ or $E$ is 
homeomorphic to a Borel subset of a Polish space,
 then also the converse holds true.
\end{enumerate}
\end{Proposition}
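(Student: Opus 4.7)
The plan is to dispatch the forward implications essentially by inspection, and to reduce the converse to a bounded-convergence argument plus a classical tightness fact.

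First, I would handle the forward directions in both \emph{(\ref{2015-10-13:10})} and \emph{(\ref{2015-12-06:00})} via two immediate observations: since $\tau_\mathcal{C}\subset\tau_\mathcal{K}$, $\tau_\mathcal{K}$-convergence (resp.\ $\tau_\mathcal{K}$-Cauchyness) automatically passes to $\tau_\mathcal{C}$; and by Proposition~\ref{prop:2015-04-29:00} the $\tau_\mathcal{K}$-bounded sets coincide with the $|\cdot|_\infty$-bounded sets, giving $\sup_\iota|f_\iota|_\infty<+\infty$.

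For the converse, Proposition~\ref{prop:2015-04-29:00} again shows that the hypothesis $\sup_\iota|f_\iota|_\infty<+\infty$ already implies $\tau_\mathcal{K}$-boundedness, so only the topological part of the conclusion needs work. Recalling that $\tau_\mathcal{K}$ is generated by the seminorms
\[
p_{K,\mu}(g)=[g]_K+\left|\int_E g\,d\mu\right|,
\]
and since the $[\cdot]_K$-part is already controlled by the $\tau_\mathcal{C}$-hypothesis, the whole task reduces to verifying that the scalar net $\{\int_E f_\iota\,d\mu\}$ is convergent (in case (i)) or Cauchy (in case (ii)) for every $\mu\in\mathbf{ca}(E)$.

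When $\mathcal{I}=\mathbb{N}$, I would exploit the fact that every singleton is compact to upgrade $\tau_\mathcal{C}$-convergence (resp.\ Cauchyness) to pointwise convergence (resp.\ pointwise Cauchyness, hence pointwise convergence to a bounded Borel function $g$), and then apply the bounded convergence theorem with dominating constant $M=\sup_n|f_n|_\infty$. For a general net under the assumption that $E$ is homeomorphic to a Borel subset of a Polish space, the key input is the classical Ulam/Lusin-type fact that every finite Borel measure on such a space is Radon, hence tight. For any $\varepsilon>0$ one picks a compact $K_\varepsilon\subset E$ with $|\mu|(E\setminus K_\varepsilon)<\varepsilon$, sets $M\coloneqq\sup_\iota|f_\iota|_\infty$, and uses the splitting
\[
\left|\int_E(f_\iota-f_{\iota'})\,d\mu\right|\leq [f_\iota-f_{\iota'}]_{K_\varepsilon}\,|\mu|_1+2M\varepsilon
\]
to conclude that the integral net is Cauchy; the analogous splitting with $f_{\iota'}$ replaced by $f$ handles case (i). The main obstacle is precisely this tightness input, which is exactly why the statement restricts to Borel subsets of Polish spaces in the general net case; for sequences the obstruction disappears because pointwise convergence plus dominated convergence suffices.
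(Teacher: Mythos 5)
Your proposal is correct and follows essentially the same route as the paper: the forward implications via $\tau_\mathcal{C}\subset\tau_\mathcal{K}$ and Proposition \ref{prop:2015-04-29:00}, the sequential converse via pointwise convergence plus dominated convergence, and the net converse via tightness of $|\mu|$ on a Borel subset of a Polish space together with the same $K_\varepsilon$-splitting of the integral.
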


\begin{proof}
\emph{(\ref{2015-10-13:10})}
Let $\{{f}_\iota\}_{\iota\in \mathcal{I}}$ be a $\tau_\mathcal{K}$-bounded net 
converging to $f$ in $(\mathcal{S}(E),\tau_\mathcal{K})$. 
By  Proposition \ref{prop:2015-04-29:00}
 we have $\sup_{\iota\in \mathcal{I}}| {f}_\iota|_\infty<+\infty$, and, 
since $\tau_\mathcal{C}\subset \tau_\mathcal{K}$, the net converges to $f$ also with respect to $\tau_\mathcal{C}$.

Conversely, let $\{f_\iota\}_{\iota\in \mathcal{I}}\subset \mathcal{S}(E)$ be such that $\sup_\iota | {f}_\iota|_\infty=M<+\infty$ and  $\lim_\iota f_\iota
=f$ in $(\mathcal{S}(K),\tau_\mathcal{C})$.
Then $\{f_\iota\}_{\iota\in \mathcal{I}}$ is $\tau_\mathcal{K}$-bounded, because $\tau_\mathcal{K}\subset  \tau_\infty$.
 We want to prove that $\{f_\iota\}_{\iota\in \mathcal{I}}$ is $\tau_\mathcal{K}$-convergent to $f$ if $\mathcal{I}=\mathbb{N}$ or if $E$
homeomorphic to a Borel subset of a Polish space.
Assume without loss of generality $f=0$.
 We already know that $[f_\iota]_K$ converges to $0$ for every compact set $K\subset E$, then it remains to show that $\int_E f_\iota d\mu$ converges to $0$ for every $\mu\in \mathbf{ca}(E)$.
 If $\mathcal{I}=\mathbb{N}$, this follows by dominated convergence theorem, because $\sup_\iota | {f}_\iota|_\infty<+\infty$.
If $E$ is
homeomorphic to a Borel subset of a Polish space,
 then $|\mu|$ is tight (see \cite[p.\ 29, Theorem 3.2]{Parthasarathy67}), so, given $\varepsilon>0$, there exists $K_\varepsilon\subset E$ compact such that $|\mu|(K_\varepsilon^c)<\varepsilon$. Let
 $\overline \iota\in \mathcal{I}$ be such that $\iota\succeq \overline \iota $ implies $\sup_{\iota\succeq \overline \iota}[{f}_\iota]_{ K_\varepsilon}<\varepsilon$ (this is possible by uniform convergence of $\{f_\iota\}_{\iota\in \mathcal{I}}$ to $0$ on compact sets).
Then
\begin{equation*}
  \begin{split}
\left|\int_E f_\iota d\mu\right|\leq  \int_E|{f}_\iota|d|\mu|
\leq
[{f}_\iota]_{K_\varepsilon}| \mu|_1+\int_{K_\varepsilon^c}|{f}_\iota|d|\mu|\leq
| \mu|_1\sup_{\iota\succeq \overline \iota}[{f}_\iota]_{ K_\varepsilon}+| {f}_\iota |_\infty|\mu|(K_\varepsilon^c)
\leq
(| \mu|_1+M)\varepsilon, \ \ \ \forall  \iota \succeq \overline \iota,
\end{split}
\end{equation*}
and we conclude by arbitrariness of $\varepsilon$. 

\emph{(\ref{2015-12-06:00})} The proof is analogous to that of \emph{(\ref{2015-10-13:10})}.
\end{proof}
%

We have a similar proposition relating $\sigma(\mathcal{S}(E),\mathbf{ca}(E))$ and the pointwise convergence in $\mathcal{S}(E)$. Actually, a part of this proposition is  implicitly provided by \cite[Theorem 2.2]{Priola99}, where the separability of $E$ and the choice $\mathcal{S}(E)=UC_b(E)$ play no role.

\begin{Proposition}
  \label{prop:2015-04-24:01BB}
The following statements hold.
  \begin{enumerate}[(i)]
\item\label{2015-10-30:05} If a  net $\{{f}_\iota\}_{\iota\in \mathcal{I}}$ is
bounded
and
convergent to ${f}$ in $(\mathcal{S}(E),\sigma(\mathcal{S}(E),\mathbf{ca}(E)))$, then
$$
\sup_{\iota\in \mathcal{I}}| {f}_\iota|_\infty<+\infty\ \ \  \mbox{and}
\ \ \ \lim_\iota f_\iota=f\ \mbox{pointwise}.
$$
If $\mathcal{I}=\mathbb{N}$ 
 then also the converse holds true. 
  \item\label{2015-12-06:01}
If a  net $\{{f}_\iota\}_{\iota\in \mathcal{I}}$ is
bounded
and
Cauchy  in $(\mathcal{S}(E),\sigma(\mathcal{S}(E),\mathbf{ca}(E)))$,
 then
$$
\sup_{\iota\in \mathcal{I}}| {f}_\iota|_\infty<+\infty\ \ \ \mbox{and}
\ \ \ \{{f}_\iota(x)\}_\iota\mbox{ is Cauchy for every }x\in E.
$$
If  $\mathcal{I}=\mathbb{N}$
 then also the converse holds true.
\end{enumerate}
\end{Proposition}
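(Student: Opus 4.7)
The plan is to model the proof on that of Proposition \ref{prop:2015-04-24:01}, with pointwise convergence replacing uniform convergence on compact sets, and with the dominated convergence theorem playing the role previously played by the tightness-based argument. The essential observation is that, for every $x\in E$, the Dirac measure $\delta_x$ lies in $\mathbf{ca}(E)$, so the associated functional $\Phi_{\delta_x}(f)=f(x)$ is continuous with respect to $\sigma(\mathcal{S}(E),\mathbf{ca}(E))$; this is what links $\sigma(\mathcal{S}(E),\mathbf{ca}(E))$-convergence (or Cauchyness) to pointwise convergence (or Cauchyness).

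For the forward direction of \emph{(\ref{2015-10-30:05})}, I would first invoke Proposition \ref{prop:2015-04-29:00} to promote $\sigma(\mathcal{S}(E),\mathbf{ca}(E))$-boundedness to $|\cdot|_\infty$-boundedness, thus getting $\sup_\iota |f_\iota|_\infty<+\infty$. Pointwise convergence then follows by pairing $\{f_\iota\}_{\iota\in\mathcal{I}}$ with $\delta_x\in\mathbf{ca}(E)$ for each $x\in E$. For the converse with $\mathcal{I}=\mathbb{N}$, I would set $M\coloneqq\sup_n|f_n|_\infty<+\infty$, fix $\mu\in\mathbf{ca}(E)$, note that $|f_n|\leq M$ is dominated by a $|\mu|$-integrable function because $|\mu|(E)<+\infty$, and apply the dominated convergence theorem to conclude $\int_E f_n\,d\mu\to\int_E f\,d\mu$, that is, $f_n\to f$ in $\sigma(\mathcal{S}(E),\mathbf{ca}(E))$.

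Part \emph{(\ref{2015-12-06:01})} is handled by the same two steps, replacing ``convergence'' with ``Cauchy''. Forward: boundedness transfers as above, and pairing with $\delta_x$ shows $\{f_\iota(x)\}_{\iota\in\mathcal{I}}$ is Cauchy in $\mathbb{R}$ for each $x\in E$. Converse with $\mathcal{I}=\mathbb{N}$: a bounded sequence Cauchy at every point converges pointwise in $\mathbb{R}$ to some bounded function $f$, and the dominated convergence theorem again yields that $\bigl\{\int_E f_n\,d\mu\bigr\}_{n\in\mathbb{N}}$ converges (and hence is Cauchy) for every $\mu\in\mathbf{ca}(E)$.

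The only conceptual point worth flagging is why the converse assertions are restricted to sequences rather than general nets: the dominated convergence theorem is a statement about countable collections of measurable functions and has no general net analogue, exactly as in the proof of Proposition \ref{prop:2015-04-24:01}. This is the real reason behind the asymmetry in the statement; the rest is bookkeeping using Propositions \ref{prop:2015-04-29:00} and \ref{2015-08-07:01}.
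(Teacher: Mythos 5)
Your proof is correct and follows essentially the same route as the paper's: Proposition \ref{prop:2015-04-29:00} for the transfer of boundedness, Dirac measures for the pointwise statements, and dominated convergence for the sequential converses (the paper simply declares part \emph{(ii)} ``analogous'' where you spell it out). Your closing remark on why the converse is restricted to sequences matches the paper's implicit reasoning as well.
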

\begin{proof}
\emph{(\ref{2015-10-30:05})}
Let $\{{f}_\iota\}_{\iota\in \mathcal{I}}$ be a bounded net in $(\mathcal{S}(E),\sigma(\mathcal{S}(E),\mathbf{ca}(E)))$, converging to
$f$ in this space.
By  Proposition \ref{prop:2015-04-29:00}
 we have $\sup_{\iota\in \mathcal{I}}| {f}_\iota|_\infty<+\infty$, and, 
since $\mathbf{ca}(E)$ contains the Dirac measures,
 the net converges to $f$ also pointwise.
Conversely, let $\{f_n\}_{n\in \mathbb{N}}\subset \mathcal{S}(E)$ be such that $\sup_{n\in \mathbb{N}} | {f}_n|_\infty=M<+\infty$ and $\lim_{n\rightarrow +\infty} f_n
=f$ pointwise.
Then an application of Lebesgue's dominated convergence theorem provides
$\lim_{n\rightarrow +\infty}f_n=f$ in 
$(\mathcal{S}(E),\sigma(\mathcal{S}(E),\mathbf{ca}(E)))$.

\emph{(\ref{2015-12-06:01})} The proof is analogous to that of \emph{(\ref{2015-10-30:05})}.
\end{proof}
%
%

\begin{Proposition}\label{prop:seqcom}
The following statements hold. 
  \begin{enumerate}[(i)]
  \item\label{2015-12-06:02}
 $(B_b(E),\sigma(B_b(E),\mathbf{ca}(E)))$ and 
 $(B_b(E),\tau_{\mathcal K})$ are sequentially complete.
  \item\label{2015-11-10:07} $C_b(E)$ is $\tau_\mathcal{K}$-closed
    in $B_b(E)$ (hence, by (\ref{2015-12-06:02}), $(C_b(E),\tau_\mathcal{K})$ is sequentially complete).
  \item\label{2015-12-06:03} If $E$ is homeomorphic to a Borel subset of a Polish space, then $UC_b(E)$ 
    is dense in $(C_b(E),\tau_\mathcal{K})$.
  \item\label{2015-12-06:04} $(UC_b(E),\tau_\mathcal{K})$ is sequentially complete if and only if
    $UC_b(E)=C_b(E)$.
  \item\label{2015-12-06:05} 
$(\mathcal{S}(E),\tau_{\mathcal K})$ is metrizable if and only if $E$ is compact.
  \end{enumerate}
\end{Proposition}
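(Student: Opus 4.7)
For parts (\ref{2015-12-06:02}) and (\ref{2015-11-10:07}) I would exploit Propositions \ref{prop:2015-04-24:01} and \ref{prop:2015-04-24:01BB}. A Cauchy sequence $\{f_n\}$ in $(B_b(E),\sigma(B_b(E),\mathbf{ca}(E)))$ is, by Proposition \ref{prop:2015-04-24:01BB}\emph{(\ref{2015-12-06:01})}, $|\cdot|_\infty$-bounded and pointwise Cauchy, so the pointwise limit $f(x):=\lim_n f_n(x)$ defines an element of $B_b(E)$ (Borel by pointwise limits of Borel, bounded by the uniform bound); the dominated convergence theorem then yields $\int f_nd\mu\to\int fd\mu$ for every $\mu\in\mathbf{ca}(E)$, so $f_n\to f$ in the weak topology. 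For $(B_b(E),\tau_\mathcal{K})$ I would run the same scheme using Proposition \ref{prop:2015-04-24:01}\emph{(\ref{2015-12-06:00})}: the Cauchy sequence is uniformly bounded and uniformly Cauchy on every compact set, hence converges uniformly on compacta to some $f\in B_b(E)$, and the integral seminorms are handled again by dominated convergence. For (\ref{2015-11-10:07}), if a net $\{f_\iota\}\subset C_b(E)$ is $\tau_\mathcal{K}$-convergent to $f\in B_b(E)$, then $f_\iota\to f$ uniformly on every compact set (since $\tau_\mathcal{C}\subset\tau_\mathcal{K}$); continuity of $f$ at an arbitrary point $x\in E$ then follows by testing on any sequence $x_n\to x$ and using that $K=\{x_n:n\in\mathbb{N}\}\cup\{x\}$ is compact, on which $f$ is the uniform limit of continuous functions.

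For part (\ref{2015-12-06:03}) my approach would be the Moreau--Yosida regularization. Given $f\in C_b(E)$ with $|f|_\infty\le M$, set
\[
f_n(x)\coloneqq \inf_{y\in E}\bigl(f(y)+nd(x,y)\bigr).
\]
Routine checks give $-M\le f_n\le f\le M$, that $f_n$ is $n$-Lipschitz (hence in $UC_b(E)$), and that $f_n\uparrow f$ pointwise because $f$ is lower semicontinuous. Dini's theorem on any compact $K\subset E$ gives $f_n\to f$ uniformly on $K$, i.e.\ $f_n\to f$ in $\tau_\mathcal{C}$. Since $\{f_n\}$ is $|\cdot|_\infty$-bounded and $E$ is homeomorphic to a Borel subset of a Polish space, the converse part of Proposition \ref{prop:2015-04-24:01}\emph{(\ref{2015-10-13:10})} upgrades this to $f_n\to f$ in $\tau_\mathcal{K}$.

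Part (\ref{2015-12-06:04}) I would prove by combining (\ref{2015-12-06:02}), (\ref{2015-11-10:07}) and Lemma \ref{2015-07-29:04}. If $UC_b(E)=C_b(E)$, sequential completeness is immediate from (\ref{2015-12-06:02})--(\ref{2015-11-10:07}). For the converse, suppose $UC_b(E)\subsetneq C_b(E)$ and apply Lemma \ref{2015-07-29:04} together with the disjoint balls $\{B_n\}$ and uniformly continuous bumps $\{\rho_n\}$ built in its proof, producing $f=\sum_{n=0}^{+\infty}\rho_n\in C_b(E)\setminus UC_b(E)$ with partial sums $f_n=\sum_{i=0}^n\rho_i\in UC_b(E)$ uniformly bounded by $1$. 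Since every compact $K\subset E$ meets only finitely many $B_i$, one has $f_n\equiv f$ on $K$ eventually, so $f_n\to f$ in $\tau_\mathcal{C}$ and pointwise, and dominated convergence gives $\int f_nd\mu\to\int fd\mu$ for every $\mu\in\mathbf{ca}(E)$; thus $f_n\to f$ in $\tau_\mathcal{K}$, and $\{f_n\}$ is $\tau_\mathcal{K}$-Cauchy in $UC_b(E)$. A hypothetical limit $g\in UC_b(E)$ would agree with $f$ at every point by testing against Dirac measures, forcing $g=f\notin UC_b(E)$, a contradiction. This is the step that I expect to require the most care, because one has to verify simultaneously that $f$ is continuous (so that $f$ exists as a candidate limit in $B_b(E)$), fails uniform continuity (so that it lies outside $UC_b(E)$), and arises as a $\tau_\mathcal{K}$-limit of the uniformly continuous truncations.

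Finally, for (\ref{2015-12-06:05}): if $E$ is compact then Proposition \ref{prop:2015-04-29:01}\emph{(\ref{2015-12-05:29})} gives $\tau_\mathcal{K}=\tau_\infty$, so the space is normed and hence metrizable. Conversely, assume $(\mathcal{S}(E),\tau_\mathcal{K})$ is metrizable. It is then bornological (a standard property of metrizable locally convex spaces), and every bornological locally convex space is infrabarreled. Corollary \ref{cor:2015-04-29:05}\emph{(\ref{2015-12-05:32})} forces $E$ to be compact. The same chain of implications applies uniformly to each of the three choices $B_b(E)$, $C_b(E)$, $UC_b(E)$ for $\mathcal{S}(E)$, so no case distinction is needed here.
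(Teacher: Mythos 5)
Your proposal is correct throughout; parts \emph{(i)}, \emph{(ii)} and \emph{(iv)} follow the paper's argument essentially verbatim (Cauchy $\Rightarrow$ uniformly bounded and uniformly Cauchy on compacta via Propositions \ref{prop:2015-04-24:01} and \ref{prop:2015-04-24:01BB}, then dominated convergence; the counterexample sequence $f_n=\sum_{i\le n}\rho_i$ from Lemma \ref{lem:0000-00-00:00} for the failure of completeness of $UC_b(E)$). The interesting divergences are in \emph{(iii)} and \emph{(v)}. For \emph{(iii)} the paper approximates $f$ by choosing, via tightness of the finitely many measures appearing in a basic neighborhood, a compact $K_\varepsilon$ and a uniformly continuous extension of $f_{|K\cup K_\varepsilon}$; your Moreau--Yosida regularization $f_n(x)=\inf_y(f(y)+nd(x,y))$ plus Dini's theorem instead produces a single \emph{sequence} of bounded Lipschitz functions converging to $f$ in $\tau_\mathcal{C}$ with a uniform bound, and since the converse half of Proposition \ref{prop:2015-04-24:01}\emph{(i)} already holds whenever the index set is $\mathbb{N}$, you do not actually need the hypothesis that $E$ is homeomorphic to a Borel subset of a Polish space (you invoke it redundantly): your argument yields the stronger conclusion that bounded Lipschitz functions are \emph{sequentially} dense in $(C_b(E),\tau_\mathcal{K})$ for an arbitrary metric space $E$, whereas the paper's argument only gives density and genuinely uses tightness. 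For \emph{(v)} the paper argues directly that every $\tau_\mathcal{K}$-neighborhood of $0$ contains an infinite-dimensional subspace and derives a contradiction with boundedness of convergent sequences; your route (metrizable $\Rightarrow$ bornological $\Rightarrow$ infrabarreled, then Corollary \ref{cor:2015-04-29:05}\emph{(ii)}) is shorter and reuses machinery the paper has already established, at the cost of importing the standard facts about bornological spaces; both are valid.
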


\begin{proof}
\emph{(\ref{2015-12-06:02})} Let $\{{f}_n\}_{n\in \mathbb{N}}$ be $\tau_{\mathcal K}$-Cauchy in $B_b(E)$. Then, as every Cauchy sequence is bounded, by Proposition \ref{prop:2015-04-24:01}\emph{(\ref{2015-12-06:00})},  the  sequence is   $\tau_\infty$-bounded. Then  its pointwise limit  ${f}$  (that clearly exists)  belongs to $B_b(E)$. 
By Proposition \ref{prop:2015-04-24:01}\emph{(\ref{2015-12-06:00})},
the convergence is uniform on every compact subset of $E$. Then 
Proposition \ref{prop:2015-04-24:01}\emph{(\ref{2015-10-13:10})}
 implies that $\{{f}_n\}_{n\in \mathbb{N}}$ is $\tau_{\mathcal K}$-convergent to $f$. This shows that  $(B_b(E),\tau_{\mathcal K})$ is sequentially complete.

By using Proposition 
\ref{prop:2015-04-24:01BB},
a similar argument
shows that also
 $(B_b(E),\sigma(B_b(E),\mathbf{ca}(E)))$
 is sequentially complete.

\emph{(\ref{2015-11-10:07})}
Let $\{{f}_\iota\}_{\iota\in \mathcal{I}}\subset C_b(E)$ be a net $\tau_\mathcal{K}$-converging to $f$ in  $B_b(E)$. In particular,  the convergence
is uniform on compact sets, hence $f\in C_b(E)$.  

\emph{(\ref{2015-12-06:03})}
Let $f\in C_b(E)$, let $K$ be a compact subset of $E$,  let $\mu_1,\ldots,\mu_n\in \mathbf{ca}(E)$, and let $\varepsilon>0$.
We show that there exists $g\in UC_b(E)$ such that
$\max_{i=1,\ldots,n}p_{K,\mu_i}(f-g)\leq \epsilon$. This will prove the density of $UC_b(E)$ in $C_b(E)$ with respect to $\tau_\mathcal{K}$.
 Since
$E$ is homeomorphic to a Borel subset of a Polish space, 
the finite family 
 $|\mu_1|,\ldots,|\mu_n|$ is tight
 (see \cite[Theorem 3.2, p.\ 29]{Parthasarathy67}). Hence,   there exists a compact set $K_\varepsilon$ such that $\max_{i=1,\ldots,n}|\mu_i|(K^c_\varepsilon)<\frac{\varepsilon}{2(1+| {f}|_\infty)}$. Let $g\in UC_b(E)$ be a uniformly continuous extension of   ${f}_{|K\cup K_\varepsilon}$ such that $|g|_\infty\leq | f|_\infty$. Then
$$
\max_{i=1,\ldots,n}p_{K,\mu_i}(f- g) \leq \ [f-g]_{K}+\max_{i=1,\ldots,n}\int_E|f-g|d|\mu_i|\leq 2| {f}|_\infty
\max_{i=1,\ldots,n}|\mu_i|(K^c_\varepsilon) \leq \ \varepsilon.
$$

\emph{(\ref{2015-12-06:04})} 
If $UC_b(E)=C_b(E)$, then the sequential completeness of  $(UC_b(E),\tau_\mathcal{K})$ follows from \emph{(\ref{2015-11-10:07})} of the present proposition.

 Suppose that $UC_b(E)\neq C_b(E)$.
Let $\{B_n\}_{n\in \mathbb{N}}$, $\{f_n\}_{n\in \mathbb{N}}\subset UC_b(E)$, and $f\in C_b(E)\setminus UC_b(E)$ be as in the second part of the proof of  Lemma \ref{lem:0000-00-00:00}. To show that $UC_b(E)$ is not sequentially complete, we will show that $\lim_{n\rightarrow +\infty}f_n= f$ in
$(C_b(E),\tau_\mathcal{K})$.
Let $K\subset  E$ be  compact  and $\mu\in \mathbf{ca}(E)$. As observed in the proof of  Lemma \ref{lem:0000-00-00:00}, $f=\sum_{i=1}^{n_K}\rho_i$ on $K$, for some $n_K\in \mathbb{N}$ depending on $K$, and then $[f-f_n]_K=0$ for every $n\geq n_K$.
Then 
\begin{equation*}
  \begin{split}
    \limsup_{n\rightarrow+\infty}p_{K,\mu}(f-f_n)&
=
\limsup_{n\rightarrow+\infty}p_{K,\mu}\left(\sum_{i=n+1}^{+\infty} \rho_i\right)
=
\limsup_{n\rightarrow+\infty}\left |\int_E\left(\sum_{i=n+1}^{+\infty} \rho_i\right)d\mu\right|\\
&\leq
\lim_{n\rightarrow+\infty} \sum_{i=n+1}^{+\infty}\int_E \rho_id|\mu|
\leq
\lim_{n\rightarrow+\infty} \sum_{i=n+1}^{+\infty} |\mu|(B_i)\\
&=\lim_{n\rightarrow+\infty} |\mu|\left(\bigcup_{i\geq n+1}B_i\right)=
 |\mu|\left(\bigcap_{n\geq 1} \bigcup_{i\geq n+1}B_i\right).
\end{split}
\end{equation*}
As the balls $\{B_n\}_{n\in \mathbb{N}}$ are pairwise disjoint,  we have $\bigcap_{n\geq 1}\bigcup_{i\geq n}B_i=\emptyset$. Hence, the last term in the inequality above is $0$ and we conclude.

\emph{(\ref{2015-12-06:05})} 
If $E$ is compact, then Proposition \ref{prop:2015-04-29:01} yields $\tau_\mathcal{K}=\tau_\infty$, hence $(\mathcal{S}(E),\tau_\mathcal{K})$ is metrizable.

If $E$ is not compact, in order to prove that $(\mathcal{S}(E),\tau_\mathcal{K})$ is not metrizable, it will be sufficient to prove that every $\tau_\mathcal{K}$-neighborhood of $0$ contains a non-degenerate vector space.
Indeed, in such a case, if $\hat{d}$ was a metric inducing $\tau_\mathcal{K}$, there would exist a sequence $\{x_n\}_{\in \mathbb{N}}$, such that $\lim_{n\rightarrow+\infty}\hat d(x_n,0)=0$ and $\lim_{n\rightarrow \infty}|x_n|_\infty=+\infty$. But then $\{x_n\}_{n\in \mathbb{N}}$ would converge to $0$ in $\tau_\mathcal{K}$, and then the sequence would be $|\cdot|_{\infty}$-bounded, by Proposition \ref{prop:2015-04-24:01}\emph{(\ref{2015-10-13:10})}, providing the contradiction.

To show that every neighborhood of $0$ in $\tau_\mathcal{K}$ contains a non-degenerate vector space, let $K\subset E$ be  compact,  $\mu_1,\ldots,\mu_m\in \mathbf{ca}(E)$, $\varepsilon>0$, and consider the neighborhood
$$
\mathcal{I}\coloneqq \{f\in \mathcal{S}(E):\ p_{K,\mu_i}(f)<\varepsilon,\ \forall  i=1,\ldots,m\}.
$$
Since $E$ is not compact, by Lemma \ref{2015-08-08:00}, $UC_b(E)\neq C_b(E)$. Hence, we can construct  the sequence $\{\rho_n\}_{n\in \mathbb{N}}\subset UC_b(E)\subset \mathcal{S}(E)$ as in the second part of the proof of 
Lemma \ref{lem:0000-00-00:00}. This is  a sequence of linearly independent functions. Setting  
$$
Z_K\coloneqq \left\{f\in UC_b(E)\colon f(x)=0,\ \forall  x\in K\right\},
$$
we have $\rho_n\in Z_K$ for every $n\geq n_K$ (where $n_K$ is as in the proof of 
Lemma \ref{lem:0000-00-00:00}).
This shows that the subspace $Z_K\subset \mathcal{S}(E)$ 
is infinite dimensional. 
For $i=1,\ldots,m$, define the functionals
$$
\Lambda_i\colon Z_K\rightarrow \mathbb{R},\ \ \varphi\mapsto \int_E \varphi d\mu_i.
$$
Since $Z_K$ is infinite dimensional,  $\mathcal{N}\coloneqq \bigcap_{i=1}^m\ker \Lambda_i$ is infinite dimensional too. On the other hand, 
  by construction, $\mathcal{N}\subset \mathcal{I}$.
This concludes the proof.
\end{proof}

\subsubsection{Characterization   of $(\mathcal{S}(E),\tau_\mathcal{K})^*$}
The aim of this subsection is to provide a characterizion of $(\mathcal{S}(E),\tau_\mathcal{K})^*$, for the cases $\mathcal{S}(E)=B_b(E)$ and $\mathcal{S}(E)=C_b(E)$.
\label{sec:2015-05-04:06}
Denote by $\mathbf{ba}_\mathcal{C}(E)$ the subspace of $\mathbf{ba}(E)$ defined by
$$
\mathbf{ba}_\mathcal{C}(E)\coloneqq \{\mu\in \mathbf{ba}(E)\colon  \exists\  K\subset E\ \mathrm{compact}\colon |\mu|(K^c)=0\}.
$$
If $E$ is compact, we clearly have $\mathbf{ba}_\mathcal{C}(E)=\mathbf{ba}(E)$.
Conversely, if  $E$ is not compact, then $\mathbf{ba}_\mathcal{C}(E)$ is a non-closed
subspace of $\mathbf{ba}(E)$. Indeed, if the sequence $\{x_n\}_{n\in \mathbb{N}}$ in $E$ does not admit any convergent subsequence, then 
$$\mu_n=\sum_{k=1}^n 2^{-k}\delta_{x_k}\in \mathbf{ba}_\mathcal{C}(E), \ \forall  n\in\N, \mbox{ and }\lim_{n\rightarrow +\infty}\mu_n=\sum_{k=1}^{+\infty} 2^{-k}\delta_{x_k}\in \mathbf{ca}(E)\setminus  \mathbf{ba}_\mathcal{C}(E).$$
Denote by $C_b(E)^\perp$ the annihilator of $C_b(E)$ in $(B_b(E),|\cdot|_\infty)^*\cong (\mathbf{ba}(E),|\cdot|_1)$ (see \eqref{eqq1},\eqref{eqq2}), that is
$$
C_b(E)^\perp\coloneqq \left\{\mu\in \mathbf{ba}(E)\colon \ \int_E {f} d \mu=0,\  \ \forall   {f}\in C_b(E)\right\}.
$$
By
Lemma \ref{rem:2015-04-27:02},
we have  $C_b(E)^\perp\backslash\{0\}\subset \mathbf{ba}(E)\setminus\textbf{ca}(E)$.

\begin{Proposition}\label{prop:2015-04-27:00}
The following statements hold.
  \begin{enumerate}[(i)]
  \item \label{2015-12-06:06}
$(B_b(E),\tau_\mathcal{K})^*= \left( \mathbf{ba}_\mathcal{C}(E)\medcap C_b(E)^\perp \right)  \oplus \mathbf{ca}(E)$.
More explicitly,
for each $\Lambda\in (B_b(E),\tau_\mathcal{K})^*$ there exist  unique $\mu\in \mathbf{ba}_\mathcal{C}(E)\medcap C_b(E)^\perp$ and  $\nu\in \mathbf{ca}(E)$ such that 
$$
\Lambda({f})=\int_E {f} d(\mu+\nu)\qquad \forall   {f}\in B_b(E),
$$
where the integral is in the Darboux sense.
\item\label{2015-10-10:00} 
$(C_b(E),\tau_\mathcal{K})^*=\mathbf{ca}(E)$.
More explicitly,
 for each $\Lambda\in (C_b(E),\tau_\mathcal{K})^*$ there exists a unique $\nu\in \mathbf{ca}(E)$ such that 
\[
\Lambda({f})=\int_E{f} d\nu\qquad \forall  f\in C_b(E).
\]
\end{enumerate}
\end{Proposition}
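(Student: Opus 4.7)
The plan is to establish part \emph{(\ref{2015-12-06:06})} by direct construction and then derive part \emph{(\ref{2015-10-10:00})} via Hahn--Banach together with \emph{(\ref{2015-12-06:06})}. The easy inclusions in \emph{(\ref{2015-12-06:06})} are routine: for $\nu\in\mathbf{ca}(E)$ one has $|\Phi_\nu(f)|\leq p_{K,\nu}(f)$ for any compact $K$; for $\mu\in\mathbf{ba}_\mathcal{C}(E)\medcap C_b(E)^\perp$ concentrated on a compact $K$ one has $|\Phi_\mu(f)|\leq|\mu|_1\, p_{K,0}(f)$. Directness of the sum follows from Lemma \ref{rem:2015-04-27:02}: any element of $\mathbf{ca}(E)\medcap C_b(E)^\perp$ vanishes, so a cancellation in the sum forces both pieces to be $0$.

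For the nontrivial inclusion in \emph{(\ref{2015-12-06:06})}, I would take $\Lambda\in(B_b(E),\tau_\mathcal{K})^*$. Since $\tau_\mathcal{K}\subset\tau_\infty$, there exists $\lambda\in\mathbf{ba}(E)$ with $\Lambda=\Phi_\lambda$, and $\tau_\mathcal{K}$-continuity yields a compact $K$, measures $\mu_1,\ldots,\mu_n\in\mathbf{ca}(E)$, and $C>0$ such that
$$|\Lambda(f)|\leq C\left([f]_K+\sum_{i=1}^n\left|\int_E f\,d\mu_i\right|\right).$$
Restricting this bound to $X_K:=\{f\in B_b(E)\colon f\equiv 0\text{ on }K\}$ and invoking \cite[Lemma 3.9, p.~63]{Rudin1991}, exactly as in the proof of Proposition \ref{prop:2015-04-29:01}, produces scalars $\alpha_i$ with $\Lambda|_{X_K}=\Phi_{\tilde\nu}|_{X_K}$ for $\tilde\nu:=\sum_i\alpha_i\mu_i\in\mathbf{ca}(E)$. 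Hence $\lambda':=\lambda-\tilde\nu$ annihilates $X_K$; applying this to $f=\mathbf{1}_A$ for Borel $A\subset K^c$ gives $\lambda'(A)=0$, so $|\lambda'|(K^c)=0$ and $\lambda'\in\mathbf{ba}_\mathcal{C}(E)$.

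At this stage $\lambda'$ is concentrated on the compact set $K$ but need not lie in $C_b(E)^\perp$, so one more correction is required. Using Tietze's extension theorem (with norm preservation), the map $f\mapsto\int_E f\,d\lambda'$ on $C_b(E)$ depends only on $f|_K$ and therefore factors through a $|\cdot|_\infty$-continuous functional on $C(K)$; by the Riesz representation theorem on the compact metric space $K$, there exists $\eta\in\mathbf{ca}(K)$, extended by zero to $\mathbf{ca}(E)$, such that $\int_E f\,d\eta=\int_E f\,d\lambda'$ for every $f\in C_b(E)$. Setting $\mu:=\lambda'-\eta$ and $\nu:=\tilde\nu+\eta$ then delivers $\lambda=\mu+\nu$ with $\mu\in\mathbf{ba}_\mathcal{C}(E)\medcap C_b(E)^\perp$ and $\nu\in\mathbf{ca}(E)$.

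For \emph{(\ref{2015-10-10:00})}, given $\Lambda\in(C_b(E),\tau_\mathcal{K})^*$ the same seminorm bound $p(f):=C([f]_K+\sum_i|\int_E f\,d\mu_i|)$ extends to all of $B_b(E)$ and dominates $|\Lambda|$ on the subspace $C_b(E)$; Hahn--Banach yields an extension $\tilde\Lambda\in(B_b(E),\tau_\mathcal{K})^*$. Applying \emph{(\ref{2015-12-06:06})} and restricting back to $C_b(E)$ kills the $\mathbf{ba}_\mathcal{C}(E)\medcap C_b(E)^\perp$ component, leaving the desired $\mathbf{ca}(E)$-representation; uniqueness is Lemma \ref{rem:2015-04-27:02}. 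The main obstacle I anticipate is verifying carefully in the proof of \emph{(\ref{2015-12-06:06})} that the Tietze-based passage from $C_b(E)$ to $C(K)$ is well-posed (independence of extension and norm control, so that Riesz on $K$ genuinely produces a countably additive Borel measure that extends to an element of $\mathbf{ca}(E)$).
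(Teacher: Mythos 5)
Your proof is correct, and for the hard inclusion of part \emph{(i)} it follows the same route as the paper: use the defining seminorm estimate, apply \cite[Lemma 3.9, p.\ 63]{Rudin1991} to identify the action of $\Lambda$ on functions vanishing on $K$ with a measure $\tilde\nu\in\mathbf{ca}(E)$, and represent the remainder $\lambda'=\lambda-\tilde\nu$, which is concentrated on $K$, through the identification $(B_b(E),|\cdot|_\infty)^*\cong\mathbf{ba}(E)$. (The paper phrases this via $\Lambda_{K^c}(f)=\Lambda(f\mathbf{1}_{K^c})$ and $\Lambda_K=\Lambda-\Lambda_{K^c}$, which is the same splitting.) Part \emph{(ii)} is also handled identically, by Hahn--Banach extension to $B_b(E)$ followed by restriction, with uniqueness from Lemma \ref{rem:2015-04-27:02}.

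The one genuine difference is your final Tietze/Riesz correction, and it is a point in your favour: the argument up to that stage only yields $\lambda'\in\mathbf{ba}_\mathcal{C}(E)$, not $\lambda'\in\mathbf{ba}_\mathcal{C}(E)\medcap C_b(E)^\perp$ (take $\Lambda=\Phi_{\delta_{x_0}}$ with $x_0\in K$ to see that $\lambda'$ need not annihilate $C_b(E)$). The paper's proof stops at ``$\mu\in\mathbf{ba}_\mathcal{C}(E)$'' and declares existence proved, leaving implicit the further decomposition of a compactly concentrated finitely additive measure into a countably additive part plus a $C_b(E)^\perp$ part. Your step supplies exactly this: since $|\lambda'|(K^c)=0$, the functional $f\mapsto\int_E f\,d\lambda'$ on $C_b(E)$ factors through $C(K)$ (the restriction map is onto with norm control by Tietze--Urysohn, and well defined because $\lambda'$ kills everything vanishing on $K$), Riesz on the compact metric space $K$ produces $\eta\in\mathbf{ca}(K)\subset\mathbf{ca}(E)$, and $\mu:=\lambda'-\eta$ lands in $\mathbf{ba}_\mathcal{C}(E)\medcap C_b(E)^\perp$ while $\nu:=\tilde\nu+\eta\in\mathbf{ca}(E)$. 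The ``obstacle'' you flag is not a real one: all the verifications you list go through, so your write-up is in fact the more complete of the two.
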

\begin{proof}
\emph{(\ref{2015-12-06:06})}  Let $\Lambda\in (B_b(E),\tau_\mathcal{K})^*$. Then there exist $L>0$, a compact set $K\subset E$, a natural number $N$, and measures $\mu_1,\ldots, \mu_{N}\in \mathbf{ca}(E)$, such that
$$
|\Lambda (f)|\leq L\left([ {f}]_K+\sum_{n=1}^N\left|\int _E{f} d\mu_n\right|\right)\qquad\forall  {f}\in B_b(E).
$$
Define 
$\Lambda_{K^c}(f)= \Lambda (f\mathbf{1}_{K^c})$, for $f\in B_b(E)$. Then
\begin{equation}
  \label{eq:2015-12-06:07}
  |\Lambda_{K^c}({f})|\leq L
\sum_{n=1}^N\left|\int _E{f} d(\mu_n\lfloor K^{c})\right|,\qquad\forall  {f}\in B_b(E),
\end{equation}
where $\mu_n\lfloor K^c$ denotes the restriction of $\mu_n$ to $K^c$.
Hence $\Lambda_{K^c}\in (B_b(E),\tau_\mathcal{K})^*$.
Moreover, by \cite[Lemma\ 3.9, p.\ 63]{Rudin1991}, 
\eqref{eq:2015-12-06:07}
implies that
there exists
 $\nu\in \mathrm{Span} \{\mu_i\lfloor K^c\colon i=1,\ldots,N\}\subset \mathbf{ca}(E)$ such that
$$
\Lambda_{K^c}({f})=\int_E {f} d\nu\qquad \forall  f\in B_b(E).
$$
Define $\Lambda_{K}(f)\coloneqq\Lambda (f\mathbf{1}_{K})$, for $f\in B_b(E)$.
Since $\Lambda_K=\Lambda-\Lambda_{K^c}$,
$\Lambda_K\in (B_b(E),\tau_\mathcal{K})^*$.
By the identification  $(B_b(E),|\cdot|_\infty)^*\cong (\mathbf{ba}(E),|\cdot|_1)$ (see \eqref{eqq1}--\eqref{eqq2}),
there exists a unique $\mu\in \mathbf{ba}(E)$ such that 
\[
\Lambda_K({f})=\int_E {f} d\mu\qquad  \forall  f\in B_b(E),
\]
 where the integral above is defined in the Darboux sense. We notice that $\mu(A)=0$ for every Borel set $A\subset K^c$. Hence
$\mu\in \mathbf{ba}_\mathcal{C}(E)$, and the
existence part of the claim is proved.

As regarding uniqueness, let $\mu_1+\nu_1$ and $\mu_2+\nu_2$ be two decompositions as in the statement. Then $\nu_1-\nu_2\in \mathbf{ca}(E)\medcap C_b(E)^\perp$.
 Therefore,
by Lemma \ref{rem:2015-04-27:02},
 $\nu_1-\nu_2=0$, and then $\mu_1=\mu_2$.

\emph{(\ref{2015-10-10:00})}
Let $\Lambda\in  (C_b(E),\tau_\mathcal{K})^*$. Since $\tau_\mathcal{K}$ is locally convex, by the Hahn-Banach Theorem we can extend $\Lambda$ to some  $\overline \Lambda\in (B_b(E),\tau_\mathcal{K})^*$. Let $\mu+\nu$ be the decomposition of  $\overline \Lambda$ provided by \emph{(\ref{2015-12-06:06})}, with $\mu\in \mathbf{ba}_\mathcal{C}(E)\medcap C_b(E)^\perp$ and $\nu\in \mathbf{ca}(E)$. Then
$$
\Lambda({f})
=
\overline \Lambda({f})
=\int_E {f} d(\mu+\nu)
=\int_E {f} d\nu\qquad  \forall  {f}\in C_b(E).
$$
Uniqueness is provided by Lemma \ref{rem:2015-04-27:02}.
\end{proof}

\begin{Remark}
\label{rem:2015-04-30:00}
In general, 
 the dual of $(C_b(E),\tau_\infty)$ cannot be identified with $\mathbf{ca}(E)$
through the integral,  that is, the isometric embedding \eqref{eq:2015-08-07:00} is not onto(\footnote{For a characterization of $(C_b(E),\tau_\infty)^*$, see
\cite[Sec. 14.2]{Aliprantis2006}.}).
An example where $(C_b(E),\tau_\infty)^*\neq \mathbf{ca}(E)$ is provided by the case $E=\mathbb{N}$. 
Then 
$C_b(\mathbb{N})=\ell^\infty$ and $(C_b(\mathbb{N}),\tau_\infty)^*=(\ell^\infty)^*\supsetneq \ell^1\cong\mathbf{ca}(\mathbb{N})$ (where the symbol ``$\cong$'' 
is consistent with the action of $\ell^1$ and of $\mathbf{ca}(\mathbb{N})$ on $\ell^\infty$).
In view of this observation, 
Proposition \ref{prop:2015-04-27:00}{(\ref{2015-10-10:00})} cannot be seen, in its generality, as a straightforward consequence of the inclusions
$
\sigma(C_b(E),\mathbf{ca}(E))\subset \tau_\mathcal{K}\subset \tau_\infty.
$
\end{Remark}

\subsection{Relationship with  weakly continuous semigroups}\label{sec:wc} 

In this subsection we first recall the notions of
 $\mathcal{K}$-convergence and of weakly continuous semigroup in the space $UC_b(E)$, introduced and studied first in \cite{Cerrai94, Cerrai01book} in the case of  $E$  separable Banach space
(\footnote{In order to avoid misunderstanding, we stress that \cite{Cerrai01book} uses the notation $C_b(E)$ to denote the space of \emph{uniformly} continuous bounded functions on $E$, i.e., our space $UC_b(E)$. Also we notice that the separability of $E$ is not needed here for our discussion.}).
 So, throughout this subsection $E$ is assumed to be a Banach space. We will show that every weakly continuous semigroup is a $C_0$-sequentially locally equicontinuous semigroup and, up to  a  renormalization,  a $C_0$-sequentially equicontinuous semigroup on $(UC_b(E),\tau_\mathcal{K})$  (Proposition \ref{2015-10-13:06}).

 The notion of $\mathcal{K}$-convergence was introduced in \cite{Cerrai94, Cerrai01book} for sequences. We recall it in its natural extension to nets.
A net of functions $\{f_\iota\}_{\iota\in \mathcal{I}}\subset UC_b(E)$  is said $\mathcal{K}$-convergent to  $f\in UC_b(E)$ if it is $|\cdot|_\infty$-bounded  and  if $\{f_\iota\}_{\iota\in \mathcal{I}}$ converges to $f$ uniformly on compact sets of $E$, that is
\begin{equation}
  \label{eq:2015-10-30:00}
  \begin{dcases}
      \sup_{\iota\in \mathcal{I}}|f_\iota|_\infty<+\infty\\
      \lim_{\iota}[f_\iota-f]_K=0 & \mbox{for every non-empty compact }K\subset E.
    \end{dcases}
  \end{equation}
In such a case, we write $f_\iota\stackrel{\mathcal{K}}{\longrightarrow} f$. 
 If $E$ is  separable,
in view of  Proposition \ref{prop:2015-04-24:01}\emph{(\ref{2015-10-13:10})}, the convergence
\eqref{eq:2015-10-30:00} is equivalent to the convergence with respect to the locally convex topology $\tau_\mathcal{K}$.
In this sense, $\tau_\mathcal{K}$ is the natural vector topology to treat weakly continuous semigroups (whose definition is recalled below) within the framework of $C_0$-sequentially locally equicontinuous semigroups. 

\begin{Definition}
A \emph{weakly continuous semigroup} on  
$UC_b(E)$
 is  a  family $T=\{T_t\}_{t\in \mathbb{R}^+}$ of bounded linear operators on $(UC_b(E),|\cdot|_\infty)$ satisfying the following conditions.
\begin{enumerate}[(P1)]
\item\label{2015-10-29:15}    $T_0=I$ and $T_t  T_s= T_{t+s}$ for $t,s\in \mathbb{R}^+$.
\item\label{2015-10-13:09} There exist $M\geq 1$ and $\alpha\in\R$ such that $|T_tf|_\infty\leq M e^{\alpha t}|f|_\infty$ for every $t\in \mathbb{R}^+$, $f\in UC_b(E)$.
\item\label{2015-10-29:14} For every $f\in UC_b(E)$ and every $\hat t>0$, the family of functions $\{T_tf\colon E\rightarrow \R\}_{t\in [0,\hat{t}]} $ is equi-uniformly continuous, that is, there exists a modulus of continuity $w$ (depending on $\hat t$) such that
  \begin{equation}
    \label{eq:2015-10-13:15}
    \sup_{t\in[0,\hat t]}|T_tf(\xi)-T_tf(\xi')|\leq w(|\xi-\xi'|_E),\qquad \forall  \xi,\xi'\in E.
  \end{equation}
\item\label{2015-10-13:12}
 For every  $f\in UC_b(E)$, we have $T_tf \stackrel{\mathcal{K}}{\longrightarrow}f$ as $t\rightarrow 0^+$; in view of {(P\ref{2015-10-13:09})} the latter convergence is equivalent to
 \begin{equation}
   \label{eq:2015-10-13:08}
\lim_{t\rightarrow 0^+} [T_tf-f]_K=0\ \mbox{for every non-empty compact }K\subset E.
 \end{equation}
\item\label{2015-10-13:13}
If $f_n\stackrel{\mathcal{K}}{\longrightarrow}f$, then $T_t f_n \stackrel{\mathcal{K}}{\longrightarrow}T_tf$ uniformly in $t\in [0,\hat t]$ for every $\hat t >0$; in view of 
(P\ref{2015-10-13:09}), the latter convergence is  equivalent to 
\begin{equation}
  \label{eq:2015-10-13:07}
  \lim_{n\rightarrow +\infty}\sup_{t\in[0,\hat t]}[T_tf_n-T_tf]_K=0\ \mbox{for every non-empty compact }K\subset E, \ \forall  \hat t\in \mathbb{R}^+.
\end{equation}
\end{enumerate} 
\end{Definition}
\begin{Proposition}\label{2015-10-13:06}
Let $T\coloneqq \{T_t\}_{t\in \mathbb{R}^+}$ be  a
  weakly continuous semigroup on $UC_b(E)$.
Then $T$
 is a $C_0$-sequentially locally equicontinuous semigroup on $ ( UC_b(E),\tau_\mathcal{K} )$ and, for every $\lambda>\alpha$ (where $\alpha$ is as in (P\ref{2015-10-13:09})),  $\{e^{-\lambda t}T_t\}_{t\in \mathbb{R}^+}$ is a 
$C_0$-sequentially  equicontinuous semigroup on $ ( UC_b(E),\tau_\mathcal{K} )$ satisfying 
Assumption \ref{ass:int}.

Conversely, if $T$ is a $C_0$-sequentially locally equicontinuous semigroup on $(UC_b(E),\tau_\mathcal{K})$ satisfying (P\ref{2015-10-29:14}), then $T$ is a weakly continuous semigroup on $UC_b(E)$.
\end{Proposition}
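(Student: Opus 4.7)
The plan is to verify the two implications by matching the properties defining a weakly continuous semigroup with those defining a $C_0$-sequentially (locally) equicontinuous semigroup on $(UC_b(E),\tau_\mathcal{K})$, using Proposition \ref{prop:2015-04-24:01}\emph{(\ref{2015-10-13:10})} to identify $\mathcal{K}$-convergence with $\tau_\mathcal{K}$-convergence for sequences, and Proposition \ref{prop:2015-04-29:00} to identify $\tau_\mathcal{K}$- and $|\cdot|_\infty$-boundedness. The only step that requires a genuine argument is passing from (P\ref{2015-10-13:13}) to control of the integral seminorms $p_{K,\mu}$ uniformly in $t$.

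For the forward implication, (P\ref{2015-10-29:15}) gives Definition \ref{def:2015-05-01:01}\emph{(\ref{2015-10-06:01})}. For strong $\tau_\mathcal{K}$-continuity at $t=0$, I would use (P\ref{2015-10-13:12}) directly on the seminorm $[\,\cdot\,]_K$, and I would handle the seminorm $|\int_E \cdot\,d\mu|$ by dominated convergence along any sequence $t_n\to 0^+$: the integrand is bounded by $(1+M)|f|_\infty$ on $[0,1]$ by (P\ref{2015-10-13:09}) and converges pointwise to $0$ by (P\ref{2015-10-13:12}) with $K$ a singleton. For sequential local equicontinuity on $[0,R]$, given $f_n\to 0$ in $\tau_\mathcal{K}$, Propositions \ref{prop:2015-04-29:00} and \ref{prop:2015-04-24:01}\emph{(\ref{2015-10-13:10})} produce $\sup_n|f_n|_\infty<\infty$ and uniform convergence of $f_n$ to $0$ on compacts, so $f_n\stackrel{\mathcal{K}}{\to}0$; (P\ref{2015-10-13:13}) then yields $\sup_{t\in[0,R]}[T_tf_n]_K\to 0$, and for the integral part I would split
\begin{equation*}
\Bigl|\int_E T_tf_n\,d\mu\Bigr|\leq [T_tf_n]_{K_\epsilon}\,|\mu|_1+|T_tf_n|_\infty\,|\mu|(K_\epsilon^c),
\end{equation*}
picking a compact $K_\epsilon$ with $|\mu|(K_\epsilon^c)<\epsilon$ (tightness of $|\mu|$ on the Polish space $E$), then letting $n\to\infty$ and $\epsilon\downarrow 0$. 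The renormalization for $\lambda>\alpha$ follows from Proposition \ref{prop:2015-08-01:00}\emph{(\ref{2015-10-13:14})} applied with $\tau=\tau_\mathcal{K}$ and $|\cdot|_X=|\cdot|_\infty$, while Assumption \ref{ass:int} holds because $t\mapsto e^{-\lambda t}T_tf$ is $|\cdot|_\infty$-absolutely integrable by (P\ref{2015-10-13:09}), hence Riemann integrable also in the coarser topology $\tau_\mathcal{K}$.

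For the converse, (P\ref{2015-10-29:15}) is the semigroup property, (P\ref{2015-10-13:09}) follows from Proposition \ref{prop:2015-08-01:00}\emph{(\ref{2015-10-13:00})} combined with Proposition \ref{prop:2015-04-29:00}, (P\ref{2015-10-29:14}) is assumed, and (P\ref{2015-10-13:12}) is obtained by restricting strong $\tau_\mathcal{K}$-continuity at $0$ to the seminorm $[\,\cdot\,]_K$. For (P\ref{2015-10-13:13}) I would convert $f_n\stackrel{\mathcal{K}}{\to}f$ into $\tau_\mathcal{K}$-convergence by Proposition \ref{prop:2015-04-24:01}\emph{(\ref{2015-10-13:10})}, apply sequential local equicontinuity on $[0,\hat t]$ to obtain $\sup_{t\in[0,\hat t]}[T_tf_n-T_tf]_K\to 0$ for every compact $K$, and read off the $|\cdot|_\infty$-boundedness of $\{T_t(f_n-f)\}_{t,n}$ from (P\ref{2015-10-13:09}) combined with $\sup_n|f_n|_\infty<\infty$.

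The main obstacle is the treatment of the integral seminorms $p_{K,\mu}$ in the forward direction's sequential local equicontinuity, since (P\ref{2015-10-13:13}) controls only uniform convergence on compact sets (uniformly in $t$) and does not directly mention integration against signed measures. Transferring this to $\sup_{t\in[0,R]}|\int_E T_tf_n\,d\mu|\to 0$ requires combining the tightness of $|\mu|$ (which relies on separability of $E$, implicit in Cerrai's framework) with the exponential bound (P\ref{2015-10-13:09}) to control the tail $\int_{K_\epsilon^c}T_tf_n\,d\mu$ uniformly in $t$ and $n$.
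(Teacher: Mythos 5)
Most of your proposal tracks the paper's argument: the identification of $\mathcal{K}$-convergence with bounded $\tau_\mathcal{K}$-convergence via Proposition \ref{prop:2015-04-24:01}\emph{(\ref{2015-10-13:10})}, the tightness splitting for the integral seminorms, the use of Proposition \ref{prop:2015-08-01:00} for the renormalization and for (P\ref{2015-10-13:09}) in the converse, are all exactly what is done there. The converse direction is complete.

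The genuine gap is your verification of Assumption \ref{ass:int}. You argue that $t\mapsto e^{-\lambda t}T_tf$ is $|\cdot|_\infty$-absolutely integrable by (P\ref{2015-10-13:09}), ``hence Riemann integrable also in the coarser topology $\tau_\mathcal{K}$.'' But integrability of the norm does not make the Riemann sums converge in $(UC_b(E),|\cdot|_\infty)$: for that you would need $t\mapsto T_tf$ to be norm-continuous, which is precisely what fails for weakly continuous semigroups (e.g.\ the Ornstein--Uhlenbeck semigroup). The Riemann sums do converge in $\tau_\mathcal{K}$, but only in the sequential completion, and by Proposition \ref{prop:seqcom}\emph{(\ref{2015-12-06:04})} the space $(UC_b(E),\tau_\mathcal{K})$ is \emph{not} sequentially complete unless $UC_b(E)=C_b(E)$; the limit is therefore only guaranteed to lie in $C_b(E)$, whereas Assumption \ref{ass:int} demands $R(\lambda)f\in UC_b(E)$. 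A $\tau_\mathcal{K}$-limit (i.e.\ a uniform-on-compacts limit) of uniformly continuous functions need not be uniformly continuous without a uniform control on the moduli of continuity. This is exactly where the hypothesis (P\ref{2015-10-29:14}), which your forward argument never uses, enters: the paper takes the convex, sequentially closed set $C$ of functions $g$ with $|g(\xi)-g(\xi')|\leq w(|\xi-\xi'|_E)$, where $w$ is the modulus from \eqref{eq:2015-10-13:15}, notes that $\{e^{-\lambda' t}T_tf\}_{t\in[0,\hat t]}\subset C\subset UC_b(E)$, and applies Proposition \ref{prop:2015-07-30:00} to conclude $\int_0^{\hat t}e^{-\lambda t}T_tf\,dt\in \frac{1}{\lambda-\lambda'}C$; the tail $\int_{\hat t}^{+\infty}$ is then made small in sup norm using (P\ref{2015-10-13:09}). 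Without this step your proof of the forward implication is incomplete.
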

\begin{proof}
Let $f\in UC_b(E)$. By 
\emph{(P\ref{2015-10-13:12})} and 
 by 
Proposition \ref{prop:2015-04-24:01}\emph{(\ref{2015-10-13:10}), $T_{t}f\rightarrow f$ in $(UC_b(E),\tau_\mathcal{K})$ when $t\rightarrow 0^+$}. This shows the strong continuity of $T$ in $(UC_b(E),\tau_\mathcal{K})$.

Now let $\{f_n\}_{n\in \mathbb{N}}$ be a sequence converging to $0$ in $(UC_b(E),\tau_\mathcal{K})$ and let $\hat t\in \mathbb{R}^+$.
 By 
Proposition \ref{prop:2015-04-24:01}\emph{(\ref{2015-10-13:10})},
 it follows that $f_n\stackrel{\mathcal{K}}{\longrightarrow}0$.
By
(P\ref{2015-10-13:13})
 we then have
$T_{t}f_n\stackrel{\mathcal{K}}{\longrightarrow}0$ uniformly in $t\in[0,\hat t]$.
Using again  Proposition \ref{prop:2015-04-24:01}\emph{(\ref{2015-10-13:10})}, we conclude that 
 $T$ is locally sequentially equicontinuous in $(UC_b(E),\tau_\mathcal{K})$.

By (P\ref{2015-10-13:09}) and by 
Proposition \ref{prop:2015-04-29:00}, we can apply
 Proposition \ref{prop:2015-08-01:00}\emph{(\ref{2015-10-13:14})} to $T$ conclude that $\{e^{-\lambda t}T_t\}_{t\mathbb{R}^+}$ is a $C_0$-sequentially equicontinuous semigroup on $(UC_b(E),\tau_\mathcal{K})$.

We finally show that, for $\lambda>\alpha$, $\{e^{-\lambda t}T_t\}_{t\in \mathbb{R}^+}$ satisfies Assumption \ref{ass:int}. Let $\alpha< \lambda'<\lambda$ and $f\in UC_b(E)$.
By Proposition \ref{prop:seqcom},
$(C_b(E),\tau_\mathcal{K})$ is sequentially complete.
 By 
Proposition \ref{prop:2015-05-01:02a}, the map
$$
\mathbb{R}^+\rightarrow (UC_b(E),\tau_\mathcal{K}),\ t \mapsto 
e^{-\lambda' t}T_tf,
$$
is continuous and bounded. It then follows that the Riemann integral $R(\lambda)f$ exists in $C_b(E)$. We show that $R(\lambda)f\in UC_b(E)$.
Since the Dirac measures are contained in $(C_b(E),\tau_\mathcal{K})^*$, by 
Proposition \ref{prop:2015-05-01:07} we have
$$
R(\lambda)f(\xi)=\int_0^{+\infty}e^{-\lambda t}T_tf(\xi)dt \qquad\forall  \xi\in E.
$$
On the other hand, by 
(P\ref{2015-10-13:09}), for every $\epsilon>0$ there exists $\hat t\in \mathbb{R}^+$ such that
$$
\sup_{\xi\in E}\int_{\hat t}^{+\infty}e^{-\lambda t}T_tf(\xi)dt<\epsilon.
$$
Hence, to prove that $R(\lambda)f\in UC_b(E)$, it suffices to show that, for every $\hat t\in \mathbb{R}^+$, 
\begin{equation}
  \label{eq:2015-10-13:16}
  \int_0^{\hat t}e^{-\lambda t}T_tfdt\in UC_b(E).
\end{equation}
Let us define the set
$$
C\coloneqq  \left\{ g\in C_b(E)\colon \sup_{\xi,\xi'\in E}|g(\xi)-g(\xi')|\leq w(|\xi-\xi'|_E)  \right\},
$$
where $w$ 
is as in \eqref{eq:2015-10-13:15}.
Clearly $C$ is a subset of $UC_b(E)$, it is convex, it contains the origin,
and
 is 
 closed in $(C_b(E),\tau_\mathcal{K})$.
By
\eqref{eq:2015-10-13:15},
 $\{e^{-\lambda't}T_tf\}_{t\in [0,\hat t]}\subset C$. Hence,  we conclude by
Proposition \ref{prop:2015-07-30:00}  that
$$
\int_0^{t_0}e^{-\lambda t}T_t f dt\in \frac{1}{\lambda-\lambda'}C\quad  \forall  \lambda>\lambda',
$$
which shows 
\eqref{eq:2015-10-13:16}, concluding the proof of the first part of the proposition.

Now let $T$ be a $C_0$-sequentially locally equicontinuous on $(UC_b(E),\tau_\mathcal{K})$ satisfying 
 (P\ref{2015-10-29:14}). 
We only need to show that $T$ verifies 
(P\ref{2015-10-13:09}),
(P\ref{2015-10-13:12}), and (P\ref{2015-10-13:13}).
Now, (P\ref{2015-10-13:09}) follows from 
Proposition \ref{prop:2015-04-29:00} and 
Proposition \ref{prop:2015-08-01:00}, whereas (P\ref{2015-10-13:12}) comes once again by Proposition \ref{prop:2015-04-24:01}\emph{(\ref{2015-10-13:10})}.
Finally, (P\ref{2015-10-13:13}) is due to Proposition \ref{prop:2015-04-24:01}\emph{(\ref{2015-10-13:10})} and to sequential local equicontinuity of $T$.
\end{proof}

\subsection{Relationship with  $\pi$-semigroups}\label{2015-10-13:03}

In this subsection we provide a connection between  the notion of  $\pi$-semigroups in $UC_b(E)$
introduced in \cite{Priola99}
 and bounded $C_0$-sequentially continuous semigroups  (see Definition \ref{2015-10-13:01}) in the space $(UC_b(E),\sigma(UC_b(E),\mathbf{ca}(E)))$ (\footnote{Also in this case, in order to avoid misunderstanding, we stress that \cite{Priola99} uses the notation $C_b(E)$ to denote the space of \emph{uniformly} continuous bounded functions on $E$, i.e. our space $UC_b(E)$. We also notice that in \cite{Priola99} the metric space $E$ is assumed to be separable, but, for our discussion, this is not needed.}).
We recall that the assumption $E$ Banach space was standing only in the latter subsection, and that in the present subsection we restore the assumption that $E$ is a generic metric space.
We start by recalling the definition of $\pi$-semigroup in $UC_b(E)$.
\begin{Definition}\label{2015-10-30:01bis}
A \emph{$\pi$-semigroup  on 
$UC_b(E)$}
 is  a  family $T=\{T_t\}_{t\in \mathbb{R}^+}$ of bounded linear operators on $(UC_b(E),|\cdot|_\infty)$ satisfying the following conditions.
\begin{enumerate}[(P1)]
\item\label{2015-10-29:15bis}    $T_0=I$ and $T_t  T_s= T_{t+s}$ for $t,s\in \mathbb{R}^+$.
\item\label{2015-10-13:09bis} There exist $M\geq 1$, $\alpha\in\R$ such that $|T_tf|_\infty\leq M e^{\alpha t}|f|_\infty$ for every $t\in \mathbb{R}^+$, $f\in UC_b(E)$.
  \item\label{2015-10-30:04bis} For each $\xi\in E$ and $f\in UC_b(E)$, the map $\mathbb{R}^+\rightarrow \mathbb{R}, \ t \mapsto T_tf(\xi)$ is continuous.
  \item\label{2015-10-30:06bis} If a sequence $\{f_n\}_{n\in \mathbb{N}}\subset UC_b(E)$ is such that
$$
\sup_{n\in \mathbb{N}}|f_n|_\infty<+\infty\qquad \mathrm{and}\qquad \lim_{n\rightarrow +\infty}f_n=f\qquad\mathrm{pointwise},
$$
then, for every $t\in \mathbb{R}^+$,
$$
 \lim_{n\rightarrow +\infty}T_tf_n=T_tf\ \ \ \mathrm{pointwise}.
$$
  \end{enumerate}
\end{Definition}

\begin{Proposition}\label{2015-10-30:08}
  $T$ is a $\pi$-semigroup in $UC_b(E)$ if and only if $\{e^{-\alpha t}T_t\}_{t\in \mathbb{R}^+}$
is a bounded $C_0$-sequentially continuous semigroup
in 
$(UC_b(E),\sigma(UC_b(E),\mathbf{ca}(E)))$
(see Definition \ref{2015-10-13:01}).
\end{Proposition}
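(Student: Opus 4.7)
The plan is to exploit the characterization of $\sigma(UC_b(E),\mathbf{ca}(E))$-convergence for sequences given by Proposition~\ref{prop:2015-04-24:01BB}: for sequences, convergence in this weak topology is equivalent to being $|\cdot|_\infty$-bounded and pointwise convergent. This immediately matches the conditions (P\ref{2015-10-30:06bis}) and (P\ref{2015-10-13:09bis}) of the $\pi$-semigroup definition with the sequential continuity and local boundedness required in Definition~\ref{2015-10-13:01}. Setting $S_t\coloneqq e^{-\alpha t}T_t$ throughout, the semigroup property and $S_0=I$ are trivially equivalent on both sides, so the whole content of the equivalence reduces to matching (P\ref{2015-10-13:09bis})--(P\ref{2015-10-30:06bis}) with conditions (i)--(ii) of Definition~\ref{2015-10-13:01} applied to $\{S_t\}$.

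For the direction ``$\pi$-semigroup $\Rightarrow$ bounded $C_0$-sequentially continuous'', I would argue as follows. To check $S_t\in\mathcal{L}_0((UC_b(E),\sigma(UC_b(E),\mathbf{ca}(E))))$, take $f_n\to 0$ in this topology. By Proposition~\ref{prop:2015-04-24:01BB}(\ref{2015-10-30:05}), $\{f_n\}$ is $|\cdot|_\infty$-bounded and tends to $0$ pointwise. Property (P\ref{2015-10-30:06bis}) gives $T_tf_n\to 0$ pointwise, and (P\ref{2015-10-13:09bis}) gives $\sup_n|S_tf_n|_\infty<+\infty$; applying the converse part of Proposition~\ref{prop:2015-04-24:01BB}(\ref{2015-10-30:05}) yields $S_tf_n\to 0$ in $\sigma(UC_b(E),\mathbf{ca}(E))$. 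For condition (ii) of Definition~\ref{2015-10-13:01}, boundedness of $\{S_tf\}_{t\in\R^+}$ is immediate from (P\ref{2015-10-13:09bis}) and Proposition~\ref{prop:2015-04-29:00}, while continuity of $t\mapsto S_tf$ (from $\R^+$ metrizable) reduces to sequential continuity: if $t_n\to t$, then by (P\ref{2015-10-30:04bis}) $S_{t_n}f\to S_tf$ pointwise and the sequence is $|\cdot|_\infty$-bounded by (P\ref{2015-10-13:09bis}), so the converse part of Proposition~\ref{prop:2015-04-24:01BB}(\ref{2015-10-30:05}) concludes.

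For the direction ``bounded $C_0$-sequentially continuous $\Rightarrow$ $\pi$-semigroup'', the key step (and the only one requiring real work) is to recover the exponential bound (P\ref{2015-10-13:09bis}). By the corollary to Proposition~\ref{prop:2015-04-29:00} (i.e.\ $\mathcal{L}_0((\mathcal{S}(E),\tau_\mathbf{P}))\subset L((\mathcal{S}(E),|\cdot|_\infty))$ applied with $\mathbf{P}$ the family of finite subsets), each $S_t$ is a $|\cdot|_\infty$-bounded operator on the Banach space $(UC_b(E),|\cdot|_\infty)$. Definition~\ref{2015-10-13:01}(ii) gives that $\{S_tf\}_{t\in\R^+}$ is $\sigma$-bounded for every $f$, hence $|\cdot|_\infty$-bounded by Proposition~\ref{prop:2015-04-29:00}. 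The classical Banach--Steinhaus theorem in the Banach space $(UC_b(E),|\cdot|_\infty)$ then yields $M\coloneqq\sup_{t\in\R^+}|S_t|_{L((UC_b(E),|\cdot|_\infty))}<+\infty$, so $|T_tf|_\infty\leq Me^{\alpha t}|f|_\infty$, which is (P\ref{2015-10-13:09bis}). Condition (P\ref{2015-10-30:04bis}) follows by pairing the $\sigma$-continuity of $t\mapsto S_tf$ with Dirac measures $\delta_\xi\in\mathbf{ca}(E)$; condition (P\ref{2015-10-30:06bis}) follows by using Proposition~\ref{prop:2015-04-24:01BB}(\ref{2015-10-30:05}) to pass from bounded pointwise convergence of $\{f_n\}$ to $\sigma$-convergence, applying sequential continuity of $S_t$, and then pairing again with Dirac measures.

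The main obstacle I anticipate is precisely the recovery of the uniform operator bound (P\ref{2015-10-13:09bis}) in the converse direction: one must be careful that the mere $\sigma$-boundedness of individual orbits $\{S_tf\}_{t\in\R^+}$ does not directly give a uniform operator-norm bound. This gap is bridged by first upgrading $\sigma$-boundedness to $|\cdot|_\infty$-boundedness (Proposition~\ref{prop:2015-04-29:00}), noting that each $S_t$ is actually continuous on the Banach space (the corollary to that proposition), and only then invoking Banach--Steinhaus. All other verifications are straightforward applications of the dictionary between pointwise bounded convergence and $\sigma(UC_b(E),\mathbf{ca}(E))$-convergence for sequences.
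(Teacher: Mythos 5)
Your proposal is correct and follows essentially the same route as the paper: Proposition \ref{prop:2015-04-24:01BB} as the dictionary between $\sigma(UC_b(E),\mathbf{ca}(E))$-convergence and bounded pointwise convergence, Proposition \ref{prop:2015-04-29:00} to upgrade $\sigma$-boundedness of orbits to $|\cdot|_\infty$-boundedness, and Banach--Steinhaus to recover the uniform operator bound in the converse direction. Your explicit remark that each $S_t$ already lies in $L((UC_b(E),|\cdot|_\infty))$ before invoking Banach--Steinhaus is a small point the paper leaves implicit, but the argument is the same.
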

\begin{proof}
Let us  denote $\sigma\coloneqq \sigma(UC_b(E),\mathbf{ca}(E))$.
Let $T$ be a $\pi$-semigroup in $UC_b(E)$.
By Definition \ref{2015-10-30:01bis}\emph{(P\ref{2015-10-13:09bis})},\emph{(P\ref{2015-10-30:06bis})} and
Proposition \ref{prop:2015-04-24:01BB}\emph{(\ref{2015-10-30:05})}, we have 
 $\{e^{-\alpha t}T_t\}_{t\in \mathbb{R}^+}\subset \mathcal{L}_0((UC_b(E),\sigma))$.
By  Definition \ref{2015-10-30:01bis}\emph{(P\ref{2015-10-13:09bis})},\emph{(P\ref{2015-10-30:04bis})} and  by 
Proposition \ref{prop:2015-04-24:01BB}\emph{(\ref{2015-10-30:05})}, the map $\mathbb{R}^+\rightarrow (UC_b(E),\sigma), \ t \mapsto  e^{-\alpha t}T_tf$ is continuous for every $f\in UC_b(E)$.  Moreover, by 
Definition \ref{2015-10-30:01bis}\emph{(P\ref{2015-10-13:09bis})} and by
Proposition
\ref{prop:2015-04-29:00}, it is also bounded.
This shows that $\{e^{-\alpha t}T_t\}_{t\in \mathbb{R}^+}$ is a 
bounded $C_0$-sequentially continuous semigroup
in 
$(UC_b(E),\sigma)$.

Conversely, let
$\{e^{-\alpha t}T_t\}_{t\in \mathbb{R}^+}$ be a 
bounded $C_0$-sequentially continuous semigroup
in 
$(UC_b(E),\sigma)$.
By  Proposition
\ref{prop:2015-04-29:00},
for every $f\in UC_b(E)$ the family $\{e^{-\alpha t}T_tf\}_{t\in \mathbb{R}^+}$ is bounded in $(UC_b(E), |\cdot|_\infty)$. By the Banach-Steinhaus theorem we conclude that there exists $M>0$ such that
$$
|e^{-\alpha t}T_t|_{L((UC_b(E),|\cdot|_\infty))}\leq M \qquad  \forall  t\in \mathbb{R}^+,
$$
which provides $T\subset L((UC_b(E),|\cdot|_\infty))$ and
\emph{(P\ref{2015-10-13:09bis})}. Then,
\emph{(P\ref{2015-10-30:04bis})} is implied by the fact that the map  $\mathbb{R}^+\rightarrow (UC_b(E),\sigma), \ t \mapsto e^{-\alpha t}T_tf$, is continuous and that Dirac measures are contained in $\sigma$.
Finally,
\emph{(P\ref{2015-10-30:06bis})} is due to the assumption $\{e^{-\alpha t}T_t\}_{t\in \mathbb{R}^+}\subset \mathcal{L}_0((UC_b(E),\sigma))$ and to 
Proposition \ref{prop:2015-04-24:01BB}\emph{(\ref{2015-10-30:05})}.
\end{proof}

As observed in Subsection \ref{2015-10-30:07},
if the Laplace transform \eqref{Lap} of a bounded $C_0$-sequentially continuous semigroup in $(UC_b(E),\sigma(UC_b(E),\mathbf{ca}(E)))$ is well-defined,
several results that we stated for $C_0$-sequentially equicontinuous semigroups still hold.
 Nevertheless, 
some other important results, as the generation theorem, or the fact that two semigroups with the same generator are equal, cannot be proved
for bounded $C_0$-sequentially continuous semigroups
 within the approach of the previous sections.
Due to
Proposition \ref{2015-10-30:08}, 
 this is reflected in the fact that, as far as we know, such results are not available in the literature for $\pi$-semigroups.

\subsection{Relationship with locally equicontinuous semigroups  with respect to the \emph{mixed} topology}\label{sub:goldys}

 When $E$ is a separable Hilbert space, in \cite{GoldysKocan01} the so called \emph{mixed topology} (introduced in \cite{Wiweger61}) is employed in the space $C_b(E)$ to frame a class of Markov transition semigroups within the theory of $C_0$-locally equicontinuous semigroups.
The same topology, but in the more general case of $E$ separable Banach space, is used in \cite{Goldys2003} to deal with Markov transition semigroups associated to the Ornstein-Uhlenbeck processe in Banach spaces.

 In this subsection, we assume that $E$ is a separable Banach space and we briefly precise
 what is the relation between the mixed topology and $\tau_\mathcal{K}$ in the space $C_b(E)$, and
between $C_0$-locally equicontinuous semigroups with respect to the mixed topology and
$C_0$-sequentially locally equicontinuous semigroups with respect to $\tau_\mathcal{K}$.



The mixed topology on $C_b(E)$, denoted by $\tau_\mathcal{M}$,
 can be defined by seminorms as follows.
Let $\mathbf{K}\coloneqq \{K_n\}_{n\in \mathbb{N}}$ be a sequence of compact subsets of $E$, and let $\mathbf{a}\coloneqq \{a_n\}_{n\in \mathbb{N}}$ be a sequence of strictly positive real numbers such that $a_n\rightarrow 0$. Define
\begin{equation}\label{goldyssemi}
p_{\mathbf{K},\mathbf{a}}({f})=\sup_{n\in \mathbb{N}}\{a_n[f]_{K_n}\}\qquad  \forall   f\in C_b(E).
\end{equation}
Then $p_{\mathbf{K},\mathbf{a}}$ is a seminorm and   $\tau_\mathcal{M}$ is 
defined as the locally convex  topology   induced by the family of seminorms $p_{\mathbf{K},\mathbf{a}}$, when $\mathbf{K}$ ranges on the set of countable families of compact subsets of $E$, and $\mathbf{a}$ ranges on the set of sequences of strictly positive real numbers converging to $0$.

It can be proved (see \cite[Theorem 2.4]{Sentilles1972}), that   $\tau_\mathcal{M}$ is the finest locally convex topology on $C_b(E)$ such that a net $\{f_\iota\}_{\iota\in \mathcal{I}}$ is bounded in the uniform norm and converges to $f$ in $\tau_\mathcal{M}$ if and only if it is $\mathcal{K}$-convergent, that is, if and only if \eqref{eq:2015-10-30:00} is verified. 



To establish the relation between $\tau_\mathcal{M}$ and $\tau_\mathcal{K}$, we start with a lemma.

\begin{Lemma}
\label{2015-12-07:04}
  Let $S\subset E$  be a Borel set and assume that $S$ is a retract of $E$, that is,  there exists a continuous map $r\colon E\rightarrow S$ such that $r(s)=s$ for every $s\in S$.
We denote by  $\tau_\mathcal{K}^S$
  the topology $\tau_\mathcal{K}$ when considered in the spaces ${C}_b(S)$.
Then 
\begin{equation}\label{Psi}
\Psi\colon C_b(E)\rightarrow  C_b (S),\ \ f \mapsto f_{|S}
\end{equation}
is continuous and open as a map from 
$(C_b(E),\tau_\mathcal{K})$ onto $ (C_b (S), \tau_\mathcal{K}^S)$.
\end{Lemma}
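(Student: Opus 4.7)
The strategy is to exhibit a linear right-inverse of $\Psi$ that is also continuous. Concretely, define
$$
R\colon C_b(S)\rightarrow C_b(E),\ \ g\mapsto g\circ r.
$$
Since $r$ is continuous and $g$ is continuous and bounded, $R(g)$ is continuous and bounded, with $|R(g)|_\infty\leq |g|_\infty$; so $R$ is well-defined and linear. Because $r(s)=s$ for every $s\in S$, we have $\Psi\circ R=\mathrm{id}_{C_b(S)}$. In particular, $\Psi$ is surjective (for every $g\in C_b(S)$, $\Psi(R(g))=g$). Moreover, once both $\Psi$ and $R$ are shown to be continuous, openness of $\Psi$ follows automatically: for any $\tau_\mathcal{K}$-neighborhood $U$ of $0$ in $C_b(E)$, the preimage $R^{-1}(U)$ is a $\tau_\mathcal{K}^S$-neighborhood of $0$ in $C_b(S)$, and $R^{-1}(U)\subseteq \Psi(U)$, because $g\in R^{-1}(U)$ implies $R(g)\in U$ and $\Psi(R(g))=g\in \Psi(U)$.

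The plan is therefore to reduce everything to the continuity of $\Psi$ and of $R$, each of which is proved by transporting seminorms of the form \eqref{maurosemi}. For continuity of $\Psi$, fix a basic seminorm $p_{K',\mu'}$ on $C_b(S)$ (with $K'\subset S$ compact and $\mu'\in \mathbf{ca}(S)$). Since the inclusion $S\hookrightarrow E$ is a homeomorphism onto its image, $K'$ is also compact in $E$; and since $S$ is Borel in $E$, the set function $\tilde\mu'(A)\coloneqq \mu'(A\cap S)$ for Borel $A\subseteq E$ defines an element of $\mathbf{ca}(E)$ with $|\tilde\mu'|_1=|\mu'|_1$. Then, for every $f\in C_b(E)$,
$$
p_{K',\mu'}(\Psi(f)) \;=\; [f]_{K'}+\Bigl|\int_S f|_S\, d\mu'\Bigr|\;=\;[f]_{K'}+\Bigl|\int_E f\, d\tilde\mu'\Bigr|\;=\;p_{K',\tilde\mu'}(f),
$$
which gives the continuity of $\Psi\colon (C_b(E),\tau_\mathcal{K})\rightarrow (C_b(S),\tau_\mathcal{K}^S)$.

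For continuity of $R$, fix a basic seminorm $p_{K,\mu}$ on $C_b(E)$ with $K\subset E$ compact and $\mu\in \mathbf{ca}(E)$. The image $r(K)\subset S$ is compact by continuity of $r$, and the pushforward $\nu\coloneqq r_*\mu$, defined on Borel subsets $B\subseteq S$ by $\nu(B)=\mu(r^{-1}(B))$, is a well-defined countably additive signed measure with $|\nu|_1\leq |\mu|_1$, hence $\nu\in \mathbf{ca}(S)$. Then, for every $g\in C_b(S)$,
$$
p_{K,\mu}(R(g))\;=\;\sup_{x\in K}|g(r(x))|+\Bigl|\int_E g\circ r\, d\mu\Bigr|\;\leq\;[g]_{r(K)}+\Bigl|\int_S g\, d\nu\Bigr|\;=\;p_{r(K),\nu}(g),
$$
which yields the continuity of $R$. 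Combined with the first paragraph, this concludes the proof. No step is genuinely difficult; the only mild subtlety is checking that the candidate measures $\tilde\mu'$ and $r_*\mu$ are countably additive and finite, which relies on $S$ being Borel in $E$ and on $r$ being Borel measurable (as it is continuous).
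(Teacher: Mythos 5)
Your proof is correct and follows essentially the same route as the paper's: continuity of $\Psi$ via the extension measure $A\mapsto\mu'(A\cap S)$ and the fact that compact subsets of $S$ are compact in $E$, and openness via $r(K)$ and the pushforward measures $r_{\#}\mu$. The only cosmetic difference is that you package the openness argument through the explicit continuous right inverse $R(g)=g\circ r$, whereas the paper directly exhibits the neighborhood $V$ (which coincides with your $R^{-1}(U)$) and checks $V\subset\Psi(U)$ using $g=(g\circ r)_{|S}$.
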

\begin{proof}
First we show that $\Psi$ is continuous. Let 
 $\{f_\iota\}_{\iota\in \mathcal{I}}\subset C_b(E)$ be a net converging to $0$ in $\tau_\mathcal{K}$, let $K\subset S$ be  compact, and let $\mu\in \mathbf{ca}(S)$.
Since $K$ is also compact in $E$, we immediately have $[f_{\iota|S}]_K\rightarrow 0$.
Moreover, since $S$ is Borel, the set function $\mu^S$ defined by $\mu^S(A)\coloneqq \mu(A\cap S)$, $A\in \mathcal{E}$, belongs to $\mathbf{ca}(E)$. Then we also have
$\int_S f_{\iota|S}d\mu=\int_E f_\iota d\mu^S\rightarrow 0$. So  $\Psi$ is continuous. 

Let us prove that $\Psi$ is open. Let $K\subset E$ be compact, $\mu_1,\ldots,\mu_n\in \mathbf{ca}(E)$, $\epsilon>0$.
Define the neighborhood of $0$ in $(C_b(E),\tau_\mathcal{K}^E)$
$$
U\coloneqq \left\{f\in C_b(E)\colon [f]_K<\epsilon,\ \left|\int_E fd\mu_i\right|<\epsilon,\ i=1,\ldots,n  \right\}
$$
and define the the neighborhood of $0$ in $(C_b(S),\tau_\mathcal{K}^S)$
$$
V\coloneqq \left\{g\in C_b(S)\colon [g]_{r(K)}<\epsilon,\ \left|\int_E gd(r_{\#}\mu_i)\right|<\epsilon,\ i=1,\ldots,n  \right\}
$$
where $r_{\#}\mu_i$ is the pushforward measure of $\mu_i$ through $r$.
Then  $g\in V$ if and only if $f\coloneqq g\circ r\in U$.
As  $g=(g\circ r)_{|S}$ for every $g\in C_b(S)$, we see that  $V\subset \Psi(U)$.
Hence,  we conclude that $\Psi$ is open.
\end{proof}

\begin{Proposition}
If $\dim E\geq 1$, then
$\tau_\mathcal{K}\subsetneq\tau_\mathcal{M}$ on $C_b(E)$.
\end{Proposition}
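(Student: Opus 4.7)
The plan is to first establish $\tau_\mathcal{K}\subset\tau_\mathcal{M}$ and then exhibit a $\tau_\mathcal{M}$-continuous seminorm that fails to be $\tau_\mathcal{K}$-continuous, thereby forcing the inclusion to be strict.

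For the inclusion, I would invoke the maximality characterization of $\tau_\mathcal{M}$ cited from \cite[Theorem~2.4]{Sentilles1972}: it suffices to show that $\tau_\mathcal{K}$ has the property that a bounded net converges in it if and only if it is $\mathcal{K}$-convergent. This is exactly the content of Proposition~\ref{prop:2015-04-24:01}\emph{(\ref{2015-10-13:10})}, which applies since $E$, being a separable Banach space, is Polish. By maximality of $\tau_\mathcal{M}$ among topologies with this property, the identity $(C_b(E),\tau_\mathcal{M})\to(C_b(E),\tau_\mathcal{K})$ is continuous, i.e., $\tau_\mathcal{K}\subset\tau_\mathcal{M}$.

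For the strict inclusion, since $\dim E\geq 1$ I would pick a sequence $\{x_n\}_{n\in\mathbb{N}}\subset E$ with $|x_n|_E\to+\infty$ (take $x_n\coloneqq ne$ for any unit vector $e$), set $K_n\coloneqq\{x_n\}$ and $a_n\coloneqq 1/n$, and consider the $\tau_\mathcal{M}$-continuous seminorm $p(f)\coloneqq p_{\mathbf{K},\mathbf{a}}(f)=\sup_{n\in\mathbb{N}}(1/n)|f(x_n)|$. Arguing by contradiction, if $p$ were $\tau_\mathcal{K}$-continuous, there would exist a compact set $K\subset E$, measures $\mu_1,\ldots,\mu_m\in\mathbf{ca}(E)$, and a constant $C>0$ such that
\[
p(f)\leq C[f]_K+C\sum_{i=1}^m\left|\int_E f\,d\mu_i\right|\qquad \forall f\in C_b(E).
\]
The key step is a simultaneous localization: each $\mu_i$ has at most countably many atoms and $K$ is compact while $|x_n|_E\to\infty$, so I can choose an arbitrarily large $n$ with $x_n\notin K$ and $\mu_i(\{x_n\})=0$ for $i=1,\ldots,m$. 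For such $n$, countable additivity of $|\mu_i|$ permits picking $r_n>0$ so small that $B(x_n,r_n)\cap K=\emptyset$ and $|\mu_i|(B(x_n,r_n))<1/(mn^2)$ for each $i$. Defining the bump $f_n(x)\coloneqq 2n\max(0,1-|x-x_n|_E/r_n)\in C_b(E)$, a direct computation gives $[f_n]_K=0$, $\left|\int_E f_n\,d\mu_i\right|\leq 2/(mn)$, and $p(f_n)\geq 2$, so the postulated bound collapses to $2\leq 2C/n$, contradicting the arbitrariness of $n$.

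The only mildly delicate point is the simultaneous localization: one must avoid the compact $K$ and the countable collection of atoms of all the $\mu_i$ at the same time, while shrinking $r_n$ fast enough to beat the amplification factor $2n$ of the bump. Everything else is a routine comparison of seminorm scales exploiting the structural difference between $\tau_\mathcal{M}$, which couples infinitely many disjoint compacts through weights $a_n\to 0$, and $\tau_\mathcal{K}$, which handles only one compact plus finitely many integral functionals at a time.
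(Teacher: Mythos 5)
Your argument for the inclusion $\tau_\mathcal{K}\subset\tau_\mathcal{M}$ is the same as the paper's. For the strictness, however, you take a genuinely different route: the paper restricts to a one-dimensional subspace through the retract lemma and then exploits the heat semigroup, which is locally equicontinuous for the mixed topology by the cited result of Goldys and Kocan but fails to be locally equicontinuous for $\tau_\mathcal{K}$ by the Gaussian computation of Example \ref{2015-05-04:07-II}. Your plan --- exhibiting directly a seminorm $p_{\mathbf{K},\mathbf{a}}$ with $K_n=\{ne\}$ and $a_n=1/n$ that cannot be dominated by any $\tau_\mathcal{K}$-continuous seminorm --- is more elementary and self-contained, and it does lead to the conclusion; but the justification of the key ``simultaneous localization'' step is incorrect as written.

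The flawed claim is that, since each $\mu_i$ has at most countably many atoms, you can find arbitrarily large $n$ with $x_n\notin K$ and $\mu_i(\{x_n\})=0$ for all $i$. Countability of the atom set does not prevent it from containing the whole sequence $\{x_n\}_{n\in\N}$: for instance $\mu_1\coloneqq\sum_{n\geq 1}2^{-n}\delta_{x_n}$ belongs to $\mathbf{ca}(E)$ and has every $x_n$ as an atom, so no such $n$ exists; and with masses of order $n^{-3/2}$ instead of $2^{-n}$ even your quantitative target $|\mu_1|(B(x_n,r_n))<1/(mn^2)$ becomes unattainable for all large $n$. The step can be repaired: setting $c_n\coloneqq\sum_{i=1}^m|\mu_i|(\{x_n\})$, summability ($\sum_n c_n\leq\sum_{i=1}^m|\mu_i|_1<+\infty$) forces $\liminf_{n\rightarrow+\infty}nc_n=0$, since $nc_n\geq\delta>0$ for all large $n$ would make $\sum_n c_n$ diverge. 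Choose a subsequence $n_k$ with $x_{n_k}\notin K$ and $n_kc_{n_k}\rightarrow 0$, and then, by continuity from above of each $|\mu_i|$ on the balls shrinking to $\{x_{n_k}\}$, radii $r_{n_k}>0$ with $B(x_{n_k},r_{n_k})\cap K=\emptyset$ and $\sum_{i=1}^m|\mu_i|(B(x_{n_k},r_{n_k}))\leq c_{n_k}+n_k^{-2}$. Your bump functions then give $2\leq 2Cn_k\left(c_{n_k}+n_k^{-2}\right)\rightarrow 0$, which is the desired contradiction. With this correction your proof is complete and avoids both the retract lemma and the probabilistic input used in the paper.
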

\begin{proof}
We already observed that $\tau_\mathcal{M}$ is 
the finest locally convex topology $\tau_\mathcal{M}$  such that $\{f_\iota\}_{\iota\in \mathcal{I}}$ is bounded in the uniform norm and converges to $f$ in $\tau_\mathcal{M}$ if and only if it is $\mathcal{K}$-convergent.
Then, by Proposition \ref{prop:2015-04-24:01}\emph{(\ref{2015-10-13:10})}, we have $\tau_\mathcal{K}\subset \mathcal{\tau}_\mathcal{M}$.

Now we show that
$ \tau_\mathcal{M}\not\subset \tau_\mathcal{K}$
 if $\dim(E)\geq 1$.
Let $S$ be a one dimensional subspace of $E$ and let 
$$
\Psi\colon C_b(E)\rightarrow C_b(S),\ f \mapsto f_{|S}.
$$
By using the seminorms defined in \eqref{goldyssemi}, one checks that $\Psi$, defined in \eqref{Psi}, is continuous from $(C_b(E),\tau_\mathcal{M})$ onto $(C_b(S),\tau^S_\mathcal{M})$,  where $\tau_\mathcal{M}^S$  denotes the topology $\tau_\mathcal{M}$ in the space ${C}_b(S)$. 
Clearly $S$ is a retract of $E$. Then, by   Lemma \ref{2015-12-07:04}, to show that $ \tau_\mathcal{M}\not\subset \tau_\mathcal{K}$ on $C_b(E)$, it is sufficient to show that 
$ \tau_\mathcal{M}^S\not\subset \tau_\mathcal{K}^S$
on $C_b(S)$. Let us identify $S$ with $\R$.
Let  $W$ be a  Wiener process in $\mathbb{R}$ on some probability space
$(\Omega,\mathcal{F}, \mathbb{P})$.
By
\cite[Theorem 4.1]{GoldysKocan01},
 the transition semigroup $T\coloneqq \{T_t\}_{t\in \mathbb{R}^+}$ defined by
$$
T_t\colon C_b(\R)\rightarrow C_b(\R),\ f \mapsto \mathbb{E} \left[ f(\cdot+ W_t) \right], 
$$
is a $C_0$-locally equicontinuous semigroup in $(C_b(\R),\tau^\R_\mathcal{M})$.
But Example \ref{2015-05-04:07-II} below
shows that $T$ is not locally 
equicontinuous in
$(C_b(\R),\tau_\mathcal{K}^\R)$. 
Then $\tau^\R_\mathcal{M}\not\subset \tau^\R_\mathcal{K}$. Since we already know that $\tau^\R_\mathcal{K}\subset \tau^\R_\mathcal{M}$, we deduce that $\tau^\R_\mathcal{M}\not\subset \tau^\R_\mathcal{K}$ and conclude.
\end{proof}


By
Proposition \ref{prop:2015-04-24:01}\emph{(\ref{2015-10-13:10})}, every sequence convergent in $\tau_\mathcal{K}$ is bounded and convergent uniformly on compact sets, and then it is convergent in $\tau_\mathcal{M}$.
Since we also know $\tau_\mathcal{K}\subset \tau_\mathcal{M}$, we immediately obtain the following

\begin{Proposition}
  A semigroup $T$ is $C_0$-sequentially (locally) equicontinuous
 in $(C_b(E),\tau_\mathcal{M})$ if and only if it is $C_0$-sequentially
(locally)
 equicontinuous
 in 
$(C_b(E),\tau_\mathcal{K})$.
\end{Proposition}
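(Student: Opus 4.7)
The plan rests on two observations: (a) the convergent sequences in $(C_b(E),\tau_\mathcal{K})$ and $(C_b(E),\tau_\mathcal{M})$ coincide, and (b) these two topologies induce the same local structure at the origin on every $|\cdot|_\infty$-bounded set $B\subset C_b(E)$. Once (a) and (b) are available, the axioms of Definition \ref{def:2015-05-01:01}, being formulated purely in terms of sequences and of the uniform-in-$t$ behavior of their images $T_tx_n$ (which necessarily remain in a sup-norm bounded set), transfer directly between the two topologies.

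Observation (a) is essentially what is recalled in the paragraph preceding the proposition: the inclusion $\tau_\mathcal{K}\subset\tau_\mathcal{M}$ yields one direction, while Proposition \ref{prop:2015-04-24:01}(\ref{2015-10-13:10}) combined with the Sentilles characterization of $\tau_\mathcal{M}$ yields the converse, since a $\tau_\mathcal{K}$-convergent sequence is automatically $|\cdot|_\infty$-bounded and $\mathcal{K}$-convergent in the sense of \eqref{eq:2015-10-30:00}. For (b), the inclusion $\tau_\mathcal{K}|_B\subset\tau_\mathcal{M}|_B$ follows from $\tau_\mathcal{K}\subset\tau_\mathcal{M}$; for the reverse, given a basic $\tau_\mathcal{M}$-neighborhood of the origin $\{f:p_{\mathbf{K},\mathbf{a}}(f)<\varepsilon\}$ with $\mathbf{K}=\{K_k\}$ and $\mathbf{a}=\{a_k\}$ as in \eqref{goldyssemi}, set $M\coloneqq\sup_{f\in B}|f|_\infty$ and use $a_k\to 0$ to choose $N$ with $a_k M<\varepsilon$ for $k\geq N$. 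Then on $B$ the inequality $\sup_k a_k[f]_{K_k}<\varepsilon$ is implied by $[f]_{K_k}<\varepsilon/a_k$ for $k=0,\dots,N-1$, which defines a basic $\tau_\mathcal{K}$-neighborhood of $0$ (intersected with $B$).

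With (a) and (b) in hand, the proposition follows by assembly. Let $T$ be $C_0$-sequentially (locally) equicontinuous in $\tau_\mathcal{K}$; the reverse implication is symmetric. The semigroup identity is topology-independent, and the strong-continuity axiom is a sequential statement at $t=0$, hence transferred by (a). Fix a sequence $x_n\to 0$ in $\tau_\mathcal{M}$, equivalently in $\tau_\mathcal{K}$ by (a). By Proposition \ref{prop:2015-04-29:00} and the chain $\tau_\mathcal{K}\subset\tau_\mathcal{M}\subset\tau_\infty$, all three topologies share the same bounded sets (namely the $|\cdot|_\infty$-bounded ones), so Proposition \ref{prop:2015-08-01:00}(\ref{2015-10-13:00}) provides $M'\geq 1$ and $\alpha\in\mathbb{R}$ with $|T_tf|_\infty\leq M'e^{\alpha t}|f|_\infty$, whence $B_R\coloneqq\{T_tx_n:t\in[0,R],\,n\in\mathbb{N}\}$ is $|\cdot|_\infty$-bounded for every $R>0$. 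Given any $\tau_\mathcal{M}$-neighborhood $U$ of $0$, (b) produces a $\tau_\mathcal{K}$-neighborhood $V$ of $0$ with $V\cap B_R\subset U\cap B_R$; the $\tau_\mathcal{K}$-sequential (local) equicontinuity then yields $T_tx_n\in V$ for $t\in[0,R]$ and $n$ large, hence $T_tx_n\in U$ on the same range. In the global (non-local) case, sup-norm boundedness of $\{T_tx_n\}_{t\in\mathbb{R}^+,\,n}$ follows from sequential equicontinuity itself (which forces $\sup_{n,t}q(T_tx_n)<\infty$ for every continuous seminorm $q$) together with Proposition \ref{prop:2015-04-29:00}, and the rest of the argument is identical.

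The delicate ingredient is the bounded-subspace statement (b), which genuinely exploits the structure of the $\tau_\mathcal{M}$-seminorms through the vanishing of the weights $a_k$; once (b) is granted, the remainder is a direct assembly of previously established facts.
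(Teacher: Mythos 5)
Your proof is correct and follows essentially the same route as the paper, which deduces the equivalence from the coincidence of convergent sequences in the two topologies together with the inclusion $\tau_\mathcal{K}\subset\tau_\mathcal{M}$. Your observation (b) — that on $|\cdot|_\infty$-bounded sets the $\tau_\mathcal{M}$-neighborhoods of the origin are absorbed by $\tau_\mathcal{K}$-neighborhoods, because the weights $a_k$ vanish — is precisely the content behind the paper's ``immediately,'' and is needed to transfer the uniform-in-$t$ part of sequential equicontinuity; you have simply made explicit a step the paper leaves implicit.
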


\section{Application to  transition semigroups}
\label{sec:trans}

 In this section we apply the results of Section \ref{sec:SE} to  transition semigroups in spaces of (not necessarily bounded) continuous functions.

\subsection{Transition semigroups in $(C_b(E), \tau_\mathcal{K})$}\label{sub2B}
 Let  $\boldsymbol{\mu}\coloneqq\{\mu_t(\xi,\cdot)\}_{\substack{t\in\R^+\\\xi\in E}}$ be a subset of $\mathbf{ca}^+(E)$ and consider the following assumptions.
\begin{Assumption}\label{2015-10-28:30}
 The family  $\boldsymbol{\mu}\coloneqq\{\mu_t(\xi,\cdot)\}_{\substack{t\in\R^+\\\xi\in E}} \subset \mathbf{ca}^+(E)$ has 
the following properties.
  \begin{enumerate}[(i)]
  \item \label{2015-11-10:00} The family $\boldsymbol{\mu}$ is bounded in $\mathbf{ca}^+(E)$ and $p_0(\xi,\Gamma)=\mathbf{1}_\Gamma(\xi)$
 for every $\xi\in E$ and every  $\Gamma\in\mathcal{E}$. 
  \item \label{2015-11-09:02}
For every $f\in C_b(E)$ and $t\in \mathbb{R}^+$, the map
\begin{equation}
  \label{eq:2015-11-10:01}
  E\rightarrow \mathbb{R},\ \xi \mapsto \int_Ef(\xi')\mu_t(\xi,d\xi')
\end{equation}
is continuous.
  \item\label{2015-11-10:02}
For every $f\in C_b(E)$, every $t,s\in \mathbb{R}^+$, and every $\xi\in E$,
$$
\int_Ef(\xi')\mu_{t+s}(\xi,d\xi')=
\int_E \left( \int_Ef(\xi'')\mu_t(\xi',d\xi'') \right) 
\mu_s(\xi,d\xi').
$$
\item\label{2015-10-28:32} For every $\hat t>0$ and every compact $K\subset E$, the family 
$
\{\mu_t(\xi,\cdot)\colon t\in [0,\hat t],\ \xi\in K\}
$
 is tight, that is, for every $\epsilon>0$, there exists a compact set $K_0\subset E$ such that
$$
\mu_t(\xi,K_0)>\mu_t(\xi,E)-\epsilon\qquad  \forall  t\in [0,\hat t],\ \forall \xi\in K.
$$
\item\label{2015-10-28:31} For every $ r >0$ and every non-empty compact $K\subset E$,
    \begin{equation}
      \label{eq:2015-11-10:04}
      \lim_{t\rightarrow 0^+}\sup_{\xi\in K}
      |\mu_t(\xi,B(\xi, r )) -1|=0,
    \end{equation}
  where $B(\xi, r )$ denotes the open ball $B(\xi, r )\coloneqq \{\xi'\in E\colon d(\xi,\xi')< r \}$.
  \end{enumerate}
\end{Assumption}

We observe that in Assumption \ref{2015-10-28:30} it is not required that $p_t(\xi,E)=1$ for every $t\in\R^+$, $\xi\in E$, that is the family  $\boldsymbol{\mu}$ is not necessarily a probability kernel in $(E,\mathcal{E})$.
Assumptions \ref{2015-10-28:30}\emph{(\ref{2015-11-09:02})},\emph{(\ref{2015-11-10:02})} can be rephrased by saying that
$$
T_t\colon C_b(E)\rightarrow C_b(E),\ f \mapsto \int_Ef(\xi)\mu_t(\cdot,d\xi)
$$
is well defined for all $t\in \mathbb{R}^+$ and $T\coloneqq \{T_t\}_{t\in \mathbb{R}^+}$ is a transition semigroup in $C_b(E)$. If  $\boldsymbol{\mu}$ is a probability kernel, then $T$ is a Markov transition semigroup.

\begin{Proposition}\label{2015-10-29:12}
 Let Assumption \ref{2015-10-28:30} holds and let $T\coloneqq \{T_t\}_{t\in \mathbb{R}^+}$ be defined as in 
\eqref{eq:2015-11-10:01}. Then $T$  is a $C_0$-sequentially locally  equicontinuous semigroup on $(C_b(E),\tau_\mathcal{K})$. Moreover, 
for every $\alpha>0$, the normalized semigroup $\{e^{-\alpha t}T_t\}_{t\in \mathbb{R}^+}$ is a
$C_0$-sequentially equicontinuous semigroup on $(C_b(E),\tau_\mathcal{K})$ satisfying
Assumption \ref{ass:int}.
\end{Proposition}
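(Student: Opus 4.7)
The plan is to verify the three defining properties of a $C_0$-sequentially locally equicontinuous semigroup (Definition \ref{def:2015-05-01:01}) for $T$ in $(C_b(E),\tau_\mathcal{K})$, then to invoke Proposition \ref{prop:2015-08-01:00}\emph{(\ref{2015-10-13:14})} for the renormalization and Remark \ref{2015-10-29:13} for Assumption \ref{ass:int}. The semigroup-property part is immediate: Assumption \ref{2015-10-28:30}\emph{(\ref{2015-11-09:02})} gives $T_tf\in C_b(E)$, \emph{(\ref{2015-11-10:00})} gives the uniform operator bound $M\coloneqq\sup_{t,\xi}\mu_t(\xi,E)<+\infty$ and hence $|T_tf|_\infty\leq M|f|_\infty$, while $T_0=I$ and $T_{t+s}=T_tT_s$ follow from \emph{(\ref{2015-11-10:00})} and \emph{(\ref{2015-11-10:02})} via Fubini on positive measures.

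For strong continuity at $t=0^+$ in $\tau_\mathcal{K}$, I would decompose
$$
T_tf(\xi)-f(\xi)=\int_E\bigl(f(\xi')-f(\xi)\bigr)\mu_t(\xi,d\xi')+f(\xi)\bigl(\mu_t(\xi,E)-1\bigr).
$$
For a compact $K\subset E$ and $\varepsilon>0$, the tightness \emph{(\ref{2015-10-28:32})} supplies a compact $K_0$ with $\mu_t(\xi,K_0^c)<\varepsilon$ uniformly in $(\xi,t)\in K\times[0,1]$; combined with uniform continuity of $f$ on $K\cup K_0$ and the concentration property \emph{(\ref{2015-10-28:31})}, a three-region split of the integral (over $B(\xi,r)$, $K_0\setminus B(\xi,r)$, and $K_0^c$) delivers $[T_tf-f]_K\to 0$, with the residual $f(\xi)(\mu_t(\xi,E)-1)$ handled by applying the same split to $f\equiv 1$. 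The $\mu$-seminorm $\bigl|\int_E(T_tf-f)d\mu\bigr|$ then follows by Fubini and dominated convergence on $|\mu|$.

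Sequential local equicontinuity on $[0,\hat t]$ is the core step. Let $\{f_n\}\subset C_b(E)$ be $\tau_\mathcal{K}$-null; by Proposition \ref{prop:2015-04-24:01}\emph{(\ref{2015-10-13:10})}, $C\coloneqq\sup_n|f_n|_\infty<+\infty$ and $f_n\to0$ uniformly on compacts. For a compact $K$ and $\varepsilon>0$, choosing $K_0$ from \emph{(\ref{2015-10-28:32})} gives
$$
|T_tf_n(\xi)|\leq[f_n]_{K_0}\mu_t(\xi,E)+|f_n|_\infty\mu_t(\xi,K_0^c)\leq M[f_n]_{K_0}+C\varepsilon,\qquad\xi\in K,\ t\in[0,\hat t],
$$
whence $\sup_{t\in[0,\hat t]}[T_tf_n]_K\to 0$. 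For $\mu\in\mathbf{ca}(E)$, Fubini rewrites $\int_ET_tf_nd\mu=\int_Ef_nd\nu_t$ with $\nu_t\coloneqq\int_E\mu_t(\xi,\cdot)d\mu(\xi)$ of total variation at most $M|\mu|_1$; the uniform tightness of $\{\nu_t\}_{t\in[0,\hat t]}$, obtained by weaving \emph{(\ref{2015-10-28:32})} with tightness of $|\mu|$, reduces the estimate to the compact-set convergence already secured.

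Finally, since $|T_t|_{L((C_b(E),|\cdot|_\infty))}\leq M$ is a constant bound in $t$, Proposition \ref{prop:2015-04-29:00} and Proposition \ref{prop:2015-08-01:00}\emph{(\ref{2015-10-13:14})} together give that $\{e^{-\alpha t}T_t\}_{t\in\R^+}$ is $C_0$-sequentially equicontinuous on $(C_b(E),\tau_\mathcal{K})$ for every $\alpha>0$; Assumption \ref{ass:int} follows from sequential completeness of $(C_b(E),\tau_\mathcal{K})$ (Proposition \ref{prop:seqcom}\emph{(\ref{2015-11-10:07})}) via Remark \ref{2015-10-29:13}. The main obstacle is the uniform-in-$t$ control of the $\mu$-seminorm for a generic $\mu\in\mathbf{ca}(E)$: it requires interlocking the kernel-tightness \emph{(\ref{2015-10-28:32})} with tightness of $|\mu|$ through a Fubini argument on $\nu_t$ so as to reduce everything to the compact-set convergence of $\{f_n\}$.
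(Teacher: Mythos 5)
Your overall architecture matches the paper's proof: the same uniform bound $|T_tf|_\infty\le M|f|_\infty$ from Assumption \ref{2015-10-28:30}\emph{(i)}, the same reduction of the $C_0$-property to $[T_tf-f]_K\to0$ via the three-region split driven by the tightness and concentration hypotheses together with the auxiliary fact $\sup_{\xi\in K}|\mu_t(\xi,E)-1|\to0$, the same estimate $|T_tf_n(\xi)|\le M[f_n]_{K_0}+b\varepsilon$ for the compact-set part of sequential local equicontinuity, and the same appeal to Propositions \ref{prop:2015-04-29:00}, \ref{prop:2015-08-01:00} and \ref{prop:seqcom} plus Remark \ref{2015-10-29:13} for the renormalized semigroup and Assumption \ref{ass:int}.

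The one step that does not go through as written is your treatment of the $\mu$-seminorms in the equicontinuity step. You propose to prove uniform tightness of $\nu_t\coloneqq\int_E\mu_t(\xi,\cdot)\,d\mu(\xi)$ by combining Assumption \ref{2015-10-28:30}\emph{(iv)} with ``tightness of $|\mu|$''. But $E$ is an arbitrary metric space here, and a finite countably additive Borel measure on such a space need not be tight; this is precisely why Proposition \ref{prop:2015-04-24:01} distinguishes the case $\mathcal{I}=\mathbb{N}$ (where only dominated convergence is needed) from the case where $E$ is homeomorphic to a Borel subset of a Polish space (where tightness of $|\mu|$ becomes available). Without tightness of $|\mu|$ you cannot control $\int_{K^c}\mu_t(\xi,E)\,d|\mu|(\xi)$, so the uniform tightness of $\{\nu_t\}$ is out of reach and your reduction stalls. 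The paper sidesteps this: having shown $\sup_{t,n}|T_tf_n|_\infty<+\infty$ and $\lim_n\sup_{t\in[0,\hat t]}[T_tf_n]_K=0$ for every compact $K$, one picks for each $n$ a time $t_n\in[0,\hat t]$ nearly achieving $\sup_{t}\big|\int_E T_tf_n\,d\mu\big|$; the sequence $h_n\coloneqq T_{t_n}f_n$ is uniformly bounded and converges to $0$ pointwise (singletons are compact), so $\int_E h_n\,d\mu\to0$ by dominated convergence with respect to $|\mu|$ --- this is the sequence case of Proposition \ref{prop:2015-04-24:01}\emph{(i)} and requires no tightness. The same remark applies, more mildly, to your $C_0$-step: along sequences $t_n\to0^+$ dominated convergence already handles $\int_E(T_{t_n}f-f)\,d\mu$, and neither Fubini nor tightness is needed. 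With this repair your proof coincides with the paper's.
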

\begin{proof}
Assumptions \ref{2015-10-28:30}\emph{(\ref{2015-11-10:00})},\emph{(\ref{2015-11-09:02})},\emph{(\ref{2015-11-10:02})} 
imply that $T$ maps $C_b(E)$ into itself and that it  is a semigroup.
We show that the  $C_0$-property holds,
 that is   $\lim_{t\rightarrow 0^+}T_tf=f$ in $(C_b(E),\tau_\mathcal{K})$ for every $f\in C_b(E)$.
Let
$
M\coloneqq \sup_{\substack{t\in \mathbb{R}^+\\\xi\in E}}|\mu_t(\xi,E)|.
$ 
By Assumption \ref{2015-10-28:30}\emph{(\ref{2015-11-10:00})}, $M<+\infty$
and
\begin{equation}\label{eqqu}
|T_tf|_\infty\leq M|f|_\infty \qquad\forall  f\in C_b(E),\ \ \forall  t\in \mathbb{R}^+.
\end{equation}
Let $f\in C_b(E)$. By \eqref{eqqu} and by  Proposition \ref{prop:2015-04-24:01}\emph{(\ref{2015-10-13:10})}, in order to show that $\lim_{t\rightarrow 0^+}T_tf=f$ in $(C_b(E),\tau_\mathcal{K})$,
 it is sufficient to show that
$\lim_{t\rightarrow 0^+}[T_tf-f]_K=0$, for every $K\subset E$ non-empty compact. Let $K\subset E$ be such a set.
We claim that
    \begin{equation}
      \label{eq:2015-11-10:05}
      \lim_{t\rightarrow 0^+}\sup_{\xi \in K}|\mu_t(\xi,E)-1|=0.
    \end{equation}
Indeed, let $\epsilon$ and $K_0$ as in 
Assumption \ref{2015-10-28:30}\emph{(\ref{2015-10-28:32})}, when $\hat t=1$, and let
$r\coloneqq \sup_{(\xi,\xi')\in K\times K_0}d(\xi,\xi')+1$.
Then $K_0\subset B(\xi,r)$ for every $\xi\in K$.
For $t\in [0,1]$ and $\xi\in K$, we have
\begin{equation*}
  \begin{split}
  |\mu_t(\xi,E)-1|&\leq  |\mu_t(\xi,E\setminus B(\xi,r)|+|\mu_t(\xi,B(\xi,r))-1|\\
&\leq |\mu_t(\xi,E\setminus K_0)|+|\mu_t(\xi,B(\xi,r))-1|\\
&\leq \epsilon +|\mu_t(\xi,B(\xi,r))-1|.
\end{split}
\end{equation*}
By taking the supremum over $x\in K$, by passing to the limit as $t\rightarrow 0^+$, by using \eqref{eq:2015-11-10:04}, and by arbitrariness of $\epsilon$, we obtain \eqref{eq:2015-11-10:05}.
In particular,
\eqref{eq:2015-11-10:05} implies
\begin{equation}
  \label{eq:2015-11-10:03}
  \lim_{t\rightarrow 0^+}\sup_{\xi\in K}|f(\xi)-\mu_t(\xi,E)f(\xi)|=0,
\end{equation}
and then $T_tf\rightarrow f$ in $\tau_\mathcal{K}$ as $t\rightarrow 0^+$ if and only if
\begin{equation}
  \label{2015-11-10:06}
  \lim_{t\rightarrow 0^+}\sup_{\xi\in K}|T_tf(\xi)-\mu_t(\xi,E)f(\xi)|=0.
\end{equation}
Again, let $\epsilon>0$ and $K_0$ be as in 
Assumption \ref{2015-10-28:30}\emph{(\ref{2015-10-28:32})}, when $\hat t=1$.
Let $w$ be a modulus of continuity for  $f_{|K_0}$. For $\delta>0$, $t\in [0,1]$, and $\xi\in K$, we write
\begin{equation*}
  \begin{split}
    |T_tf(\xi)-\mu_t(\xi,E)f(\xi)|\leq& \int_E|f(\xi')-
f(\xi)
|\mu_t(\xi,d\xi')    =
    \int_{K_0\cap B(\xi,\delta)}|f(\xi')-f(\xi)|\mu_t(\xi,d\xi')\\
    &+
    \int_{K_0\cap B(\xi,\delta)^c}|f(\xi')-f(\xi)|\mu_t(\xi,d\xi')+
    \int_{K_0^c}|f(\xi')-f(\xi)|\mu_t(\xi,d\xi')\\
    \leq& w(\delta)+2|f|_\infty \left(\mu_t(\xi,B(\xi,\delta)^c)+\epsilon  \right).
  \end{split}
\end{equation*}
We then obtain
$$
\sup_{\xi\in K}|T_tf(\xi)-\mu_t(\xi,E)f(\xi)|\leq \omega(\delta)+2|f|_\infty \left(\sup_{\xi\in K} \mu_t(\xi,B(\xi,\delta)^c)+ \epsilon \right)\qquad  \forall  \delta>0, \ \forall  t\in [0,1],\  \forall  \xi\in K.
$$
By passing to the limit as $t\rightarrow 0^+$, by 
\eqref{eq:2015-11-10:04}, by
\eqref{eq:2015-11-10:05},
and by arbitrariness of $\delta$ and $\epsilon$, we obtain 
\eqref{2015-11-10:06}.

We now show that  $\{T_t\}_{t\in [0,\hat t]}$ is sequentially equicontinuous for every $\hat t>0$. Let $\{f_n\}_{n\in \mathbb{N}}$ be a sequence converging to $0$ in $(C_b(E),\tau_\mathcal{K})$ and let $\hat t>0$. By Proposition \ref{prop:2015-04-24:01}\emph{(\ref{2015-10-13:10})}, $\{|f_n|_\infty\}_{n\in \mathbb{N}}$ is bounded by some $b>0$. Then, by \eqref{eqqu}, $\{T_tf_n\}_{t\in \mathbb{R}^+,n\in \mathbb{N}}$ is bounded. To show that $T_tf_n\rightarrow 0$ in $(C_b(E),\tau_\mathcal{K})$, uniformly for $t\in[0,\hat t]$, it is then sufficient to show that
$$
\lim_{n\rightarrow +\infty}\sup_{t\in[0,\hat t]}[T_tf_n]_K=0\qquad  \forall  K\subset E\mbox{ non-empty compact.}
$$
Let $\varepsilon>0$
and  $K_0$ be as in
Assumption \ref{2015-10-28:30}\emph{(\ref{2015-10-28:32})},
when $\hat t=1$.
Then, 
for $t\in[0,\hat t]$, $\xi\in K$, $n\in \mathbb{N}$, we have
\begin{equation*}
  \begin{split}
    |T_tf_n(\xi)|\leq \int_{K_0}|f_n(\xi')|\mu_t(\xi,d\xi')+\int_{K_0^c}|f_n(\xi')|\mu_t(\xi,d\xi')\leq M[f_n]_{K_0}+ b\epsilon .
  \end{split}
\end{equation*}
Since $[f_n]_{K_0}\rightarrow 0$ as $n\rightarrow +\infty$, by arbitrariness of $\epsilon$ we conclude $\sup_{t\in [0,\hat t]}[T_tf_n]_K\rightarrow 0$ as $n\rightarrow +\infty$.
This concludes the proof that  
 $T$ is a $C_0$-sequentially locally equicontinuous semigroup on $(C_b(E),\tau_\mathcal{K})$.

Next,  by Proposition \ref{prop:2015-04-29:00} and by \eqref{eqqu}, we can apply
Proposition \ref{prop:2015-08-01:00} and obtain that, for every $\alpha>0$,  $\{e^{-\alpha t}T_t\}_{t\in \mathbb{R}^+}$ is 
$C_0$-sequentially locally equicontinuous semigroup on $(C_b(E),\tau_\mathcal{K})$.
Finally, by 
Remark 
\ref{2015-10-29:13}
 and Proposition \ref{prop:seqcom}\emph{(\ref{2015-11-10:07})}, we conclude that 
Assumption \ref{ass:int} holds true for $\{e^{-\alpha t}T_t\}_{t\in \mathbb{R}^+}$.
\end{proof}

\subsection{Extension to weighted spaces of continuous functions} \label{sub2}

In this subsection, we briefly discuss how to deal with  transition semigroups in weighted spaces  of continuous functions.
Let $\gamma\in C(E)$ such that $\gamma>0$. We introduce the following $\gamma$-weighted space of continuous functions
$$
C_\gamma(E)\coloneqq   \left\{ f\in C(E): f\gamma \in C_b(E) \right\} .
$$
A typical case is when $E$ is an unbounded subset of a Banach space and $\gamma(x)=(1+|x|_E^p)^{-1}$, for some $p\in \N$. Then $C_\gamma(E)$ is the space of continuous functions on $E$ having at most polynomial growth of order $p$. 
By the very definition of $C_\gamma(E)$, the multiplication by $\gamma$
$$
\varphi_\gamma\colon C_\gamma(E)\rightarrow C_b(E),\  f \mapsto  f\gamma,
$$
defines an algebraic isomorphism. Hence, by endowing $C_\gamma(E)$ with the topology $\tau^{\gamma}_{\mathcal{K}}\coloneqq \gamma^{-1}(\tau_\mathcal{K})$, this space becomes a locally convex Hausdorff topological vector space.
A family of seminorms inducing $\tau^\gamma_\mathcal{K}$ is given by
$$
p^\gamma_{K,\mu}\coloneqq [f\gamma]_K+\left|\int_E f\gamma d\mu\right|\qquad \ \forall  f\in C_\gamma(E),
$$
when $K$ ranges on the set of non-empty compact subsets of $E$ and $\mu$ ranges on $\mathbf{ca}(E)$.
 Clearly,  $ \left( C_\gamma(E),\tau^\gamma_\mathcal{K} \right) $ and $(C_b(E),\tau_\mathcal{K})$ enjoy the same topological properties and $\gamma$ is an isomorphism of topological vector spaces.
This basic observation will be used now to frame $C_0$-sequentially locally equicontinuous semigroups on $(C_\gamma(E),\tau^{\gamma}_\mathcal{K})$ induced by transition functions.

Let $\boldsymbol{\mu}\coloneqq \{\mu_t(\xi,\cdot)\}_{t\in \mathbb{R}^+,\xi\in E}\subset \mathbf{ca}^+(E)$ and let $t\in \R^+$. Define
the family $\boldsymbol{\mu}^\gamma\coloneqq \{\mu_t^\gamma(\xi,\cdot)\}_{t\in \mathbb{R}^+,\xi\in E}$ by
\begin{equation}
  \label{eq:2015-11-12:06}
  \mu^\gamma_t(\xi,\Gamma)\coloneqq \gamma(\xi)\int_\Gamma \gamma^{-1}(\xi')\mu_t(\xi,d\xi')\qquad  \forall \Gamma\in \mathcal{E}, \ \forall \xi\in E,
\end{equation}
and
\begin{equation}\label{TTT}
T_tf(\xi)\coloneqq \int_E f(\xi')\mu_t(\xi,d\xi') \qquad\forall \xi \in E, \ \forall f\in C_\gamma(E). 
\end{equation}
Given $f\in C_\gamma(E)$, $\xi\in E$,
the latter is well defined and finite if and only
if and only if 
$$
\int_E \varphi_\gamma(f)(\xi')\mu^\gamma_t(\xi,d\xi')=
\gamma(\xi)\int_E f(\xi')\mu_t(\xi,d\xi')
$$
is well defined and  finite. Then, $T_tf(\xi)$ is well defined and finite if and only if, 
setting $g\coloneqq f\gamma$, 
$$
T^\gamma_tg(\xi)\coloneqq \int_E g(\xi')\mu^\gamma_t(\xi,d\xi')
$$
is well defined and  finite.
At the end, we get that $T_tf(\xi)$ is well defined and finite for every $\xi\in E$ and every $f\in C_\gamma(E)$ if and only if $T_t^\gamma g (f)$ is well defined and finite for every  $\xi\in E$ and every $g\in C_b(E)$. In such a case 
\begin{equation}
  \label{eq:2015-11-10:08}
  (\varphi^{-1}_\gamma \circ T^\gamma_t\circ \varphi_\gamma )f=T_tf \qquad \forall f\in C_\gamma(E),
\end{equation}
that is, the diagram 
\[
\begin{tikzpicture}[scale=6.5]
  \node (A) at (0,0.3) {$C_\gamma(E)$};
  \node (B) at (0.38,0.3) {$C_b(E)$};
  \node (C) at (0,0) {$C_\gamma(E)$};
  \node (D) at (0.38,0) {$C_b(E)$};
  \path[->,font=\scriptsize]
  (A) edge node [above] {$\varphi_\gamma$} (B);
  \path[->,font=\scriptsize]
  (D) edge node [below] {$\varphi_\gamma^{-1}$} (C);
  \path[->,font=\scriptsize]
  (A) edge node [right] {$T_t$} (C);
  \path[->,font=\scriptsize]
  (B) edge node [right] {$T^\gamma_t$} (D);
\end{tikzpicture}
\]
is commutative.
Due to this fact, Proposition \ref{2015-10-29:12} can be immediately stated in the following equivalent form.

\begin{Proposition}\label{2015-11-10:09}
 Let $\boldsymbol{\mu}\coloneqq \{\mu_t(\xi,\cdot)\}_{t\in \mathbb{R}^+,\xi\in E}\subset \mathbf{ca}^+(E)$ and 
let ${\boldsymbol{\mu}}^\gamma\coloneqq\{\mu^\gamma_t(\xi,\cdot)\}_{\substack{t\in\mathbb{R}^+\\\xi\in E}}$ be defined starting from $\boldsymbol{\mu}$ through \eqref{eq:2015-11-12:06}. Assume that ${\boldsymbol{\mu}}^\gamma$ satisfies Assumption \ref{2015-10-28:30} (when ${\boldsymbol{\mu}}$ is replaced by ${\boldsymbol{\mu}}^\gamma$).
Then $T\coloneqq \{T_t\}_{t\in \mathbb{R}^+}$ defined in \eqref{TTT} is a $C_0$-sequentially locally  equicontinuous semigroup on $(C_\gamma(E),\tau^\gamma_\mathcal{K})$. Moreover, 
for every $\alpha>0$, the normalized semigroup $\{e^{-\alpha t}T_t\}_{t\in \mathbb{R}^+}$ is a
$C_0$-sequentially equicontinuous semigroup on $(C_\gamma(E),\tau_\mathcal{K}^\gamma)$ satisfying
Assumption \ref{ass:int}.
\end{Proposition}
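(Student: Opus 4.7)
The plan is to exploit the commutative diagram displayed immediately before the statement, which shows $T_t = \varphi_\gamma^{-1} \circ T^\gamma_t \circ \varphi_\gamma$ for every $t \in \R^+$. By the very definition $\tau^\gamma_\mathcal{K} \coloneqq \varphi_\gamma^{-1}(\tau_\mathcal{K})$, the multiplication map $\varphi_\gamma$ is both an algebraic and a topological isomorphism between $(C_\gamma(E), \tau^\gamma_\mathcal{K})$ and $(C_b(E), \tau_\mathcal{K})$. Hence every topological or sequential notion (continuity, sequential local equicontinuity, existence of Riemann integrals, and so on) will transfer from $T^\gamma$ to $T$ by conjugation, and the proof will reduce to invoking Proposition \ref{2015-10-29:12} applied to the semigroup $T^\gamma$ built from $\boldsymbol{\mu}^\gamma$, which satisfies Assumption \ref{2015-10-28:30} by hypothesis.

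More precisely, I would first observe that $T_t$ is a well-defined linear operator from $C_\gamma(E)$ into itself and that $T = \{T_t\}$ forms a semigroup on $C_\gamma(E)$: this follows at once from \eqref{eq:2015-11-10:08} and from the corresponding property of $T^\gamma$ on $C_b(E)$ given by Proposition \ref{2015-10-29:12}. Strong continuity at $0$ in $(C_\gamma(E), \tau^\gamma_\mathcal{K})$ then follows from strong continuity of $T^\gamma$ in $(C_b(E), \tau_\mathcal{K})$, since for $f \in C_\gamma(E)$
$$
T_t f - f = \varphi_\gamma^{-1}\bigl(T^\gamma_t (\varphi_\gamma f) - \varphi_\gamma f\bigr),
$$
and the right-hand side tends to $0$ in $\tau^\gamma_\mathcal{K}$ as $t \to 0^+$ because its argument tends to $0$ in $\tau_\mathcal{K}$ and $\varphi_\gamma^{-1}$ is continuous. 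Sequential local equicontinuity transfers in the same way: if $\{f_n\} \subset C_\gamma(E)$ converges to $0$ in $\tau^\gamma_\mathcal{K}$, then $\{\varphi_\gamma f_n\}$ converges to $0$ in $\tau_\mathcal{K}$, so sequential local equicontinuity of $T^\gamma$ yields convergence of $\{T^\gamma_t \varphi_\gamma f_n\}$ to $0$ in $\tau_\mathcal{K}$ uniformly on $[0,\hat t]$, whence $\{T_t f_n\}$ converges to $0$ in $\tau^\gamma_\mathcal{K}$ uniformly on $[0,\hat t]$.

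The exponential renormalization and Assumption \ref{ass:int} are handled by the same conjugation argument. Since $\{e^{-\alpha t} T^\gamma_t\}_{t \in \R^+}$ is $C_0$-sequentially equicontinuous on $(C_b(E), \tau_\mathcal{K})$ by the second part of Proposition \ref{2015-10-29:12}, its conjugate $\{e^{-\alpha t} T_t\}_{t \in \R^+} = \{\varphi_\gamma^{-1} \circ (e^{-\alpha t} T^\gamma_t) \circ \varphi_\gamma\}_{t \in \R^+}$ is $C_0$-sequentially equicontinuous on $(C_\gamma(E), \tau^\gamma_\mathcal{K})$. For Assumption \ref{ass:int}, given $f \in C_\gamma(E)$ and $\lambda > 0$, the Riemann integral $\int_0^{+\infty} e^{-(\lambda+\alpha)t} T^\gamma_t (\varphi_\gamma f)\, dt$ exists in $C_b(E)$; applying the sequentially continuous linear operator $\varphi_\gamma^{-1}$ to each Riemann sum and passing to the limit shows that $\int_0^{+\infty} e^{-\lambda t}(e^{-\alpha t} T_t f)\, dt$ exists in $C_\gamma(E)$ and equals $\varphi_\gamma^{-1}$ of the corresponding integral in $C_b(E)$.

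I do not expect any substantive obstacle: the entire point of introducing $\boldsymbol{\mu}^\gamma$ together with the conjugated topology $\tau^\gamma_\mathcal{K}$ is precisely to reduce the weighted case to the bounded case in a purely functorial way via the topological isomorphism $\varphi_\gamma$. The only step requiring a little care is verifying that each property of $T$ to be established corresponds, under conjugation by $\varphi_\gamma$, to the property of $T^\gamma$ already proved in Proposition \ref{2015-10-29:12}; but this is routine since $\varphi_\gamma$ and $\varphi_\gamma^{-1}$ are linear homeomorphisms between $(C_\gamma(E),\tau^\gamma_\mathcal{K})$ and $(C_b(E),\tau_\mathcal{K})$.
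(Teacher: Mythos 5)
Your proposal is correct and follows exactly the paper's route: the paper likewise observes that $\varphi_\gamma$ is a topological isomorphism making the displayed diagram commute, and then obtains the proposition as the conjugated restatement of Proposition \ref{2015-10-29:12} applied to $T^\gamma$. Your spelled-out transfer of each property (semigroup law, strong continuity, sequential local equicontinuity, and Assumption \ref{ass:int} via Riemann sums) is just a more explicit version of what the paper leaves implicit.
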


\subsection{Markov transition semigroups associated to stochastic differential equations}
\label{2015-10-29:16}

Propositions \ref{2015-10-29:12} and \ref{2015-11-10:09} have a straightforward application  to transition functions associated to mild solutions of stochastic differential equations in Hilbert spaces.
Let $(U,|\cdot|_U)$, $(H,|\cdot|_H)$ be  separable Hilbert spaces, let $(\Omega,\mathcal{F},\{\mathcal{F}_t\}_{t\in \mathbb{R}^+},\mathbb{P})$ be a  complete filtered probability space, let $Q$ be a positive self-adjoint operator, and let $W^Q$ be a $U$-valued $Q$-Wiener process defined on  $(\Omega,\mathcal{F},\{\mathcal{F}_t\}_{t\in \mathbb{R}^+},\mathbb{P})$ (see \cite[Ch.\ 4]{DaPratoZabczyk14}). Denote by $L_2(U_0,H)$ the space of Hilbert-Schmidt operators from $U_0\coloneqq Q^{1/2}(U)$~(\footnote{The scalar product on $U_0$ is defined by $\langle u,v\rangle_{U_0}\coloneqq \langle Q^{-1/2} u, Q^{-1/2} v\rangle_H$.})
into $H$, let  $A$ be the generator of a strongly continuous semigroup $\{S_A(t)\}_{t\in \R^+}$ in $(H,|\cdot|_H)$, and let  $F\colon H\rightarrow H$, $B\colon H\rightarrow L_2(U_0,H)$. 
Then, under suitable assumptions on the coefficients $F$ and $B$ (e.g.,  \cite[p.\ 187, Hypotehsis\ 7.1]{DaPratoZabczyk14}), for every $\xi\in H$,
the  SDE in the space $H$
\begin{equation}\label{SDEinf}
  \begin{dcases}
    dX(t)= AX(t)+ F(X(t))dt+B(X(t)) dW^Q(t) & t\in(0,T]\\
    X(0)=\xi,
\end{dcases}
\end{equation}
admits a unique (up to undistinguishability) mild solution $X(\cdot,\xi)$ 
with continuous trajectories (see \cite[p.\ 188, Theorem\ 7.2]{DaPratoZabczyk14}),
that is, there exists a unique $H$-valued process  $X(\cdot,\xi)$ with continuous trajectories satisfying the integral equation
\begin{equation}\label{mild}
X(t,\xi)=S_A(t) \xi+\int_0^t S_A(t-s) F(X(s,\xi))ds
+\int_0^t S_A(t-s) B(X(s,\xi))dW^Q(s)\qquad \forall t\in\R^+.
\end{equation} 
  By standard estimates 
(see, e.g., \cite[p.\ 188, Theorem\ 7.2]{DaPratoZabczyk14}(\footnote{The constant in that estimate can be taken exponential in time, because the SDE is autonomous.})), for every $p\geq 2$  we have, for some $K_p>0$ and  $\hat \alpha_p\in \mathbb{R}$, 
  \begin{equation}
    \label{eq:2015-11-12:05}
    \mathbb{E} \left[ |X(t,\xi)|_H^p \right] \leq K_pe^{\hat \alpha_p t}(1+|\xi|_H^p)\qquad \forall
(t,\xi)\in \mathbb{R}^+\times H.
\end{equation}
Moreover, by \cite[p.\ 235, Theorem 9.1]{DaPratoZabczyk14},
\begin{equation}\label{contt}
(t,\xi)\ \mapsto \ X(t,\xi) \mbox{ is stochastically continuous}.
\end{equation}

\begin{Proposition}\label{2015-11-12:00}
Let \cite[Hypothesis\ 7.1]{DaPratoZabczyk14} hold and 
let $X(\cdot,\xi)$ be the mild solution to \eqref{SDEinf}.
\begin{enumerate}[(i)]
\item\label{2015-12-07:00} Define
\begin{equation}\label{Semmark}
T_t f(\xi)\coloneqq \E\left[f(X(t,\xi))\right] \qquad \forall f\in C_b(H) \ \forall \xi\in H, \ \forall t\in\R^+ .
\end{equation}
Then 
  $T\coloneqq \{T_t\}_{t\in\R^+}$ is a $C_0$-sequentially locally equicontinuous semigroup in $(C_b(H),\tau_{\mathcal{K}})$.
 Moreover, $ \{e^{-\alpha t} T_t\}_{t\in\R^+}$ is    a $C_0$-sequentially equicontinuous semigroup in $(C_b(H),\tau_{\mathcal{K}})$ for every $\alpha>0$.
\item \label{2015-12-07:01}
Let $p\geq 2$  and set $\gamma(\xi)\coloneqq (1+|\xi|_H^p)^{-1}$ for $\xi\in H$. 
Define
\begin{equation}\label{Semmarkgamma}
T_t f(\xi)\coloneqq \E\left[f(X(t,\xi))\right]\qquad\forall f\in C_\gamma(H), \ \forall \xi\in H, \ \forall t\in\R^+ .
\end{equation}
 Then $T\coloneqq \{T_t\}_{t\in\R^+}$ is a $C_0$-sequentially locally equicontinuous semigroup in $(C_\gamma(H),\tau^\gamma_{\mathcal{K}})$. Moreover, $ \{e^{-\alpha t} T_t\}_{t\in\R^+}$ is a $C_0$-sequentially equicontinuous semigroup in $(C_\gamma(H),\tau^\gamma_{\mathcal{K}})$ for every $\alpha>\hat \alpha_p$, where $\hat \alpha_p$ is the constant appearing in \eqref{eq:2015-11-12:05}.
 \end{enumerate}
\end{Proposition}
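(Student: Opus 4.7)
The plan is to deduce both parts of the proposition from Proposition \ref{2015-10-29:12} and Proposition \ref{2015-11-10:09} respectively, by verifying that the transition kernel associated with $X(\cdot,\xi)$ fulfills Assumption \ref{2015-10-28:30}. For part \emph{(\ref{2015-12-07:00})}, I would set $\mu_t(\xi, \Gamma)\coloneqq \mathbb{P}(X(t,\xi) \in \Gamma)$ for $t \in \R^+$, $\xi \in H$, $\Gamma \in \mathcal{B}(H)$, so that \eqref{Semmark} rewrites as $T_t f(\xi) = \int_H f(\xi') \mu_t(\xi, d\xi')$. Items \emph{(\ref{2015-11-10:00})} and \emph{(\ref{2015-11-10:02})} of Assumption \ref{2015-10-28:30} are then immediate: $\mu_t(\xi,\cdot)$ is a probability measure, $\mu_0(\xi,\cdot)=\delta_\xi$ since $X(0,\xi)=\xi$, and the Chapman--Kolmogorov identity is the Markov property of the mild solution, a standard consequence of pathwise uniqueness together with the independence of the increments of $W^Q$. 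Item \emph{(\ref{2015-11-09:02})} (the Feller property) follows by dominated convergence from the stochastic continuity of $\xi \mapsto X(t,\xi)$ encoded in \eqref{contt}, while item \emph{(\ref{2015-10-28:31})} amounts to showing $\sup_{\xi \in K} \mathbb{P}(|X(t,\xi) - \xi|_H \geq r) \to 0$ as $t \to 0^+$, which follows from the pointwise statement in \eqref{contt} upgraded uniformly on $K$ via the standard Gronwall/Burkholder estimates available for the mild equation \eqref{mild}.

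The technical heart of the argument, and the point I expect to be the main obstacle, is item \emph{(\ref{2015-10-28:32})}: for every compact $K \subset H$ and every $\hat t > 0$, the family $\{\mu_t(\xi,\cdot) : t \in [0,\hat t],\ \xi \in K\}$ has to be tight. In the infinite-dimensional setting, boundedness in $L^p$ as furnished by \eqref{eq:2015-11-12:05} does not suffice, since balls of $H$ are not precompact. The plan is to decompose the mild solution via \eqref{mild} into the three contributions $S_A(t)\xi$, the drift convolution and the stochastic convolution. The first term lies in the compact set $\overline{\{S_A(t)\xi : t \in [0,\hat t],\ \xi \in K\}}$ by strong continuity of $\{S_A(t)\}_{t\in\R^+}$; the drift convolution is handled via Lipschitz continuity of $F$ together with a Kolmogorov-type tightness criterion in the time variable applied fibrewise; finally, the stochastic convolution, being driven by the trace-class $Q$-Wiener process, has tight law uniformly in $t \in [0,\hat t]$, cf.\ the standard facts in \cite[Ch.~5]{DaPratoZabczyk14}. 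Once item \emph{(\ref{2015-10-28:32})} is established, Proposition \ref{2015-10-29:12} immediately concludes part \emph{(\ref{2015-12-07:00})}.

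For part \emph{(\ref{2015-12-07:01})}, I would pass to the weighted kernel $\mu_t^\gamma$ defined in \eqref{eq:2015-11-12:06}, which reads explicitly
\[
\mu_t^\gamma(\xi, \Gamma) = \gamma(\xi)\,\mathbb{E}\bigl[(1+|X(t,\xi)|_H^p)\,\mathbf{1}_\Gamma(X(t,\xi))\bigr].
\]
Estimate \eqref{eq:2015-11-12:05} gives $\mu_t^\gamma(\xi, H) \leq K_p\, e^{\hat\alpha_p t}$, so after renormalization by $e^{-\alpha t}$ with $\alpha > \hat\alpha_p$ the family $\{e^{-\alpha t}\mu_t^\gamma\}$ is uniformly bounded on $\R^+ \times H$, as required in Assumption \ref{2015-10-28:30}\emph{(\ref{2015-11-10:00})}. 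The Feller property \emph{(\ref{2015-11-09:02})} and the Chapman--Kolmogorov identity \emph{(\ref{2015-11-10:02})} are inherited from the corresponding properties of $\boldsymbol{\mu}$ through the isomorphism $\varphi_\gamma$, provided one has uniform integrability of $\gamma^{-1}(X(t,\xi))$ in $\xi$ over compact sets; this follows from \eqref{eq:2015-11-12:05} applied with exponent $p+\delta$ for some $\delta > 0$. Items \emph{(\ref{2015-10-28:32})} and \emph{(\ref{2015-10-28:31})} for $\mu_t^\gamma$ reduce to the corresponding properties of $\mu_t$ from part \emph{(\ref{2015-12-07:00})}, together with uniform integrability of $\gamma^{-1}(X(t,\xi))$ on complements of large compact sets, once again provided by \eqref{eq:2015-11-12:05}. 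Proposition \ref{2015-11-10:09} then delivers the desired conclusion.
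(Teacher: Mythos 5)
Your overall strategy coincides with the paper's: both parts are reduced to Propositions \ref{2015-10-29:12} and \ref{2015-11-10:09} by checking Assumption \ref{2015-10-28:30} for the kernel $\mu_t(\xi,\cdot)=\mathbb{P}(X(t,\xi)\in\cdot)$ and its weighted renormalization, and your handling of items \emph{(\ref{2015-11-10:00})}, \emph{(\ref{2015-11-09:02})}, \emph{(\ref{2015-11-10:02})}, \emph{(\ref{2015-10-28:31})} and of part \emph{(\ref{2015-12-07:01})} is sound (indeed, your explicit appeal to higher moments from \eqref{eq:2015-11-12:05} to get uniform integrability of $\gamma^{-1}(X(t,\xi))$ is more careful than the paper, which dismisses part \emph{(\ref{2015-12-07:01})} as immediate; and for item \emph{(\ref{2015-10-28:31})} a soft sequential-compactness argument on $K$ suffices, with no need for Gronwall/Burkholder estimates).

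The genuine gap is in your treatment of the tightness condition, Assumption \ref{2015-10-28:30}\emph{(\ref{2015-10-28:32})}, which you correctly identify as the crux but do not actually prove. Decomposing \eqref{mild} into $S_A(t)\xi$ plus the two convolutions only disposes of the first term; for the other two you assert tightness by reference to ``standard facts,'' but uniform tightness over $(t,\xi)\in[0,\hat t]\times K$ of the laws of $\int_0^tS_A(t-s)F(X(s,\xi))ds$ and of the state-dependent stochastic convolution is not standard without extra structure (e.g.\ compactness of $S_A$ or a factorization argument), and a ``Kolmogorov-type tightness criterion in the time variable'' controls regularity of paths, not concentration of the time-$t$ marginals on compact subsets of the infinite-dimensional space $H$; Lipschitzness of $F$ and the moment bound \eqref{eq:2015-11-12:05} only give boundedness in $L^p$, which, as you yourself note, does not yield tightness in $H$. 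The paper avoids all of this with a soft argument: by the joint stochastic continuity \eqref{contt}, the map $(t,\xi)\mapsto\mu_t(\xi,\cdot)$ is continuous from $\mathbb{R}^+\times H$ into $\mathbf{ca}(H)$ with the topology $\sigma(\mathbf{ca}(H),C_b(H))$; hence $\{\mu_t(\xi,\cdot)\colon t\in[0,\hat t],\ \xi\in K\}$ is the continuous image of a compact set, so it is weakly compact, and a weakly compact family of Borel probability measures on a Polish space is tight (Prokhorov, \cite[Theorem 15.22]{Aliprantis2006}). You should replace your decomposition argument by this one (or supply a genuine proof of uniform tightness of the two convolution terms, which would amount to redoing the same work in a harder way).
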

\begin{proof}
\emph{(\ref{2015-12-07:00})}
Define
\begin{equation}
  \label{eq:2015-12-07:02}
  \mu_t(\xi,\Gamma)\coloneqq
 \mathbb{P}(X(t,\xi)\in \Gamma)\qquad \forall t\in \mathbb{R}^+,\ \forall  \xi\in H, \ \forall \Gamma\in \mathcal{B}(H).
\end{equation}
We show that we can apply Proposition
\ref{2015-10-29:12} with the family $\boldsymbol{\mu}\coloneqq\{\mu_t(\xi,\cdot)\}_{\substack{t\in\R^+\\\xi\in H}}$ given by \eqref{eq:2015-12-07:02}.
The condition of Assumption \ref{2015-10-28:30}\emph{(\ref{2015-11-10:00})}
 is clearly verified. The condition of Assumption \ref{2015-10-28:30}\emph{(\ref{2015-11-09:02})} is consequence of \eqref{contt}. The condition of Assumption \ref{2015-10-28:30}\emph{(\ref{2015-11-10:02})}
is  verified by  \cite[p.\ 249, Corollaries 9.15 and 9.16]{DaPratoZabczyk14}.

Now we verify the condition of Assumption \ref{2015-10-28:30}\emph{(\ref{2015-10-28:32})}.
 Let $\hat t>0$ and let $K\subset E$ compact. 
By \eqref{contt} the map
$$
\mathbb{R}^+\times H \rightarrow  \left( \mathbf{ca}(H)
,\sigma \left( \mathbf{ca}(H),C_b(H) \right)  \right) 
,\ \  (t,\xi) \mapsto \mu(\xi,\cdot)
$$
is continuous. 
Then the family of probability measures $\{\mu_t(\xi,\cdot)\}_{(t,\xi)\in [0,\hat t]\times H}$ is 
$\sigma \left( \mathbf{ca}(H),C_b(H) \right)$-compact. Hence,
 by \cite[p.\ 519, Theorem 15.22]{Aliprantis2006},
it  is tight.

We finally verify the condition of
Assumption \ref{2015-10-28:30}\emph{(\ref{2015-10-28:31})}.
Let $r>0$, let $\{t_n\}_{n\in \mathbb{N}}\subset \mathbb{R}^+$ be a sequence converging to $0$, and let $\{\xi_n\}_{n\in \mathbb{N}}$ be sequence converging to $\xi$ in $H$. By \eqref{contt}  and  recalling that $X(0,\xi)=\xi$, we get
\begin{equation*}
    \lim_{n\rightarrow +\infty}
\mu_{t_n} \left( \xi_n,B(\xi_n,r) \right) =
\lim_{n\rightarrow +\infty}
\mathbb{P} \left(|\xi_n-X(t_n,\xi_n)|_H < r\right) =0.
\end{equation*}
By arbitrariness of the sequences $\{t_n\}_{n\in\N}$, $\{\xi_n\}_{n\in\N}$ and of $\xi$, this implies the condition of
Assumption \ref{2015-10-28:30}\emph{(\ref{2015-10-28:31})}.

\emph{(\ref{2015-12-07:01})}
First, we notice that $T_tf$ in \eqref{Semmarkgamma} is well defined due to \eqref{eq:2015-11-12:05}.
Consider now the family $\boldsymbol{\mu}\coloneqq\{\mu_t(\xi,\cdot)\}_{\substack{t\in\R^+\\\xi\in H}}$ defined in
\eqref{eq:2015-12-07:02}
 and  the renormalized family 
$\boldsymbol{\nu}\coloneqq\{\nu_t(\xi,\cdot)\}_{\substack{t\in\R^+\\\xi\in H}}$ defined by $\nu_t(\xi,\cdot)\coloneqq e^{-\hat \alpha_p t} \mu_t$. Then, consider the weighted family $\boldsymbol{\nu}^\gamma\coloneqq\{\nu^\gamma_t(\xi,\cdot)\}_{\substack{t\in\R^+\\\xi\in H}}$ 
\begin{equation*}
  \nu^{\gamma}_t(\xi,\Gamma)\coloneqq \frac{1}{1+|\xi|^p}\int_\Gamma (1+|\xi'|^p) \nu_t(\xi,d\xi')\qquad \forall \Gamma\in \mathcal{B}(H), \ \forall \xi\in H.
\end{equation*}
We have
$$
T_t f(\xi) =e^{\hat \alpha_p t} \int_H f(\xi') \nu_t(\xi,d\xi')\qquad  \forall  f\in C_\gamma(H), \ \forall \xi\in H,  \ \forall t\in\R^+. 
$$
Hence, 
by Proposition
\ref{2015-11-10:09}, the proof reduces to show that   Assumption  \ref{2015-10-28:30} is  verified by $\boldsymbol{\nu}^\gamma$.
The latter follows straightly from its definition by taking into account the properties already proved for $\boldsymbol{\mu}$ in part \emph{(i)} of the proof and \eqref{eq:2015-11-12:05}.
\end{proof}

\begin{Example}
\label{2015-05-04:07-II}
Let $H$ be a non-trivial separable Hilbert space with inner product $\langle\cdot,\cdot\rangle$.
Let $Q\in L(H)$ be a positive  self-adjoint trace-class operator and  let $W^Q$ be a $Q$-Wiener process in $H$ on some filtered probability space
$(\Omega,\mathcal{F},\{\mathcal{F}_t\}_{t\in\R^+}, \mathbb{P})$ (see \cite[Ch.\ 4]{DaPratoZabczyk14}). 
Let  $T=\{T_t\}_{t\in \mathbb{R}^+}$ be defined by
$$
T_tf (\xi)\coloneqq \mathbb{E}[f(\xi+W_t^Q)] = \int_H f(\xi')\mu_t(\xi,d\xi') \qquad \forall f\in C_b(H), \  \forall\xi\in H, \ \forall t\in \R^+,
$$
where 
$\mu_t(\xi,\cdot)$ denotes the law of $\xi+W^Q_t$.
Then, by Proposition  \ref{2015-11-12:00}, $T$ is a $C_0$-sequentially locally equicontinuous semigroup in $(C_b(H),\tau_\mathcal{K})$. 
 We claim that $T$ is not locally equicontinuous. Indeed,
if $T$ was locally equicontinuous, for any fixed $\hat t>0$,
there should exist $L>0$, a compact set $K\subset H$, and $\eta_1,\ldots,\eta_n\in \ca(H)$ such that
\begin{equation}
  \label{eq:2015-07-23:00-II}
  \sup_{t\in [0,\hat t]}|T_tf(0)|\leq L\left([f]_{K}+\sum_{i=1}^n\left |\int_Hf d\eta_i\right|\right)\qquad  \forall    f\in C_b(H).
\end{equation}
Let   $v\in H\setminus \{0\}$ and let $a\coloneqq \max_{h\in K}|\langle v,h\rangle|$.
Then, denoting by $\lambda_t$ the pushforward measure of  $\mu_t(0,\cdot)$ through the application $\langle v,\cdot \rangle$ (that is the law of the real-valued random variable $\langle v,W_t^Q\rangle$), and by 
$\nu_i$, $i=1,...,n$, the pushforward measure of  $\eta_i$ through the same application, inequality 
\eqref{eq:2015-07-23:00-II} provides, in particular,
\begin{equation}
  \label{eq:2015-07-23:01-II}
  \sup_{t\in [0,\hat t]}
\left|\int_{a}^{+\infty}g
d \lambda_t\right|\leq L\sum_{i=1}^n\left |\int_a^{+\infty}g d\nu_i\right|,\qquad \ \forall   g\in C_{0,b}([a,+\infty)),
\end{equation}
where $C_{0,b}([a,+\infty))$ is the space of bounded continuous functions $f$ on $[a,+\infty)$ such that $ f(a)=0$.
Then, by \cite[p.\ 63, Lemma\ 3.9]{Rudin1991},
 every $\lambda_t$ restricted to $(a,+\infty)$ must be a linear combination of the measures $\nu_1\ldots, \nu_n$ restricted to $(a,+\infty)$. In particular, choosing any sequence $0<t_1<\ldots<t_n<t_{n+1}\leq \hat t$,
the family $\{\lambda_{t_i}\lfloor (a,+\infty)\}_{i=1,\ldots,n+1}$ is linearly dependent.
This is  not possible, as they are restrictions of nondegenerate Gaussian laws having all different variances.
\end{Example}
\begin{Remark}
In this subsection we have considered a Hilbert space setting, as the theory of SDEs in Hilbert spaces is very well developed and the properties of their solutions allow to state our results for a large class of SDEs. Nevertheless, the same kind of results hold for suitable classes of SDEs in Banach spaces (see e.g.\  \cite{Goldys2003}).     
\end{Remark}

\addcontentsline{toc}{section}{References}
\bibliographystyle{plain}
\bibliography{biblio.bbl}

\end{document}